\tikzset{snake it/.style={decorate, decoration=snake}}
\tikzset{
  dep u/.style={insert path={-- ++(0,15) node{}}},
  dep r/.style={insert path={-- ++(15,0) node{}}},
  dep d/.style={insert path={-- ++(0,-15) node{}}},
  dep l/.style={insert path={-- ++(-15,0) node{}}},
  recurse lattice path/.code args={#1#2}{
    \ifx#1.\else\tikzset{dep #1,recurse lattice path=#2}\fi
  },
  lattice path/.style={recurse lattice path=#1.}
}
\newcommand{\QQ}{\mathbb Q}
\newcommand{\RR}{\mathbb R}
\newcommand{\CC}{\mathbb C}
\newcommand{\PP}{\mathbb P}
\newcommand{\ZZ}{\mathbb Z}
\newcommand{\TT}{\mathbb T}
\renewcommand{\SS}{\mathbb S}
\newcommand{\kk}{\mathbbm k}
\newcommand{\C}{\mathcal C}
\newcommand{\I}{\mathcal I}
\newcommand{\be}{\mathbf e}
\newcommand{\M}{{\boldsymbol M}}
\newcommand{\br}{\mathbf r}
\newcommand{\bmu}{{\boldsymbol \mu}}
\newcommand{\newterm}{\emph}
\newcommand{\centered}[1]{\begin{tabular}{l} #1 \end{tabular}}
\theoremstyle{definition}
\newtheorem{thm}{Theorem}[section]
\newtheorem{cor}[thm]{Corollary}
\newtheorem{lem}[thm]{Lemma}
\newtheorem{prop}[thm]{Proposition}
\newtheorem{defn}[thm]{Definition}
\newtheorem{eg}[thm]{Example}
\newtheorem{rem}[thm]{Remark}
\newtheorem{ques}[thm]{Question}
\newtheorem{mainthm}{Theorem}
\DeclareMathOperator{\distance}{dist}
\DeclareMathOperator{\val}{val}
\DeclareMathOperator{\Dr}{Dr}
\DeclareMathOperator{\Conv}{Conv}
\DeclareMathOperator{\Trop}{Trop}
\DeclareMathOperator{\trop}{trop}
\DeclareMathOperator{\TrFl}{TrFl}
\DeclareMathOperator{\FlDr}{FlDr}
\DeclareMathOperator{\TrGr}{TrGr}
\DeclareMathOperator{\Gr}{Gr}
\DeclareMathOperator{\Gl}{GL}
\DeclareMathOperator{\PPP}{P}
\DeclareMathOperator{\Fl}{Fl}
\DeclareMathOperator{\Perm}{Perm}
\title[Polyhedral and tropical geometry of flag positroids]
{Polyhedral and tropical geometry of flag positroids}
\author{Jonathan Boretsky, Christopher Eur, Lauren Williams}
\date{}
\address{Centre de Recherches Math{\'e}matiques in Montreal}
\email{jonathan.boretsky@mail.mcgill.ca}
\address{Carnegie Mellon University}
\email{ceur@cmu.edu}
\address{Harvard University}
\email{williams@math.harvard.edu}
\begin{document}

\maketitle

\begin{abstract}
	A \emph{flag positroid}  of ranks $\br:=(r_1<\dots <r_k)$ on $[n]$
is a flag matroid that can be realized by a real $r_k \times n$ matrix  $A$
such that the $r_i \times r_i$ minors of $A$
involving rows $1,2,\dots,r_i$ are nonnegative for all $1\leq i \leq k$. In this paper we explore
the polyhedral and tropical geometry of flag positroids, particularly when 
$\br:=(a, a+1,\dots,b)$ is a sequence of consecutive numbers.  In this case we show that 
the nonnegative tropical flag variety 
	$\TrFl_{\br,n}^{\geq 0}$ equals the 
nonnegative flag Dressian $\FlDr_{\br,n}^{\geq 0}$,  and that the points 
 $\bmu  = (\mu_a,\ldots, \mu_b)$ of 
$\TrFl_{\br,n}^{\geq 0} = 
 \FlDr_{\br,n}^{\geq 0}$ give rise to coherent subdivisions of 
the flag positroid polytope $P(\underline{\bmu})$ into flag positroid
polytopes.   Our results have applications to Bruhat interval polytopes:
for example, we show that a complete flag matroid polytope is a Bruhat interval polytope 
if and only if its $(\leq 2)$-dimensional faces are Bruhat interval polytopes.
Our results also have applications to realizability questions.  
	We define
	a \emph{positively oriented flag matroid} to be a sequence
	of positively oriented matroids $(\chi_1,\dots,\chi_k)$ which is also 
	an oriented flag matroid.  We then prove
	that every positively oriented flag matroid of ranks 
	$\br=(a,a+1,\dots,b)$ is realizable.
\end{abstract}

\setcounter{tocdepth}{1}
\tableofcontents

\section{Introduction}

In recent years there has been a great deal of interest in the 
tropical Grassmannian \cite{SS04, HJJS08, HJS14, Cachazo, Bosstrop}, 
and matroid polytopes and their subdivisions \cite{Spe08, AFF, Early}, 
as well as ``positive'' 
\cite{Pos, SW05,  Oh, ARW, LeFraser, LPW, SW21, AHLS}
and ``flag'' 
\cite{TW15, BEZ21,  Bossinger, JL, JLLO, Bor}
versions of the above objects.
The aim of this paper is to illustrate the beautiful relationships between the
nonnegative tropical flag variety, the nonnegative flag Dressian, 
and flag positroid polytopes and their subdivisions, unifying and 
generalizing some of the existing results.  We will particularly 
focus on the case of flag varieties (respectively, flag positroids)
consisting of subspaces (respectively, matroids)
of \emph{consecutive} ranks.  This case 
includes both Grassmannians
and complete flag varieties.

\medskip
For positive integers $n$ and $d$ with $d<n$, we 
let $[n]$ denote the set $\{1, \ldots, n\}$ and we let 
${[n] \choose d}$ denote the collection of all $d$-element subsets of $[n]$.
Given a subset $S\subseteq [n]$ we 
let $\be_S$ denote the sum of standard basis vectors $\sum_{i\in S} \be_i$.  
For a collection $\mathcal B \subset {[n]\choose d}$, we let
\[
P(\mathcal B) = \text{the convex hull of $\{\be_B : B\in \mathcal B\}$ in $\RR^n$}.
\]
The collection $\mathcal B$ is said to define a \newterm{matroid $M$ 
of rank $d$ on $[n]$} if every edge of the polytope
$P(\mathcal B)$
is parallel to $\be_i - \be_j$ for some $i\neq j \in [n]$.
In this case, we call $\mathcal B$ the set of 
\newterm{bases} of $M$, and define the \newterm{matroid polytope} $P(M)$ of $M$ to be the polytope $P(\mathcal B)$.  
When $\mathcal B$ indexes the nonvanishing Pl\"ucker coordinates of 
an element $A$ of the Grassmannian $\Gr_{d,n}(\CC)$, 
we say that $A$ \emph{realizes} $M$, and it is well-known that 
$P(\mathcal B)$ is the moment map image
of the closure of the torus orbit of $A$ in the Grassmannian \cite{GGMS87}.
We assume familiarity with the fundamentals of matroid theory as in  \cite{Oxl11} and
\cite{BGW03}.

\medskip
The above definition of matroid in terms of its 
polytope is due to \cite{GGMS87}.
Flag matroids are natural generalizations of matroids that admit the following polytopal definition.

\begin{defn}\cite[Corollary 1.13.5 and Theorem 1.13.6]{BGW03}
Let $ \br = (r_1, \ldots, r_k)$ be a sequence of increasing integers in $[n]$.
A \newterm{flag matroid} of ranks $\br$ on $[n]$ is a sequence 
$\M = (M_1, \ldots, M_k)$ of matroids of ranks $(r_1, \ldots, r_k)$ on $[n]$ such that
all vertices of the polytope
\[
P(\M) = P(M_1)+ \cdots + P(M_k), \text{ the Minkowski sum of  matroid polytopes},
\]
are equidistant from the origin.
	The polytope $P(\M)$ is called the \newterm{flag matroid polytope} of $\M$; we sometimes
	say it is a flag matroid polytope of \emph{rank $\br$}.
\end{defn}

Flag matroids are exactly the type $A$ objects in the theory of Coxeter matroids \cite{GS87, BGW03}.
Just as a realization of a matroid is a point in a Grassmannian, a realization of a flag matroid 
is a point in a flag variety.  More concretely, a \newterm{realization} of a flag matroid 
of ranks $(r_1,\dots,r_k)$ 
is an $r_k \times n$ matrix $A$ 
over a field such that for each $1 \leq i \leq k$, the $r_i \times n$ submatrix of $A$ formed by the first $r_i$ rows of $A$ is a realization of $M_i$.
For an equivalent definition of flag matroids in terms of Pl\"ucker relations on partial flag varieties, see \cite[Proposition A]{JL}.

There is a notion of moment map for any flag variety (indeed for any generalized partial
flag variety $G/P$) \cite{GS87, BGW03}.  When a flag matroid $\M$ can be realized by 
a point $A$ in the flag variety, then its matroid polytope $P(\M)$
is the moment map image of the closure of the torus orbit of $A$ in the flag variety
\cite{GS87}, \cite[Corollary 1.13.5]{BGW03}.

There are natural ``positive'' analogues of matroids, flag matroids, and their polytopes.
\begin{defn}\label{def:flagpositroid}
Let $\br = (r_1, \cdots ,r_k)$ be a sequence of increasing integers in $[n]$.
	We say that a flag matroid $(M_1,\dots,M_k)$  of ranks $\br$ on $[n]$ is  a \newterm{flag positroid} if it has a realization by a real matrix $A$ such that 
	the $r_i \times n$ submatrix of $A$ formed by the first $r_i$ rows of $A$ has all nonnegative minors
	for each $1 \leq i\leq k$.
\end{defn}

We refer to the flag matroid polytope of a flag positroid as a \emph{flag positroid polytope}.
It follows from our definition above that flag positroids are realizable.

\medskip
Setting $k = 1$ in \Cref{def:flagpositroid} gives the well-studied notion of \newterm{positroids} 
and \emph{positroid polytopes} \cite{Pos, Oh, ARW}.  
Therefore each flag positroid is a sequence of positroids.

In recent years it has been gradually understood that the tropical 
geometry of the Grassmannian and flag variety,
and in particular, the \emph{Dressian} and \emph{flag Dressian}, are
%and their positive
  intimately connected to (flag) matroid 
polytopes and their subdivisions \cite{Spe08, HJJS08, BEZ21}
(see also \cite[\S4]{MS15}).  
A particularly attractive point of view, which sheds light on 
the above connections, 
is the theory of \emph{(flag) matroids over hyperfields}
\cite{BB19, JL}. In this framework, the Dressian 
and flag Dressian are the Grassmannian and flag variety 
over the \emph{tropical hyperfield}, while  matroids
and flag matroids are the points of the Grassmannian and flag 
variety over the \emph{Krasner hyperfield}.

The tropical geometry of the \emph{positive} Grassmannian and flag variety
are particularly nice:
the positive tropical Grassmannian equals the positive Dressian, 
whose cones in turn parameterize 
subdivisions of the hypersimplex into positroid polytopes \cite{SW05, SW21, LPW, AHLS}.  And the positive tropical complete flag variety equals
the positive complete flag Dressian, whose cones
parameterize subdivisions of the permutohedron into 
\emph{Bruhat interval polytopes} \cite{Bor, JLLO}.
\Cref{thm:eqvs} below unifies and generalizes the above results.

\begin{defn}\label{def:tropicalsemifield}
Let $\TT = \RR \cup \{\infty\}$ be the set underlying the tropical hyperfield, endowed with
	the topology such that $-\log: \RR_{\geq 0} \to \TT$ is a homeomorphism.  
Given a point $w \in \TT^{\binom{[n]}{r}}$, 
we  define the \newterm{support} of $w$ 
	to be $\underline w = \{S\in \binom{[n]}{r} : w_S\neq \infty\}$.
	When $\underline w$ is the set of bases of a matroid,
	we identify $\underline w$ with that matroid.
	Let $\PP\left(\TT^{\binom{[n]}{r}}\right)$ be the tropical projective space of $\TT^{\binom{[n]}{r}}$, which is defined as $\big(\TT^{\binom{[n]}{r}}\setminus \{(\infty, \ldots, \infty)\}\big) / \sim$, where $w \sim w'$ if $w = w' + (c, \ldots, c)$ for some $c\in \RR$.
\end{defn}
Our main result is the following.

\begin{mainthm}\label{thm:eqvs}
Suppose $\br$ is a sequence of consecutive integers $(a, \ldots, b)$ for some $1\leq a \leq b \leq n$.
Then, for $\bmu  = (\mu_a,\ldots, \mu_b) \in \prod_{i = a}^{b} \PP\left( \TT^{\binom{[n]}{i}}\right)$, 
	the following statements are equivalent:
\begin{enumerate}[label = (\alph*)]
\item\label{eqvs:TrFl} $\bmu\in \TrFl_{\br,n}^{\geq 0}$, the nonnegative tropicalization of the flag variety, i.e.\ the closure of the coordinate-wise valuation of points in $\Fl_{\br,n}(\C_{\geq 0})$.
\item\label{eqvs:FlDr} $\bmu \in \FlDr_{\br,n}^{\geq 0}$, the nonnegative flag Dressian, i.e. the ``solutions" to the positive-tropical Grassmann-Pl\"ucker and incidence-Pl\"ucker relations.
\item\label{eqvs:subdiv} 
		Every face in the coherent subdivision $\mathcal 
		D_{\boldsymbol\mu}$ of the polytope $P(\underline{\bmu}) = P(\underline{\mu_1}) + \cdots + P(\underline{\mu_k})$ induced by $\bmu$ is a flag positroid polytope (of rank $\br$).

\item\label{eqvs:2faces} 
	Every face of dimension at most 2 in the subdivision $\mathcal D_{\boldsymbol\mu}$ of $P(\underline{\boldsymbol \mu})$ is a flag positroid polytope (of rank $\br$).
\item\label{eqvs:3terms} 
The support $\underline \bmu$ of $\bmu$
	is a flag matroid, 
	$\bmu$ satisfies every three-term positive-tropical incidence relation when $a< b$ (respectively, every three-term positive-tropical Grassmann-Pl\"ucker relation when $a = b$), and either $\underline\bmu$ consists of uniform matroids or $\mu_i\in \Dr_{i;n}^{\geq 0}$ for at least one $a\leq i \leq b$.		
\end{enumerate}
\end{mainthm}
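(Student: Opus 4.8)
The plan is to prove the cycle of implications
\[
\ref{eqvs:TrFl} \Rightarrow \ref{eqvs:FlDr} \Rightarrow \ref{eqvs:subdiv} \Rightarrow \ref{eqvs:2faces} \Rightarrow \ref{eqvs:3terms} \Rightarrow \ref{eqvs:TrFl},
\]
reserving the heaviest work for the last two arrows, which is where the hypothesis that $\br = (a,a+1,\dots,b)$ is a \emph{consecutive} sequence really gets used. The implication \ref{eqvs:TrFl}$\Rightarrow$\ref{eqvs:FlDr} should be essentially formal: the nonnegative tropicalization of the flag variety always satisfies the positive-tropical Grassmann--Pl\"ucker and incidence-Pl\"ucker relations, since these are the tropicalizations of the defining (positive) polynomial relations on $\Fl_{\br,n}(\C_{\geq 0})$ and valuation is continuous, so taking closures preserves them. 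For \ref{eqvs:FlDr}$\Rightarrow$\ref{eqvs:subdiv}, I would use the dictionary between points of the flag Dressian and coherent subdivisions: a point $\bmu$ of the flag Dressian induces, via the lower faces of the lifted Minkowski sum, a coherent subdivision $\mathcal D_{\bmu}$ of $P(\underline{\bmu})$ each of whose cells is a flag matroid polytope (this is the flag analogue of Speyer's matroid subdivision theorem, available in \cite{BEZ21,JL}). The additional input needed is that in the \emph{nonnegative} flag Dressian the cells are not merely flag matroid polytopes but flag \emph{positroid} polytopes; this follows because each coordinate projection of $\bmu$ lands in the nonnegative Dressian $\Dr_{r_i,n}^{\geq 0}$, whose cells are positroid polytopes by \cite{SW21,LPW,AHLS}, and a flag matroid whose constituents are all positroids and which is itself a flag matroid is a flag positroid --- precisely the content of the positively-oriented-flag-matroid realizability statement we intend to prove for consecutive ranks. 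The implication \ref{eqvs:subdiv}$\Rightarrow$\ref{eqvs:2faces} is immediate since faces of dimension at most $2$ of the subdivision are in particular faces of the subdivision.

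The real content is \ref{eqvs:2faces}$\Rightarrow$\ref{eqvs:3terms} and \ref{eqvs:3terms}$\Rightarrow$\ref{eqvs:TrFl}. For the first of these, I would argue that each three-term (incidence- or Grassmann--) Pl\"ucker relation is "witnessed" on a face of $\mathcal D_{\bmu}$ of dimension at most $2$: the three terms of such a relation correspond to a small configuration of vertices $\be_S$ spanning an at-most-$2$-dimensional affine subspace, and the way they are subdivided in $\mathcal D_{\bmu}$ is controlled by whether the induced low-dimensional cell is a positroid (resp.\ flag positroid) polytope. Concretely, a three-term relation fails the positivity (tropical-sign) condition exactly when the corresponding edge or square in the subdivision is cut "the wrong way," producing a cell that is a matroid polytope but not a positroid polytope --- e.g.\ the polytope of the uniform matroid $U_{2,4}$ cut by the non-positroid diagonal. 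So the hypothesis that all $(\leq 2)$-faces are flag positroid polytopes forces all three-term positive-tropical relations to hold; and the support $\underline\bmu$ is a flag matroid because its polytope $P(\underline\bmu)$ is the full polytope being subdivided, whose every edge is parallel to some $\be_i - \be_j$ (this being inherited from the $1$-dimensional cells). Here is where $a,b$ consecutive matters: for a jump in ranks larger than $1$ the relevant local obstructions would not all be captured by $2$-dimensional faces, and indeed the three-term relations would not suffice.

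Finally, for \ref{eqvs:3terms}$\Rightarrow$\ref{eqvs:TrFl}, the strategy is to upgrade the three-term relations to \emph{all} the positive-tropical Pl\"ucker and incidence relations, and then realize $\bmu$ by a point of $\Fl_{\br,n}(\C_{\geq 0})$. For the upgrade, in the case $a=b$ (Grassmannian) one cites the theorem that the positive Dressian is cut out by the three-term positive-tropical Grassmann--Pl\"ucker relations \cite{SW21,AHLS}; the genuinely new ingredient is the incidence side, where one must show that three-term positive-tropical incidence relations together with a flag-matroid support imply all incidence relations between consecutive ranks. I would prove this by a local-move / exchange argument on the chain of matroids $\underline{\mu_a} \subset \cdots \subset \underline{\mu_b}$, using that consecutive ranks differ by $1$ so that quotient steps are elementary, reducing a general incidence relation to a sequence of three-term ones much as in \cite{Bor,JLLO}. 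Once $\bmu$ is known to lie in the full nonnegative flag Dressian, realizability follows by inducting on $b-a$: having realized the truncation $(\underline{\mu_a},\dots,\underline{\mu_{b-1}})$ by a nonnegative matrix $A'$, one extends $A'$ by one more row so that the new maximal minors match $\mu_b$ and remain nonnegative, which is possible because the elementary-quotient relation between $\underline{\mu_{b-1}}$ and $\underline{\mu_b}$ plus the positivity of the incidence data pins down a valid nonnegative choice (this is parallel to the positroid-side argument of \cite{SW21}). The main obstacle I anticipate is precisely this last extension step: keeping \emph{all} the larger minors nonnegative while prescribing the valuations $\mu_b$ requires a careful, possibly Plücker-positivity-preserving, parametrization --- this is the technical heart of the proof and the place where the consecutive-ranks hypothesis is doing the most work.
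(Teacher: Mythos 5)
Your cycle $(a)\Rightarrow(b)\Rightarrow(c)\Rightarrow(d)\Rightarrow(e)\Rightarrow(a)$ is a sensible logical outline, and your sketch of $(d)\Rightarrow(e)$ via local $(\le 2)$-dimensional faces is in the right spirit of the paper's argument. However, there is a concrete error in your $(b)\Rightarrow(c)$ step, and a separate serious gap in your $(e)\Rightarrow(a)$ step.

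For $(b)\Rightarrow(c)$: you assert that ``a flag matroid whose constituents are all positroids and which is itself a flag matroid is a flag positroid,'' and claim this is what the realizability theorem gives you. That statement is false, and in fact the paper explicitly produces a counterexample: the flag matroid $(M,M')$ on $[3]$ of ranks $(1,2)$ with bases $\{1,3\}$ and $\{12,13,23\}$ is realizable, both constituents are positroids, yet it is \emph{not} a flag positroid (\Cref{eg:notreal}). The realizability theorem (\Cref{cor:real}) applies only to \emph{positively oriented flag matroids}, i.e.\ sequences of positively oriented matroids that form an \emph{oriented} flag matroid --- a condition strictly stronger than being a flag matroid with positroid constituents. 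To run the argument correctly, one must show that each face of $\mathcal D_{\bmu}$ is the support of an element of the \emph{nonnegative} flag Dressian (not merely of the flag Dressian), which carries the orientation data. The paper does this in \Cref{prop:afflin}: one shows that affine-linear reweightings $\varphi\boldsymbol w$ and initial parts $\boldsymbol w^{\mathrm{in}}$ of a point $\boldsymbol w\in\FlDr_{\br;n}^{\geq 0}$ remain in $\FlDr_{\br;n}^{\geq 0}$, so each cell of $\mathcal D_{\bmu}$ is the support of such a point, hence a positively oriented flag matroid by \Cref{lem:signembed}, hence a flag positroid by \Cref{cor:real}. Your version skips exactly the orientation-preservation step that makes this work.

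For $(e)\Rightarrow(a)$: you propose to upgrade three-term to all positive-tropical relations and then realize by an inductive row-extension of a nonnegative matrix, and you correctly flag this extension as the technical heart. But the row-extension scheme as sketched does not obviously terminate: given a nonnegative realization $A'$ of ranks $a,\dots,b-1$ and a prescribed tropical Pl\"ucker vector $\mu_b$, there is no apparent reason a single additional row can be chosen so that \emph{all} $b\times b$ minors have the required valuations and the correct (positive) leading coefficients simultaneously; you would need to explain why the space of valid extensions is nonempty. The paper avoids this entirely by splitting into $(e)\Rightarrow(b)$ and $(b)\Rightarrow(a)$. The step $(e)\Rightarrow(b)$ (\Cref{thm:almost3term}) is proved not by an exchange argument on the chain of matroids, but by building a single auxiliary valuated matroid $\widetilde\bmu$ of rank $r+1$ on $[n+1]$ from the pair $(\mu_r,\mu_{r+1})$ (adding a new element), and reducing to a rank-$2$ lemma on five elements (\Cref{lem:EB}) that is verified via an explicit polynomial identity (\Cref{lem:ID}). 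The step $(b)\Rightarrow(a)$ (\Cref{thm:almostAB}) goes by repeatedly extending a point of $\FlDr_{\br;n}^{\geq 0}$ to one of $\FlDr_n^{\geq 0}$ (ranks $1$ through $n$) using subtraction-free parametrizations of positive Richardsons (\Cref{lem:extend}, \Cref{cor:project}), and then invoking the known complete-flag case $\FlDr_n^{\geq 0}=\TrFl_n^{\geq 0}$. This gives a clean reduction where you had an unresolved existence claim.
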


For the definitions of the objects in \Cref{thm:eqvs}, see \Cref{prop:closure} for \ref{eqvs:TrFl}, \Cref{defn:tropFl} for \ref{eqvs:FlDr}, \Cref{defn:subdiv} for \ref{eqvs:subdiv}, and \Cref{defn:3terms} for \ref{eqvs:3terms}.  

\medskip
We note that if $\br=(d)$ is a single integer, 
\Cref{thm:eqvs} describes the
relationship between the nonnegative tropical Grassmannian,
the nonnegative Dressian, and subdivisions of positroid polytopes (e.g. the hypersimplex,
if $\bmu$ has no coordinates equal to $\infty$)
into positroid polytopes.  And when $\br=(1,2,\dots,n)$, \Cref{thm:eqvs} 
describes the 
relationship between the nonnegative tropical complete flag variety, 
the nonnegative complete flag Dressian, and subdivisions of Bruhat interval polytopes (e.g. the permutohedron, if $\bmu$ has no coordinates equal to $\infty$) into 
Bruhat interval polytopes. We illustrate this relationship in the case where $\bmu$ has no coordinates equal to $\infty$ in \Cref{sample}.

\begin{figure}[h]\centering
 \begin{tikzpicture}[scale=1.5, every node/.style={scale=1.5}]
     \tkzDefPoint(-.2,0){e14}
     \tkzDefPoint(1,0){e13}
     \tkzDefPoint(.5,.43){e24}
     \tkzDefPoint(1.65,.43){e23}
     \tkzDefPoint(.7,1.3){e12}
     \tkzDefPoint(.7,-.95){e34}
         \filldraw[fill=gray!30!white](e14)--(e24)--(e23)--(e13)--(e14);
                \filldraw[fill=blue!60!white](e14)--(e13)--(e12);
                \filldraw[fill=blue!60!white](e13)--(e23)--(e12);
                \filldraw[fill=magenta](e13)--(e14)--(e34);
                \filldraw[fill=magenta](e13)--(e23)--(e34);
\tkzDrawPolygon(e24, e14,e13,e23,e24,e12,e14,e13,e12,e23,e34,e14,e13,e34,e23)
                \tkzDrawPolygon(e24,e14,e12,e24,e23,e12,e13,e14,e13,e23);
                \node[shift=(e24), anchor=north] {\tiny $e_{24}$};
                \node[shift=(e14), anchor= east] {\tiny $e_{14}$};
                \node[shift=(e23), anchor=west] {\tiny $e_{23}$};
                \node[shift=(e13), anchor=north east] {\tiny $e_{13}$};
                \node[shift=(e12), anchor=east] {\tiny $e_{12}$};
                \node[shift=(e34), anchor=east] {\tiny $e_{34}$};
        \filldraw[color=black,fill=black] (e24) circle (1pt);
        \filldraw[color=black,fill=black] (e14) circle (1pt);
        \filldraw[color=black,fill=black] (e23) circle (1pt);
        \filldraw[color=black,fill=black] (e13) circle (1pt);
        \filldraw[color=black,fill=black] (e12) circle (1pt);
        \filldraw[color=black,fill=black] (e34) circle (1pt);
 \end{tikzpicture}\hspace{.8cm}
     \begin{tikzpicture}[scale=1.4, every node/.style={scale=1.5}]
     \tkzDefPoint(0,0){Z1}
     \tkzDefPoint(0,-1){newZ1}
     \tkzDefPoint(1.732,0){Z3}
     \tkzDefPoint(1.732,-1){newZ3}
                \tkzDefPoint(.866,.5){Z2}
     \tkzDefPoint(.866,-1.5){Z4}
                \filldraw[fill=blue!60!white](newZ1)--(Z1)--(Z2)--(Z3);
                \filldraw[fill=magenta](newZ1)--(Z3)--(newZ3)--(Z4);
                \tkzDrawPolygon(newZ1, Z1, Z2, Z3, newZ3, Z4, newZ1)
                %\tkzDrawPolygon(e24,e14,e12,e24,e23,e12,e13,e14,e13,e23);
                \node[shift=(Z1), anchor=east] {\tiny $(2,3,1)$};
                \node[shift=(newZ1), anchor=east] {\tiny $(1,3,2)$};
                \node[shift=(Z2), anchor=south] {\tiny $(3,2,1)$};
                \node[shift=(Z3), anchor=west] {\tiny $(3,1,2)$};
                \node[shift=(newZ3), anchor=west] {\tiny $(2,1,3)$};
                \node[shift=(Z4), anchor=north] {\tiny $(1,2,3)$};
        \filldraw[color=black,fill=black] (Z1) circle (1pt);
        \filldraw[color=black,fill=black] (Z2) circle (1pt);
        \filldraw[color=black,fill=black] (Z3) circle (1pt);
        \filldraw[color=black,fill=black] (Z4) circle (1pt);
        \filldraw[color=black,fill=black] (newZ1) circle (1pt);
        \filldraw[color=black,fill=black] (newZ3) circle (1pt);
\end{tikzpicture} 

\caption{At left: the coherent subdivision of the hypersimplex into positroid polytopes 
induced by a point $\bmu \in \Dr^{> 0}_{2,4}$  such that 
   $\mu_{13}+\mu_{24} = \mu_{23}+\mu_{14} < \mu_{12}+\mu_{34}$.
At right: the coherent subdivision of the
permutohedron into Bruhat interval polytopes
induced by a point $\bmu\in \FlDr^{>0}_{(1,2,3),3}$ 
such that $\mu_{2}+\mu_{13} = \mu_{1}+\mu_{23} < \mu_{3}+\mu_{12}$.}
\label{sample}
\end{figure}
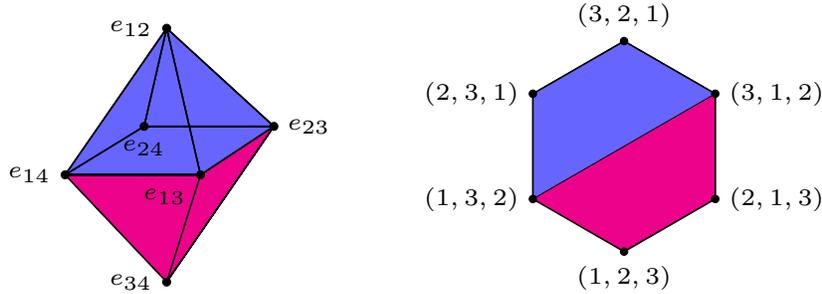

We prove the equivalence \ref{eqvs:TrFl}$\iff$\ref{eqvs:FlDr} in \Cref{pf:1}, the implications \ref{eqvs:FlDr}$\implies$\ref{eqvs:subdiv}$\implies$\ref{eqvs:2faces}$\implies$\ref{eqvs:FlDr} in \Cref{pf:2}, and the equivalence \ref{eqvs:FlDr}$\iff$\ref{eqvs:3terms} in \Cref{pf:3}.

\medskip

\Cref{thm:eqvs} has applications to flag positroid polytopes.

\begin{cor}\label{cor:2faces}
	For a flag matroid $\M=(M_a,M_{a+1},\dots,M_b)$ 
	 of consecutive ranks $\br=(a,a+1,\dots,b)$, 
	its flag matroid polytope $P(\M)$ is a 
 flag positroid polytope
if and only if its $(\leq 2)$-dimensional faces are 
	flag positroid polytopes (of rank $\br$).
\end{cor}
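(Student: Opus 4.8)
The plan is to derive \Cref{cor:2faces} from \Cref{thm:eqvs} by applying the latter to the weight vector that induces the \emph{trivial} subdivision of $P(\M)$. Given $\M = (M_a,\dots,M_b)$, I would set $\bmu = (\mu_a,\dots,\mu_b)$ with $\mu_{i,S} = 0$ when $S$ is a basis of $M_i$ and $\mu_{i,S} = \infty$ otherwise. Then $\underline{\bmu} = \M$ and $P(\underline{\bmu}) = P(\M)$, and because each $\mu_i$ is constant on its support, every vertex of the Minkowski sum $P(\M)$ gets lifted to the same height, so the induced coherent subdivision $\mathcal D_{\bmu}$ is the trivial subdivision of $P(\M)$, whose cells are precisely the (closed) faces of $P(\M)$, with $P(\M)$ itself the unique maximal cell. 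Under this dictionary, condition \ref{eqvs:subdiv} of \Cref{thm:eqvs} reads ``every face of $P(\M)$ is a flag positroid polytope of rank $\br$,'' and condition \ref{eqvs:2faces} reads ``every face of $P(\M)$ of dimension at most $2$ is a flag positroid polytope of rank $\br$.''

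With this setup the ``if'' direction is immediate: the hypothesis that every $(\leq 2)$-dimensional face of $P(\M)$ is a flag positroid polytope of rank $\br$ is exactly condition \ref{eqvs:2faces} for $\bmu$, so \Cref{thm:eqvs} yields condition \ref{eqvs:subdiv} for $\bmu$, and applying it to the maximal cell shows that $P(\M)$ is a flag positroid polytope.

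For the ``only if'' direction I would start from a flag positroid $\M'$ of rank $\br$ with $P(\M') = P(\M)$ (so in fact $\M' = \M$, since a flag matroid is determined by its polytope, though this is not needed). By \Cref{def:flagpositroid}, $\M'$ admits a realization by a real $b\times n$ matrix $A$ such that for each $a\leq i \leq b$ the submatrix of $A$ formed by its first $i$ rows has all minors nonnegative; the nonvanishing maximal minors of these submatrices are then positive reals, hence of valuation $0$, so the coordinatewise valuation of $A$ (as a point of $\Fl_{\br,n}$) equals the constant weight vector $\bmu'$ attached to $\M'$ as above. Thus $\bmu' \in \TrFl_{\br,n}^{\geq 0}$, which is condition \ref{eqvs:TrFl} of \Cref{thm:eqvs}, so condition \ref{eqvs:subdiv} holds for $\bmu'$ and every face of $P(\M') = P(\M)$ — in particular every face of dimension at most $2$ — is a flag positroid polytope of rank $\br$.

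The only points requiring (minor) care are bookkeeping: that a face of a rank-$\br$ flag matroid polytope is again a rank-$\br$ flag matroid polytope, so that it is meaningful to ask whether such a face ``is a flag positroid polytope of rank $\br$''; that a weight constant on its support induces the trivial subdivision; and that a totally nonnegative real realization gives a point of $\Fl_{\br,n}(\C_{\geq 0})$ whose coordinatewise valuation is the corresponding constant weight. I do not expect a genuine obstacle here: the substance of the corollary is entirely packaged inside \Cref{thm:eqvs}, and the only real idea is to recognize that the relevant instance of \Cref{thm:eqvs} is the one governing the trivial subdivision of $P(\M)$.
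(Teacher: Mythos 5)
Your proposal is correct and follows essentially the same route as the paper: introduce the weight vector $\bmu$ with entries in $\{0,\infty\}$ matching bases and non-bases, observe it induces the trivial subdivision of $P(\M)$, and apply the equivalence of \ref{eqvs:subdiv} and \ref{eqvs:2faces} in \Cref{thm:eqvs}. Where you add value is in the ``only if'' direction: the paper's proof says only that the result ``follows from the equivalence of \ref{eqvs:subdiv} and \ref{eqvs:2faces},'' but for the trivial subdivision, \ref{eqvs:subdiv} asserts that \emph{every} face of $P(\M)$ is a flag positroid polytope, which is a priori stronger than the hypothesis ``$P(\M)$ itself is a flag positroid polytope.'' You correctly close this gap by noting that a flag positroid realization of $\M$ by a real matrix $A$ with nonnegative minors is a point of $\Fl_{\br,n}(\C_{\geq 0})$ (constant Puiseux series) with coordinatewise valuation $\bmu$, hence $\bmu$ satisfies \ref{eqvs:TrFl}, and then \ref{eqvs:TrFl} $\implies$ \ref{eqvs:2faces} gives the claim. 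This is a modest but genuine addition of rigor rather than a different approach. (One could equally close the gap via \ref{eqvs:FlDr} using \Cref{lem:signembed} and the fact that a flag positroid is a positively oriented flag matroid; both routes are available once \Cref{thm:eqvs} is in hand.)
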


\begin{proof}
	Let $\bmu = (\mu_a,\dots,\mu_b)$, with $\mu_i\in \{0,\infty\}^{{[n]\choose i}}$, where the coordinates
of each $\mu_i$ are either $0$ or $\infty$ based on whether 
we have a basis or nonbasis of $M_i$.
This gives rise to the trivial subdivision of the corresponding flag matroid 
	polytope $P(\underline{\bmu})=P(\M)$.  
The result now follows from the equivalence of \ref{eqvs:subdiv} and \ref{eqvs:2faces} in \Cref{thm:eqvs}.
\end{proof}

In the Grassmannian case, that is, the case that $\br=(d)$ is a single integer, the flag positroid  polytopes  of rank $\br$
are precisely the positroid polytopes, and 
in that case the above corollary appeared as 
\cite[Theorem 3.9]{LPW}.

Also in the Grassmannian case, the objects discussed in \Cref{thm:eqvs} are
closely related to questions of realizability.  
Note that by definition, every positroid has a realization by a 
matrix whose Pl\"ucker coordinates are nonnegative, so 
it naturally defines a \emph{positively oriented matroid}, that is,
an oriented matroid defined by a chirotope whose values are all 
$0$ and $1$.  Conversely, 
every positively oriented matroid  can be realized by a positroid:
this was first proved in \cite{ARW17} using positroid polytopes, and 
subsequently in 
\cite{SW21}, using the positive tropical Grassmannian.
It is natural then to ask if there is an analogous realizability statement in the setting of flag matroids,
and if one can characterize when a sequence of positroids forms a flag positroid;
indeed, this was 
part of the motivation for \cite{BCT}, which studied
quotients of uniform positroids.
Note however that questions of realizability for flag matroids are rather subtle:
for example, a sequence of positroids that form a realizable flag matroid can still fail to be a flag positroid
(see \Cref{eg:notreal}).
By working with \emph{oriented} flag matroids, we 
give an 
answer to this realizability question 
in \Cref{cor:real}, in the case of consecutive ranks.

\begin{cor}\label{cor:real}
Suppose $(M_1, \ldots, M_k)$ is a sequence of positroids on $[n]$ of consecutive ranks
$\br = (r_1,\dots,r_k)$.
Then, when considered as a sequence of positively oriented matroids, $(M_1, \ldots, M_k)$ is a flag positroid if and only if it is an oriented flag matroid.
\end{cor}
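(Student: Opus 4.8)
The plan is to derive the statement from \Cref{thm:eqvs}. The ``only if'' direction does not even require \Cref{thm:eqvs}. If $\M=(M_1,\dots,M_k)$ is a flag positroid, then by \Cref{def:flagpositroid} it is realized by a real $r_k\times n$ matrix $A$ whose submatrix $A^{(i)}$ on the first $r_i$ rows has all nonnegative maximal minors, for each $i$. The chirotope $B\mapsto\operatorname{sign}\det A^{(i)}_B$ is then exactly the canonical $\{0,+\}$-valued chirotope $\chi_i$ of the positroid $M_i$, and the chirotopes of the nested submatrices $A^{(1)},\dots,A^{(k)}$ of a single matrix form an oriented flag matroid, since a point of the partial flag variety over an ordered field induces, via the sign map, a flag matroid over the sign hyperfield (see \cite{BGW03, JL}). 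Hence $(M_1,\dots,M_k)$, viewed with its positively oriented matroid structure $(\chi_1,\dots,\chi_k)$, is an oriented flag matroid.

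For the ``if'' direction, suppose $(\chi_1,\dots,\chi_k)$ is an oriented flag matroid (this hypothesis makes sense since each $M_i$, being a positroid, does carry the positively oriented structure $\chi_i$). As in the proof of \Cref{cor:2faces}, put $\bmu=(\mu_1,\dots,\mu_k)$ with $\mu_i\in\{0,\infty\}^{\binom{[n]}{r_i}}$ equal to $0$ on the bases of $M_i$ and $\infty$ on its nonbases. I claim $\bmu$ satisfies condition \ref{eqvs:3terms} of \Cref{thm:eqvs}. First, its support $\underline\bmu=(M_1,\dots,M_k)$ is a flag matroid, because forgetting orientations sends an oriented flag matroid to a flag matroid. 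Second, for each relevant index the three-term \emph{signed} Grassmann--Pl\"ucker relations within the $M_i$ and the three-term \emph{signed} incidence relations between consecutive $M_i,M_{i+1}$ hold, because $(\chi_1,\dots,\chi_k)$ is an oriented flag matroid; and for data all of whose nonzero chirotope values equal $+$ --- equivalently, all of whose finite $\mu$-entries equal $0$ --- each such signed relation imposes literally the same Boolean condition on the participating bases as the corresponding three-term positive-tropical relation, as one sees by comparing $p_{Sij}p_{Skl}+p_{Sil}p_{Sjk}=p_{Sik}p_{Sjl}$ with $\mu_{Sik}+\mu_{Sjl}=\min(\mu_{Sij}+\mu_{Skl},\,\mu_{Sil}+\mu_{Sjk})$, and analogously on the incidence side. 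Thus $\bmu$ satisfies \ref{eqvs:3terms}, hence \ref{eqvs:subdiv} by \Cref{thm:eqvs}. Since all finite entries of $\bmu$ are equal, the coherent subdivision $\mathcal D_{\bmu}$ of $P(\underline\bmu)=P(\M)$ is trivial, so \ref{eqvs:subdiv} says precisely that $P(\M)$ is a flag positroid polytope; that is, $\M$ is a flag positroid.

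The only step needing care is the elementary dictionary in the ``if'' direction: one must line up the sign conventions --- which monomial carries the minus sign, and the role of the common index set $S$ --- between the oriented-matroid and positive-tropical versions of each three-term relation, but this is routine once both are written out, and all the substantial content is already carried by \Cref{thm:eqvs}. In particular the proof uses no input beyond \Cref{thm:eqvs}; it sidesteps the nontrivial fact that positively oriented matroids are positroids, since realizability of each $M_i$ is part of the hypothesis in the ``if'' direction and an explicit positive realization is exhibited in the ``only if'' direction.
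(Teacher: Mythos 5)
Your ``only if'' direction is fine and is essentially the observation behind \Cref{lem:signembed}: a real realization of a flag positroid with all nonnegative flag minors induces, via the sign map, an oriented flag matroid whose constituents are the positively oriented matroids of the $M_i$.

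The ``if'' direction, however, has a circularity problem. You verify that $\bmu = t(\boldsymbol\chi)$ satisfies condition \ref{eqvs:3terms}, and then invoke the implication \ref{eqvs:3terms}$\implies$\ref{eqvs:subdiv} of \Cref{thm:eqvs} to conclude that $P(\M)$ is a flag positroid polytope. But within the paper's proof of \Cref{thm:eqvs}, the implication \ref{eqvs:FlDr}$\implies$\ref{eqvs:subdiv} (through which \ref{eqvs:3terms}$\implies$\ref{eqvs:subdiv} is routed) is proved in \Cref{pf:2} via \Cref{prop:afflin}, and \Cref{prop:afflin}(1) explicitly deduces its ``flag positroid'' conclusion from \Cref{cor:real} itself. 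In other words, the piece of \Cref{thm:eqvs} that you use presupposes the corollary you are trying to prove. This is not a cosmetic ordering issue: for $\bmu$ with all coordinates in $\{0,\infty\}$ and trivial subdivision, the specialization of \ref{eqvs:FlDr}$\implies$\ref{eqvs:subdiv} \emph{is} \Cref{cor:real}, so there is no way to prove this implication first without already knowing the corollary.

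The non-circular route, which is what the paper does, goes through \ref{eqvs:TrFl} rather than \ref{eqvs:subdiv}. Your dictionary between sign-hyperfield null-set conditions and positive-tropical relations works verbatim for \emph{all} Pl\"ucker relations in $\mathscr P_{\br;n}$, not just the three-term ones; this directly shows $t(\boldsymbol\chi) \in \FlDr_{\br;n}^{\geq 0}$ (that is \Cref{lem:signembed}). Then the equivalence \ref{eqvs:TrFl}$\iff$\ref{eqvs:FlDr}, which is established in \Cref{pf:1} independently of \Cref{cor:real}, gives $t(\boldsymbol\chi) \in \TrFl_{\br;n}^{\geq 0}$. Finally, since all coordinates of $t(\boldsymbol\chi)$ are $0$ or $\infty$ (hence rational), \Cref{prop:closure} yields a point $p \in \Fl_{\br;n}(\C_{\geq 0})$ with $\val(p) = t(\boldsymbol\chi)$, and specializing the Puiseux parameter $t$ to $0$ gives the desired real realization as a flag positroid. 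Once your argument is re-routed this way, it coincides with the paper's proof.
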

	
We define
a \emph{positively oriented flag matroid} to be a sequence
of positively oriented matroids $(\chi_1,\dots,\chi_k)$ which is also 
an oriented flag matroid.  \Cref{cor:real} then says that every positively oriented flag matroid 
of consecutive ranks $(r_1,\dots,r_k)$ is realizable.

See \Cref{sec:posorient} for a review of oriented matroids and oriented flag matroids.
Note that because a positroid by definition has a realization over $\RR$ with all nonnegative minors, it defines a positively oriented matroid.
In \Cref{pf:cor}, we deduce \Cref{cor:real} from the equivalence of \ref{eqvs:TrFl} and \ref{eqvs:FlDr} in \Cref{thm:eqvs}.
Another proof using ideas from discrete convex analysis is sketched in \Cref{rem:altproof}.
In both proofs, the consecutive ranks condition is indispensable.  We do not know whether the corollary holds 
if $\br=(r_1,\dots,r_k)$ fails to satisfy the consecutive rank condition.

\begin{ques}\label{ques}
Suppose $M$ and $M'$ are positroids on $[n]$ such that, when considered as positively oriented matroids, they form an oriented flag matroid $(M,M')$.  Is $(M,M')$ then a flag positroid?
\end{ques}

One may attempt to answer the question by appealing to the fact \cite[Exercise 8.14]{Kun86} that for a flag matroid $(M, M')$, one can always find a flag matroid $(M_1,\ldots, M_k)$ of consecutive ranks such that $M_1 = M$ and $M_k = M'$.
However, the analogous statement fails for flag positroids:
See \Cref{eg:cantcomplete} for an example of a flag positroid $(M,M')$ on $[4]$ of ranks $(1,3)$ such that there is no flag positroid $(M, M_2, M')$ with rank of $M_2$ equal to 2.

The consecutive rank condition has recently shown up in 
 \cite{BK}, which studied the relation between two notions of total 
positivity for partial flag varieties,
``Lusztig positivity'' and 
``Pl\"ucker positivity'' (see \Cref{def:backgroundflag}).  In particular, 
the Pl\"ucker positive subset of a partial flag variety agrees with 
 the Lusztig positive subset of the partial flag variety precisely when the 
 flag variety consists  of linear subspaces of consecutive ranks \cite[Theorem 1.1]{BK}.

\medskip
A \emph{generalized Bruhat interval polytope} \cite[Definition 7.8 and Lemma 7.9]{TW15}
can be defined as the moment map image of the closure of the torus orbit of a point
$A$ in the nonnegative part $(G/P)^{\geq 0}$ (in the sense of Lusztig) of a flag variety $G/P$.
When $\br$ is a sequence of consecutive integers, it then follows from 
\cite{BK} that generalized Bruhat interval polytopes for $\Fl_{\br;n}^{\geq 0}$
are precisely the flag positroid polytopes of ranks $\br$.
In the complete flag case, a generalized Bruhat interval polytope is 
just a \emph{Bruhat interval polytope} \cite{KW15}, that is, the 
convex hull of the permutation vectors $(z(1),\dots,z(n))$ for all permutations $z$ lying
in some Bruhat interval $[u,v]$.

We can now restate \Cref{cor:2faces} as follows.

\begin{cor}
For a flag matroid on $[n]$ of consecutive ranks $\br$, 
its flag matroid polytope is a 
generalized Bruhat interval polytope
if and only if its $(\leq 2)$-dimensional faces are generalized Bruhat interval polytopes.
	In particular, for a complete flag matroid on $[n]$,
	its flag matroid polytope is a Bruhat interval polytope if and 
 only if its $(\leq 2)$-dimensional faces are Bruhat interval polytopes.
\end{cor}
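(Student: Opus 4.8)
The plan is to deduce both assertions directly from \Cref{cor:2faces}, using the identification recorded just above (and due to \cite{BK}) that, for a sequence $\br$ of consecutive integers, the generalized Bruhat interval polytopes for $\Fl_{\br;n}^{\geq 0}$ are precisely the flag positroid polytopes of ranks $\br$. Under this dictionary the phrase ``$P(\M)$ is a generalized Bruhat interval polytope'' translates to ``$P(\M)$ is a flag positroid polytope of rank $\br$,'' and likewise face-by-face, so the first assertion is \Cref{cor:2faces} rephrased. For the second assertion one specializes to $\br=(1,2,\dots,n)$, recalls that in the complete-flag case a generalized Bruhat interval polytope is exactly a Bruhat interval polytope \cite{KW15, TW15}, and notes that a complete flag matroid on $[n]$ automatically has ranks $(1,2,\dots,n)$, which are consecutive.

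The one point that is not purely formal, and which I would establish first, is that a face of a flag matroid polytope of consecutive ranks $\br$ is again a flag matroid polytope of the \emph{same} ranks $\br$; this is what makes the clause ``its $(\leq 2)$-dimensional faces are generalized Bruhat interval polytopes'' match the hypothesis of \Cref{cor:2faces} on the nose. Concretely, if $F$ is the face of $P(\M)=P(M_a)+\cdots+P(M_b)$ selected by a linear functional $\phi$, then since a face of a Minkowski sum is the Minkowski sum of the $\phi$-faces of the summands, $F=F_a+\cdots+F_b$ with $F_i$ the $\phi$-face of $P(M_i)$. Each $F_i$ is the matroid polytope of a matroid $M_i'$ on $[n]$ of the same rank $r_i$ (every vertex of $F_i$ is some $\be_B$ with $|B|=r_i$), and because $F$ is a face of a flag matroid polytope it is itself a flag matroid polytope, i.e.\ $(M_a',\dots,M_b')$ is a flag matroid of ranks $\br$. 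Hence a $(\leq 2)$-dimensional face of $P(\M)$ is a flag positroid polytope of rank $\br$ exactly when it is a generalized Bruhat interval polytope for $\Fl_{\br;n}^{\geq 0}$.

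With this in hand the argument is immediate: by \Cref{cor:2faces}, $P(\M)$ is a flag positroid polytope of rank $\br$ if and only if all of its $(\leq 2)$-dimensional faces are, and transporting both sides of this equivalence through the \cite{BK} identification gives the first claim; specializing to $\br=(1,\dots,n)$ gives the second. The only mild obstacle is bookkeeping: one must consistently interpret ``generalized Bruhat interval polytope'' relative to the fixed partial flag variety $\Fl_{\br;n}$ (equivalently, keep track of the ranks $\br$), since a priori a polytope could be a flag positroid polytope for one choice of ranks but not another. This is precisely the subtlety already built into the statement of \Cref{cor:2faces} via the qualifier ``of rank $\br$,'' so no new work beyond the face computation above is required.
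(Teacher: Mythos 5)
Your proposal is correct and matches the paper's approach: the paper explicitly presents this corollary as a restatement of \Cref{cor:2faces}, relying on the identification (via \cite{BK}, as discussed just before the statement) of flag positroid polytopes of consecutive ranks $\br$ with generalized Bruhat interval polytopes for $\Fl_{\br;n}^{\geq 0}$. Your added remark that a face of a flag matroid polytope of ranks $\br$ is again a flag matroid polytope of the same ranks $\br$ is a worthwhile piece of bookkeeping the paper leaves implicit (it follows from its \Cref{prop:greedy}), and it is needed to make the translation go through face-by-face; otherwise your argument is the one the paper intends.
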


The structure of this paper is as follows.
In \Cref{sec:background}, we give background on total positivity
and Bruhat interval polytopes.  In \Cref{sec:trop},
we introduce the tropical flag variety, the flag Dressian, and 
nonnegative analogues of these objects; we also prove
the equivalence of \ref{eqvs:TrFl} and \ref{eqvs:FlDr} in 
\Cref{thm:eqvs}. 
In \Cref{sec:POM} we discuss positively oriented flag matroids
and prove 
\Cref{cor:real}.  In \Cref{sec:subdivision} we explain the relation
between the flag Dressian and subdivisions of flag matroid polytopes,
then prove that 
\ref{eqvs:FlDr}$\implies$\ref{eqvs:subdiv}$\implies$\ref{eqvs:2faces}$\implies$\ref{eqvs:FlDr} in \Cref{thm:eqvs}.
We prove some key results about three-term incidence and 
Grassmann-Pl\"ucker relations in \Cref{sec:3}, which allow us to 
prove \ref{eqvs:FlDr}$\iff$\ref{eqvs:3terms} in \Cref{thm:eqvs}.
\Cref{sec:projection} concerns projections of positive
Richardsons to positroids: we characterize
 the positroid constituents of complete flag positroids,
and we characterize when two adjacent-rank positroids form
an oriented matroid quotient, or equivalently, can appear
as constituents of a complete flag positroid.
In \Cref{sec:BIP}, we make some remarks about the various
fan structures for $\TrFl_{\br;n}^{>0}$; we then
 discuss 
fan structures and coherent subdivisions in the case of the Grassmannian
and complete flag variety, 
including a detailed look at the case of 
$\TrFl_4^{>0}$.

\subsection*{Acknowledgements} The first author is supported by the Natural Sciences and Engineering Research Council of Canada (NSERC). Le premier auteur a été financé par le Conseil de recherches en sciences naturelles et en génie du Canada (CRSNG), [Ref. no. 557353-2021]. The second author is partially supported by the US National Science Foundation (DMS-2001854).  The third author is partially supported by 
the National Science Foundation (DMS-1854512 and DMS-2152991).
We  are grateful to Tony Bloch, Michael Joswig, Steven Karp, Georg Loho, Dante Luber, and 
Jorge Alberto Olarte for sharing their work with us, which partially inspired this project. We also thank Yue Ren, Vasu Tewari, and Jorge Olarte for useful comments. 
We are grateful to Lara Bossinger for several invaluable
discussions about fan structures.
Lastly, we thank Jidong Wang for pointing out an error in a previous version of this paper.

\section{Background on total positivity and Bruhat interval polytopes}\label{sec:background}

\subsection{Background on total positivity}\label{def:backgroundflag}
Let $n\in \ZZ_+$ and let $\br = \{r_1 < \dots < r_k\} \subseteq [n].$ 
For a field $\kk$, let  $G=\Gl_n(\kk)$, and let 
$\PPP_{\br; n}(\kk)$ denote the parabolic subgroup of 
$G$ of block upper-triangular
matrices with diagonal blocks of sizes $r_1, r_2-r_1,\dots, r_k - r_{k-1}, n-r_k$.
We define the \emph{partial flag variety} 
$$\Fl_{\br; n}(\kk):=\Gl_n(\kk)/\PPP_{\br;n}(\kk).$$  
As usual, we identify $\Fl_{\br; n}(\kk)$ with the variety of partial flags of subspaces in $\kk^n$:
\[
\operatorname{Fl}_{\br;n}(\kk) = \{(V_1 \subset \cdots \subset V_k) : \text{$V_i$ a linear subspace of $\kk^n$ of dimension $r_i$ for $i = 1, \ldots, k$}\}.
\]
We write $\operatorname{Fl}_n(\kk)$ for the complete flag variety $\operatorname{Fl}_{1,2,\ldots, n;n}(\kk)$. Note that 
$\Fl_n(\kk)$ can be identified with $\Gl_n(\kk)/B(\kk)$, where
$B(\kk)$ is the subgroup of upper-triangular matrices.  
There is a natural projection $\pi$ from $\Fl_n(\kk)$ to 
any partial flag variety by simply forgetting some of the 
subspaces.

\medskip
If $A$ is an $r_k \times n$ matrix such that $V_{r_i}$ is the span of the first $r_i$ rows,
we say that $A$ is a \emph{realization} of $V:=(V_1 \subset \dots \subset V_k) \in \Fl_{\br;n}$.
Given any realization $A$ of $V$ and any $1\leq i \leq k$, we have the 
\emph{Pl\"ucker coordinates} or \emph{flag minors}
$p_I(A)$ where $I\in {[n] \choose r_i}$; concretely, $p_I(A)$ is the determinant of the 
submatrix of $A$ occupying the first $r_i$ rows and columns $I$.
This gives the Pl\"ucker embedding of $\Fl_{\br;n}(\kk)$ into $\PP^{{[n]\choose r_1}-1} 
\times \dots \times \PP^{{[n]\choose r_k}-1}$ taking 
$V$ to $\left( \big(p_I(A)\big)_{I\in {[n]\choose r_1}}, \dots, \big(p_I(A)\big)_{I\in {[n]\choose r_k}} \right)$.

We now let $\kk$ be the field $\RR$ of real numbers.  With this understanding,
we will often drop the $\RR$ from our notation.
\begin{defn}
	We say that a real matrix is \emph{totally positive} if all of its minors are positive.
	We let $\Gl_n^{>0}$ denote the subset of $\Gl_n$ of totally positive matrices.
\end{defn}

There are two natural ways to define positivity for partial flag varieties.
The first notion comes from work of Lusztig \cite{lusztig}.  The second notion
uses Pl\"ucker coordinates, and was initiated in work of Postnikov \cite{Pos}.

\begin{defn}\label{def:2positive}
	We define the \emph{(Lusztig)  positive part} of $\Fl_{\br; n}$,
	denoted by $\Fl_{\br;n}^{>0}$, as the image of $\Gl_n^{>0}$ inside 
	$\Fl_{\br;n} = \Gl_n/ \PPP_{\br; n}$. We define the 
	\emph{(Lusztig)  nonnegative part} of $\Fl_{\br; n}$, denoted 
	 by $\Fl_{\br;n}^{\geq 0}$, as the closure of 
	  $\Fl_{\br;n}^{>0}$ in the Euclidean topology.  

	We define the \emph{Pl\"ucker positive part} (respectively,
	\emph{Pl\"ucker nonnegative part}) of 
	$\Fl_{\br; n}$ to be the subset of 
	$\Fl_{\br; n}$ where all Pl\"ucker coordinates are 
	 positive (respectively, nonnegative).\footnote{The reader who is concerned
	 about the fact that we are working with projective
	 coordinates can replace  ``all Pl\"ucker coordinates are positive'' by 
	  ``all Pl\"ucker coordinates are nonzero and have the same sign''.}
\end{defn}

It is well-known that 
the Lusztig positive part of 
$\Fl_{\br;n}$ is a subset of the Pl\"ucker positive part of 
$\Fl_{\br;n}$, and that the two notions agree in the case of the Grassmannian
\cite[Corollary 1.2]{TW13}.  
The two notions also agree in the case of the complete flag variety \cite[Theorem 5.21]{Bor}. 
More generally, we have the following.

\begin{thm}\cite[Theorem 1.1]{BK}\label{thm:BK}
	The Lusztig positive (respectively, Lusztig nonnegative) part of 
	$\Fl_{\br;n}$ equals the Pl\"ucker positive (respectively, Pl\"ucker
	nonnegative) part of 
$\Fl_{\br;n}$  if and only if 
the set $\br$ consists of consecutive integers.
\end{thm}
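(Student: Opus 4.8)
The plan is to bootstrap the statement from the two extreme cases already available in the excerpt --- the Grassmannian case \cite[Corollary 1.2]{TW13} and the complete flag case \cite[Theorem 5.21]{Bor} --- tied together by the $\Gl_n$-equivariant projections $\pi\colon \Fl_n \to \Fl_{\br;n}$. One inclusion is free for every $\br$: if $A \in \Gl_n^{>0}$ then each flag minor $p_I(A)$ is in particular a minor of $A$, hence positive, so $\Fl_{\br;n}^{>0}$ lies in the Pl\"ucker positive part, and taking closures $\Fl_{\br;n}^{\geq 0}$ lies in the Pl\"ucker nonnegative part. So only the reverse inclusions, when $\br$ is consecutive, and the failure of equality, when it is not, require work; I will also assume $r_k < n$, since a trailing index equal to $n$ contributes only a trivial factor and changes neither side.

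For the ``if'' direction, write $\br = (a, a+1, \dots, b)$ and let $V = (V_a \subset \cdots \subset V_b)$ be Pl\"ucker positive. First I would prove a \emph{lifting lemma}: $V$ extends to a Pl\"ucker positive \emph{complete} flag. By the Grassmannian case, $V_a$ is the row span of the first $a$ rows of some $g \in \Gl_n^{>0}$, so the chain $V_1^g \subset \cdots \subset V_{a-1}^g \subset V_a$ of initial row spans of $g$ is Pl\"ucker positive; symmetrically $V_b$ is the row span of the first $b$ rows of some $h \in \Gl_n^{>0}$, giving a Pl\"ucker positive chain $V_b \subset V_{b+1}^h \subset \cdots \subset V_{n-1}^h \subset \kk^n$. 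Gluing these two one-sided extensions onto the ends of the given contiguous block $V_a \subset \cdots \subset V_b$ produces a complete flag with all Pl\"ucker coordinates positive --- and this is exactly where consecutiveness enters, since only these two extensions, each reducing to a single Grassmannian, are needed. The complete flag case \cite[Theorem 5.21]{Bor} then gives $g' \in \Gl_n^{>0}$ realizing the lifted flag, and applying $\pi$ --- which sends $\Fl_n^{>0}$ onto $\Fl_{\br;n}^{>0}$, both being the image of $\Gl_n^{>0}$ --- yields $V \in \Fl_{\br;n}^{>0}$. For the nonnegative statement I would run the same argument with totally nonnegative matrices, handling the possible linear dependence of initial row spans by a compactness argument --- approximating a totally nonnegative representative by totally positive ones and extracting a convergent subsequence of the resulting Pl\"ucker positive partial flags --- and then invoking the nonnegative form of \cite[Theorem 5.21]{Bor}; alternatively one deduces it from the equality of positive parts by taking closures, using that the Pl\"ucker positive part is dense in the Pl\"ucker nonnegative part here (again by lifting, plus Postnikov's density $\overline{\Gr_{a,n}^{>0}} = \Gr_{a,n}^{\geq 0}$ \cite{Pos}).

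For the ``only if'' direction, suppose $\br$ is not consecutive and pick ranks $r < r'$ adjacent in $\br$ with $r' - r \geq 2$; consider the equivariant projection $\pi\colon \Fl_{\br;n} \to \Fl_{(r,r');n}$, which carries the Lusztig nonnegative part into the Lusztig nonnegative part. The engine is a \emph{bad pair}: a Pl\"ucker positive flag $(V_r \subset V_{r'})$ admitting \emph{no} subspace $V_{r+1}$ with $V_r \subset V_{r+1} \subset V_{r'}$ all of whose Pl\"ucker coordinates are $\geq 0$. For $(r,r',n) = (1,3,4)$ one can take $V_1 = \langle (1,1,1,1) \rangle$ and $V_3 = \{x : 10x_1 - x_2 + x_3 - 10x_4 = 0\}$: this pair is Pl\"ucker positive, while for any candidate $V_2 = \langle (1,1,1,1), w \rangle$ nonnegativity of the Pl\"ucker coordinates forces $w_1 \leq w_2 \leq w_3 \leq w_4$, whereas the incidence $V_2 \subset V_3$ reads $10w_1 - w_2 + w_3 - 10w_4 = 0$, i.e.\ $10(w_1 - w_2) + 9(w_2 - w_3) + 10(w_3 - w_4) = 0$ by Abel summation, and the left side is $\leq 0$ with equality only when $w$ is constant (so $V_2$ fails to be two-dimensional) --- hence no admissible $V_2$ exists. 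I would then produce a bad pair at the required ranks $(r,r')$ on $[n]$, either by an analogous construction with a suitably ``steep'' alternating defining hyperplane for $V_{r'}$ or by combining the $[4]$ example with generic positive data on the remaining coordinates and perturbing (legitimate because the set of pairs that \emph{do} admit a Pl\"ucker nonnegative intermediate subspace is closed, being the image of a compact-fibred projection, so its complement is open). As in the lifting lemma, the bad pair extends to a Pl\"ucker positive point $x \in \Fl_{\br;n}$ with $\pi(x) = (V_r \subset V_{r'})$, since all ranks of $\br$ below $r$ or above $r'$ lie on one side of the gap. If $x$ were Lusztig nonnegative then so would be $\pi(x)$, which would lift to a Lusztig --- hence Pl\"ucker --- nonnegative complete flag, producing a Pl\"ucker nonnegative $V_{r+1}$ between $V_r$ and $V_{r'}$, a contradiction. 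Thus $x$ is Pl\"ucker positive but not Lusztig nonnegative, so neither the positive nor the nonnegative parts coincide.

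The conceptual skeleton --- lifting, then reducing to \cite[Corollary 1.2]{TW13} and \cite[Theorem 5.21]{Bor} via the equivariant projections --- is clean, and the $(1,3,4)$ bad pair is a one-line computation. The hard part will be (i) constructing and verifying a bad pair for \emph{arbitrary} $r < r'$ with $r' - r \geq 2$ and arbitrary $n$ --- whether through a uniform explicit family or through a reduction-and-perturbation from the $[4]$ case, each requiring a genuine argument --- and, to a lesser extent, (ii) the degeneracy bookkeeping in the nonnegative half of the lifting lemma. I expect (i) to be the principal difficulty.
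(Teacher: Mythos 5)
The paper does not actually prove this theorem; it is imported from \cite{BK} by citation. The only in-paper discussion is Remark~\ref{BK2}, which sketches an alternate argument for the ``if'' direction (consecutive $\Rightarrow$ equality) and says nothing at all about the ``only if'' direction. Your ``if'' half follows the same outline as Remark~\ref{BK2}: use the Grassmannian case of \cite[Corollary 1.2]{TW13} at rank $a$ to extend downward, use it (dually) at rank $b$ to extend upward, get a Pl\"ucker nonnegative complete flag, apply the complete-flag case of \cite[Theorem 5.21]{Bor}, and project. One small caveat: the paper's Remark~\ref{BK2} defers the nonnegative-case bookkeeping to Lemma~\ref{lem:extend}, which goes through positroid cells and the Marsh--Rietsch subtraction-free parametrizations, and this is precisely where your proposal is vaguest (a compactness argument, or a density claim $\overline{\Fl_{\br;n}^{>0,\mathrm{Pl}}} = \Fl_{\br;n}^{\geq 0,\mathrm{Pl}}$ that is itself essentially of the same difficulty as what is being proved). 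Your strictly positive case, on the other hand, is genuinely cleaner than the cell-theoretic route: simply pick any $g \in \Gl_n^{>0}$ with $V_a = \pi_a(g)$ and use its initial row spans, no stratification needed.

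The genuine gap is in the ``only if'' direction, which the paper never addresses. Your reduction is sound: a Pl\"ucker positive pair $(V_r \subset V_{r'})$ with no Pl\"ucker nonnegative intermediate $V_{r+1}$ cannot be Lusztig nonnegative (any Lusztig nonnegative point of $\Fl_{(r,r');n}$ lifts along $\pi$ to a point of $\Fl_n^{\geq 0}$, whose rank-$(r{+}1)$ constituent would be the forbidden intermediate subspace), and then your lifting lemma promotes such a pair to a Pl\"ucker positive point of $\Fl_{\br;n}$ that is not Lusztig nonnegative. Your explicit $(r,r',n) = (1,3,4)$ example is correct --- $V_1 = \langle(1,1,1,1)\rangle$, $V_3 = \ker(10,-1,1,-10)$, with $V_2 = \langle(1,1,1,1), w\rangle$ forcing $w_1 \leq w_2 \leq w_3 \leq w_4$, whence $10(w_1{-}w_2) + 9(w_2{-}w_3) + 10(w_3{-}w_4) = 0$ forces $w$ constant. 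But this only disposes of $(1,3,4)$. For the theorem you need a bad pair at \emph{every} gap $(r,r')$ with $r'-r\geq 2$ inside $[n]$, and you do not carry this out: you offer two heuristics (a ``steep alternating hyperplane'' family, or perturbing a $[4]$ example embedded in a bigger flag), and yourself flag this as the principal difficulty. As written this is a sketch, not a proof. The perturbation route in particular needs care: the argument ``the set of pairs admitting a nonnegative intermediate subspace is closed, hence its complement is open'' is plausible, but you must then produce, for each $(r,r',n)$, a Pl\"ucker \emph{positive} point in that complement, and the $[4]$ example does not sit Pl\"ucker positively inside $\Fl_{(r,r');n}$ without additional work.

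So: the ``if'' direction matches the paper's sketch and is fine modulo the same nonnegative-case bookkeeping the paper also defers; the ``only if'' direction, which the paper does not attempt, contains a correct base case but an unfinished general construction.
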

See \cite[Section 1.4]{BK} for more references and 
a nice discussion of the history. Since
in this paper we will be mainly studying the case where $\br$ consists of consecutive integers,
we will use the two notions interchangeably when there is no ambiguity.

Let $B$ and $B^-$ be the opposite Borel subgroups consisting of upper-triangular
and lower-triangular matrices.  Let $W=S_n$ be the Weyl group of 
$\Gl_n$.  Given $u, v\in W$, the \emph{Richardson variety}
is the intersection of opposite Bruhat cells
$$\mathcal{R}_{u,v}:=(B\dot v B/B) \cap (B^- \dot u B/B),$$
where $\dot v$ and $\dot u$ denote permutation
matrices in $\Gl_n$ representing  $v$ and $u$.
It is well-known that $\mathcal{R}_{u,v}$ is nonempty precisely when 
$u \leq v$ in Bruhat order, and in that case is irreducible of 
dimension $\ell(v)-\ell(u)$.

For $u,v\in W$ with $u \leq v$, let 
$\mathcal{R}_{u,v}^{>0}:=\mathcal{R}_{u,v} \cap \Fl_n^{\geq 0}$ be the 
positive part of the Richardson variety.
Lusztig conjectured and Rietsch proved \cite{rietsch} that 
\begin{equation}\label{eq:Rietsch}
\Fl_n^{\geq 0}  = \bigsqcup_{u \leq v} \mathcal{R}_{u,v}^{>0}
\end{equation}
is a cell decomposition of $\Fl_n^{\geq 0}$.
Moreover, Rietsch showed that one obtains a cell decomposition of 
the nonnegative partial flag variety $\Fl_{\br; n}^{\geq 0}$ by projecting
the cell decomposition of $\Fl_n^{\geq 0}$
\cite{rietsch}, \cite[Section 6]{Rie06}.
  Specifically,
if we let $W_{\br}$ be the parabolic subgroup of $W$ generated by 
the simple reflections
$\{s_i \ \vert \ 1 \leq i \leq n-1 \text{ and } i\notin \{r_1,\dots,r_k\}\}$,
then one obtains a cell decomposition by using the projections 
$\pi(\mathcal{R}_{u,v}^{> 0})$  of 
the cells $\mathcal{R}_{u,v}^{> 0}$ 
where $u\leq v$ and $v$ is a minimal-length coset representative
of $W/W_{\br}$.
(We note moreover that Rietsch's results hold for $G$ a semisimple,
simply connected linear algebraic group over $\CC$ split over $\RR$).

In the case of the Grassmannian,  Postnikov 
studied the Pl\"ucker nonnegative part $\Gr_{d,n}^{\geq 0}$ of the Grassmannian,
and gave a decomposition of it into \emph{positroid cells}
$S_{\mathcal{B}}^{>0}$ by intersecting $\Gr_{d,n}^{\geq 0}$ with the 
matroid strata \cite{Pos}.  Concretely, if $\mathcal{B}$ is the collection of bases
of an element of $\Gr_{d,n}^{\geq 0}$, then 
$S_{\mathcal{B}}^{>0} = \{A \in \Gr_{d,n}^{\geq 0} \ \vert \ p_I(A) \neq 0
\text{ if and only if }I\in \mathcal{B}\}$.
 This cell decomposition of 
 $\Gr_{d,n}^{\geq 0}$ agrees with Rietsh's cell decomposition
\cite[Corollary 1.2]{TW13}.

\subsection{Background on (generalized) Bruhat interval polytopes}

Bruhat interval polytopes were defined in \cite{KW15}, motivated by 
the connections to the full Kostant-Toda hierarchy.
\begin{defn}[\cite{KW15}] Given two permutations
$u$ and $v$ in $S_n$ with $u\leq v$ in Bruhat order, the
\emph{Bruhat interval polytope}
$P_{u,v}$ is defined as
	\begin{equation}\label{eq:original}
 P_{u,v} = \Conv\{(x(1),x(2),\dots,x(n)) \ \vert \ u \leq x \leq v\} \subset \RR^n.
\end{equation}
We also define the \emph{(twisted) Bruhat interval polytope}
$\tilde{P}_{u,v}$ by 
\begin{equation}\label{eq:twisted}
	\tilde{P}_{u,v} = \Conv\{(n+1-x^{-1}(1),n+1-x^{-1}(2),\dots,n+1-x^{-1}(n)) \ \vert \ u \leq x \leq v\} \subset \RR^n.
\end{equation}
\end{defn}
While the definition of Bruhat interval polytope in \eqref{eq:original} is more natural from a combinatorial
point of view, as we'll see shortly, the definition  in \eqref{eq:twisted} is more natural from the 
point of view of the moment map.  Note that the set of Bruhat interval polytopes is the 
same as the set of twisted Bruhat interval polytopes; it is just a difference in labeling.

\begin{rem} 
	If we choose any point $A$ in the cell $\mathcal{R}_{u,v}^{>0} \subset 
\Fl_n^{\geq 0}$ (thought of as an $n\times n$ matrix),
and let $M_i$ be the matroid represented by the first  $i$ rows of $A$, 
then $\tilde{P}_{u,v}$ is the Minkowski sum of the matroid polytopes
$P(M_1),\dots, P(M_{n})$ \cite[Corollary 6.11]{KW15}. In particular, $\tilde{P}_{u,v}$ is the matroid polytope of the flag matroid $M_1,\ldots, M_{n}$.
 \end{rem}

Following \cite{TW15}, we can generalize the notion of Bruhat interval polytope as follows
(see \cite[Section 7.2]{TW15} for notation). 
\begin{defn}\label{def:genBIP} 
Choose a generalized partial flag variety $G/P=G/P_J$, let $W_J$ be the 
associated parabolic subgroup of the Weyl group $W$, and let 
$u,v\in W$ with 
 $u\leq v$ in Bruhat order and 
	$v$ a minimal-length coset representative of $W/W_J$.
	Let $\pi$ denote the projection from $G/B$ to $G/P$, and 
let $A$ be an element of the cell 
$\pi(\mathcal{R}_{u,v}^{>0})$ 
	of (Lusztig's definition of) $(G/P)^{\geq 0}$.

	A \emph{generalized Bruhat interval polytope} 
	$\tilde{P}_{u,v}^J$ 
	can be defined
in any of the following equivalent ways
\cite[Definition 7.8, Lemma 7.9, Proposition 7.10, Remark 7.11]{TW15}
	and \cite[Preface]{BGW03}:
	\begin{itemize}
          \item the moment map image of the closure of the torus orbit of 
		  $A$ in $G/P$ (which is a Coxeter matroid polytope)
		\item the moment map image of the closure 
		of the cell 
	$\pi(\mathcal{R}_{u,v}^{>0})$ 
\item the moment map image of the closure of the 
	projected Richardson variety 
	$\pi(\mathcal{R}_{u,v})$ 
\item the convex hull 
	$\Conv\{z \cdot \rho_J \ \vert \ u \leq z \leq v\} \subset
			\mathfrak{t}_{\RR}^*,$\\
			where $\rho_J$ is the sum of 
			fundamental weights $\sum_{j\in J} \omega_j$,
			and $\mathfrak{t}_{\RR}^*$ is the dual
			of the real part of the Lie algebra
			$\mathfrak{t}$ of the torus $T \subset G$.
	\end{itemize}
\end{defn}

\begin{rem}\label{rem:BIPsum}
When $G=\Gl_n$ with fundamental weights $\be_1, \be_1+\be_2, \ldots, \be_1+\cdots +\be_{n-1}$, each generalized Bruhat interval polytope 
	$\tilde{P}_{u,v}^J$ 
	is the flag positroid polytope
associated to a matrix $A$ representing a point of $ \Fl_{\br;n}^{\geq 0}$, with 
$\br=(r_1,\dots,r_k)$.  In this case 
the generalized Bruhat interval polytope  is precisely the Minkowski sum $P(M_1)+\dots +P(M_k)$ of the 
		matroid polytopes $P(M_i)$,
		where $M_i$ is the matroid realized by the first $r_i$ rows of $A$.
	In particular,  
	the generalized Bruhat interval 
	polytope $\tilde{P}_{u,v}^{\emptyset}=
	\tilde{P}_{u,v}$
	is the Minkowski sum
	 $P(M_1)+\dots+P(M_{n})$, where 
	$M_i$ is the positroid realized by the first $i$ rows 
	of any matrix representing a point of 
	 $A\in \mathcal{R}_{v,w}^{>0}$.  We will discuss 
	 how to read off the matroids $M_i$ from $(u,v)$ in 
	 \Cref{sec:projecting}.
\end{rem}

As mentioned in the introduction, 
when $\br$ is a sequence of consecutive ranks, the generalized Bruhat interval
polytopes for $\Fl_{\br;n}^{\geq 0}$ are precisely the flag positroid 
	polytopes
 of ranks $\br$.
	  When $\br=(1,2,\dots,n)$, we recover the notion
	of Bruhat interval polytope, and when $\br$ is a single integer,
	we recover the notion of positroid polytope.

\section{The nonnegative tropicalization}\label{sec:trop}

\subsection{Background on tropical geometry}
We define the main objects in \ref{eqvs:TrFl} and \ref{eqvs:FlDr} of \Cref{thm:eqvs}, and record some basic properties.
For a more comprehensive treatment of tropicalizations and positive-tropicalizations, we refer to \cite[Ch.~6]{MS15} and \cite{SW05}, respectively.

\medskip
For a point $w = (w_1, \ldots, w_m) \in \TT^m$, we write $\overline w$ for its image in the tropical projective space $\PP(\TT^m)$.  For $a = (a_1, \ldots, a_n) \in \ZZ^m$, write $a \bullet w = a_1 w_1 + \cdots + a_m w_m$.

\begin{defn}\label{defn:trophyper}
For a real homogeneous polynomial
\[
f = \sum_{a \in \mathcal A} c_{a} x^{a} \in \RR[x_1, \ldots, x_m], \quad\text{where $\mathcal A$ is a finite subset of $\ZZ_{\geq 0}^m$ and $0\neq c_{a}\in \RR$,}
\]
the \newterm{extended tropical hypersurface} $V_{\trop}(f)$ and the \newterm{nonnegative tropical hypersurface} $V_{\trop}^{\geq 0}(f)$ are subsets of the tropical projective space $\PP(\TT^m)$ defined by
\begin{align*}
V_{\trop}(f)  &= \left\{ \overline w \in \PP(\TT^m) \  \middle| \ \text{the minimum in 
	$\min_{a\in \mathcal A}(a\bullet w)$, if finite, is achieved at least twice}\right\},\\
\text{and}\\
V_{\trop}^{\geq 0}(f)  &= \left\{ \overline w \in \PP(\TT^m) \  \middle| \ \begin{matrix} \text{the minimum in $\displaystyle\min_{a\in \mathcal A}(a\bullet w)$, if finite, is achieved at least twice,}\\ \text{ 
including at some $a, a'\in \mathcal A$ such that $c_a$ and $c_{a'}$ have opposite signs} \end{matrix}\right\}.
\end{align*}
We say that a point \newterm{satisfies the tropical relation} of $f$ if it is in $V_{\trop}(f)$, and that it \newterm{satisfies the positive-tropical relation} of $f$ if it is in $V_{\trop}^{\geq 0}(f)$.
\end{defn}

When $f$ is a multihomogeneous real polynomial, we define $V_{\trop}(f)$ and $V_{\trop}^{\geq 0}(f)$ similarly as subsets of a product of tropical projective spaces.
We will consider tropical hypersurfaces of polynomials that define the Pl\"ucker embedding of a partial flag variety.

\begin{defn}
For integers $0 < r\leq s< n$, the \newterm{(single-exchange) Pl\"ucker relations of type $(r,s;n)$} are polynomials in variables $\{x_I : I \in \binom{[n]}{r} \cup \binom{[n]}{s}\}$ defined as
\[
\mathscr {P}_{r,s;n}=\left\{\sum_{j\in J\setminus I} \operatorname{sign}(j,I,J) x_{I\cup j}x_{J\setminus j }\left| I \in {\binom{[n]}{r-1}},\: J\in {\binom{[n]}{s+1}} \right.\right\},
\]
where $\operatorname{sign}(j,I,J)=(-1)^{|\{k\in J|k<j\}|+|\{i\in I|j<i\}|}$.
When $r = s$, the elements of $\mathscr P_{r,r;n}$ are called the \newterm{Grassmann-Pl\"ucker relations} (of type $(r;n)$), and when $r< s$, the elements of $\mathscr P_{r,s;n}$ are called the \newterm{incidence-Pl\"ucker relations} (of type $(r,s;n)$).
\end{defn}

As in the introduction, let 
$\br = (r_1 < \cdots < r_k)$ be a sequence of increasing integers in $[n]$.
We let
$
\mathscr P_{\br;n} = \bigcup_{\substack{r\leq s \\ r,s\in \br}} \mathscr P_{r,s;n},
$
and let $\langle \mathscr P_{\br;n}\rangle$ be the ideal generated by the elements of $\mathscr P_{\br;n}$.
It is well-known that for any field $\kk$ the ideal 
$\langle \mathscr P_{\br;n}\rangle$ 
set-theoretically carves out the partial flag variety $\operatorname{Fl}_{\br;n}(\kk)$ embedded in $\prod_{i = 1}^k \PP\left(\kk^{\binom{[n]}{r_i}}\right)$ via the standard Pl\"ucker embedding \cite[\S9]{Ful97}.  Similarly, the Pl\"ucker relations define the tropical analogues of partial flag varieties as follows.

\begin{defn}\label{defn:tropFl}
The \newterm{tropicalization $\TrFl_{\br;n}$} of $\operatorname{Fl}_{\br;n}$, the \newterm{nonnegative tropicalization $\TrFl_{\br;n}^{\geq 0}$} of $\operatorname{Fl}_{\br;n}$, the \newterm{flag Dressian $\FlDr_{\br;n}$}, and the \newterm{nonnegative flag Dressian} $\FlDr_{\br;n}^{\geq 0}$ are subsets of $\prod_{i = 1}^k \PP\left(\TT^{\binom{[n]}{r_i}}\right)$ defined as
\begin{align*}
\TrFl_{\br;n} & = \bigcap_{f\in \langle \mathscr P_{\br;n}\rangle} V_{\trop}(f)
\quad	\text{ and } \quad
	\TrFl_{\br;n}^{\geq 0}  = \bigcap_{f\in \langle \mathscr P_{\br;n} \rangle} V_{\trop}^{\geq 0}(f), \\
\FlDr_{\br;n} &=\bigcap_{f\in \mathscr P_{\br;n}} V_{\trop}(f) \quad \text{ and }\quad
\FlDr_{\br;n}^{\geq 0} = \bigcap_{f\in \mathscr P_{\br;n}} V_{\trop}^{\geq 0}(f).
\end{align*}
\end{defn}

When $k=1$, i.e.\ when $\br$ consists of one integer $d$, one obtains the (nonnegative) tropicalization of the Grassmannian $\operatorname{TrGr}_{d;n}^{(\geq 0)}$ and the (nonnegative) Dressian $\operatorname{Dr}_{d;n}^{(\geq 0)}$ studied in \cite{SS04, SW05, SW21, AHLS}.  Like $\operatorname{Fl}_n$, we  write only $n$ in the subscript when $\br = (1, 2,\ldots, n)$.

\begin{rem}
In \cite[\S6]{JLLO}, the authors define the ``positive flag Dressian'' to consist of the elements $\bmu = (\mu_1, \ldots, \mu_k) \in \FlDr_{\br;n}$ whose constituents $\mu_i$ are each in the strictly positive Dressian.  
	In our language, this is equal to considering the points of
\[
\bigcap_{f\in \bigcup_{i = 1}^k \mathscr P_{r_i,r_i;n}} V_{\trop}^{\geq 0}(f) \cap \bigcap_{f\in \bigcup_{r_i < r_j} \mathscr P_{r_i,r_j;n}} V_{\trop}(f)
\]
that have no $\infty$ coordinates.
In a similar vein, we could consider defining the ``nonnegative flag Dressian'' 
to be the elements of the flag Dressian whose constituents are in the 
nonnegative Dressian.  This gives a strictly 
larger set than our definition of the nonnegative flag Dressian,
and has the shortcoming that %, unlike with our definition, 
the equivalence of \ref{eqvs:TrFl} and \ref{eqvs:FlDr} in \Cref{thm:eqvs} would no longer hold.  See \Cref{eg:notreal}.
\end{rem}

We record a useful equivalent description of the (nonnegative) tropicalization of a partial flag variety 
using Puiseux series. Recall the notion of the tropical semifield from \Cref{def:tropicalsemifield}.
\begin{defn}\label{def:Puiseux}
Let $\C = \CC\{\{t\}\}$ be the field of Puiseux series with coefficients in $\CC$, with the usual valuation map $\operatorname{val}: \C \to \TT$.
	Concretely, for $f\neq 0$, $\val(f)$ is the exponent of the initial term of $f$,
	and $\val(0) = \infty$.
Let 
\[
	\C_{> 0} = \{f \in \C\setminus \{0\} : \text{the initial coefficient of $f$ is real and positive}\} \ 
	\text{ and } \ \C_{\geq 0} = \C_{>0} \cup \{0\}.
\]
\end{defn}

For a point $p \in \operatorname{Fl}_{\br;n}(\C) \subseteq \prod_{i=1}^k \PP\left(\C^{\binom{[n]}{r_i}}\right)$, applying the valuation $\operatorname{val}: \C \to \TT$ coordinate-wise to the Pl\"ucker coordinates gives a point $\operatorname{val}(p) \in \prod_{i=1}^k \PP\left(\TT^{\binom{[n]}{r_i}}\right)$.
Noting that $\operatorname{val}(\C) = \QQ\cup \{\infty\} \subset \TT$,
we say that a point in $\prod_{i=1}^k \PP\left(\TT^{\binom{[n]}{r_i}}\right)$ has \newterm{rational coordinates} if it is a point in  $\prod_{i=1}^k \PP\left(\left(\QQ\cup \{\infty\}\right)^{\binom{[n]}{r_i}}\right)$.
Let $\operatorname{Fl}_{\br;n}(\C_{\geq 0})$ be the subset of $\operatorname{Fl}_{\br;n}(\C)$ consisting of points with all coordinates in $\C_{\geq 0}$, i.e.\ the points $p \in \operatorname{Fl}_{\br;n}(\C) \subseteq \prod_{i=1}^k \PP\left(\C^{\binom{[n]}{r_i}}\right)$ that have a representative in $\prod_{i=1}^k \C_{\geq 0}^{\binom{[n]}{r_i}}$.

\begin{prop}\label{prop:closure}
The set $\{\operatorname{val}(p) : p \in \operatorname{Fl}_{\br;n}(\C)\}$ equals the set of points in $\TrFl_{\br;n}$ with rational coordinates.  Likewise, the set $\{\operatorname{val}(p) : p \in \operatorname{Fl}_{\br;n}(\C_{\geq 0})\}$ equals  the set of points in $\TrFl_{\br;n}^{\geq 0}$ with rational coordinates.  Moreover, we have
\begin{align*}
\TrFl_{\br;n} &= \text{the closure of } \{\operatorname{val}(p) : p \in \operatorname{Fl}_{\br;n}(\C)\} \text{ in }   \textstyle\prod_{i=1}^k \PP\left(\TT^{\binom{[n]}{r_i}}\right)\quad\text{and}\\
\TrFl_{\br;n}^{\geq 0} &= \text{the closure of } \{\operatorname{val}(p) : p \in \operatorname{Fl}_{\br;n}(\C_{\geq 0})\} \text{ in }   \textstyle\prod_{i=1}^k \PP\left(\TT^{\binom{[n]}{r_i}}\right).
\end{align*}
\end{prop}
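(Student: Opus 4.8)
The plan is to prove the four assertions by treating the tropicalization and the nonnegative tropicalization in parallel, reducing everything to the known statement for (ordinary and nonnegative) tropical varieties of an embedded projective variety, combined with a density argument. First I would recall the general fact from tropical geometry (see \cite[Ch.~6]{MS15}): for a subvariety $X \subseteq \PP^N$ cut out set-theoretically over every field by an ideal $\mathscr I$, the tropicalization $\Trop(X) := \bigcap_{f \in \mathscr I} V_{\trop}(f)$ coincides with the closure of $\{\val(p) : p \in X(\C)\}$, and moreover the points with rational coordinates are exactly $\{\val(p) : p \in X(\C)\}$. Applying this to $X = \Fl_{\br;n} \subseteq \prod_i \PP\left(\CC^{\binom{[n]}{r_i}}\right)$, with $\mathscr I = \langle \mathscr P_{\br;n}\rangle$ (which carves out $\Fl_{\br;n}$ over any field by \cite[\S9]{Ful97}), gives immediately the statements about $\TrFl_{\br;n}$. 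The only mild point to check is that the multihomogeneous/multiprojective setting causes no trouble: one works in a product of projective spaces, but the fundamental theorem of tropical geometry applies verbatim (e.g.\ by embedding the product in a single projective space via Segre, or by invoking the multiprojective version directly). Also one should note $\val(\C) = \QQ \cup \{\infty\}$, so ``rational coordinates'' is the correct qualifier, and the closure in the Euclidean topology on $\prod_i \PP\left(\TT^{\binom{[n]}{r_i}}\right)$ recovers the full tropicalization from its rational points since $\QQ$ is dense in $\RR$.

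For the nonnegative statements, I would use the positive analogue of the fundamental theorem, as developed in \cite{SW05} (and extended in \cite{SW21}): for a subvariety $X \subseteq \PP^N$ defined over $\RR$ with ideal $\mathscr I$, the set $\{\val(p) : p \in X(\C_{\geq 0})\}$ equals the set of rational-coordinate points of $\bigcap_{f \in \mathscr I} V_{\trop}^{\geq 0}(f)$, and the nonnegative tropicalization $\TrFl_{\br;n}^{\geq 0}$ (defined in \Cref{defn:tropFl} as this intersection over $f \in \langle \mathscr P_{\br;n}\rangle$) is the closure of the image of $\Fl_{\br;n}(\C_{\geq 0})$ under $\val$. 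The key input here is that $\Fl_{\br;n}(\C_{\geq 0})$ is by definition the subset of $\Fl_{\br;n}(\C)$ with a representative having all Pl\"ucker coordinates in $\C_{\geq 0}$, which matches exactly the hypothesis under which the positive fundamental theorem is stated; and the definition of $\TrFl_{\br;n}^{\geq 0}$ as the closure of the coordinate-wise valuation of $\Fl_{\br;n}(\C_{\geq 0})$ (as announced in \Cref{thm:eqvs}\ref{eqvs:TrFl}) is consistent with this. Again the passage from rational-coordinate points to the full closed set is a density argument, using that $\val: \C_{\geq 0}\setminus\{0\} \to \QQ$ is surjective and $\QQ$ is dense in $\RR$.

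The main obstacle I expect is \emph{not} in the algebraic content but in correctly handling the topology of tropical projective space with $\infty$ allowed as a coordinate value. In particular one must check that taking closures behaves well at the boundary strata where some Pl\"ucker coordinates vanish: a point of $\TrFl_{\br;n}$ with some coordinates equal to $\infty$ should still be approximable by valuations of honest Puiseux-series flags, which requires knowing that $\Fl_{\br;n}(\C)$ maps \emph{onto} the rational points of $\TrFl_{\br;n}$ including those boundary points (not merely onto the interior), and similarly for the nonnegative version with $\C_{\geq 0}$. This is where one genuinely uses that $\mathscr P_{\br;n}$ set-theoretically defines the flag variety (so that there are no ``phantom'' tropical points), together with the observation that the valuation map is proper enough that the image of $\Fl_{\br;n}(\C_{\geq 0})$ is already closed among rational-coordinate points. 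I would organize the write-up so that the ordinary case is dispatched by citing \cite[Ch.~6]{MS15} and the nonnegative case by citing \cite{SW05, SW21}, spelling out only the (routine) verification that the hypotheses of those references are met in the multiprojective flag-variety setting, and then closing with the density argument to pass from rational points to the asserted closures.
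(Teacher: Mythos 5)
Your proposal is correct and follows essentially the same route as the paper: the paper's proof consists exactly of the two citations you identify, namely the (extended) Fundamental Theorem of tropical geometry from \cite[Theorem 3.2.3 \& Theorem 6.2.15]{MS15} for $\TrFl_{\br;n}$ and \cite[Proposition 2.2]{SW05} for the nonnegative analogue. The extra discussion you give about the multiprojective embedding and the boundary strata where coordinates equal $\infty$ is reasonable caution but does not constitute a different argument from the paper's.
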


\begin{proof}
	The first equality is known as the (extended) \emph{Fundamental Theorem of tropical geometry} \cite[Theorem 3.2.3 \& Theorem 6.2.15]{MS15}.  The second equality is the analogue for nonnegative tropicalizations, established in \cite[Proposition 2.2]{SW05}.
\end{proof}

\begin{rem}
	The need to restrict to rational coordinates and the need to take the closure in \Cref{prop:closure} can be removed if we let $\C$ be the Mal'cev-Neumann ring $\CC((\RR))$ (see \cite[\S3]{Poo93}) which satisfies $\operatorname{val}(\C) = \TT$. See also \cite{Markwig:Puiseux}.
\end{rem}

Let us also record an equivalent description of the (nonnegative) flag Dressian when $\br$ is a sequence of consecutive integers.  We need the following definition.  As is customary in matroid theory, we write $Sij$ for the union $S\cup \{i,j\}$ of subsets $S$ and $\{i,j\}$ of $[n]$.

\begin{defn}\label{defn:3terms}
The set $\mathscr P_{r,r;n}^{(3)}$ of \emph{three-term Grassmann-Pl\"ucker relations} (of type $(r;n)$) is the subset of $\mathscr P_{r,r;n}$ consisting of polynomials of the form
\[
x_{Sij}x_{Sk\ell} - x_{Sik}x_{Sj\ell} + x_{Si\ell}x_{Sjk}
\]
for a subset $S\subseteq [n]$ of cardinality $r-2$ and a subset $\{i < j < k < \ell\} \subseteq [n]$ disjoint from $S$.
Similarly, the set $\mathscr P_{r,r+1;n}^{(3)}$ of \emph{three-term incidence-Pl\"ucker relations} (of type $(r,r+1)$) is the subset of $\mathscr P_{r,r;n}$ consisting of polynomials of the form
\[
x_{Si}x_{Sjk} -  x_{Sj} x_{Sik}+ x_{Sk} x_{Sij}
\]
for a subset $S\subseteq [n]$ of cardinality $r-1$ and a subset $\{i < j < k\} \subseteq [n]$ disjoint from $S$.
\end{defn}

Let $\mathscr P_{\br;n}^{(3)}$ be the union of the three-term
Grassmann-Pl\"ucker and three-term incidence-Pl\"ucker relations, which we refer to as the \emph{three-term 
Pl\"ucker relations}.

\begin{prop}\label{prop:3terms}
	Suppose $\br=(r_1<\dots<r_k)$ consists of consecutive integers.  Then a point $\bmu = (\mu_1, \ldots, \mu_k) \in \prod_{i = 1}^k \PP\left(\TT^{\binom{[n]}{r_i}}\right)$ is in the (nonnegative) flag Dressian if and only if its support $\underline\bmu = (\underline \mu_1, \ldots, \underline \mu_k)$ is a flag matroid and $\bmu$ satisfies the (nonnegative-)tropical three-term Pl\"ucker relations.  More explicitly, we have
\begin{align*}
\FlDr_{\br;n} &= \left\{\bmu \in  \textstyle\prod_{i = 1}^k \PP\left(\TT^{\binom{[n]}{r_i}}\right) \  \ \middle| \ \ \underline\bmu\text{ is a flag matroid and } \bmu \in \bigcap_{f\in \mathscr P_{\br;n}^{(3)}} V_{\trop}(f) \right\}, \text{ and}\\
\FlDr_{\br;n}^{\geq 0} &= \left\{\bmu \in  \textstyle\prod_{i = 1}^k \PP\left(\TT^{\binom{[n]}{r_i}}\right) \  \ \middle| \ \ \underline\bmu\text{ is a flag matroid and } \bmu \in \bigcap_{f\in \mathscr P_{\br;n}^{(3)}} V_{\trop}^{\geq 0}(f) \right\}.
\end{align*}
\end{prop}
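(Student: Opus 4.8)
The plan is to establish the two set equalities simultaneously, running the same argument for the tropical hypersurfaces $V_{\trop}(f)$ and the nonnegative tropical hypersurfaces $V_{\trop}^{\geq 0}(f)$ and indicating where the sign information enters; write $\mathrm{RHS}$ for the right-hand side of whichever equality is under discussion. The inclusion $\FlDr_{\br;n}^{(\geq 0)}\subseteq\mathrm{RHS}$ is the straightforward direction. First, since $\br$ is a sequence of consecutive integers, each three-term incidence relation in $\mathscr P_{\br;n}^{(3)}$ has type $(r_i,r_i+1;n)=(r_i,r_{i+1};n)$ and is therefore genuinely among the relations defining $\FlDr_{\br;n}^{(\geq 0)}$; together with the three-term Grassmann--Pl\"ucker relations this gives $\mathscr P_{\br;n}^{(3)}\subseteq\mathscr P_{\br;n}$, so any point of the flag Dressian satisfies the three-term relations. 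Second, the support of any point of the flag Dressian is a flag matroid: each $\underline{\mu_i}$ is a matroid because $\mu_i$ satisfies all (positive-)tropical Grassmann--Pl\"ucker relations of type $(r_i;n)$, and the chain $\underline{\bmu}$ is a flag matroid because, for $i<j$, the (positive-)tropical incidence-Pl\"ucker relations between $\mu_i$ and $\mu_j$ specialize, over the Krasner hyperfield, to the statement that $\underline{\mu_i}$ is a matroid quotient of $\underline{\mu_j}$ (see \cite{JL, BEZ21}).

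For the reverse inclusion $\mathrm{RHS}\subseteq\FlDr_{\br;n}^{(\geq 0)}$ one must show that a point $\bmu$ with $\underline{\bmu}$ a flag matroid which satisfies the three-term (positive-)tropical Pl\"ucker relations in fact satisfies \emph{all} relations in $\mathscr P_{\br;n}$. I would proceed in three steps. Step one handles the single-rank Grassmann--Pl\"ucker relations: the content is that a function whose support is a matroid and which satisfies the three-term Grassmann--Pl\"ucker relations tropically lies in $\Dr_{r_i;n}$, a known local characterization of the Dressian, and in the nonnegative case this is part of the description of the positive Dressian in \cite{SW05, SW21, AHLS}. Step two reduces the incidence-Pl\"ucker relations of type $(r_i,r_j;n)$ with $j>i+1$ --- which occur only when $\br$ has length at least three, and which cannot be attacked directly since three-term incidence relations only come in type $(r,r+1;n)$ --- to the case of consecutive ranks: one inserts the intermediate ranks $r_i<r_i+1<\dots<r_j$, all of whose constituents are valuated matroids by step one, and propagates the incidence relations using the valuated analogue of the composition of matroid quotients. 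Step three, the core of the argument, treats the incidence-Pl\"ucker relations between consecutive ranks $r_i,r_{i+1}=r_i+1$: given $\mu_i\in\Dr_{r_i;n}$ and $\mu_{i+1}\in\Dr_{r_{i+1};n}$ whose supports form a flag matroid, satisfying the three-term incidence relations of type $(r_i,r_i+1;n)$ already forces all incidence relations of that type. Both steps two and three should rest on the detailed study of three-term incidence and Grassmann--Pl\"ucker relations carried out in \Cref{sec:3}.

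The hard part is step three, together with the structural reason the three steps cannot be circumvented: although the ideal $\langle\mathscr P_{\br;n}\rangle$ is generated by the (non-three-term) Pl\"ucker relations, tropicalization does not commute with forming ideals, so ``the three-term relations generate'' is of no use, and each implication has to be proved directly at the level of (valuated) matroids. For the nonnegative statement one runs the same three steps while tracking, in each tropical minimum, a pair of monomials whose coefficients have opposite signs; the sign conventions in $\mathscr P_{r,s;n}$ and $\mathscr P_{r,s;n}^{(3)}$ are compatible enough that this sign data propagates alongside the tropical equalities in each of steps one, two, and three, whence $\bmu\in\FlDr_{\br;n}^{\geq 0}$.
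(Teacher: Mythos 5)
Your overall plan — reduce to three-term Grassmann--Pl\"ucker relations for each constituent, then to three-term incidence relations for consecutive ranks, then propagate to non-consecutive ranks — is a reasonable direct approach, and your forward inclusion is fine. However, the paper's actual proof is completely different: it identifies $\FlDr_{\br;n}$ with the partial flag variety $\operatorname{Fl}_{\br;n}(\TT)$ over the tropical hyperfield and $\FlDr_{\br;n}^{\geq 0}$ with the subset of $\operatorname{Fl}_{\br;n}(\TT\RR)$ over the signed tropical hyperfield coming from $\TT$, checks that $\TT$ and $\TT\RR$ are perfect (because doubly distributive), and then invokes \cite[Theorem 2.16 \& Corollary 2.24]{JL}, which gives exactly the ``support a flag matroid $+$ three-term relations'' criterion for partial flag varieties of consecutive ranks over any perfect hyperfield. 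This makes the hard local-to-global step a black box.

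The real problem with your sketch is your step three, which you correctly flag as ``the core of the argument'' but do not actually prove. You say it ``should rest on the detailed study of three-term incidence and Grassmann--Pl\"ucker relations carried out in \Cref{sec:3},'' but that section proves the implication \ref{eqvs:3terms}$\implies$\ref{eqvs:FlDr} of \Cref{thm:eqvs}, and its key result (\Cref{thm:almost3term}) \emph{uses} \Cref{prop:3terms} as an ingredient: the proof of \Cref{thm:almost3term} establishes all three-term Grassmann--Pl\"ucker relations for the auxiliary $\widetilde\bmu$ and then needs the $k=1$ case of \Cref{prop:3terms} to conclude $\widetilde\bmu\in\Dr_{r+1;n+1}^{\geq 0}$. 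So appealing to \Cref{sec:3} here would be circular. Without the hyperfield machinery (or an independent combinatorial argument, which you do not give), the claim that three-term incidence relations plus flag-matroid support force all incidence relations of type $(r,r+1;n)$ is unproved, and that is precisely the content of the proposition. Your step one for the nonnegative case is also asserted rather than argued — ``part of the description of the positive Dressian in \cite{SW05, SW21, AHLS}'' is not a precise citation for the statement you need — though that case again follows immediately from the perfect-hyperfield theorem the paper actually cites.
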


\begin{proof}
We will use the language and results from the study of matroids over hyperfields.  See \cite{BB19} for hyperfields and relation to matroid theory, and see \cite[\S2.3]{Gun} for a description of the signed tropical hyperfield $\TT\RR$, for which we note the following fact:  
The underlying set of $\TT\RR$ is $(\RR\times \{+,-\}) \cup \{\infty\}$, so given $c\in \TT$, 
	one can identify it with the element 
	$(c,+)\in \RR\times \{+,-\}$ 
	of $\TT\RR$  
	if $c<\infty$ and $\infty$ otherwise.
	
In the language of hyperfields, for a homogeneous polynomial $f$ in $m$ variables and a hyperfield $\mathbb F$, one has the notion of the ``hypersurface of $f$ over $\mathbb F$,'' which is a subset $V_{\mathbb F}(f)$ of $\PP(\mathbb F^m)$.  When $\mathbb F$ is the tropical hyperfield $\TT$, this coincides with $V_{\trop}(f)$ in \Cref{defn:trophyper}.
When $\mathbb F$ is the signed tropical hyperfield $\TT\RR$, a point $w\in \TT^m$, when considered as a point of $\TT\RR^m$, is in $V_{\TT\RR}(f)$ if and only if it is in $V_{\trop}^{\geq 0}(f)$.
Thus, in the language of flag matroids over hyperfields \cite{JL}, the flag Dressian is the partial flag variety $\operatorname{Fl}_{\br;n}(\TT)$ over $\TT$, and the nonnegative flag Dressian is the subset of the partial flag variety $\operatorname{Fl}_{\br;n}(\TT\RR)$ over $\TT\RR$ consisting of points that come from $\TT$.

Now, both the tropical hyperfield and the signed tropical hyperfield are perfect hyperfields because they are doubly distributive \cite[Corollary 3.45]{BB19}.
Our proposition then follows from \cite[Theorem 2.16 \& Corollary 2.24]{JL}, which together state the following:
When $\br$ consists of consecutive integers, for a perfect hyperfield $\mathbb F$, a point $p\in \prod_{i = 1}^k\PP\left(\mathbb F^{\binom{[n]}{r_i}}\right)$ is in the partial flag variety $\operatorname{Fl}_{\br;n}(\mathbb F)$ over $\mathbb  F$ if and only if the support of $p$ is a flag matroid and $p$ satisfies the three-term Pl\"ucker relations over $\mathbb F$.
\end{proof}

For completeness, we include the proof of the following fact.

\begin{lem}
The signed tropical hyperfield $\TT\RR$ is doubly distributive.  That is, for any $x,y,z,w \in \TT\RR$, one has an equality of sets $(x\boxplus y)\cdot (z\boxplus w) = xz \boxplus xw \boxplus yz \boxplus yw$.
\end{lem}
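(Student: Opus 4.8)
The plan is to verify the set equality directly, reducing it to a finite check on valuations and signs. Recall that $\TT\RR$ has underlying set $(\RR\times\{+,-\})\cup\{\infty\}$, with multiplication $(\rho,\epsilon)\cdot(\rho',\epsilon')=(\rho+\rho',\epsilon\epsilon')$ and $\infty$ absorbing, and that, writing $v(\cdot)$ for the valuation ($v(\infty)=\infty$), hyperaddition is $x\boxplus y=\{x\}$ if $v(x)<v(y)$, or if $v(x)=v(y)$ and $x,y$ have the same sign (in which case $x=y$); $x\boxplus y=\{y\}$ if $v(y)<v(x)$; and $x\boxplus y=\TT\RR_{>v(x)}:=\{t\in\TT\RR:v(t)>v(x)\}$ (a set containing $\infty$) if $v(x)=v(y)<\infty$ and $x,y$ have opposite signs. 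I will call a hypersum \emph{fat} when it has the form $\TT\RR_{>\rho}$ and a \emph{singleton} otherwise.

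Two reductions come first. The inclusion $(x\boxplus y)(z\boxplus w)\subseteq xz\boxplus xw\boxplus yz\boxplus yw$ holds in any hyperring: for $s\in x\boxplus y$ and $t\in z\boxplus w$ we have $st\in s\cdot(z\boxplus w)=sz\boxplus sw$, and since $sz\in xz\boxplus yz$ and $sw\in xw\boxplus yw$, associativity of $\boxplus$ gives $sz\boxplus sw\subseteq xz\boxplus xw\boxplus yz\boxplus yw$; so only the reverse inclusion needs work. Next, if some coordinate is $\infty$, say $y=\infty$, both sides collapse to $x\cdot(z\boxplus w)=xz\boxplus xw$ by ordinary one-sided distributivity, so we may assume $x,y,z,w\in\RR\times\{+,-\}$. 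Finally, rescaling $(x,y)$ by $x^{-1}$ and $(z,w)$ by $z^{-1}$ multiplies both sides of the claimed identity by the unit $(xz)^{-1}$ and hence preserves it, so we may assume $x=z=(0,+)$, leaving only the parameters $y=(q,b)$ and $w=(s,d)$; the swap $y\leftrightarrow w$ is a further symmetry of the identity.

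It remains to check the reverse inclusion, organized by the types of $x\boxplus y$ and $z\boxplus w$. If both are singletons $\{m\}$ and $\{m'\}$, the left side is $\{mm'\}$; on the right, each of $xz,xw,yz,yw$ has valuation at least $v(m)+v(m')=v(mm')$, and any one attaining this valuation must equal $mm'$ (any minimal-valuation element of $\{x,y\}$ equals $m$, and likewise $\{z,w\}$ and $m'$), so the hypersum is again $\{mm'\}$. If exactly one of the factors is fat — by symmetry, say $z\boxplus w=\TT\RR_{>\rho}$ with $v(z)=v(w)=\rho$ and $\sigma(z)=-\sigma(w)$ — then $xz,xw$ share valuation $v(x)+\rho$ with opposite signs, and $yz,yw$ share valuation $v(y)+\rho$ with opposite signs, so the right side is $\TT\RR_{>v(x)+\rho}\boxplus\TT\RR_{>v(y)+\rho}=\TT\RR_{>\min(v(x),v(y))+\rho}$; this equals the left side $(x\boxplus y)\cdot\TT\RR_{>\rho}=\TT\RR_{>v(m)+\rho}$ because the singleton $x\boxplus y=\{m\}$ has $v(m)=\min(v(x),v(y))$.

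The case requiring real care is when both factors are fat: $v(x)=v(y)=:\pi$ with $\sigma(x)=-\sigma(y)$, and $v(z)=v(w)=:\rho$ with $\sigma(z)=-\sigma(w)$. The left side is $\TT\RR_{>\pi}\cdot\TT\RR_{>\rho}=\TT\RR_{>\pi+\rho}$, while the four products $xz,xw,yz,yw$ all have valuation $\pi+\rho$ with sign pattern $\{+,+,-,-\}$ up to an overall sign; the point is that the (bracketing-independent) iterated hypersum of such a balanced quadruple is \emph{exactly} $\TT\RR_{>\pi+\rho}$, neither a proper subset nor anything larger. Concretely, $(\pi+\rho,+)\boxplus(\pi+\rho,-)=\TT\RR_{>\pi+\rho}$, then adding $(\pi+\rho,+)$ pins this back to $\{(\pi+\rho,+)\}$, and adding $(\pi+\rho,-)$ reopens it to $\TT\RR_{>\pi+\rho}$; grouping instead as $(xz\boxplus xw)\boxplus(yz\boxplus yw)$ gives $\TT\RR_{>\pi+\rho}\boxplus\TT\RR_{>\pi+\rho}=\TT\RR_{>\pi+\rho}$, in agreement. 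This balanced-cancellation computation, together with the routine set-level bookkeeping of hypersums, is essentially the only obstacle; assembling the finitely many cases then yields the lemma.
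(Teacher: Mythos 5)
Your approach (explicit set computation organized by whether each of $x\boxplus y$ and $z\boxplus w$ is a singleton or an interval) is genuinely different from the paper's, which disposes of the $\infty$ and unequal-valuation cases by ordinary one-sided distributivity and then reduces the remaining equal-valuation case to the double distributivity of the sign hyperfield $\SS$. Your argument is more self-contained, but it rests on an incorrect description of the hyperaddition, and this matters.

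You wrote $x\boxplus y=\TT\RR_{>v(x)}:=\{t:v(t)>v(x)\}\cup\{\infty\}$ when $v(x)=v(y)$ and the signs are opposite. The correct hypersum is the non-strict set $\{t : v(t)\geq v(x)\}\cup\{\infty\}$: it contains $(v(x),+)$ and $(v(x),-)$ as well. This is forced by the quotient construction from real Puiseux series — $2t^\rho + (-t^\rho)=t^\rho$ shows that $(\rho,+)\boxplus(\rho,-)$ must contain $(\rho,+)$. More decisively, your strict version is not even associative: with it one gets
\[
\bigl((\rho,+)\boxplus(\rho,-)\bigr)\boxplus(\rho,-)=\{(\rho,-)\}\ne \{t:v(t)>\rho\}\cup\{\infty\}=(\rho,+)\boxplus\bigl((\rho,-)\boxplus(\rho,-)\bigr),
\]
so $\TT\RR$ would not be a hyperfield. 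This undermines your computation of the right-hand side: the iterated hypersum $xz\boxplus xw\boxplus yz\boxplus yw$ is only a well-defined set because hyperaddition is associative, yet you evaluate it by choosing particular bracketings (left-to-right, and grouped as $(xz\boxplus xw)\boxplus(yz\boxplus yw)$). With a non-associative operation these bracketings are not a priori equal, so the ``balanced cancellation'' computation in the fat--fat case does not pin down the set.

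The good news is that the flaw is local: if you replace strict $>$ by $\geq$ throughout, associativity is restored, and your case analysis goes through unchanged — in the fat--fat case both sides become $\{t:v(t)\geq\pi+\rho\}\cup\{\infty\}$, and the singleton--fat and singleton--singleton cases also survive with $\geq$ in place of $>$. One more small remark: the normalization $x=z=(0,+)$ you set up at the start is never actually used; the case analysis proceeds with arbitrary $v(x),v(y),v(z),v(w)$, so that paragraph can be dropped.
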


\begin{proof}
If any one of the four $x,y,z,w$ is $\infty$, then the desired equality is the usual distributivity of the signed tropical hyperfield.  Thus, we now assume that all four elements are in $\RR\times \{+,-\}$, and write $x = (x_\RR, x_\SS) \in \RR\times \{+,-\}$ and similarly for $y,z,w$.  If $x_\RR > y_\RR$, then $xz_\RR > yz_\RR$ and $xw_\RR > yw_\RR$,  so the equality follows again from the usual distributivity.  So we now assume that all four elements have the same value in $\RR$, and the equality then follows from the fact that the signed hyperfield $\SS$ is doubly distributive.
\end{proof}

\begin{rem}
Even when $\br$ does not consist of consecutive integers, \cite[Theorem 2.16]{JL} implies that the flag Dressian and the nonnegative flag Dressian are carved out by fewer polynomials than $\mathscr P_{\br;n}$ in the following way:  Denoting by
\[
\mathscr P_{\br;n}^{adj} = \bigcup_{i=1}^k \mathscr P_{r_i,r_i;n} \cup \bigcup_{i=1}^{k-1} \mathscr P_{r_i,r_{i+1};n},
\]
one has
\[
\FlDr_{\br;n} = \bigcap_{f\in\mathscr P_{\br;n}^{adj}} V_{\trop}(f) \quad\text{and}\quad \FlDr_{\br;n}^{\geq 0} = \bigcap_{f\in\mathscr P_{\br;n}^{adj}} V_{\trop}^{\geq 0}(f).
\]
	This generalizes the fact that a sequence of matroids $(M_1, \ldots, M_k)$ is a flag matroid if and only if $(M_i,M_{i+1})$ is a flag matroid for all $i = 1, \ldots, k-1$ \cite[Theorem 1.7.1, Theorem 1.11.1]{BGW03}.
\end{rem}

The following corollary of \Cref{prop:3terms} 
 is often useful in computation.
It states that the nonnegative tropical flag Dressian is in some sense 
``convex'' inside the tropical flag Dressian.

\begin{cor}\label{cor:3terms}
Suppose $\br = (r_1 < \cdots < r_k)$ consists of consecutive integers, and suppose we have points $\bmu_1, \ldots, \bmu_\ell \in \prod_i^k \TT^{\binom{[n]}{r_i}}$ that are in $\FlDr_{\br;n}^{\geq 0}$.  Then, if a nonnegative linear combination $c_1 \bmu_1 + \cdots + c_\ell \bmu_\ell$ is in $\FlDr_{\br;n}$, it is in $\FlDr_{\br;n}^{\geq 0}$.
\end{cor}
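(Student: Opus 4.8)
The plan is to deduce the corollary from \Cref{prop:3terms}, which characterizes membership in $\FlDr_{\br;n}$ and in $\FlDr_{\br;n}^{\geq 0}$ by the \emph{same} two conditions — the support being a flag matroid, and satisfying the (ordinary, respectively nonnegative) three-term Pl\"ucker relations — so that only the three-term relations require attention. Write $\bmu = c_1\bmu_1 + \cdots + c_\ell\bmu_\ell$; after discarding the indices with $c_m = 0$ we may assume every $c_m > 0$. Since $\bmu \in \FlDr_{\br;n}$, \Cref{prop:3terms} already tells us that $\underline{\bmu}$ is a flag matroid and that $\bmu \in V_{\trop}(f)$ for every three-term relation $f \in \mathscr P_{\br;n}^{(3)}$. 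Thus it remains to prove: for each $f \in \mathscr P_{\br;n}^{(3)}$, if every $\bmu_m$ lies in $V_{\trop}^{\geq 0}(f)$ and $\bmu \in V_{\trop}(f)$, then $\bmu \in V_{\trop}^{\geq 0}(f)$.

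The next step is to record the sign pattern. Every $f \in \mathscr P_{\br;n}^{(3)}$ has exactly three monomials, with coefficient sequence $(+1, -1, +1)$; write $T_1, T_2, T_3$ for the three tropical terms (so $T_i(w)$ is a sum of two coordinates of $w$), ordered so that $T_2$ is the one attached to the negative coefficient. Because two of the three coefficients share a sign, a point $w$ lies in $V_{\trop}^{\geq 0}(f)$ precisely when, whenever $\min\{T_1(w), T_2(w), T_3(w)\}$ is finite, the minimum is attained at $T_2(w)$ as well as at $T_1(w)$ or $T_3(w)$. I will also use the linearity $T_i(\bmu) = \sum_m c_m T_i(\bmu_m)$ in $\TT$ (with the convention $c_m \cdot \infty = \infty$), valid because each $T_i$ is a sum of coordinates and each $c_m$ is positive.

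Now the argument is short. Let $\lambda := \min_i T_i(\bmu)$ and assume $\lambda < \infty$. If $T_2(\bmu) > \lambda$, then $\lambda$ is attained by $T_1(\bmu)$ or $T_3(\bmu)$; say $T_1(\bmu) = \lambda$. Finiteness of the positively weighted sum $\sum_m c_m T_1(\bmu_m) = T_1(\bmu)$ forces $T_1(\bmu_m) < \infty$, hence $\min_i T_i(\bmu_m) < \infty$, for every $m$; since $\bmu_m \in V_{\trop}^{\geq 0}(f)$ this yields $T_2(\bmu_m) = \min_i T_i(\bmu_m) \leq T_1(\bmu_m)$ for every $m$, and summing against the weights $c_m$ gives $T_2(\bmu) \leq T_1(\bmu) = \lambda$ — a contradiction. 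So $T_2(\bmu) = \lambda$; and since $\bmu \in V_{\trop}(f)$ the finite minimum $\lambda$ is attained at least twice, hence also by $T_1(\bmu)$ or $T_3(\bmu)$. Therefore $\bmu \in V_{\trop}^{\geq 0}(f)$. Running this over all $f \in \mathscr P_{\br;n}^{(3)}$ and applying \Cref{prop:3terms} once more gives $\bmu \in \FlDr_{\br;n}^{\geq 0}$.

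I do not anticipate a real obstacle; the care points are the $\infty$-bookkeeping (handled by the convention $c_m\cdot\infty = \infty$ together with the fact that a finite positively weighted sum has all summands finite) and remembering that the flag-matroid condition on $\underline{\bmu}$ is supplied for free by $\bmu \in \FlDr_{\br;n}$, so the sign analysis of the three-term relations is all that is left. It is worth flagging that the hypothesis $\bmu \in \FlDr_{\br;n}$ is essential and not automatic: a positive combination of points of $\FlDr_{\br;n}^{\geq 0}$ need not have its tropical minima attained twice, and the hypothesis is exactly what rescues this.
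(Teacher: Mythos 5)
Your proof is correct and follows essentially the same route as the paper's: reduce to the three-term relations via \Cref{prop:3terms}, observe that the sign pattern $(+,-,+)$ means the positive-tropical condition is precisely that the middle term $T_2$ attains the minimum of $\{T_1,T_3\}$, and use positivity of the $c_m$ to pass this through the weighted sum, with the hypothesis $\bmu\in\FlDr_{\br;n}$ supplying both the flag-matroid support condition and the ``achieved twice'' condition. The only difference is stylistic: you argue by contradiction and spell out the $\infty$-bookkeeping, while the paper gives the direct inequality $T_2(\bmu)\leq\min\{T_1(\bmu),T_3(\bmu)\}$.
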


\begin{proof}
We make the following general observation:  Suppose $f = c_{\alpha} x^{\alpha} - c_{\beta}x^{\beta} + c_{\gamma} x^{\gamma}$ is a three-term polynomial in $\RR[x_1, \ldots, x_m]$ with $c_\alpha, c_\beta, c_\gamma$ positive.  Then an element $u\in \TT^m$ satisfies the positive-tropical relation of $f$ if and only if $\beta\bullet u = \min\{\alpha\bullet u, \gamma \bullet u\}$.
Hence, if $u_1, \ldots, u_\ell \in \TT^m$ each satisfy this relation, then a nonnegative linear combination of them can satisfy the tropical relation of $f$ only if the term at $\beta$ achieves the minimum, that is, only if the positive-tropical relation is satisfied.
The corollary now follows from this general observation and \Cref{prop:3terms}.
\end{proof}

\subsection{Equivalence of \ref{eqvs:TrFl} and \ref{eqvs:FlDr} 
in \Cref{thm:eqvs}}\label{pf:1}

Let $\br$ be a sequence of consecutive integers $(a, \ldots, b)$ for some $1\leq a \leq b \leq n$.  We will show that $\TrFl_{\br;n}^{\geq 0} = \FlDr_{\br;n}^{\geq 0}$.  The inclusion $\TrFl_{\br;n}^{\geq 0} \subseteq \FlDr_{\br;n}^{\geq 0}$ is immediate from \Cref{defn:tropFl}.  We will deduce $\TrFl_{\br;n}^{\geq 0} \supseteq \FlDr_{\br;n}^{\geq 0}$ by utilizing the two known cases of the equality $\TrFl_{\br;n}^{\geq 0} = \FlDr_{\br;n}^{\geq 0}$ --- when $\br = (r)$ and when $\br = (1, 2, \ldots, n)$.

\medskip

We start by recalling
 that tropicalization behaves well on subtraction-free rational maps.

\begin{defn}\label{def:Tropicalization}
Let 
$f = \sum_{a \in \mathcal A} c_{a} x^{a} \in \RR[x_1, \ldots, x_m]$ be a real polynomial, where $\mathcal A$ is a finite subset of $\ZZ_{\geq 0}^m$ and $0\neq c_{a}\in \RR$.
	We define the \emph{tropicalization}
	$\Trop(f): \RR^m \to \RR$ to be the piecewise-linear map 
	$w\mapsto \min_{a\in \mathcal A}(a\bullet w)$, where as before,
	$a \bullet w = a_1 w_1 + \dots + a_m w_m$.
\end{defn}
	
Note that $\Trop({f_1}{f_2}) = 
\Trop({f_1}) + \Trop({f_2}).$  Moreover,
 if $f_1$ and $f_2$ are two polynomials with positive coefficients, 
	and $a_1,a_2\in\RR_{>0}$, then 
$\Trop(a_1 {f_1}+a_2 {f_2}) = 
\min(\Trop({f_1}),\Trop({f_2})).$
These facts imply the following simple lemma, which 
 appears as \cite[Lemma 11.5]{RW:Duke}. See 
\cite[Proposition 2.5]{SW05} and \cite{PS04}
for closely related statements.

\begin{lem}\label{lem:trop}
	Let $f=(f_1,\dots,f_n):{\C}^m \to \C^n$ be a 
	rational map defined by polynomials $f_1,\dots,f_n$ with positive coefficients
	(or more generally by subtraction-free rational expressions).
	Let  
	$(x_1,\dots,x_m) \in (\C_{\geq 0})^m$,
	such that $f(x_1,\dots,x_m) = (y_1,\dots,y_n)$.
	Then 
 $$(\Trop(f))(\val(x_1),\dots,\val(x_m)) = 
	(\val(y_1),\dots, \val(y_n)).$$
\end{lem}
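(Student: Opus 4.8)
The plan is to reduce the statement to the case of a single polynomial with positive coefficients and then proceed by structural induction on the subtraction-free expression. The one fact that does all the work is that on the nonnegative part of the Puiseux field the valuation is ``additive'' in the naive sense. Concretely, I claim that $\C_{\geq 0}$ is closed under addition and multiplication, that $\C_{>0}$ is moreover closed under taking inverses, and that on these sets one has
\[
\val(a+b) = \min(\val a, \val b), \qquad \val(ab) = \val a + \val b, \qquad \val(a^{-1}) = -\val a.
\]
The multiplicative identities are just the axioms of a valuation and hold on all of $\C$; the content is the additive identity. This is exactly where positivity enters: if $a,b\in\C_{\geq 0}$ and, say, $\val a \leq \val b$, then the coefficient of the term of exponent $\val a$ in $a+b$ is either the initial coefficient of $a$ (when $\val a < \val b$) or the sum of the initial coefficients of $a$ and $b$ (when $\val a = \val b$); either way it is a sum of positive reals, hence positive, so no cancellation occurs, $a+b$ again lies in $\C_{\geq 0}$, and $\val(a+b) = \min(\val a, \val b)$. (Without the sign hypothesis the two initial coefficients could cancel and the valuation could jump up, which is precisely why the lemma fails for general polynomials.)

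With this in hand, the first step is the polynomial case. Write $\val(x)$ for the tuple $(\val x_1, \ldots, \val x_m)$. If $f = \sum_{a\in\mathcal A} c_a x^a$ with all $c_a > 0$ and all $x_i \in \C_{\geq 0}$, then each monomial $c_a x^a$ lies in $\C_{\geq 0}$ (using $\val c_a = 0$ since $c_a$ is a positive real) and has valuation $a\bullet \val(x)$ by the multiplicative identity. Summing the monomials and applying the additive identity repeatedly gives $f(x) \in \C_{\geq 0}$ and $\val(f(x)) = \min_{a\in\mathcal A}(a\bullet\val(x)) = (\Trop f)(\val x_1,\dots,\val x_m)$, which is the claim for a single positive-coefficient polynomial. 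Applying this coordinatewise to $f = (f_1,\dots,f_n)$ settles the polynomial case of the lemma.

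For a general subtraction-free rational expression $E$, recall that such an $E$ is built from the variables $x_1,\dots,x_m$ and positive real constants using only $+$, $\times$, and $\div$. I induct on the number of operations in the expression tree, carrying along simultaneously (i) $E(x)\in \C_{\geq 0}$, and (ii) $\val(E(x)) = (\Trop E)(\val x)$, where $\Trop E$ denotes the min-plus expression obtained by replacing $+ \mapsto \min$, $\times \mapsto +$, $\div \mapsto{}$(subtraction); this is well defined because the expression tree is unchanged and $\min$, $+$, subtraction are the images of $+$, $\times$, $\div$. The base cases (a variable or a positive constant) are immediate, and each inductive step is one of the three identities above applied to the two subexpressions. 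Finally one disposes of the degenerate cases where some $x_i = 0$, i.e.\ $\val x_i = \infty$, using the conventions $\min(\infty,c)=c$ and $\infty + c = \infty$; for the $\div$ steps one only needs the denominators to be nonzero at the point, which is part of the hypothesis that $f$ is defined there.

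The one genuinely delicate point, and the thing I would take the most care over, is the no-cancellation claim for $\val(a+b)$ on $\C_{\geq 0}$: this is the only place the subtraction-free/positivity hypothesis is used, and without it the lemma is false. Everything else is bookkeeping --- tracking the $\infty$'s and confirming that $\Trop$ of a subtraction-free expression is unambiguously defined.
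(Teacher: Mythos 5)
Your proof is correct and fills in the details that the paper delegates to a citation: the paper simply records the two facts that $\Trop(f_1 f_2) = \Trop(f_1) + \Trop(f_2)$ and $\Trop(a_1 f_1 + a_2 f_2) = \min(\Trop f_1, \Trop f_2)$ for positive $a_1,a_2$, and then points to \cite[Lemma 11.5]{RW:Duke} for the statement. Your no-cancellation argument on $\C_{\geq 0}$, the monomial-by-monomial polynomial case, and the induction on the expression tree (with the $0/\infty$ conventions handled at the end) constitute exactly the proof that is being invoked, so this is essentially the paper's approach, just written out in full.
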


The next result states that we can extend points in the nonnegative Dressian to points in the nonnegative two-step flag Dressian.

\begin{prop}\label{lem:extend}
Given $\mu_d \in \Dr_{d;n}^{\geq 0}$ with rational coordinates, 
there exists $\mu_{d+1} \in \Dr_{d+1;n}^{\geq 0}$ such that 
$(\mu_d, \mu_{d+1}) \in \FlDr_{d, d+1; n}^{\geq 0}$.  
Similarly, 
there exists $\mu_{d-1} \in \Dr_{d-1;n}^{\geq 0}$ such that 
$(\mu_{d-1}, \mu_{d}) \in \FlDr_{d-1, d; n}^{\geq 0}$.  
\end{prop}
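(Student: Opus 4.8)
The plan is to reduce the statement to the known equality $\TrGr_{d;n}^{\geq 0} = \Dr_{d;n}^{\geq 0}$ together with \Cref{lem:trop}, by producing an explicit subtraction-free construction that, starting from a totally nonnegative matrix realizing $\underline{\mu_d}$, builds a totally nonnegative matrix realizing a flag matroid $(\underline{\mu_d}, \underline{\mu_{d+1}})$ whose valuation data extends $\mu_d$. More precisely, since $\mu_d$ has rational coordinates and lies in $\Dr_{d;n}^{\geq 0} = \TrGr_{d;n}^{\geq 0}$ (by \cite{SW21}, or the $k=1$ case we are allowed to use), \Cref{prop:closure} gives a point $p \in \Gr_{d;n}(\C_{\geq 0})$ with $\val(p) = \mu_d$. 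Realize $p$ by a $d \times n$ matrix $A$ with entries in $\C_{\geq 0}$; the first task is to append an appropriate $(d+1)$-st row $v$ so that the resulting $(d+1)\times n$ matrix $A'$ still has all Pl\"ucker coordinates in $\C_{\geq 0}$, i.e.\ represents a point of $\Gr_{d+1;n}(\C_{\geq 0})$.

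The natural candidate for the new row comes from the theory of total positivity: one can take $v$ to be a generic totally nonnegative combination, e.g.\ $v = \sum_{i} t^{c_i} e_i$ for suitable rational exponents $c_i$, or better, use the fact that $A$ can be written (after column operations that do not affect nonnegativity of the relevant minors) in a form where extending by one row to a totally nonnegative $(d+1)\times n$ matrix is transparent — for instance by realizing $A$ as coming from a plabic graph / parametrization of the positroid cell of $\underline{\mu_d}$ and using that the corresponding positroid has a rank-$(d+1)$ positroid quotient above it (the relevant fact being that uniform-type extensions always exist; cf.\ the discussion around \cite{BCT} and \cite{Kun86}). The key point is that whatever construction is used, the $(d+1)\times(d+1)$ minors of $A'$ are given by subtraction-free expressions (Laplace-type expansions along the new row with nonnegative coefficients, after arranging signs correctly) in the entries of $A$ and the parameters defining $v$, so they lie in $\C_{\geq 0}$. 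Then $A'$ defines $p' \in \Fl_{d,d+1;n}(\C_{\geq 0})$, and setting $\mu_{d+1} := \val$ of its rank-$(d+1)$ Pl\"ucker coordinates, \Cref{prop:closure} for the two-step flag variety gives $(\mu_d,\mu_{d+1}) \in \TrFl_{d,d+1;n}^{\geq 0} \subseteq \FlDr_{d,d+1;n}^{\geq 0}$, with $\mu_{d+1} \in \Dr_{d+1;n}^{\geq 0}$ since it is the valuation of a point of $\Gr_{d+1;n}(\C_{\geq 0})$. The dual statement (appending a row ``below'', i.e.\ producing $\mu_{d-1}$) follows by the standard duality $\Gr_{d;n} \cong \Gr_{n-d;n}$, which sends nonnegative Pl\"ucker coordinates to nonnegative Pl\"ucker coordinates (up to the well-known sign twist $p_I \mapsto \pm p_{[n]\setminus I}$ that preserves total nonnegativity), converting an upward extension by one step into a downward one; alternatively one argues directly, deleting rather than adding a row.

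The main obstacle I expect is the construction of the extra row $v$ with guaranteed nonnegativity of all the new maximal minors: it is not enough to take a "generic" row, since genericity over $\C$ need not respect the sign conditions, and one must ensure simultaneously that (i) the new matrix has the correct matroid in rank $d+1$ (so that $(\underline{\mu_d},\underline{\mu_{d+1}})$ is genuinely a flag matroid — but this is automatic once $A \subseteq A'$ as row spans, since $M_d$ is then a quotient of $M_{d+1}$) and (ii) every $(d+1)$-minor of $A'$ has nonnegative initial coefficient. The cleanest route is probably to avoid ad hoc choices entirely: use the $k=1$ equality to get $A$, then invoke that the positroid cell containing $\underline{\mu_d}$ admits an extension — concretely, realize $\mu_d$ via a point of $\Gr_{d;n}(\C_{\geq 0})$ coming from a BCFW-type or plabic-graph parametrization and add a row recorded by the same combinatorial data one step up — so that nonnegativity is inherited from the subtraction-free parametrization rather than checked minor by minor. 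If a fully explicit $v$ resists analysis, one can instead argue with a limiting/closure argument: perturb $A$ to lie in a top-dimensional positroid cell, extend there (where genericity is harmless), and take a limit using that $\FlDr^{\geq 0}$ and $\Dr^{\geq 0}$ are closed and that valuations behave continuously on the relevant subtraction-free families. I would develop the explicit-row approach first and fall back on the closure argument if the sign bookkeeping becomes unwieldy.
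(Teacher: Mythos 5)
Your overall strategy is the same as the paper's: use the known equality $\TrGr_{d;n}^{\geq 0} = \Dr_{d;n}^{\geq 0}$ plus \Cref{prop:closure} to get a point $V_d \in \Gr_{d,n}(\C_{\geq 0})$ with $\val(V_d) = \mu_d$, then extend it to (the Pl\"ucker coordinates of) a flag over $\C_{\geq 0}$ and take valuations. That much is right, and the final tropicalization step via \Cref{lem:trop} is handled correctly.

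The gap is in the middle: you never actually produce the extension. The explicit--row route ($v = \sum_i t^{c_i} e_i$ with signs ``arranged correctly'') is not made to work, and you acknowledge as much. The ``positroid quotient above'' route quotes \cite{Kun86}/\cite{BCT} for a purely matroidal fact, which gives you a candidate matroid $M_{d+1}$ but says nothing about whether a realization by a matrix over $\C_{\geq 0}$ exists compatibly with your given $V_d$ (this is precisely the subtlety \Cref{eg:notreal} illustrates). The ``limiting/closure'' fallback is too vague to carry the argument: perturbing into a top-dimensional cell changes the matroid, and it is not explained what ``valuations behave continuously'' would mean or how limits of the extended rows would converge to something with the required valuation. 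What is missing is the specific fact the paper invokes (\Cref{cor:project}, from Rietsch's work and the Marsh--Rietsch parametrization): every positroid cell $S_{\mathcal B}(\C_{>0})$ is the image of a positive Richardson cell $\mathcal R_{v,w}(\C_{>0}) \subset \Fl_n(\C_{\geq 0})$ under the projection $\pi_d$, and $\mathcal R_{v,w}(\C_{>0})$ has a \emph{subtraction-free} parametrization $\phi_{v,w}: (\C_{>0})^m \to \mathcal R_{v,w}(\C_{>0})$ whose flag minors are positive polynomials in the parameters. This immediately lifts $V_d$ to an entire complete flag $(V_1,\dots,V_n)$ over $\C_{\geq 0}$ --- not just one extra row --- and both $\mu_{d+1}$ and $\mu_{d-1}$ come for free as valuations of the neighboring flag minors, with the three-term incidence relations holding because they hold for the un-tropicalized flag. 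You gesture at this kind of ``subtraction-free parametrization'' argument, but until you name and use the projected-Richardson fact, the proof does not close.
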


The proof of \Cref{lem:extend} requires the following 
refined results about Rietsch's cell decomposition of the nonnegative
flag variety.
\begin{thm}\label{thm:cell}
The  nonnegative flag variety has a cell decomposition into positive Richardsons
$$\Fl_n(\C_{\geq 0}) = \bigsqcup_{v\leq w} \mathcal{R}_{v,w}(\C_{>0})$$
where each cell 
$\mathcal{R}_{v,w}(\C_{>0})$
can be parameterized 
	using a map $$\phi_{v,w}: (\C_{>0})^{\ell(w)-\ell(v)} \to 
\mathcal{R}_{v,w}(\C_{>0}).$$
Moreover, this parameterization can be expressed 
as an embedding into projective space (e.g. using the flag minors)
using polynomials in the parameters with positive coefficients.
\end{thm}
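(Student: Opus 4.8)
The plan is to lift Rietsch's cell decomposition of $\Fl_n(\RR)^{\geq 0}$, together with the Marsh--Rietsch parameterizations of its cells, from $\RR$ to the Puiseux semifield $\C_{\geq 0}$, exploiting that these parameterizations and their inverses are encoded by \emph{subtraction-free} formulas and therefore make sense over any semifield.

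First I would assemble the real picture. By \eqref{eq:Rietsch} one has $\Fl_n(\RR)^{\geq 0} = \bigsqcup_{v \leq w} \mathcal R_{v,w}^{>0}$, and by Marsh and Rietsch each cell $\mathcal R_{v,w}^{>0}$ is the image of a homeomorphism $\phi_{v,w}: \RR_{>0}^{m} \to \mathcal R_{v,w}^{>0}$, where $m = \ell(w) - \ell(v)$, built from a reduced word as an ordered product of Chevalley generators $x_i(t)$, $y_i(t)$ and (integer) Weyl-group lifts, read as a point of $\Fl_n = \Gl_n/B$. The two inputs I would extract are: (1)~on $\mathcal R_{v,w}^{>0}$ each flag minor $p_I \circ \phi_{v,w}$ is either identically zero --- precisely when $I$ lies outside the flag-minor support $\Pi_{v,w}$ of the flag matroid attached to the cell --- or, after a suitable normalization of the representative matrix, equals a polynomial with positive coefficients in $t_1, \dots, t_m$; and (2)~$\phi_{v,w}$ admits a subtraction-free rational inverse $\psi_{v,w}$ expressing each parameter $t_i$ as a ratio of subtraction-free polynomials in the flag minors (the ``twist'' or chamber-ansatz formulas). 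I would also record that, over $\RR$, the cell $\mathcal R_{v,w}^{>0}$ is exactly the locus in $\Fl_n(\RR)^{\geq 0}$ with flag-minor support $\Pi_{v,w}$, and that $(v,w) \mapsto \Pi_{v,w}$ is injective.

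Now I would transfer to $\C$. Since a sum of two Puiseux series with real positive leading coefficients again has a real positive leading coefficient, $\C_{>0}$ is closed under $+$, $\times$ and inversion, so it is a sub-semifield of $\C$ with $\C_{\geq 0} = \C_{>0} \cup \{0\}$. Hence the same word-formulas define $\phi_{v,w}: \C_{>0}^{m} \to \Fl_n(\C)$, and by input~(1) every flag minor of $\phi_{v,w}(t_1, \dots, t_m)$ lies in $\C_{>0} \cup \{0\} = \C_{\geq 0}$; so $\phi_{v,w}$ lands in $\Fl_n(\C_{\geq 0})$ and the ``moreover'' clause holds verbatim over $\C$. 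I set $\mathcal R_{v,w}(\C_{>0}) := \phi_{v,w}(\C_{>0}^{m})$. Injectivity of $\phi_{v,w}$ is formal from input~(2), and the images are pairwise disjoint since $\phi_{v,w}$ outputs points of flag-minor support $\Pi_{v,w}$ and the $\Pi_{v,w}$ are distinct. For the covering, take $p \in \Fl_n(\C_{\geq 0})$: its flag-minor support equals the support of $\val(p)$, which lies in $\TrFl_n^{\geq 0} \subseteq \FlDr_n^{\geq 0}$ by \Cref{prop:closure}; by the prior complete-flag results \cite{Bor, JLLO} this support is a flag positroid of ranks $(1, \dots, n)$, and the set of such coincides with $\{\Pi_{v,w} : v \leq w\}$ (both being the flag-minor supports of points of $\Fl_n(\RR)^{\geq 0}$, using \cite{TW13, Bor} to identify Lusztig and Pl\"ucker nonnegativity in the complete-flag case). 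So the support of $p$ is $\Pi_{v,w}$ for a unique pair $v \leq w$, all nonzero flag minors of $p$ lie in $\C_{>0}$, and feeding them into $\psi_{v,w}$ returns parameters in $\C_{>0}^{m}$ whose image under $\phi_{v,w}$ is $p$; hence $p \in \mathcal R_{v,w}(\C_{>0})$.

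The step I expect to be the main obstacle is this covering statement. Over $\RR$ it is underpinned by topology --- local closedness of the cells and Rietsch's theorem that their union is all of $\Fl_n(\RR)^{\geq 0}$ --- but over the non-archimedean field $\C$ no such input is available, so the argument must be purely algebraic. This is precisely why inputs~(1) and~(2) are needed in their ``on the nose'' form: the honest subtraction-freeness of the Marsh--Rietsch inverse guarantees that it carries $\C_{\geq 0}$-valued flag minors back to $\C_{>0}$-valued parameters with no sign ambiguity, and the identification of the admissible supports with flag positroids of ranks $(1, \dots, n)$ rests on the prior complete-flag results \cite{Bor, JLLO}, which are logically anterior to the theorem being proved here. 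With these in hand, disjointness is immediate from the distinctness of supports, and the theorem follows.
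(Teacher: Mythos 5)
Your proposal is correct, and it supplies a concrete argument where the paper's own proof is essentially a citation: the paper simply asserts that Marsh--Rietsch's Theorem 11.3 ``holds over Puiseux series'' and then cites Rietsch--Williams for the positivity of the Pl\"ucker coordinates. Your plan instead transfers the \emph{real} Marsh--Rietsch result to $\C$ rather than re-running their proof over a semifield; the extra ingredients you invoke are (a) the subtraction-free chamber-ansatz inverse $\psi_{v,w}$, and (b) the identification of flag-minor supports of points of $\Fl_n(\C_{\geq 0})$ with the $\Pi_{v,w}$, which you derive by specializing $t$ and invoking the real result of Bor (the complete-flag case of \Cref{thm:BK}) together with Rietsch's cell decomposition \eqref{eq:Rietsch}. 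Neither of these is stated in the paper's proof, but both are standard and their use here is not circular: \Cref{prop:closure}, Rietsch's decomposition, and Bor's Lusztig--Pl\"ucker equality are all logically anterior to \Cref{thm:cell}, while the potentially circular route through \Cref{cor:real} and \Cref{prop:afflin} is avoided as long as you keep the parenthetical route through the non-tropical $\Fl_n(\RR)^{\geq 0}$ as the actual mechanism. Your instinct that the covering step is where the paper's brevity conceals the most work is sound; the paper's assertion is justified because Marsh--Rietsch's arguments are semifield-agnostic (or, alternatively, by transfer over the real closed field $\RR\{\{t\}\}$), but your explicit transfer via subtraction-free formulas is arguably cleaner to verify and makes visible exactly which positivity inputs are required. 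The two proofs use the same sources but organize the logic differently; yours buys a more self-contained covering argument at the cost of invoking one additional fact (the subtraction-free inverse), which the paper leaves implicit inside the citation.
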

\begin{proof}
	The first statement comes from \cite[Theorem 11.3]{MR04};  
Marsh and Rietsch were working over $\RR$ and $\RR_{>0}$ but the same proof holds over
Puiseux series. The statement that the parameterization can be expressed
	as an embedding into projective space using positive polynomials
	comes from 
 \cite[Proposition 5.1]{RW08}.
\end{proof}

\begin{cor}\label{cor:project}
Each $m$-dimensional positroid cell $S_{\mathcal{B}}(\C_{>0})$
in the nonnegative Grassmannian 
	$\Gr_{d,n}(\C_{\geq 0})$  is the projection
	$\pi_d(\mathcal{R}_{v,w}(\C_{>0}))$ of some positive Richardson 
of dimension $m=\ell(w)-\ell(v)$ in $\Fl_n(\C_{\geq 0})$, 
so we get a 
subtraction-free rational map  
$$\pi_d \circ \phi_{v,w}: (\C_{>0})^{m} \to \mathcal{R}_{v,w}(\C_{>0}) 
	\to S_{\mathcal{B}}(\C_{>0}).$$
\end{cor}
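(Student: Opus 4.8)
The plan is to combine Postnikov's positroid cell decomposition of $\Gr_{d,n}^{\geq 0}$ with Rietsch's realization of the cells of a nonnegative partial flag variety as projections of positive Richardson cells of $\Fl_n^{\geq 0}$, and then to feed in the explicit positive parameterization provided by \Cref{thm:cell}. The only substantive input beyond these cited results is the observation that everything in sight is subtraction-free, hence insensitive to replacing $\RR$ by the Puiseux series field $\C$.

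First I would recall the two relevant partitions of $\Gr_{d,n}(\C_{\geq 0})$. On the one hand, Postnikov's positroid parameterizations \cite{Pos} are subtraction-free, so they remain valid with $\RR_{>0}$ replaced by $\C_{>0}$; thus $\Gr_{d,n}(\C_{\geq 0})$ is partitioned into positroid cells $S_{\mathcal B}(\C_{>0})$, each of some dimension $m$. On the other hand, as recalled in \Cref{def:backgroundflag}, Rietsch's cell decomposition of a nonnegative partial flag variety is obtained by projecting the cells of $\Fl_n^{\geq 0}$ \cite{rietsch, Rie06}; specialized to $\br = (d)$, this yields $\Gr_{d,n}^{\geq 0} = \bigsqcup_{v \leq w} \pi_d(\mathcal R_{v,w}^{>0})$, the union over pairs $v \leq w$ with $w$ a minimal-length coset representative of $W/W_{(d)}$, where $\pi_d$ restricts to a homeomorphism on each $\mathcal R_{v,w}^{>0}$ and $\dim \pi_d(\mathcal R_{v,w}^{>0}) = \ell(w) - \ell(v)$. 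Since the underlying Marsh--Rietsch parameterizations $\phi_{v,w}$ of \Cref{thm:cell} are subtraction-free, all of this carries over verbatim with $\RR_{>0}$ replaced by $\C_{>0}$.

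Next I would invoke \cite[Corollary 1.2]{TW13}, which identifies Rietsch's cell decomposition of $\Gr_{d,n}^{\geq 0}$ with Postnikov's positroid cell decomposition. Hence each $m$-dimensional positroid cell $S_{\mathcal B}(\C_{>0})$ is exactly $\pi_d(\mathcal R_{v,w}(\C_{>0}))$ for a suitable pair $v \leq w$ with $w$ minimal in its coset, and $m = \ell(w) - \ell(v)$. Finally, \Cref{thm:cell} supplies a surjection $\phi_{v,w} \colon (\C_{>0})^{\ell(w)-\ell(v)} \to \mathcal R_{v,w}(\C_{>0})$ whose composite with the Pl\"ucker embedding is given by polynomials with positive coefficients; composing with $\pi_d$, which on Pl\"ucker coordinates simply retains the block $(p_I)_{I \in \binom{[n]}{d}}$ and discards the rest and is therefore trivially subtraction-free, produces the desired subtraction-free rational map $\pi_d \circ \phi_{v,w} \colon (\C_{>0})^m \to S_{\mathcal B}(\C_{>0})$, surjecting onto the positroid cell. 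The main point requiring attention is the matching of the two cell decompositions together with the dimension-preservation of $\pi_d$ on each Richardson cell; both are furnished by the cited results once one checks, as above, that they are insensitive to replacing $\RR$ by $\C$, which holds precisely because every map involved is subtraction-free.
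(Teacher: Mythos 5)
Your proof takes essentially the same route as the paper's: the paper's proof simply notes that the identification of each positroid cell with the projection of a positive Richardson was already recorded in \Cref{def:backgroundflag} (via Rietsch's cell decomposition and the matching with Postnikov's decomposition from \cite[Corollary~1.2]{TW13}), then applies \Cref{thm:cell}. You spell out the same chain of citations in more detail, correctly flag that everything transfers from $\RR$ to $\C$ by subtraction-freeness, and nothing is missing.
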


\begin{proof}
That fact that each positroid cell is the projection of a positive Richardson
was discussed in \Cref{def:backgroundflag}.  
The result now follows from \Cref{thm:cell}.
\end{proof}

\begin{proof}[Proof of \Cref{lem:extend}]
Using \cite[Theorem 9.2]{AHLS}, the fact that 
$\mu_d \in \Dr_{d;n}^{\geq 0}$ with rational coordinates implies that 
$\mu_d= \val(\{\Delta_I(V_d)\})$ 
for some subspace
$V_d\in \Gr_{d,n}(\C_{\geq 0})$, and hence $V_d$ lies in 
some positroid cell 
	$S_{\mathcal{B}}(\C_{>0})$ over Puiseux series.  

By \Cref{cor:project}, $V_d$ is the projection of 
a point 
	$(V_1,\dots,V_n)$ of $\Fl_n(\mathcal{C}_{\geq 0})$,
 which in turn 
is the image of a point $(x_1,\dots,x_{m})\in (\mathcal{C}_{>0})^m$,
and the Pl\"ucker coordinates $\Delta_I(V_j)$ of each $V_j$
are expressed as positive polynomials 
$\Delta_I(x_1,\dots,x_m)$ in the parameters $x_1,\dots,x_m$.

In particular, we have subtraction-free maps 
$$\pi_d \circ \phi_{v,w}: (\C_{>0})^m \to \Fl_n(\C_{\geq 0}) \to \Gr_{d,n}(\C_{\geq 0})$$
taking 
$$(x_1,\dots,x_m) \mapsto \{\Delta_I(x_1,\dots,x_m) \ \vert \ 
I \subset [n]\} \mapsto \left\{\Delta_I(x_1,\dots,x_m) \ \vert \ 
I \in {[n] \choose d}\right\}.$$
The fact that the maps $\phi_{v,w}$ and $\pi_d$ are subtraction-free
 implies by \Cref{lem:trop} that we can tropicalize them, obtaining 
maps 
$$\Trop(\pi_d \circ \phi_{v,w}): \RR^m \to \TrFl_n^{\geq 0} \to \TrGr_{d,n}^{\geq 0}$$
taking 
	$$(\val(x_1),\dots,\val(x_m)) \mapsto \{\val(\Delta_I(x_1,\dots,x_m)) \ \vert \ 
	I \subset [n]\} \mapsto \left
\{\val(\Delta_I(x_1,\dots,x_m)) \ \vert \ 
	I \in {[n] \choose d}\right\}.$$

We now let 
	$\mu_{d+ 1}= \{\val(\Delta_I(V_{d+ 1})) \ \vert \ I\in {[n] \choose d+1}\}$
and 	$\mu_{d- 1}= \{\val(\Delta_I(V_{d- 1})) \ \vert \ I\in {[n] \choose d-1}\}$.  
	By construction we have that 
all the three-term (incidence) Pl\"ucker relations hold for 
$(\mu_d, \mu_{d+1})$, and similarly for $(\mu_{d-1}, \mu_{d})$.
Therefore 
$(\mu_d, \mu_{d+1}) \in \FlDr_{d, d+1; n}^{\geq 0}$ and 
$(\mu_{d-1}, \mu_{d}) \in \FlDr_{d-1, d; n}^{\geq 0}$.  
\end{proof}

The following consequence of 
\Cref{lem:extend} is very useful.
\begin{cor}\label{lem:extend2}
	Let $a' \leq a \leq b \leq b'$ be positive integers,
	and let $\br=(a,a+1,\dots,b)$ and 
		 $\br'=(a',a'+1,\dots,b')$ be sequences of consecutive
		 integers.  Then any point 
		 $(\mu_a,\dots,\mu_b)\in \FlDr_{\br;n}^{\geq 0}$
		 with rational coordinates
		 can be extended to a point
		 $(\mu_{a'},\mu_{a'+1},\dots,\mu_a,\dots, \mu_b, \dots, \mu_{b'})\in \FlDr_{\br';n}^{\geq 0}$.
\end{cor}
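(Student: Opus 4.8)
The plan is to reduce the statement to a single one-step extension and then iterate. It suffices to establish the following two claims: (i) if $(\mu_a,\dots,\mu_b)\in\FlDr_{(a,\dots,b);n}^{\geq 0}$ has rational coordinates, then there is a $\mu_{b+1}$, also with rational coordinates, such that $(\mu_a,\dots,\mu_b,\mu_{b+1})\in\FlDr_{(a,\dots,b+1);n}^{\geq 0}$; and (ii) the symmetric statement prepending a $\mu_{a-1}$ at the bottom. Granting these, one first applies (i) repeatedly to climb from $b$ up to $b'$, then applies (ii) repeatedly to descend from $a$ down to $a'$; since each single step preserves rationality of all coordinates, there is no obstruction to iterating, and the two halves do not interfere because each step only adds relations in one new rank.

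For claim (i): the constituent $\mu_b$ lies in $\Dr_{b;n}^{\geq 0}$ and has rational coordinates, so \Cref{lem:extend} supplies $\mu_{b+1}\in\Dr_{b+1;n}^{\geq 0}$ with $(\mu_b,\mu_{b+1})\in\FlDr_{b,b+1;n}^{\geq 0}$; moreover the proof of \Cref{lem:extend} produces $\mu_{b+1}$ of the form $\val(\{\Delta_I(V_{b+1})\})$, so its coordinates lie in $\QQ\cup\{\infty\}$, i.e.\ they are rational. It then remains to check that the full tuple lies in the nonnegative flag Dressian, which by \Cref{prop:3terms} reduces to two verifications. First, the support $(\underline{\mu_a},\dots,\underline{\mu_b},\underline{\mu_{b+1}})$ must be a flag matroid: we know $(\underline{\mu_a},\dots,\underline{\mu_b})$ is a flag matroid, and $(\underline{\mu_b},\underline{\mu_{b+1}})$ is a flag matroid, and by \cite[Theorem 1.7.1 and Theorem 1.11.1]{BGW03} a sequence of matroids is a flag matroid as soon as every consecutive pair is, so this holds. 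Second, $(\mu_a,\dots,\mu_{b+1})$ must satisfy every nonnegative-tropical three-term Pl\"ucker relation in $\mathscr P_{(a,\dots,b+1);n}^{(3)}$: the type $(r;n)$ Grassmann--Pl\"ucker relations for $a\le r\le b$ and the type $(r,r+1)$ incidence-Pl\"ucker relations for $a\le r\le b-1$ hold because $(\mu_a,\dots,\mu_b)\in\FlDr_{(a,\dots,b);n}^{\geq 0}$; the type $(b+1;n)$ relations hold because $\mu_{b+1}\in\Dr_{b+1;n}^{\geq 0}$; and the type $(b,b+1)$ relations hold because $(\mu_b,\mu_{b+1})\in\FlDr_{b,b+1;n}^{\geq 0}$. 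Since every element of $\mathscr P_{(a,\dots,b+1);n}^{(3)}$ is of one of these forms, claim (i) follows.

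Claim (ii) is proved in exactly the same way, using the second assertion of \Cref{lem:extend} to produce $\mu_{a-1}\in\Dr_{a-1;n}^{\geq 0}$ (again with rational coordinates, for the same reason) with $(\mu_{a-1},\mu_a)\in\FlDr_{a-1,a;n}^{\geq 0}$, and then verifying the hypotheses of \Cref{prop:3terms} for $(\mu_{a-1},\mu_a,\dots,\mu_b)$ verbatim as above. I do not expect any serious obstacle here: all the real content sits in \Cref{lem:extend} and \Cref{prop:3terms}, and what remains is bookkeeping. The two points that genuinely need attention are (a) checking that \Cref{lem:extend} outputs a point with rational coordinates, so that it can legitimately be fed back into \Cref{lem:extend} at the next step, and (b) invoking the characterization of flag matroids via consecutive pairs to glue the support data; both are handled above.
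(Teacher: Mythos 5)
Your proof is correct and follows essentially the same route as the paper: iterate \Cref{lem:extend} to extend rank by rank, then conclude via \Cref{prop:3terms}. The only differences are organizational --- you verify flag-Dressian membership after each single-step extension rather than once at the end, and you make explicit the check that the support remains a flag matroid (via the consecutive-pairs criterion), a point the paper's proof leaves implicit --- but the substance is identical.
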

\begin{proof}
We start with $\bmu = (\mu_a, \mu_{a+1}, \dots, \mu_b) \in 
\FlDr_{\br;n}^{\geq 0}$.   We take $\mu_b$ and repeatedly use \Cref{lem:extend} to construct
$\mu_{b+1}$, then $\mu_{b+2}, \dots, \mu_{b'}$.
Similarly we take $\mu_a$ and use \Cref{lem:extend} to construct
$\mu_{a-1}, \mu_{a-2},\dots, \mu_{a'}$.
Now by construction 
$(\mu_{a'},\mu_{a'+1},\dots, \mu_a,\dots,\mu_b,\dots,\mu_{b'})$ 
satisfies:
\begin{itemize}
\item $\mu_i \in \Dr_{i;n}^{\geq 0}$ for $i =a', a'+1,\dots, b'$; 
\item all three-term incidence-Pl\"ucker relations hold 
	(because the three-term incidence-Pl\"ucker relations occur only in consecutive ranks).
\end{itemize}
	Therefore $(\mu_{a'},\mu_{a'+1},\dots, \mu_{b'})\in \FlDr_{\br';n}^{\geq 0}$ by \Cref{prop:3terms}.
\end{proof}

\begin{thm}\label{thm:almostAB}
	Let $\br=(a,a+1,\dots,b)$ be a sequence of consecutive integers, and 
let $\bmu \in \FlDr_{\br;n}^{\geq 0}$ with rational coordinates.  Then 
 $\bmu\in \TrFl_{\br;n}^{\geq 0}$.
\end{thm}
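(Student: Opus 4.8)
The plan is to reduce the claim to the two already-established cases of the equality $\TrFl^{\geq 0} = \FlDr^{\geq 0}$, namely the Grassmannian case $\br = (d)$ (due to \cite{SW21, AHLS}) and the complete flag case $\br = (1,2,\dots,n)$ (due to \cite{Bor}), using \Cref{lem:extend2} as the bridge. Given $\bmu = (\mu_a,\dots,\mu_b) \in \FlDr_{\br;n}^{\geq 0}$ with rational coordinates, I would first invoke \Cref{lem:extend2} with $a' = 1$ and $b' = n$ to extend $\bmu$ to a point $\widetilde{\bmu} = (\mu_1,\mu_2,\dots,\mu_n) \in \FlDr_{(1,2,\dots,n);n}^{\geq 0}$, still with rational coordinates (the construction in \Cref{lem:extend} produces valuations of honest Puiseux-series Pl\"ucker coordinates, hence rational values).

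Next I would apply the known complete-flag case: since $\widetilde{\bmu} \in \FlDr_n^{\geq 0} = \TrFl_n^{\geq 0}$, by \Cref{prop:closure} there is a point $p \in \Fl_n(\C_{\geq 0})$ with $\val(p) = \widetilde{\bmu}$. Writing $p = (V_1 \subset V_2 \subset \dots \subset V_n)$, the partial flag $(V_a \subset V_{a+1} \subset \dots \subset V_b)$ obtained by forgetting the other subspaces is a point of $\Fl_{\br;n}(\C_{\geq 0})$, because the projection map $\pi$ forgetting subspaces sends $\Fl_n(\C_{\geq 0})$ into $\Fl_{\br;n}(\C_{\geq 0})$ — this is clear at the level of Pl\"ucker coordinates, since the Pl\"ucker coordinates of the projected flag are precisely a subset of those of $p$, hence still in $\C_{\geq 0}$. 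Then $\val$ of this projected point is exactly $\bmu$, so $\bmu \in \{\val(q) : q \in \Fl_{\br;n}(\C_{\geq 0})\} \subseteq \TrFl_{\br;n}^{\geq 0}$ by \Cref{prop:closure}, as desired.

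The one point requiring a little care — and the place I would expect a referee to want details — is the very first reduction step: \Cref{lem:extend2} as stated requires $a \geq 1$ and produces the extension down to rank $a' = 1$ and up to rank $b' = n$, and one must make sure the ranks $1$ and $n$ are legitimate (rank $n$ on $[n]$ is the trivial matroid, which is fine, and \Cref{lem:extend} is stated for $\Dr_{d;n}$ with no upper restriction beyond what makes $\Dr_{d+1;n}$ nonempty, i.e. $d+1 \leq n$). If one is uneasy about rank $n$, an alternative is to extend only to $\br' = (1,2,\dots,b)$ when $b < n$ and still apply the complete-flag result after a further trivial extension, or to note that $\Fl_{(1,\dots,b);n}^{\geq 0}$ already suffices: one needs $\widetilde{\bmu}$ to lie in some $\FlDr_{\br';n}^{\geq 0}$ for which $\TrFl^{\geq 0} = \FlDr^{\geq 0}$ is known and which contains $\br$ as a sub-sequence of consecutive ranks, and the complete flag is the safe choice. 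Either way, the genuinely substantive content is entirely imported: \Cref{lem:extend2}, the complete-flag equality $\TrFl_n^{\geq 0} = \FlDr_n^{\geq 0}$, and \Cref{prop:closure}; the argument here is just the assembly, with the only real verification being that forgetting subspaces preserves nonnegativity of Pl\"ucker coordinates. Note this theorem only handles rational coordinates; upgrading to all of $\TrFl_{\br;n}^{\geq 0} = \FlDr_{\br;n}^{\geq 0}$ will require a separate density/closure argument, presumably carried out after this theorem.
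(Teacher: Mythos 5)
Your proposal is correct and takes essentially the same route as the paper: extend $\bmu$ to a complete flag Dressian point via \Cref{lem:extend2}, invoke the complete-flag equality $\FlDr_n^{\geq 0} = \TrFl_n^{\geq 0}$ from \cite{Bor}, and project back down. The paper's proof simply states that membership in $\TrFl_n^{\geq 0}$ implies membership in $\TrFl_{\br;n}^{\geq 0}$ after forgetting coordinates, whereas you justify this projection step explicitly through \Cref{prop:closure} and Puiseux-series realizations — a harmless elaboration, not a different argument.
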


\begin{proof}
We start with $\bmu = (\mu_a, \mu_{a+1}, \dots, \mu_b) \in 
\FlDr_{\br;n}^{\geq 0}$, and use \Cref{lem:extend2} to construct
$(\mu_{1},\dots, \mu_{n})\in \FlDr_{n}^{\geq 0}$.
Now \cite[Theorem 5.21\textsuperscript{trop}]{Bor} states that $\FlDr_n^{\geq 0} = \TrFl_n^{\geq 0}$.
Hence, we have $(\mu_1,\dots, \mu_n) \in \TrFl_n^{\geq 0}$, so $(\mu_a,\mu_{a+1},\dots,\mu_b) \in \TrFl_{\br,n}^{\geq 0}$.
\end{proof}

\begin{proof}[Proof of \ref{eqvs:TrFl}$\iff$\ref{eqvs:FlDr} in \Cref{thm:eqvs}]
We only need show that \ref{eqvs:FlDr}$\implies$\ref{eqvs:TrFl}, i.e.\ that $\TrFl_{\br;n}^{\geq 0} \supseteq \FlDr_{\br;n}^{\geq 0}$, since the other direction is trivial.  But this follows from \Cref{thm:almostAB} because the points in $\FlDr_{\br;n}^{\geq 0}$ with rational coordinates are dense in $\FlDr_{\br;n}^{\geq 0}$, and $\TrFl_{\br;n}^{\geq 0}$ is closed. 
\end{proof}

\begin{rem}
Note that our method of proof crucially used the fact that $\br$ is a sequence of consecutive integers:
we used \Cref{lem:extend} to fill in the ranks from $b$ through $n$ and from $a$ down to $1$.
	But if say we were considering $\br = \{a, b\}$ with $b-a>1$ and $\bmu = (\mu_a, \mu_b)$, we could not guarantee using \Cref{lem:extend}
	that we could construct $\mu_{b-1}, \mu_{b-2},\dots, \mu_{a+1}$ in a way that is consistent with $\mu_a$.
\end{rem}

\begin{rem}\label{BK2}
Recall from \Cref{thm:BK} that if $\br$ is a sequence of consecutive integers, 
the two notions of the positive/nonnegative part of the 
flag variety (see \Cref{def:2positive})
 coincide.
	The method used to prove the equivalence of \ref{eqvs:TrFl} and \ref{eqvs:FlDr} in \Cref{thm:eqvs} can be applied in a non-tropical context to prove \Cref{thm:BK} in an alternate way. We start by noting that the result holds when $\br=(a)$, which is to say, for the nonnegative Grassmannian \cite[Corollary 1.2]{TW13} and also when $\br=(1,2,\ldots, n)$, which is to say, for the  nonnegative complete flag variety 
	\cite[Theorem 5.21]{Bor}. To prove the result for $\br=(a,a+1,\ldots, b)$, we start with a flag $V_{\bullet} = (V_a,\dots,V_{b})$ in ranks $\br$ whose Pl\"ucker coordinates are all nonnegative, so that $V_{\bullet}$ is Pl\"ucker nonnegative. As in \Cref{lem:extend}, we can use the $\br=(a)$ case to argue that the flag can be extended to lower ranks in such a way that all the Pl\"ucker coordinates are nonnegative. Dually, we can extend to higher ranks from the $\br=(b)$ case. This yields a complete flag $(V_1,\dots,V_n)$ 
	with all nonnegative Pl\"ucker coordinates. We can then apply the result in the complete flag case to conclude that $(V_1,\dots,V_n)$ lies in $\Fl_{n}^{\geq 0}$. Thus, $V_{\bullet}$ is a projection of the  nonnegative complete flag $(V_1,\dots,V_n)$ and itself lies in 
	$\Fl_{\br;n}^{\geq 0}$, which is to say, $V_{\bullet}$ is Lusztig nonnegative. 
\end{rem}

The \newterm{strictly positive tropicalization} of a partial flag variety $\TrFl_{\br;n}^{> 0}$ is the subset of $\TrFl_{\br;n}^{\geq 0}$ consisting of points whose coordinates are never  $\infty$.  Define similarly the \newterm{strictly positive flag Dressian} $\FlDr_{\br;n}^{>0}$.  The weaker version of \Cref{thm:almostAB} stating that $\TrFl_n^{>0} = \FlDr_n^{>0}$ was established in \cite[Lemma 19]{JLLO} as follows.
One starts by noting that if $\mu \in \Dr_{r+m;n+m}^{\geq 0}$, then the sequence of minors $(\mu_r, \ldots, \mu_{r+m})$ where $\mu_{r+i} = \mu\setminus\{n+1, \ldots, n+i\}/\{n+i+1, \ldots, n+m\}$ is a point in $\FlDr_{r,\ldots,r+m;n}^{\geq 0}$.
Then, the crucial step is a construction in discrete convex analysis \cite[Proposition 2]{MS18} that shows that every element of $\FlDr_n^{>0}$ arises from an element of $\Dr_{n;2n}^{>0}$ in this way.  One then appeals to $\Gr_{r;n}^{>0} = \Dr_{r;n}^{>0}$ established in \cite{SW21}.

 \Cref{eg:cantlift} shows  that the above argument does not work if one replaces
 ``strictly positive'' with ``nonnegative.''  In particular, 
 the crucial step fails: that is, not every element of $\FlDr_n^{\geq 0}$ arises from an element of $\Dr_{n;2n}^{\geq 0}$ in such a way.

\begin{eg}\label{eg:cantlift}
Let $(M_1, M_2, M_3)$ be matroids on $[3]$ whose sets of bases are $(\{1,3\}, \{13\}, \{123\})$.  The matrix 
\[
\begin{bmatrix}
1 & 0 & 1\\ 
0 & 0 & 1\\
0 & -1 & 0
\end{bmatrix}
\]
shows that it is a flag positroid.  However, we 
claim that there is no positroid $M$ of rank $3$ on $[6]$ such that $M_1 = M\setminus 4 / 56$, $M_2 = M\setminus 45 /6$, and $M_3 = M\setminus456$.  
	Since all three cases involve deletion by 4, 
if we replace $M\setminus 4$ by $M'$, and decrease each of $5,6$ by $1$,
then	we are claiming that there is no positroid $M'$ of rank 3 on $[5]$ such that
\begin{equation}\label{eq:minors}
M_1 = M'/45,\ M_2 = M'\setminus 4 / 5,\ \text{and }M_3 = M'\setminus 45.
\end{equation}
From $M_1 = M'/45$ and $M_2 = M'\setminus 4 / 5$, we have that $M'/5$ has bases $\{14,34,13\}$, and similarly, we have $M'\setminus 4$ has bases $\{135,123\}$.  Hence, the set of bases of $M'$ contains $\{123, 135,145,345\}$, and does not contain $\{125,235,245\}$.  
By considering the Pl\"ucker relation 
	$$p_{134} p_{235} = p_{123} p_{345}+p_{135} p_{234},$$
	we see that no positroid satisfies these properties.
\end{eg}

\section{Positively oriented flag matroids}\label{sec:POM}

In this section we explain the relationship between the nonnegative flag
Dressian and positively oriented flag matroids, 
and we apply our previous results to flag matroids.
In particular, we prove 
 \Cref{cor:real}, which says that every positively oriented flag
 matroid of consecutive ranks is realizable.
We also  prove  \Cref{lem:extend3}, which says 
that a positively oriented flag matroid
of consecutive ranks $a,\dots,b$ can be extended
to ranks $a',\dots,b'$ (for $a'\leq a \leq a \leq b$).

\subsection{Oriented matroids and flag matroids}\label{sec:posorient}

We give here a brief review of oriented matroids in terms of Pl\"ucker relations.
Let $\SS = \{-1,0,1\}$ be the 
\emph{hyperfield of signs}.  For a polynomial $f = \sum_{a\in \mathcal A} c_a x^a \in \RR[x_1, \ldots, x_m]$, we say that an element $\chi\in \SS^m$ is \newterm{in the null set} of $f$ if the set $\{\operatorname{sign}(c_a) \chi^a\}_{a\in \mathcal A}$ is either $\{0\}$ or contains $\{-1,1\}$.

\begin{defn}\label{def:oriented}
	An \newterm{oriented matroid} of rank $r$ on $[n]$ is a point $\chi \in \SS^{\binom{[n]}{r}}$, called a \emph{chirotope},
	such that $\chi$ is in the null set of $f$ for every $f\in \mathscr P_{r,r;n}$.  Similarly, an \newterm{oriented flag matroid} of ranks $\br$ is a 
	point $\boldsymbol\chi = (\chi_1, \ldots, \chi_k) \in \prod_{i = 1}^k \SS^{\binom{[n]}{r_i}}$ 
	such that $\boldsymbol\chi$ is in the null set of $f$ for every $f\in \mathscr P_{\br;n}$.
\end{defn}

While these definitions may seem different from those in the standard reference \cite{BLVSWZ99} on oriented matroids,  \Cref{def:oriented} is equivalent to \cite[Definition 3.5.3]{BLVSWZ99} by \cite[Example 3.33]{BB19}.  The definition of oriented flag matroid here is equivalent to the definition of a sequence of oriented matroid quotients \cite[Definition 7.7.2]{BLVSWZ99} by \cite[Example above Theorem D]{JL}.

\begin{defn}
A \newterm{positively oriented matroid} is an oriented matroid $\chi$ such that $\chi$ only takes values 0 or 1.  
Similarly, we define a \newterm{positively oriented flag matroid} to be an oriented flag matroid $\boldsymbol\chi$ such that $\boldsymbol\chi$ only takes values 0 or 1.
\end{defn}

A positroid $M$ defines a positively oriented matroid $\chi = \chi_M$ 
where $\chi$ takes value 1 on its bases and 0 otherwise.  In 1987, 
da Silva \cite{daS} conjectured that every positively oriented matroid arises in this way; this conjecture was subsequently proved in  \cite{ARW17}
and then \cite{SW21}.
\begin{thm}\label{thm:realizable}\cite{ARW17}
Every positively oriented matroid $\chi$ is realizable, i.e. 
$\chi$ has the form $\chi_M$ for some positroid $M$.
\end{thm}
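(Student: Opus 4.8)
The plan is to deduce this from the single-rank case $\br=(d)$ of \Cref{thm:eqvs}, in the spirit of \cite{SW21}. The guiding observation is that all of the orientation data of a \emph{positively} oriented matroid is already recorded by its support: if $\chi\in\SS^{\binom{[n]}{d}}$ is a positively oriented matroid, then $\chi$ is the all-positive chirotope supported on a matroid $M$ with $\mathcal B(M)=\{I:\chi_I=1\}$, and it is enough to show that $M$ is a positroid, for then $\chi=\chi_M$ by definition of $\chi_M$.

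First I would turn $M$ into a tropical Pl\"ucker vector. Let $\bmu\in\PP\big(\TT^{\binom{[n]}{d}}\big)$ be defined by $\mu_I=0$ for $I\in\mathcal B(M)$ and $\mu_I=\infty$ otherwise, so that $\underline\bmu=M$. The heart of the argument is then a short dictionary. Fix a three-term Grassmann--Pl\"ucker relation $f=x_{Sij}x_{Sk\ell}-x_{Sik}x_{Sj\ell}+x_{Si\ell}x_{Sjk}$ as in \Cref{defn:3terms}. Each of its three monomials has $\bmu$-value in $\{0,\infty\}$, and this value is $0$ precisely when the corresponding product of two $\chi$-coordinates equals $1$. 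Unwinding \Cref{def:oriented}, the condition that $\chi$ lie in the null set of $f$ says exactly that either all three of these products vanish (so all three monomial values equal $\infty$ and the minimum is not finite), or else the middle, negatively signed product equals $1$ together with at least one of the two positively signed products; this is precisely the statement that the finite minimum among the three monomial values is attained at least twice, including at a monomial of each sign, i.e.\ that $\bmu\in V_{\trop}^{\geq 0}(f)$. Hence $\bmu$ satisfies every three-term positive-tropical Grassmann--Pl\"ucker relation, and since $\underline\bmu=M$ is a matroid, $\bmu$ satisfies condition \ref{eqvs:3terms} of \Cref{thm:eqvs} with $\br=(d)$.

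Now I invoke \Cref{thm:eqvs} in the case $\br=(d)$: the implications \ref{eqvs:3terms}$\implies$\ref{eqvs:FlDr}$\implies$\ref{eqvs:subdiv} give that $\bmu\in\Dr_{d;n}^{\geq 0}=\TrGr_{d;n}^{\geq 0}$ and that every face of the coherent subdivision $\mathcal D_{\bmu}$ of $P(\underline\bmu)$ is a positroid polytope. Because the coordinates $\mu_I$ on the support are all equal to $0$, the lifted polytope $\Conv\{(\be_I,\mu_I):I\in\mathcal B(M)\}$ is flat, so $\mathcal D_{\bmu}$ is the trivial subdivision of $P(M)$; taking its unique maximal face shows that $P(M)$ itself is a positroid polytope, hence $M$ is a positroid. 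Alternatively, from $\bmu\in\TrGr_{d;n}^{\geq 0}$, which has rational coordinates, \Cref{prop:closure} produces a point of $\Gr_{d;n}(\C_{\geq 0})$ realizing $M$, and specializing the parameter $t$ to a sufficiently small positive real yields a genuine positroid realization; either route finishes the proof.

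I expect the only genuinely new work to be the short dictionary in the second paragraph; all of the depth is imported from \Cref{thm:eqvs} for $\br=(d)$, which for a single rank packages exactly the equality of the nonnegative Dressian with the nonnegative tropical Grassmannian and the positroidality of the faces of its induced subdivisions \cite{SW21, AHLS, LPW}. The one place that needs a touch of care, should one take the second route, is the descent from a realization over the nonnegative Puiseux series to one over $\RR_{\geq 0}$: this is the standard specialization of $t$ to a small positive real, which preserves the finitely many vanishing and non-vanishing conditions that determine the matroid $M$, and it can also be quoted directly from \cite{SW21}.
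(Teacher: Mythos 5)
This statement is cited in the paper (to \cite{ARW17} and \cite{SW21}) rather than proved from scratch; the closest thing the paper offers to a proof is the proof of \Cref{cor:real}, which specializes to the present theorem when $k=1$, and which is routed through \Cref{lem:signembed}, the equivalence \ref{eqvs:TrFl}$\iff$\ref{eqvs:FlDr} (for a single rank this is the external fact $\Dr_{d;n}^{\geq 0}=\TrGr_{d;n}^{\geq 0}$ from \cite{SW21,AHLS}), and \Cref{prop:closure}. Your second route is exactly this chain: encode $\chi$ as a $\{0,\infty\}$-valued point $\bmu$ (your ``short dictionary'' is a re-derivation of \Cref{lem:signembed}), pass through $\Dr_{d;n}^{\geq 0}=\TrGr_{d;n}^{\geq 0}$ via \ref{eqvs:3terms}$\implies$\ref{eqvs:FlDr}, lift to $\Gr_{d,n}(\C_{\geq 0})$ via \Cref{prop:closure}, and specialize $t$. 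That route is sound and matches the paper's (and \cite{SW21}'s) argument.

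Your first route, however, is circular in the context of this paper's dependency graph, so you should drop it as a stated alternative. You invoke \ref{eqvs:FlDr}$\implies$\ref{eqvs:subdiv} to conclude that $P(M)$ is a positroid polytope. But the paper's proof of \ref{eqvs:FlDr}$\implies$\ref{eqvs:subdiv} goes through \Cref{prop:afflin}, and the proof of \Cref{prop:afflin}(1) --- the clause that upgrades ``positively oriented flag matroid'' to ``flag positroid'' in consecutive ranks --- explicitly cites \Cref{cor:real}. For $\br=(d)$, \Cref{cor:real} (interpreted as the paper does, as ``every positively oriented flag matroid of consecutive ranks is realizable'') is precisely \Cref{thm:realizable}. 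So using \ref{eqvs:FlDr}$\implies$\ref{eqvs:subdiv} to prove \Cref{thm:realizable} presupposes the very realizability statement you are after. The fix is simply to commit to your second route: once you know $\bmu\in\TrGr_{d;n}^{\geq 0}$, \Cref{prop:closure} and the $t\to 0$ specialization deliver a realization directly, with no detour through the subdivision clause \ref{eqvs:subdiv}. Everything else in your write-up, including the verification that $\bmu$ satisfies \ref{eqvs:3terms}, is correct and closely follows \Cref{lem:signembed} together with \Cref{prop:3terms}.
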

By \Cref{thm:realizable}, each positively oriented flag matroid
is a sequence of positroids which is also an oriented flag matroid.

In this section 
we will prove \Cref{cor:real}, which generalizes
\Cref{thm:realizable}, and says that every positively oriented flag matroid
$(\chi_1,\dots,\chi_k)$ of consecutive ranks $r_1<\dots<r_k$ 
can be realized by a flag positroid.
But before we prove it, let us give an example that shows that imposing 
the oriented flag matroid condition is stronger than imposing that we have a realizable flag matroid whose consistent matroids are positroids.

\begin{eg}\label{eg:notreal}
We give an example of a realizable flag matroid that has positroids as its constituent matroids but is not a flag positroid.  
This example also appeared in \cite[Example 5]{JLLO} and 
	\cite[Example 6]{BK}.
	Let $(M,M')$ be matroids of ranks 1 and 2 on $[3]$ whose sets of bases are $\{1,3\}$ and $\{12,13,23\}$, respectively.  Both are positroids.  
	We can realize $(M,M')$ as a flag matroid using the matrix
\[
\begin{bmatrix}
a & 0 & b\\
c & d & e
\end{bmatrix},
\]
where the nonvanishing minors $a,b,ad, -bd, ae-bc$ are nonzero.
	In order to realize $(M,M')$ as a flag positroid,
	we need to choose real numbers $a, b, c, d, e$ such that
	all these minors are strictly positive.  However, $a>0$ and $ad>0$ implies $d>0$, while $b>0$ and $-bd>0$ implies $d<0$.

	This example is consistent with \Cref{cor:real} because $(M,M')$, when considered as a sequence of positively oriented matroids, is not an oriented flag matroid.
\end{eg}

\subsection{From the nonnegative flag Dressian 
to positively oriented flag matroids}\label{pf:cor}

We start with the following  simple observation. While the proof is very simple,
we label it a ``theorem'' to emphasize its importance.

\begin{thm}\label{lem:signembed}
The set of positively oriented flag matroids of ranks $\br$ can be identified with the set of points of the nonnegative flag Dressian $\FlDr_{\br;n}^{\geq 0}$ whose coordinates are all either 0 or $\infty$.
\end{thm}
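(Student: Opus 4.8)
The plan is to unwind both sides of the claimed identification in terms of the definitions already set up in the excerpt, and observe that they literally describe the same combinatorial data. The key bridge is the signed tropical hyperfield $\TT\RR$ and the identification (recorded in the proof of \Cref{prop:3terms}) of the nonnegative flag Dressian with the set of points of $\operatorname{Fl}_{\br;n}(\TT\RR)$ coming from $\TT$; restricting further to points all of whose coordinates are $0$ or $\infty$ should correspond exactly to restricting to the sub-hyperfield $\SS = \{-1,0,1\}$ sitting inside $\TT\RR$ via $0\mapsto\infty$ and $+1\mapsto (0,+)$. Under this correspondence, $V_{\trop}^{\geq 0}(f)$ for such $\{0,\infty\}$-valued points becomes precisely the condition of being in the null set of $f$ over $\SS$, which is \Cref{def:oriented}. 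So the two descriptions coincide term by term.

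First I would fix a point $\bmu = (\mu_1,\dots,\mu_k)$ with each $\mu_i \in \{0,\infty\}^{\binom{[n]}{r_i}}$, and to such a point associate the tuple $\boldsymbol\chi = (\chi_1,\dots,\chi_k)$ with $\chi_i \in \{0,1\}^{\binom{[n]}{r_i}}$ defined by $\chi_i(I) = 1$ if $(\mu_i)_I = 0$ and $\chi_i(I) = 0$ if $(\mu_i)_I = \infty$; this is evidently a bijection between $\{0,\infty\}$-valued points of $\prod_i \PP(\TT^{\binom{[n]}{r_i}})$ (once we quotient by the grading, noting the all-$0$ representative is the canonical one) and $\{0,1\}$-valued chirotope tuples. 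Next I would check, for a single Plücker polynomial $f = \sum_a c_a x^a \in \mathscr P_{\br;n}$, that $\bmu \in V_{\trop}^{\geq 0}(f)$ if and only if $\boldsymbol\chi$ lies in the null set of $f$. For the forward direction: if the minimum of $\min_a(a\bullet\mu)$ over the monomials of $f$ is $+\infty$ (i.e.\ every monomial $x^a$ of $f$ has some variable set to $\infty$ by $\bmu$), then every term $\operatorname{sign}(c_a)\chi^a$ equals $0$, so the null-set condition holds with value $\{0\}$; if the minimum is finite, it is achieved exactly at those monomials all of whose variables are $0$ under $\bmu$, equivalently at those $a$ with $\chi^a = 1$, and $V_{\trop}^{\geq 0}$ demands two such with opposite-sign coefficients, which is exactly the statement that $\{\operatorname{sign}(c_a)\chi^a\}$ contains both $-1$ and $1$. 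The reverse direction is the same computation read backwards. Since this holds for every $f \in \mathscr P_{\br;n}$, $\bmu \in \FlDr_{\br;n}^{\geq 0}$ iff $\boldsymbol\chi$ is an oriented flag matroid, i.e.\ a positively oriented flag matroid.

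A minor technical point to handle carefully, rather than a genuine obstacle, is the passage to tropical projective space: the nonnegative flag Dressian lives in $\prod_i \PP(\TT^{\binom{[n]}{r_i}})$, so I should note that any point all of whose coordinates (in some, hence every, representative) lie in $\{0,\infty\}$ and which is not identically $\infty$ in each factor — automatic here since a chirotope has at least one nonzero value — has a unique representative with all finite coordinates equal to $0$, and that the tropical relations are representative-independent. With that remark, the stated bijection is well-defined and the term-by-term equivalence above completes the proof. The argument uses nothing beyond \Cref{defn:trophyper}, \Cref{def:oriented}, and \Cref{defn:tropFl}; in particular it does not need the consecutive-ranks hypothesis, which is consistent with the way the theorem is stated. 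I do not anticipate a real obstacle here — the content is precisely that the defining inequalities of $\FlDr^{\geq 0}$, specialized to $\{0,\infty\}$ coordinates, unwind to the sign-hyperfield null-set conditions defining oriented flag matroids; the only care needed is bookkeeping of the $\infty$ (i.e.\ "all terms vanish") case and the projective normalization.
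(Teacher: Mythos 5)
Your argument is correct and is essentially the paper's proof, just spelled out: the paper defines the same bijection $t$ sending $\{0,1\}$-valued chirotopes to $\{0,\infty\}$-valued tropical points and simply "observes" that $\chi$ lies in the null set of $f$ over $\SS$ iff $t(\chi) \in V_{\trop}^{\geq 0}(f)$, which is exactly the term-by-term case analysis (all-$\infty$ case vs.\ finite-minimum case) you carry out. The remark about projective normalization is a small extra care the paper elides; it does not change the argument.
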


\begin{proof}
Given a point $\chi = (\chi_1, \ldots, 
	\chi_m) \in \{0,1\}^m\subset \SS^m$,\footnote{Note that 
	$(\chi_1,\dots,\chi_m)$ is not a sequence of chirotopes in this proof,
	instead each $\chi_i \in \SS.$} we define
	$t(\chi) = (t_1, \ldots, t_m) \in \TT^m$ by setting
	$t_i = 0$ if $\chi_i = 1$ and $t_i = \infty$ if $\chi_i = 0$.  Then, we observe that $\chi$ is in the null set of a polynomial $f\in \RR[x_1, \ldots, x_m]$ if and only if the image of $t(\chi)$ in $\PP(\TT^m)$ is a point in $V_{\trop}^{\geq 0}(f)$.  Therefore, 
each  positively oriented flag matroid $\boldsymbol\chi$ can be 
	identified with the element $t(\boldsymbol\chi)$ in the nonnegative flag Dressian $\FlDr_{\br;n}^{\geq 0}$.
\end{proof}

We now prove that every positively oriented flag matroid
	$\boldsymbol\chi
=(\chi_1,\dots,\chi_k)$ of consecutive ranks $r_1<\dots<r_k$ 
is realizable.

\begin{proof}[Proof of \Cref{cor:real}]
By the lemma, we may identify a positively oriented flag matroid 
	$\boldsymbol\chi$ 
	as an element $t(\boldsymbol\chi)$
	of the nonnegative flag Dressian.
	Because the ranks $\br$ are consecutive integers, the equivalence \ref{eqvs:TrFl}$\iff$\ref{eqvs:FlDr} of \Cref{thm:eqvs} implies that $t(\boldsymbol\chi)$ is thus a point in $\TrFl_{\br;n}^{\geq 0}$.  
	Because $t(\boldsymbol\chi)$ has rational coordinates 
	(all non-$\infty$ coordinates are 0), 
\Cref{prop:closure} implies that 
	 $t(\boldsymbol\chi)=\val(p)$ for some 
	 $p \in\prod_{i = 1}^k \PP\left(\C_{\geq 0}^{\binom{[n]}{r_i}}\right)$.
	 Setting the parameter $t$ in each Puisseux series of $p$ to 0 now gives 
	 the realization of $\boldsymbol\chi$ as a flag positroid.
\end{proof}

As in \Cref{ques}, we do not know whether the corollary holds when $\br$ does not consist of consecutive integers.  The following example shows that one cannot reduce to the consecutive ranks case.

\begin{eg}\label{eg:cantcomplete}
We give an example of a flag positroid $(M,M')$ on $[4]$ of ranks $(1,3)$ such that there is no flag positroid $(M,M_2,M')$ with rank of $M_2$ equal to 2.  Let the sets of bases of $M$ and $M'$ be $\{1,2,3,4\}$ and $\{123,234\}$, respectively.  The matrix
\[
\begin{bmatrix}
1 & 1 & 1 & 1\\
0 & 1 & 0 & 0\\
0 & 0 & 1 & 0
\end{bmatrix}
\]
for example shows that $(M,M')$ is a flag positroid.  However, this flag positroid cannot be extended to a flag positroid with consecutive ranks.  To see this, note that any realization of $(M,M')$ as a flag positroid, after row-reducing by the first row, is of the form
\[
\begin{bmatrix}
1 & a & b & c \\
0 & x & y & 0\\
0 & z & w & 0
\end{bmatrix}
\]
where $a,b,c>0$ and $xw-yz>0$.  The minors of the matrix formed by the first two rows include $x, y, -cx, -cy$, which cannot be all nonnegative since $c>0$ and not both of $x$ and $y$ are zero.
\end{eg}

\begin{rem}\label{rem:altproof}
Let us sketch an alternate proof of \Cref{cor:real} that relies only on the weaker version of \ref{eqvs:TrFl}$\iff$\ref{eqvs:FlDr} in \Cref{thm:eqvs} that the strictly positive parts agree, i.e. that $\TrFl_{\br;n}^{>0} = \FlDr_{\br;n}^{>0}$.
For a matroid $M$ of rank $d$, define $\rho_M \in \RR^{\binom{[n]}{d}}$ by $\rho_M(S) = d - \operatorname{rk}_M(S)$ for $S\in \binom{[n]}{d}$, where $\operatorname{rk}_M$ is the rank function of $M$.
If $M$ is a positively oriented matroid, then $\rho_M$ is a point in the positive Dressian $\Dr^{>0}_{d,n}$ \cite[Proof of Theorem 5.1]{SW21}.  One can use this to show that if $\M = (M_1, \ldots, M_k)$ is a positively oriented flag matroid of consecutive ranks $\br$, then the sequence $\boldsymbol\rho = (\rho_{M_1},\ldots, \rho_{M_k})$ is a point in $\FlDr_{\br;n}^{>0}$.  
	Since $\TrFl_{\br;n}^{>0} = \FlDr_{\br;n}^{>0}$ and $\boldsymbol\rho$ has rational coordinates, \Cref{prop:closure} implies that there is a point $p \in \Fl_{\br;n}(\C_{\geq 0})$ with $\operatorname{val}(p) = \boldsymbol\rho$.  Consider the coordinate $p(S) \in \C$ of $p$ at a subset $S\in \binom{[n]}{r_i}$.  By construction, the initial term of $p(S)$ is $ct^q$ for some positive real $c$ and a nonnegative integer $q$, where $q$ is zero exactly when $S$ is a basis of $M_i$.
Thus, setting the parameter $t$ to $0$ in the 
 Puisseux series of $p$ 
 gives a realization of $\M$ as a flag positroid.
\end{rem}

We now use \Cref{lem:signembed} to give a matroidal analogue
of \Cref{lem:extend2}.
\begin{cor}\label{lem:extend3}
	Let $a' \leq a \leq b \leq b'$ be positive integers,
	and let $(M_a, M_{a+1},\dots,M_b)$ be 
	a positively oriented flag matroid on $[n]$ of consecutive
	ranks $a,a+1,\dots,b$, that is,
	a sequence of positroids $M_a,\dots,M_b$ 
	which is also an oriented flag matroid.  Then we can extend 
	it to a positively oriented flag matroid 
		 $(M_{a'},M_{a'+1},\dots,M_a,\dots, M_b, \dots, 
		 M_{b'})$ of consecutive ranks
		 $a',a'+1,\dots,b'$.
\end{cor}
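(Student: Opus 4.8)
The plan is to transfer the statement into the language of the nonnegative flag Dressian via \Cref{lem:signembed}, apply \Cref{lem:extend2} there, and then recover an oriented flag matroid by comparing the three-term positive-tropical Pl\"ucker relations with the three-term Pl\"ucker relations over the sign hyperfield $\SS$. Write $\br = (a, a+1, \ldots, b)$ and $\br' = (a', a'+1, \ldots, b')$.

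First I would set $\boldsymbol\chi = (\chi_{M_a}, \ldots, \chi_{M_b})$, the sequence of $0/1$-chirotopes of the positroids $M_a, \ldots, M_b$. By \Cref{lem:signembed} the point $t(\boldsymbol\chi)$ lies in $\FlDr_{\br;n}^{\geq 0}$ and has all coordinates equal to $0$ or $\infty$; in particular it has rational coordinates, so \Cref{lem:extend2} applies and produces an extension $\boldsymbol\mu = (\mu_{a'}, \ldots, \mu_a, \ldots, \mu_b, \ldots, \mu_{b'}) \in \FlDr_{\br';n}^{\geq 0}$, again with rational coordinates and with rank-$i$ constituent equal to that of $t(\boldsymbol\chi)$ for each $a \le i \le b$. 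Put $N_i := \underline{\mu_i}$ for all $a' \le i \le b'$. Since $\br'$ is a sequence of consecutive integers, \Cref{prop:3terms} applies to $\boldsymbol\mu$: its support $(N_{a'}, \ldots, N_{b'})$ is a flag matroid, and $\boldsymbol\mu \in V_{\trop}^{\geq 0}(f)$ for every three-term Pl\"ucker relation $f \in \mathscr P_{\br';n}^{(3)}$. Moreover $N_i = M_i$ for $a \le i \le b$, because the rank-$i$ constituent of $\boldsymbol\mu$ has support exactly the set of bases of $M_i$.

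The main point is to show that the $0/1$-chirotope sequence $\boldsymbol\chi' := (\chi_{N_{a'}}, \ldots, \chi_{N_{b'}})$ is an oriented flag matroid of ranks $\br'$. Each $f \in \mathscr P_{\br';n}^{(3)}$ has sign pattern $(+, -, +)$ on its three monomials $m_1, m_2, m_3$, and since the coordinates of $\boldsymbol\mu$ are $\infty$ exactly off the supports $N_i$, the tropical value of $m_j$ at $\boldsymbol\mu$ is finite if and only if the product of chirotope entries ${\chi'}^{m_j}$ equals $1$. Membership of $\boldsymbol\mu$ in $V_{\trop}^{\geq 0}(f)$ then forces either that all three tropical monomial values are $\infty$ (so ${\chi'}^{m_1} = {\chi'}^{m_2} = {\chi'}^{m_3} = 0$), or that the value at the negative monomial $m_2$ is finite and so is that at $m_1$ or at $m_3$ (so ${\chi'}^{m_2} = 1$ and ${\chi'}^{m_1} = 1$ or ${\chi'}^{m_3} = 1$); in both cases the subset $\{\,{\chi'}^{m_1},\ -{\chi'}^{m_2},\ {\chi'}^{m_3}\,\}$ of $\SS$ is $\{0\}$ or contains $\{-1, 1\}$, i.e. $\boldsymbol\chi'$ lies in the null set of $f$. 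Thus $\underline{\boldsymbol\chi'} = (N_{a'}, \ldots, N_{b'})$ is a flag matroid and $\boldsymbol\chi'$ satisfies every three-term Pl\"ucker relation over $\SS$; since $\SS$ is a perfect (doubly distributive) hyperfield and $\br'$ consists of consecutive integers, the perfect-hyperfield statement underlying \Cref{prop:3terms} (\cite[Theorem 2.16 and Corollary 2.24]{JL}) shows that $\boldsymbol\chi'$ is an oriented flag matroid. As it takes values in $\{0, 1\}$ it is a positively oriented flag matroid, so by \Cref{thm:realizable} each $N_i$ is a positroid; together with $N_i = M_i$ for $a \le i \le b$, this exhibits $(N_{a'}, \ldots, N_a, \ldots, N_b, \ldots, N_{b'})$ as the desired extension.

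The one step requiring care is the sign bookkeeping in the third paragraph: one must check that the ``opposite signs'' clause in the definition of $V_{\trop}^{\geq 0}$ (\Cref{defn:trophyper}) translates exactly into membership of the support-chirotope in the null set of $f$ (\Cref{def:oriented}) for the $(+,-,+)$ pattern of the three-term Grassmann- and incidence-Pl\"ucker relations. Beyond that, the argument is just the assembly of \Cref{lem:signembed}, \Cref{lem:extend2}, \Cref{prop:3terms}, and \Cref{thm:realizable}; as in \Cref{lem:extend2}, the only point where anything could go wrong is the consecutive-ranks hypothesis, and that is built into the statement.
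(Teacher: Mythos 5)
Your proof is correct, but it takes a genuinely different route from the paper's. The paper notes that \Cref{lem:extend2} alone does not obviously produce an extension with $0/\infty$ coordinates, and so it reopens the proof of \Cref{lem:extend} and reruns the positive-Richardson parametrization over $\RR_{>0}$ (constant Puiseux series) rather than over $\C_{>0}$, which forces the valuations of the extended Pl\"ucker coordinates to be $0$ or $\infty$ directly; it also sketches a second alternative working entirely with real flags as in \Cref{BK2}. You instead treat \Cref{lem:extend2} as a black box --- accepting an extension $\bmu \in \FlDr_{\br';n}^{\geq 0}$ whose new constituents need not be $0/\infty$-valued --- and then pass to the support $\underline{\bmu}$, verifying via the three-term positive-tropical relations from \Cref{prop:3terms} and the null-set bookkeeping over $\SS$ that the sequence of $0/1$-chirotopes of the supports is an oriented flag matroid, invoking the Jarra--Lorscheid perfect-hyperfield criterion to upgrade from three-term relations to the full set. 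Your sign analysis is correct: since a three-term relation has sign pattern $(+,-,+)$ with a unique negative monomial, membership in $V_{\trop}^{\geq 0}(f)$ forces either all three monomial values infinite or the middle one finite together with at least one of the outer two, which is exactly the null-set condition over $\SS$ for the support chirotope. In effect you are re-deriving the ``support of a nonnegative flag Dressian point is a positively oriented flag matroid'' fact that the paper later packages as \Cref{prop:afflin}(1); citing that directly would also have worked (there is no circularity, though it is a forward reference). Your route is cleaner in that it avoids reopening the proof of \Cref{lem:extend}, at the cost of leaning more heavily on the hyperfield machinery behind \Cref{prop:3terms}.
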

\begin{proof}
As in \Cref{lem:signembed}, we view 
the positively oriented flag matroid $(M_a,\dots,M_b)$
as a point of the nonnegative flag Dressian 
$(\mu_a,\dots,\mu_b) \in \FlDr_{\br;n}^{\geq 0}$ 
whose coordinates are all either 0 or $\infty$.  The desired statement
	\emph{almost} follows from 
	\Cref{lem:extend}: we just need to check that 
	we can extend $(\mu_a,\dots,\mu_b)$ in a way which 
	preserves the fact that coordinates are all either $0$ or 
	$\infty$.  This is true, and we prove it by 
	following the proof of 
	\Cref{lem:extend} and replacing all instances
	of the positive Puiseux series $\mathcal{C}_{>0}$ by 
	the positive Puiseux series with 
	\emph{constant coefficients},
	that is, by $\RR_{>0}$.
	Alternatively, we can use our result that 
 $(M_a,\dots,M_b)$ is realizable by a flag positroid, 
	and then argue as in \Cref{BK2}.
\end{proof}

\section{Subdivisions of flag matroid polytopes}\label{sec:subdivision}

\subsection{Flag Dressian and flag matroidal subdivisions}

Consider a point $\bmu=(\mu_1,\dots,\mu_k) \in \prod_{i=1}^k \PP\left(\TT^{\binom{[n]}{r_i}}\right)$ 
such that its support $\underline\bmu$ is a flag matroid.  By construction, the vertices of the flag matroid polytope $P(\underline\bmu)$ have the form $\be_{B_1} + \cdots + \be_{B_k}$ where $B_i$ is a basis of the matroid $\underline\mu_i$ for each $i = 1, \ldots, k$.

\begin{defn}\label{defn:subdiv}
We define $\mathcal D_\bmu$ to be the coherent subdivision of $P(\underline\bmu)$ induced by assigning each vertex $\be_{B_1} + \cdots + \be_{B_k}$ of $P(\underline\bmu)$ the weight $\mu_1(B_1) + \cdots + \mu_k(B_k)$.  That is, the faces of $\mathcal D_\bmu$ correspond to the faces of the lower convex hull of the set of points
\[
\{(\be_{B_1} + \cdots + \be_{B_k}, \mu_1(B_1) + \cdots + \mu_k(B_k)) \in \RR^n \times \RR : \be_{B_1} + \cdots + \be_{B_k} \text{ a vertex of $P(\bmu)$}\}.
\]
\end{defn}

The points of the flag Dressians are exactly the ones for which the subdivision $\mathcal D_\bmu$ consists of flag matroid polytopes.

\begin{thm}\cite[Theorem A.(a)\&(c)]{BEZ21} \label{th:BEZ}
A point $\bmu\in \prod_{i = 1}^k \PP\left(\TT^{\binom{[n]}{r_i}}\right)$ is in the flag Dressian $\FlDr_{\br;n}$ if and only if the all faces of the subdivision $\mathcal D_\bmu$ are flag matroid polytopes.
\end{thm}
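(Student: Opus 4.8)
The plan is to prove the two implications separately, taking as the base case $k=1$ the classical theorem that a single tropical Pl\"ucker vector $\mu$ of rank $d$ lies in $\Dr_{d;n}$ if and only if every cell of $\mathcal D_\mu$ is a matroid polytope (Speyer, Kapranov; see \cite[\S4]{MS15}), and using the interplay between Minkowski sums and regular subdivisions throughout. The first step is to describe the cells of $\mathcal D_\bmu$: for $w\in\RR^n$ let $\mathrm{in}_w\mu_i = \{B\in\underline{\mu_i} : \mu_i(B) + \langle w,\be_B\rangle \text{ is minimal}\}$ be the support of the initial form of $\mu_i$ at $w$. Since the lifted points in \Cref{defn:subdiv} are sums of lifts of vertices of the $P(\underline{\mu_i})$, the cell of $\mathcal D_\bmu$ selected by $w$ has vertex set $\{\be_{B_1}+\cdots+\be_{B_k} : B_i\in\mathrm{in}_w\mu_i\}$, hence equals the Minkowski sum $P(\mathrm{in}_w\mu_1) + \cdots + P(\mathrm{in}_w\mu_k)$; taking $w=0$ recovers the full cell $P(\underline\bmu)$, and every cell arises this way.

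For the forward direction, suppose $\bmu\in\FlDr_{\br;n}$. Restricting the Pl\"ucker relations to a single index $r_i$ shows $\mu_i\in\Dr_{r_i;n}$, so by the base case every $\mathrm{in}_w\mu_i$ is the set of bases of a matroid, and hence every cell of $\mathcal D_\bmu$ is a Minkowski sum $P(N_1^w)+\cdots+P(N_k^w)$ of matroid polytopes. It remains to check that $(N_1^w,\ldots,N_k^w)$ is a flag matroid for each $w$, for which I would use the standard fact that passing to initial forms at $w$ commutes with the Grassmann--Pl\"ucker and incidence--Pl\"ucker relations: if $\bmu$ satisfies the tropical relation of $f\in\mathscr P_{\br;n}$, then the monomials of $f$ attaining the minimum at $w$ exhibit $\mathrm{in}_w\bmu$ as a solution of the corresponding relation over the Krasner hyperfield, i.e.\ $(N_1^w,\ldots,N_k^w)$ is an honest flag matroid. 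Its polytope is precisely the cell of $\mathcal D_\bmu$ at $w$, which is therefore a flag matroid polytope by \cite[Corollary 1.13.5]{BGW03}.

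For the converse, suppose every cell of $\mathcal D_\bmu$ is a flag matroid polytope, so in particular every edge of every cell is parallel to some $\be_i-\be_j$. The full cell gives that $\underline\bmu$ is a flag matroid, hence each $\underline{\mu_i}$ is a matroid. To see $\mu_i\in\Dr_{r_i;n}$, I would show that every cell $F$ of $\mathcal D_{\mu_i}$ is a matroid polytope: if $F$ had an edge $e$ not parallel to any $\be_a-\be_b$, then selecting $e$ by some $w_0$ and then perturbing within $e^\perp$ by a sufficiently generic vector yields a $w$ for which the cell of $\mathcal D_\bmu$ is the Minkowski sum of a translate of $e$ with faces of the other $\mathcal D_{\mu_j}$ that contribute only directions parallel to $e$ (or none at all), so this cell still has an edge in the direction of $e$ --- contradicting the assumption. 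Finally, to deduce that $\bmu$ satisfies the Grassmann-- and incidence--Pl\"ucker relations, one localizes: for a three-term relation built from index sets $I$ and $J$, choose a weight $w$ assembled from $-T\be_I$ and $+T\be_{[n]\setminus J}$ (with a generic small perturbation) so that the cell of $\mathcal D_\bmu$ at $w$ involves exactly the monomials appearing in the relation; the flag matroid condition on this cell then forces the tropical relation. When $\br$ consists of consecutive integers one only needs the three-term relations by \Cref{prop:3terms}, which makes this localization manageable; for general $\br$ a more elaborate but analogous choice of $w$ is needed, or one reduces to the adjacent-pair case.

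I expect the main obstacle to be exactly the Minkowski-sum combinatorics in the converse direction: verifying that a perturbed weight selects the intended (low-dimensional) face in each summand subdivision, and choosing the weight that isolates a given Pl\"ucker relation inside a single cell of $\mathcal D_\bmu$ without inadvertently merging the monomials one needs to compare. The forward direction and the reductions to the $k=1$ matroid subdivision theorem should be routine once the ``$\mathrm{in}_w$ commutes with the Pl\"ucker relations'' lemma is in hand.
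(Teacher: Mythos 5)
This theorem is cited from \cite[Theorem A.(a)\&(c)]{BEZ21} and not proved in the paper, so there is no in-paper argument to compare against; I will evaluate the proposal on its own terms.

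There is a genuine gap in your cell description, which is the load-bearing step in both directions. You claim the cell of $\mathcal D_\bmu$ selected by $w$ is the Minkowski sum $P(\mathrm{in}_w\mu_1)+\cdots+P(\mathrm{in}_w\mu_k)$, and hence has vertex set $\{\be_{B_1}+\cdots+\be_{B_k}: B_i\in\mathrm{in}_w\mu_i\}$. This is false in general. In \Cref{defn:subdiv} the weight lives on the \emph{vertices} of $P(\underline\bmu)$, i.e.\ on the \emph{flags} $B_1\subset\cdots\subset B_k$ of $\underline\bmu$; the cell at $w$ minimizes $\sum_i\bigl[\langle w,\be_{B_i}\rangle+\mu_i(B_i)\bigr]$ over flags, which need not decouple into minimizing each summand, because the individual minimizers need not nest. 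Concretely: take $n=3$, $\br=(1,2)$, $\underline\mu_1=\{1,2,3\}$, $\underline\mu_2=\{12,13,23\}$, $\mu_1\equiv 0$, and $\mu_2(12)=0$, $\mu_2(13)=\mu_2(23)=1$. Then $\bmu\notin\FlDr_{(1,2);3}$ (the three-term incidence minimum is uniquely attained). At $w=(0,1,0)$, the cell of $\mathcal D_\bmu$ is the \emph{triangle} on $\{\be_1+\be_{12},\,\be_1+\be_{13},\,\be_3+\be_{13}\}$, which has the bad edge $(1,1,-2)$, whereas $P(\mathrm{in}_w\mu_1)+P(\mathrm{in}_w\mu_2)=\Conv\{\be_1,\be_3\}+\Conv\{\be_{12},\be_{13}\}$ is the \emph{parallelogram} obtained by adjoining the non-vertex $\be_3+\be_{12}=(1,1,1)$. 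So the identity you assert fails precisely in the regime that matters.

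The Minkowski-sum description of the cell only holds once one already knows that $(\mathrm{in}_w\mu_1,\ldots,\mathrm{in}_w\mu_k)$ is a flag matroid, which is what you are trying to prove. In the forward direction this is repairable: first establish (from the tropical Pl\"ucker relations, essentially your ``$\mathrm{in}_w$ commutes'' lemma, or from the argument in \Cref{prop:afflin}(3)) that the initial data is a flag matroid, and only then deduce the Minkowski-sum identity as a consequence rather than a starting point. In the converse direction the argument as written is circular: you invoke the cell description to show each $\mu_i\in\Dr_{r_i;n}$ and to localize the Pl\"ucker relations, but you have no license for it before you know $\bmu\in\FlDr_{\br;n}$. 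You would instead want to work directly with the cell at $w$, which is $P(\underline{(\varphi\bmu)^{\rm in}})$ for the affine function $\varphi(x)=\langle w,x\rangle$ by definition, and deduce the relations from the polytopal hypothesis on that cell; this also avoids the delicate perturbation argument you sketch for producing a bad edge of $\mathcal D_{\mu_i}$ inside a cell of $\mathcal D_\bmu$, whose Minkowski-sum step is not justified. Finally, as you acknowledge, your localization only handles three-term relations cleanly and hence requires $\br$ consecutive (or adjacent), but the theorem is stated for arbitrary $\br$.
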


When $\br$ consists of consecutive integers $(a,a+1,\ldots, b)$, the nonnegative analogue of this theorem is the equivalence of \ref{eqvs:FlDr} and \ref{eqvs:subdiv} in \Cref{thm:eqvs}, which states that a point $\bmu\in \prod_{i = a}^b \PP\left(\TT^{\binom{[n]}{i}}\right)$ is in the nonnegative flag Dressian $\FlDr_{\br;n}^{\geq 0}$ if and only if all faces of the subdivision $\mathcal D_\bmu$ are flag positroid polytopes.  A different nonnegative analogue of \Cref{th:BEZ} that holds for $\br$ not 
necessarily consecutive, but loses the flag positroid property, can be found in \Cref{rem:subdivmod}.

\subsection{The proof of 
\ref{eqvs:FlDr}$\implies$\ref{eqvs:subdiv}$\implies$\ref{eqvs:2faces}$\implies$\ref{eqvs:FlDr} in \Cref{thm:eqvs}}\label{pf:2}
We start by recording two observations.
The first is a well-known consequence of the greedy algorithm for matroids; see for instance \cite[Proposition 4.3]{AK06}.
For a matroid $M$ on $[n]$ and a vector $\mathbf v \in \RR^n$, let $\operatorname{face}(P(M), {\mathbf v})$ be the face of the matroid polytope $P(M)$ that maximizes the standard pairing with $\mathbf v$.

\begin{prop}\label{prop:greedy}
	Let $M$ be a matroid on $[n]$ and 
let $\mathscr S = (\emptyset\subsetneq S_1 \subsetneq \cdots \subsetneq S_\ell \subsetneq [n])$ be a chain of nonempty proper subsets of $[n]$.  
For a vector $\mathbf v_{\mathscr S}$ in the relative interior of the cone $\RR_{\geq 0}\{\be_{S_1}, \ldots, \be_{S_\ell}\}$, we have
\[
\operatorname{face}(P(M), {\mathbf v_{\mathscr S}}) = P(M^{\mathscr S}),
\]
where $M^{\mathscr S} = M|S_1 \oplus M|S_2/S_1 
	\oplus M|S_3/S_2 \oplus \cdots \oplus M/S_\ell$ is the direct sum of minors of $M$.

	For $\M = (M_1, \ldots, M_k)$ a flag matroid, since $P(\M)$ is the Minkowski sum $P(M_1) + \cdots + P(M_k)$, we likewise have that $\operatorname{face}(P(\M), \mathbf v_{\mathscr S}) = P(\M^{\mathscr  S}) = P(M_1^{\mathscr S}) + \cdots + P(M_k^{\mathscr S})$, where $\M^{\mathscr S}=(M_1^{\mathscr S},\ldots, M_k^{\mathscr S})$. 
	In particular, the face of a flag matroid
	polytope is a flag matroid polytope.
\end{prop}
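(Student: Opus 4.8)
The plan is to prove the single-cut case $\ell=1$ directly, bootstrap to an arbitrary chain via the greedy/level-set description of faces of matroid polytopes, and then pass to flag matroids using the fact that a face of a Minkowski sum is the Minkowski sum of the corresponding faces. For a single subset $\emptyset\subsetneq S\subsetneq [n]$, I would first verify that $\operatorname{face}(P(M),\be_S)=P(M|S\oplus M/S)$: since the vertices of $P(M)$ are the points $\be_B$ with $B$ a basis of $M$ and $\be_S\cdot\be_B=|B\cap S|$, whose maximum over bases equals $\operatorname{rk}_M(S)$, a short rank computation shows that $|B\cap S|=\operatorname{rk}_M(S)$ holds exactly when $B\cap S$ is a basis of $M|S$ and $B\setminus S$ is a basis of $M/S$, and the corresponding $\be_B$ are precisely the vertices of $P(M|S\oplus M/S)$.

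For a general chain $\mathscr S$, write $\mathbf v_{\mathscr S}=\sum_{i=1}^\ell c_i\be_{S_i}$ with all $c_i>0$. Because the $S_i$ are nested, $(\mathbf v_{\mathscr S})_j=\sum_{i:\,j\in S_i}c_i=\sum_{i\geq i_0(j)}c_i$, where $i_0(j)=\min\{i:j\in S_i\}$ (and $i_0(j)=\ell+1$ if $j\notin S_\ell$); positivity of the $c_i$ makes this a strictly decreasing function of $i_0(j)$. Hence the level sets of $\mathbf v_{\mathscr S}$ are exactly $S_1,\ S_2\setminus S_1,\ \ldots,\ S_\ell\setminus S_{\ell-1},\ [n]\setminus S_\ell$, listed in strictly decreasing order of weight, and — crucially — this order type is independent of the chosen positive coefficients. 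Iterating the single-cut case (equivalently, invoking the greedy description \cite[Proposition 4.3]{AK06}) then identifies $\operatorname{face}(P(M),\mathbf v_{\mathscr S})$ with the matroid polytope of $M|S_1\oplus (M/S_1)|(S_2\setminus S_1)\oplus\cdots\oplus M/S_\ell$, which is $P(M^{\mathscr S})$ once one rewrites $(M/S_{i-1})|(S_i\setminus S_{i-1})=M|S_i/S_{i-1}$.

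For the flag statement, I would use the elementary identity $\operatorname{face}(Q_1+\cdots+Q_k,\mathbf v)=\operatorname{face}(Q_1,\mathbf v)+\cdots+\operatorname{face}(Q_k,\mathbf v)$ valid for any polytopes $Q_i$ and any functional $\mathbf v$; with $Q_i=P(M_i)$ and $\mathbf v=\mathbf v_{\mathscr S}$, the previous paragraph applied term by term gives $\operatorname{face}(P(\M),\mathbf v_{\mathscr S})=\sum_i P(M_i^{\mathscr S})=P(\M^{\mathscr S})$. That $\M^{\mathscr S}=(M_1^{\mathscr S},\ldots,M_k^{\mathscr S})$ is again a flag matroid — so that a face of a flag matroid polytope is a flag matroid polytope — follows since each $M_i^{\mathscr S}$ is a matroid of rank $r_i$ (a direct sum of minors, with ranks telescoping), and since the vertices of the face $P(\M^{\mathscr S})$ form a subset of those of $P(\M)$ and so remain equidistant from the origin. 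I expect the only genuinely delicate point to be the observation in the second paragraph that the order type of the level sets of $\mathbf v_{\mathscr S}$ is pinned down by the chain condition alone and does not vary with the representative chosen in the relative interior of $\RR_{\geq 0}\{\be_{S_1},\ldots,\be_{S_\ell}\}$; this is precisely where both the chain hypothesis and the relative-interior hypothesis are needed, while the rest is a routine rank computation together with the standard greedy-algorithm fact.
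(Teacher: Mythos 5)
Your proof is correct and takes the same route the paper gestures at: it is the standard greedy-algorithm (level-set) description of faces of matroid polytopes, which the paper simply cites to \cite[Proposition 4.3]{AK06} without further argument, and you have correctly filled in both the rank computation for the single-cut case and the key observation that the ordered partition into level sets of $\mathbf v_{\mathscr S}$ depends only on the chain, not on the positive coefficients. Your passage to the flag case, via $\operatorname{face}(\sum_i Q_i,\mathbf v)=\sum_i\operatorname{face}(Q_i,\mathbf v)$ together with the paper's polytopal definition of a flag matroid, is also the intended argument and is correct.
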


The second observation concerns the following operations that we will show  preserve the nonnegative flag Dressian.
Recall that for $w\in \TT^{\binom{[n]}{r}}$,  its support $\underline w$ is $\{S\in \binom{[n]}{r} : w_S\neq \infty\}$.
\begin{itemize}
\item We consider a point $w\in \TT^{\binom{[n]}{r}}$ as a set of weights 
	on the vertices
		$\{\be_S : S \in \underline w\}$ of  $P(\underline w) \subset \RR^n$.
	Given an affine-linear function $\varphi: \RR^n \to \RR$ and an element $w\in \TT^{\binom{[n]}{r}}$, we define
\[
\varphi w \in \TT^{\binom{[n]}{r}} \quad\text{by}\quad (\varphi w)(S) = \varphi(\be_S) + w(S) \text{ for $S\in \binom{[n]}{r}$}.
\]
\item For a point $w\in \TT^{\binom{[n]}{r}}$, denote by $w^{\mathrm{in}} \in \TT^{\binom{[n]}{r}}$ its \emph{initial part}, i.e.
\[
w^\mathrm{in}(S)  = \begin{cases}
0 & \text{if $w(S) = \min\{w(S') : S' \in \binom{[n]}{r}$}\}\\
\infty & \text{otherwise}.
\end{cases}
\]
\end{itemize}

\begin{prop}\label{prop:afflin}
Let $\br = (r_1,\dots,r_k)$ be a sequence of increasing integers in $[n]$.  
Suppose $\boldsymbol w = (w_1, \ldots, w_k) \in \FlDr_{\br;n}^{\geq 0}$.
Then, the following hold.
\begin{enumerate}
\item The support $\underline{\boldsymbol w}$ is a positively oriented flag matroid.  In particular, it is a flag positroid when $\br = (r_1, \ldots, r_k)$ consists of consecutive integers.
\item We have $\varphi \boldsymbol w = (\varphi w_1, \ldots, \varphi w_k) \in \FlDr_{\br;n}^{\geq 0}$ for any affine-linear functional $\varphi$ on $\RR^n$.
\item We have $\boldsymbol w^{\mathrm{in}}= (w_1^{\mathrm{in}}, \ldots,  w_k^{\mathrm{in}})\in \FlDr_{\br;n}^{\geq 0}$.
\end{enumerate}
\end{prop}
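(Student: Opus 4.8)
The plan is to treat each of the three claims by reducing to a single single-exchange Pl\"ucker relation $f = \sum_{j\in J\setminus I}\operatorname{sign}(j,I,J)\,x_{I\cup j}x_{J\setminus j}\in\mathscr P_{r,s;n}$ (with $r\le s$, both in $\br$) at a time, using only the defining equality $\FlDr_{\br;n}^{\geq 0}=\bigcap_{f\in\mathscr P_{\br;n}}V_{\trop}^{\geq 0}(f)$. Two elementary structural features of these relations do all the work: every monomial of $f$ is squarefree of degree two, and a routine multiset computation shows that all monomials of $f$ carry the \emph{same} exponent vector, $\be_{I\cup j}+\be_{J\setminus j}=\be_I+\be_J$, independently of $j$. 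Throughout, write $\alpha,\beta$ for the indices with $r_\alpha=r$ and $r_\beta=s$ (possibly $\alpha=\beta$), and note $m_\gamma:=\min_{S}w_\gamma(S)\in\RR$ is finite since $\overline{w_\gamma}$ is a genuine point of tropical projective space, i.e.\ $w_\gamma$ is not identically $\infty$.

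For (1): I would show that the $0/1$ vector $\chi$ with $\chi_i(S)=1\iff w_i(S)\neq\infty$ lies in the null set of every $f\in\mathscr P_{\br;n}$, which by \Cref{def:oriented} is exactly the assertion that $\underline{\boldsymbol w}$ is a positively oriented flag matroid. Fix $f$. If every monomial $x^a=x_{I\cup j}x_{J\setminus j}$ of $f$ has $\chi^a=0$ --- that is, uses a variable at an $\infty$-coordinate of $\boldsymbol w$ --- then $\{\operatorname{sign}(c_a)\chi^a\}=\{0\}$ and we are done. Otherwise some monomial has finite $\boldsymbol w$-value, so $\min_a(a\bullet\boldsymbol w)$ is finite; since $\boldsymbol w\in V_{\trop}^{\geq 0}(f)$ this minimum is attained at two monomials $x^a,x^{a'}$ with $c_a,c_{a'}$ of opposite signs, and finiteness of the minimum forces $a\bullet\boldsymbol w,\ a'\bullet\boldsymbol w<\infty$, hence $\chi^a=\chi^{a'}=1$ and $\{-1,1\}\subseteq\{\operatorname{sign}(c_a)\chi^a\}$. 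Thus $\chi$ is in the null set of $f$. When $\br$ consists of consecutive integers, the ``in particular'' is then immediate from \Cref{cor:real}.

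For (2): writing $\varphi=\ell+c$ with $\ell$ linear and $c$ a constant, the common-exponent property gives $\varphi(\be_{I\cup j})+\varphi(\be_{J\setminus j})=\ell(\be_I+\be_J)+2c$, the same real number for every $j$. Hence passing from $\boldsymbol w$ to $\varphi\boldsymbol w$ adds one fixed constant to the value $a\bullet(\varphi\boldsymbol w)$ of every monomial $x^a$ of $f$. Therefore $\boldsymbol w$ and $\varphi\boldsymbol w$ have the same minimizing monomials for $f$, the minimum of one is finite exactly when that of the other is, and the ``opposite signs among the minimizers'' condition is unchanged; so $\varphi\boldsymbol w\in V_{\trop}^{\geq 0}(f)$ for every $f$, i.e.\ $\varphi\boldsymbol w\in\FlDr_{\br;n}^{\geq 0}$.

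For (3): with the notation above, the $\boldsymbol w$-value of the $j$-th monomial of $f$ is $v_j:=w_\alpha(I\cup j)+w_\beta(J\setminus j)\ge m_\alpha+m_\beta$, while (since $w_\gamma^{\mathrm{in}}(S)=0\iff w_\gamma(S)=m_\gamma$) its $\boldsymbol w^{\mathrm{in}}$-value equals $0$ when $v_j=m_\alpha+m_\beta$ and $\infty$ otherwise. If $v_j>m_\alpha+m_\beta$ for all $j$, then $\boldsymbol w^{\mathrm{in}}$ makes all monomials of $f$ infinite and the positive-tropical relation of $f$ holds vacuously. Otherwise $\min_j v_j=m_\alpha+m_\beta<\infty$, so the set of monomials of $f$ minimized by $\boldsymbol w^{\mathrm{in}}$ (those with $v_j=m_\alpha+m_\beta$) is precisely the set minimized by $\boldsymbol w$; as $\boldsymbol w\in V_{\trop}^{\geq 0}(f)$ and this minimum is finite, that common set contains two monomials with coefficients of opposite signs, so $\boldsymbol w^{\mathrm{in}}\in V_{\trop}^{\geq 0}(f)$; ranging over all $f$ gives $\boldsymbol w^{\mathrm{in}}\in\FlDr_{\br;n}^{\geq 0}$. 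The one genuinely non-formal ingredient is \Cref{cor:real}, invoked in (1); the point requiring care is the bookkeeping in (3), namely that the minimum of $v_j$ over the monomials of a \emph{single} relation equals $m_\alpha+m_\beta$ exactly when that value is attained --- this is what links the ``local'' minimum appearing in the definition of $V_{\trop}^{\geq 0}(f)$ to the ``global'' minima $m_\gamma$ defining $\boldsymbol w^{\mathrm{in}}$. (A less hands-on alternative for (3) would be to note that $\FlDr_{\br;n}^{\geq 0}$ is closed and invariant under $\boldsymbol w\mapsto t\boldsymbol w$ for $t>0$, with $\overline{\boldsymbol w^{\mathrm{in}}}=\lim_{t\to\infty}\overline{t\boldsymbol w}$, but I would prefer the direct argument to avoid checking closedness.)
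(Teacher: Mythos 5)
Your proof is correct and follows essentially the same approach as the paper: direct verification, one Pl\"ucker relation at a time, using the observation that the operations in (2) and (3) either preserve the set of minimizing monomials or send all of them to $\infty$, and that in (1) a finite minimum attained at two opposite-sign monomials forces nonzero $\chi$-values there. The only cosmetic difference is in (1), where you verify null-set membership over $\SS$ directly instead of routing through the paper's Lemma identifying $0/\infty$-valued points of $\FlDr_{\br;n}^{\geq 0}$ with positively oriented flag matroids — the content is the same.
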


\begin{proof}
We may consider $\underline{\boldsymbol w}$ as an element $\prod_{i = 1}^k \PP\left(\TT^{\binom{[n]}{r_i}}\right)$ by assigning the value 0 to a subset $S$ if it is in the support of $\boldsymbol w$ and $\infty$ otherwise.
Then, we have $\underline{\boldsymbol w} \in \FlDr_{\br;n}^{\geq 0}$ because the terms in each of the tropical Pl\"ucker relations that achieve the minimum when evaluated at $\boldsymbol w$ continue to do so when evaluated at $\underline{\boldsymbol w}$.
The statement (1) follows from \Cref{lem:signembed} and \Cref{cor:real}

The support is unchanged by $\varphi$, so $\underline{\varphi\boldsymbol w}$ is a flag matroid.  
The statement (2) now follows because for each of the positive-tropical Pl\"ucker relations, the operation $\varphi$ preserves the terms at which the minimum is achieved.

	The support $\underline{\boldsymbol w^{\mathrm{in}}}$ is a 
	flag matroid   
by \Cref{th:BEZ}
and  because $P(\underline{\boldsymbol w^{\mathrm{in}}})$ is a face in the subdivision $\mathcal D_{\boldsymbol w}$ of $P(\underline{\boldsymbol w})$.
The statement (3) now follows because for each of the positive-tropical Pl\"ucker relations, the operation $^{\mathrm{in}}$ either preserves the terms at which the minimum is achieved or changes all the terms involved to $\infty$.
\end{proof}

\begin{rem}
While it's not needed here, we note that \Cref{prop:afflin} is the ``positive'' analogue of the following statement, which is proved similarly:
If $\boldsymbol w \in \FlDr_{\br;n}$, then (1) $\underline{\boldsymbol w}$ is a flag matroid, (2) $\varphi\boldsymbol w\in \FlDr_{\br;n}$, and (3) $\boldsymbol w^{\rm{in}} \in \FlDr_{\br;n}$.  See also \cite[Corollary 4.3.2]{BEZ21} for related statements.
\end{rem}

\begin{proof}[Proof of \ref{eqvs:FlDr}$\implies$\ref{eqvs:subdiv}]
Every face in the coherent subdivision is the initial one after an affine-linear transformation.  Hence, the implication follows from \Cref{prop:afflin}.
\end{proof}

\begin{rem}\label{rem:subdivmod}
One may modify the statement \ref{eqvs:subdiv} to the following:
\begin{itemize}
\item[(c')] Every face in the coherent subdivision $\mathcal D_\bmu$ of $P(\underline\bmu)$ is the flag matroid polytope of a positively oriented flag matroid.
\end{itemize}
Similar argument as above shows that \ref{eqvs:FlDr}$\implies$(c') even when $\br$ doesn't consist of consecutive integers.  One can also verify the converse (c')$\implies$\ref{eqvs:FlDr} in this more general case as follows:

Suppose for contradiction (c') but not \ref{eqvs:FlDr} for some $\bmu$.  Then \Cref{th:BEZ} implies that $\bmu$ is in the flag Dressian, and thus the failure of \ref{eqvs:FlDr} implies that there is a Pl\"ucker relation where the minimum occurs at least twice but at the terms whose coefficients have the same sign.  \Cref{prop:afflin} implies that, replacing $\bmu$ by $\varphi\bmu$ for some $\varphi$ if necessary, we may conclude that the same is true for that Pl\"ucker relation evaluated at $\bmu^{\rm{in}}$.  But then $\bmu^{\rm{in}}$, which arise as a face in the subdivision, is not a positively oriented flag matroid by \Cref{lem:signembed}, contradicting (c').

There is no equivalence of (c') and \ref{eqvs:3terms} since three-term incidence relations exist only for consecutive ranks.
\end{rem}

The implication \ref{eqvs:subdiv}$\implies$\ref{eqvs:2faces} is immediate.

\begin{proof}[Proof of \ref{eqvs:2faces}$\implies$\ref{eqvs:FlDr}]
First, assumption \ref{eqvs:2faces} implies that every edge of the subdivision $\mathcal D_{\bmu}$ of $P(\underline\bmu)$ is a flag matroid polytope, i.e.\ it is parallel to $\be_i - \be_j$ for some $i\neq j \in [n]$ and its two vertices are equidistant from the origin.  Hence the edges of $P(\underline\bmu)$ have the same property, so $\underline\bmu$ is a flag matroid.
By Proposition~\ref{prop:3terms}, to show \ref{eqvs:FlDr} it now suffices to show that every positive-tropical three-term Pl\"ucker relation is satisfied.

We start with the case $a = b$, where $\bmu$ is just $(\mu)$.  We need check the validity of the three-term positive-tropical Grassmann-Pl\"ucker relations, say for an arbitrary choice of $S\in \binom{[n]}{a-2}$ and $\{i<j<k<\ell\} \subseteq [n]\setminus S$.
If $S$ is not independent in the matroid $\underline\mu$, then every term in the three-term relation involving $S$ and $ijk\ell$ is $\infty$, so we may assume $S$ is independent.
Let $\mathscr S$ be a maximal chain $S_1\subsetneq \cdots \subsetneq S_m$ of subsets of $[n]$ with the property that $S_{a-2}= S$ and $S_{a-1} = S\cup \{ijk\ell\}$.  Then, Proposition~\ref{prop:greedy} implies that for a vector $\mathbf v_{\mathscr S}$ in the relative interior of the cone 
	$\RR_{\geq 0}\{\be_{S_1}, \ldots, \be_{S_m}\}$, we have
\[
\operatorname{face}(P(\underline\mu), \mathbf v_{\mathscr S}) = P(\underline\mu^{\mathscr S}) \simeq 
	P(\underline\mu|S\cup ijk\ell / S).
\]
For the second identification, we have used that 
	\begin{enumerate}
		\item \label{en:1}
	the matroid polytope of a direct sum of matroids is the product of the matroid polytopes;
\item 	\label{en:2} with the exception of $(S_{a-2}, S_{a-1}) = (S,S\cup ijk\ell)$, all other minors of the matroid $\underline\mu$ corresponding to $(S_c, S_{c+1})$ in the chain have their polytopes being a point because $|S_{c+1}\setminus S_c| = 1$.
	\end{enumerate}
Since $S$ is assumed to be independent, the rank of the matroid minor $\underline\mu|S\cup ijk\ell /S$ is at most $2$.  If it is less than 2, then every term in the three-term relation involving $S$ and $ijk\ell$ is $\infty$, so let us now treat the case when the rank is exactly 2. 
For a basis $\widehat B$ of $\underline \mu|S\cup ijk\ell /S$, let $B$ be the basis of $\underline \mu$ such that the vertex $\be_B$ of $P(\underline \mu)$ corresponds to the vertex $\be_{\widehat B}$ of $P(\underline\mu|S\cup ijk\ell /S)$ under the identification above.
Identifying $[4] = \{1<2<3 <4\}$ with $\{i<j<k<\ell\}$,
we may thus consider ``restricting'' $\mu$ to the face $P(\underline\mu|S\cup ijk\ell/S)$ to obtain an element $\widehat\mu = \mu|S\cup ijk\ell/S \in \Dr_{2;4}$ defined by
\[
\widehat\mu(\widehat B) = \begin{cases}
\mu(B)& \text{ if $\widehat B$ a basis of $\underline\mu|S\cup ijk\ell /S$}\\
\infty& \text{otherwise}
\end{cases}
\qquad\text{for $ \textstyle \widehat B\in \binom{[4]}{2}$}.
\]
It is straightforward to check that for $\Dr_{2;4}$, the three-term positive-tropical Grassmann-Pl\"ucker relations are satisfied if and only if all 2-dimensional faces in the corresponding subdivision are positroid polytopes.
Since the faces of the subdivision $\mathcal D_{\widehat \mu}$ of $P(\underline\mu|S\cup ijk\ell /S)$ are a subset of the faces of the subdivision $\mathcal D_\mu$, we have that $\mu$ satisfies the three-term tropical-positive Grassmann-Pl\"ucker relation involving $ijk\ell$ and $S$.

\smallskip
Let us now treat the case $a<b$.
That the three-term Grassmann-Pl\"ucker relations are satisfied for every $\mu_i$ where $i = a, \dotsc, b$ follows from our previous case of $a = b$ once we show the following claim:
\begin{quote}
For a flag matroid $\underline\bmu$ with consecutive rank sequence $(a,\dotsc, b)$, if every face of $P(\underline\bmu)$ of dimension at most 2 is a flag positroid polytope, then the same holds for every constituent matroid, i.e.\ for every $c = a, \dotsc, b$, every face of $P(\underline\mu_c)$ of dimension at most 2 is a positroid polytope .
\end{quote}
To prove the claim, suppose for some $a\leq c\leq b$ that a 2-dimensional face $Q$ of $P(\underline\mu_c)$ is not a positroid polytope. 
	Our goal is to use $Q$ to
	find a $2$-dimensional face of $P(\underline{\bmu})$ 
	that is not a flag positroid polytope.
	By \cite[Theorem 3.9]{LPW}, 
	a 2-dimensional matroid polytope
	which is not a positroid polytope has vertices of the form
	$\be_{Sij},\be_{Sk\ell},\be_{Si\ell},\be_{Sjk}$, where 
	 $S\subset [n]$ with $|S| = c-2$ and $\{i<j<k<\ell\} \subset [n]\setminus S$; thus 
	 $Q = \operatorname{conv}
	(\be_{Sij},\be_{Sk\ell},\be_{Si\ell},\be_{Sjk})$ for such $\{S,i,j,k,l\}$\footnote{One may also deduce this independently of \cite{LPW} by using the argument given in the first third of this proof of \ref{eqvs:2faces}$\implies$\ref{eqvs:FlDr} concerning the $a = b$ case.}.  
	Note that this 2-face $Q$ is the Minkowski sum of $\be_S$ with the product $\operatorname{conv}(\be_i,\be_k) \times \operatorname{conv}(\be_j,\be_\ell)$.

    Let $\mathscr S$ be a maximal chain $S_1\subsetneq \dotsb \subsetneq S_m$ of subsets of $[n]$ with the property that $S_{c-1} = S$, $S_c = S\cup ik$, and $S_{c+1} = S\cup ijk\ell$. Then, Proposition~\ref{prop:greedy} implies that for a vector $v_{\mathscr S}$ in the relative interior of the cone 
	$\RR_{\geq 0}\{\be_{S_1}, \ldots, \be_{S_m}\}$, we have	
    
\[
\operatorname{face}(P(\underline\bmu), \mathbf v_{\mathscr S})  = P(\underline\bmu^{\mathscr S})  \simeq P(\underline\bmu|S_{c+1}/S_c) \times P(\underline\bmu|S_{c}/S_{c-1}).
\]
For the second identification, we have used that 
	\begin{enumerate}
		\item %\label{en:1}
	the matroid polytope of a direct sum of matroids is the product of the matroid polytopes;
\item 	%\label{en:2} 
	with the exception of $(S_{c-1},S_c)=(S,S\cup ik)$ and $(S_c, S_{c+1}) = (S\cup ik, S\cup ijkl)$, all other minors of the constituent matroids of $\underline\bmu$ corresponding to $(S_d, S_{d+1})$ in the chain have their polytopes being a point because $|S_{d+1}\setminus S_d| = 1$.
	\end{enumerate}
Note that the polytope $P(\underline\bmu|S_{c+1}/S_c) \times P(\underline\bmu|S_{c}/S_{c-1})$ is at most 2-dimensional since $\underline\bmu|S_{c+1}/S_c$ and $\underline\bmu|S_{c}/S_{c-1}$ are flag matroids on ground sets $\{j,\ell\}$ and $\{i,k\}$, respectively. The polytope has $Q$ as a Minkowski summand, and thus in particular is not a flag positroid polytope.

\smallskip
Lastly, we check the validity of the three-term positive-tropical incidence-Pl\"ucker relations, say for an arbitrary choice of $S\subset [n]$ with $a-1\leq |S|\leq b-2$ and $\{i<j<k\} \subseteq [n]\setminus S$.
We may assume that $S$ has rank $|S|$ in the matroid $\mu_{|S|+1}$, since otherwise every term in the three-term positive-tropical incidence relation is $\infty$, so that the relation is vacuously satisfied.
Let $\mathscr S$ be a maximal chain $S_1\subsetneq \cdots \subsetneq S_m$ of subsets of $[n]$ with the property that $S_c = S$ and $S_{c+1} = S\cup ijk$ for $c = |S|$.  Then, 
	Proposition~\ref{prop:greedy} implies that
for a vector $\mathbf v_{\mathscr S}$ in the relative interior of the cone 
	$\RR_{\geq 0}\{\be_{S_1}, \ldots, \be_{S_m}\}$, we have
\[
\operatorname{face}(P(\underline\bmu), \mathbf v_{\mathscr S}) = P(\underline\bmu^{\mathscr S}) \simeq 
	P(\underline\bmu|S\cup ijk / S).
\]
For the second identification, we have used that 
	\begin{enumerate}
		\item %\label{en:1}
	the matroid polytope of a direct sum of matroids is the product of the matroid polytopes;
\item 	%\label{en:2} 
	with the exception of $(S_c, S_{c+1}) = (S,S\cup ijk)$, all other minors of the constituent matroids of $\underline\bmu$ corresponding to $(S_d, S_{d+1})$ in the chain have their polytopes being a point because $|S_{d+1}\setminus S_d| = 1$.
	\end{enumerate}
Note that the 
	polytope $P(\underline\bmu|S\cup ijk / S)$
	is at most 2-dimensional since it is a flag matroid polytope on 3 elements.
	Similarly to the $a =b$ case, we may ``restrict'' $\underline\bmu$ to the face $P(\underline\bmu|S\cup ijk/S)$ to obtain an element  
	$\widehat\bmu = \bmu|S\cup ijk/S \in \FlDr_{\widehat{\boldsymbol r}; 3}$.
	We may assume that $\widehat{\boldsymbol r} = (1,2)$ since otherwise every term in the three-term incidence relation of the pair $(S,ijk)$ is $\infty$.
For $\operatorname{FlDr}_3$, it is straightforward to verify that the unique three-term positive-tropical incidence relation involving $S$ and $ijk$ is satisfied if and only if the subdivision $\mathcal D_{\widehat\bmu}$ consists only of flag positroid polytopes.  Since the faces of the subdivision $\mathcal D_{\widehat\bmu}$ are a subset of the faces of the subdivision $\mathcal D_{\bmu}$, we have that $\bmu$ satisfies the three-term incidence relation
	involving $S$ and $\{i,j,k\}$.
\end{proof}

\section{Three-term incidence relations}\label{sec:3}

\subsection{The proof of 
\ref{eqvs:3terms}$\iff$\ref{eqvs:FlDr} in \Cref{thm:eqvs}}\label{pf:3}

In the case that $a=b$ in \Cref{thm:eqvs}, the equivalence 
\ref{eqvs:3terms}$\iff$\ref{eqvs:FlDr} is the content of 
\Cref{prop:3terms}.  

To prove the implication when $a<b$, we will show the following key theorem.

\begin{thm}\label{thm:almost3term}
Suppose $\boldsymbol\mu = (\mu_1, \mu_2) \in \PP\left(\TT^{\binom{[n]}{r}}\right) \times \PP\left(\TT^{\binom{[n]}{r+1}}\right)$ satisfies every three-term positive-tropical incidence relation, and suppose that the support $\underline{\boldsymbol\mu}$ is a flag matroid.
Then, we have $\boldsymbol\mu \in \FlDr^{\geq 0}_{r,r+1;n}$ if either of the following (incomparable) conditions hold:
\begin{enumerate}[label = (\roman*)]
\item The support $\underline\bmu$ consists of uniform matroids.
\item Either $\mu_1\in \Dr^{\geq 0}_{r;n}$ or $\mu_2\in \Dr^{\geq 0}_{r+1;n}$.
\end{enumerate}
\end{thm}

\begin{proof}[Proof of \ref{eqvs:FlDr}$\iff$\ref{eqvs:3terms}]
By Proposition~\ref{prop:3terms}, the implication \ref{eqvs:FlDr}$\implies$\ref{eqvs:3terms} is immediate.
For the converse, since $\br$ consists of consecutive integers, if $\underline\bmu$ is a flag matroid and $\bmu$ satisfies every three-term positive-tropical incidence relation, then $\bmu$ also satisfies every three-term positive-tropical Grassmann-Pl\"ucker relation if either of the conditions (i) or (ii) of Theorem~\ref{thm:almost3term} is satisfied.
The hypothesis of \ref{eqvs:3terms} satisfies this, so $\bmu$ is an element of $\FlDr_{\br;n}^{\geq 0}$ by \Cref{prop:3terms}.
\end{proof}

The proof of \Cref{thm:almost3term} relies on the following technical lemma.

\begin{lem}\label{lem:EB}
Suppose $w\in \TT^{\binom{[5]}{2}}$ satisfies all three-term positive-tropical Grassmann-Pl\"ucker relations involving the element 5.  
	Suppose moreover that $w_{i5}<\infty$ for some $i=1,2,3,4$.
Then	$w \in \Dr_{2;5}^{\geq 0}$, i.e.\ $w$ also satisfies the three-term positive-tropical Grassmann-Pl\"ucker relation not involving 5.
\end{lem}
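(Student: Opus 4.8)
The statement concerns a single point $w \in \TT^{\binom{[5]}{2}}$, so the plan is to reduce the claim to a finite case analysis that can be carried out by hand. There are $\binom{5}{4}=5$ four-element subsets of $[5]$, and for each there is one three-term Grassmann--Pl\"ucker relation; four of these five involve the element $5$ (namely those indexed by $\{1,2,3,5\}, \{1,2,4,5\}, \{1,3,4,5\}, \{2,3,4,5\}$), and exactly one, indexed by $\{1,2,3,4\}$, does not. So the hypothesis gives us four tropical Pl\"ucker relations plus the non-degeneracy assumption that $w_{i5}<\infty$ for some $i$, and we must deduce the fifth relation
\[
\min\{w_{12}+w_{34},\ w_{13}+w_{24},\ w_{14}+w_{23}\}\text{ is achieved at least twice, with a sign change.}
\]
Recall that for a three-term relation $x_{ab}x_{cd}-x_{ac}x_{bd}+x_{ad}x_{bc}$, the positive-tropical condition (as noted in the proof of \Cref{cor:3terms}) is equivalent to the ``middle term wins'': $w_{ac}+w_{bd}=\min\{w_{ab}+w_{cd},\ w_{ad}+w_{bc}\}$.

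First I would record, for each of the four hypotheses, the explicit ``middle term equals the min of the outer two'' equation. Concretely, writing $a_i = w_{i5}$ for $i=1,2,3,4$ and keeping $w_{12},\dots,w_{34}$ for the other six coordinates, the four relations become (up to checking the sign conventions carefully against the $\operatorname{sign}(j,I,J)$ formula):
\[
w_{13}+a_2 = \min\{w_{12}+a_3,\ w_{23}+a_1\},\quad
w_{14}+a_2 = \min\{w_{12}+a_4,\ w_{24}+a_1\},
\]
\[
w_{14}+a_3 = \min\{w_{13}+a_4,\ w_{34}+a_1\},\quad
w_{24}+a_3 = \min\{w_{23}+a_4,\ w_{34}+a_2\}.
\]
Here some of the $a_i$ may be $\infty$; the non-degeneracy hypothesis says not all of them are. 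The plan is then a direct argument: by the symmetry of the configuration under permutations of $\{1,2,3,4\}$ that fix $5$, I may assume WLOG which of the $a_i$ are finite, and in fact it will suffice to split into the cases ``some $a_i$ is finite'' and track what each equation forces. The cleanest route is probably: add pairs of the four equations to produce inequalities among $w_{12}+w_{34}$, $w_{13}+w_{24}$, $w_{14}+w_{23}$ with the $a_i$'s cancelling. For instance, from the first and fourth relations one gets $w_{13}+w_{24}+a_2+a_3 \le (w_{12}+a_3)+(w_{34}+a_2)$ and $\le (w_{23}+a_1)+(w_{23}+a_4)$, etc., and similarly the second and third give bounds; combining these should pin down that the middle quantity among $\{w_{12}+w_{34},w_{13}+w_{24},w_{14}+w_{23}\}$ is attained with the correct sign, which is exactly the target relation. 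When an $a_i=\infty$ one of the two terms in a $\min$ drops out, which only makes the bookkeeping easier, but one must be careful that enough finite data remains — this is precisely where the hypothesis $w_{i5}<\infty$ for some $i$ enters (without it, e.g. all $a_i=\infty$, the four hypotheses are vacuous and the conclusion can genuinely fail).

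The main obstacle I anticipate is not conceptual but combinatorial hygiene: getting every sign in the three-term relations right (the $\operatorname{sign}(j,I,J)$ factors), correctly handling the cases where one or more $w_{i5}=\infty$ (so that a $\min$ of two terms degenerates to a single term, or the ``achieved twice'' conclusion needs care), and making sure the symmetry reduction genuinely covers all cases. A safe fallback, if the slick ``add the equations'' argument turns out to have an annoying case, is to organize the whole proof as a finite verification: the relative order of the six pairwise sums $w_{ij}+w_{k\ell}$ together with the finiteness pattern of the $a_i$ determines everything, and one checks the (few) possibilities directly. I expect the ``add two equations and cancel the $a_i$'s'' approach to work uniformly, giving a short proof of perhaps half a page.
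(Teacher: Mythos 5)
Your set-up is correct: you have rightly identified the four hypothesis relations involving $5$, written them in the ``middle term equals the min of the outer two'' form (with the correct sign conventions), and noted that the non-degeneracy hypothesis rules out the vacuous case. However, the core of your proposed argument --- adding pairs of the derived inequalities so that the $a_i$'s cancel --- provably cannot deliver the full conclusion. It does give one of the two needed inequalities: from $w_{13}+a_2 \le w_{12}+a_3$ and $w_{24}+a_3 \le w_{34}+a_2$ you get $w_{13}+w_{24} \le w_{12}+w_{34}$. But the other inequality $w_{13}+w_{24} \le w_{14}+w_{23}$ cannot be obtained this way: in every one of the eight inequalities that the four hypothesis relations produce, the variable $w_{14}$ appears only with nonnegative coefficient, so no nonnegative linear combination of them can bound $-w_{14}$ from above, and hence none can yield $w_{13}+w_{24}-w_{14}-w_{23}\le 0$. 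Moreover, even both inequalities would not be enough: you must show $w_{13}+w_{24}$ actually \emph{ties} with one of the other two terms, which requires exploiting the equality half of the min, not just the $\le$ half. Your fallback --- a case split on the ``relative order of the six pairwise sums $w_{ij}+w_{k\ell}$'' --- is also not quite the right invariant, since the hypothesis relations compare mixed sums like $w_{13}+a_2$ versus $w_{12}+a_3$, and the WLOG-by-$S_4$ reduction is not available because positivity is preserved only by the dihedral (not symmetric) group acting on $[5]$.

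The paper's proof supplies exactly the missing organizing idea. It proves a purely algebraic identity (their Lemma~\ref{lem:ID}): assuming $p_{25}\neq 0$ (resp.\ $p_{35}\neq 0$), the three-term relation not involving $5$ can be written as an explicit $p$-linear combination of three of the four three-term relations involving $5$. Tropicalizing this identity (after dividing by $p_{25}$ and clearing) turns each of the three summands into a min-equation in which several of the terms literally coincide; the coincidences reduce the case split to $2^3=8$ cases, and in each case one reads off directly that $w_{13}+w_{24}$ is the min and that one of $w_{12}+w_{34}$, $w_{14}+w_{23}$ ties with it. The $\infty$-handling also requires more care than ``bookkeeping gets easier'': when $w_{25}=w_{35}=\infty$ the paper uses the remaining finite coordinate to force $w_{23}=w_{24}=w_{34}=\infty$, so the target relation holds vacuously. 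In short: your reduction to finitely many cases is the right framework, but without the algebraic identity to structure it, the case analysis you sketch cannot close the argument.
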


\begin{proof}
	The idea of the proof of \Cref{lem:EB} is that in the usual Grassmannian $\Gr_{2,5}$,
	if we can invert certain Pl\"ucker coordinates, then 
	we can write the three-term 
	Grassmann-Pl\"ucker relation not involving $5$
	as a linear combination of three of the other
	three-term Grassmann-Pl\"ucker relations.  In particular,
	we have the following identity, which is easy to verify.
	\begin{lem}\label{lem:ID}
    If $p_{25}\neq 0$ (respectively, $p_{35} \neq 0$) then
		$p_{13} p_{24}-p_{12}p_{34} - p_{14}p_{23}$
		can be written in the following ways.
	\begin{align*}
		&p_{13} p_{24}-p_{12}p_{34} - p_{14}p_{23}  \\
		&=(p_{13}p_{25} - p_{12}p_{35}-p_{15}p_{23})\frac{p_{24}}{p_{25}}
		-(p_{14}p_{25}-p_{12}p_{45}-p_{15}p_{24})\frac{p_{23}}{p_{25}}
		+(p_{24}p_{35}-p_{23}p_{45}-p_{25}p_{34})\frac{p_{12}}{p_{25}}\\
	&=(p_{13}p_{25} - p_{12}p_{35}-p_{15}p_{23})\frac{p_{34}}{p_{35}}
		-(p_{14}p_{35}-p_{13}p_{45}-p_{15}p_{34})\frac{p_{23}}{p_{35}}
		+(p_{24}p_{35}-p_{23}p_{45}-p_{25}p_{34})\frac{p_{13}}{p_{35}}.
	\end{align*}
	\end{lem}

  We next note that we can
	interpret the first (respectively, second) expression in \Cref{lem:ID} 
	tropically as long as $w_{25} <\infty$ (respectively,
	$w_{35}<\infty$).

\emph{Case 1: $w_{25} < \infty.$} Then we can make sense of the terms
	on the right hand side of the first expression of \Cref{lem:ID}
	tropically.  Since the three-term positive tropical Pl\"ucker
	relations involving $5$ hold, and $w_{25}<\infty$,
	we have 
	\begin{align*}
		w_{13}+w_{25}+w_{24}-w_{25}&=
		\min(w_{12}+w_{35}+w_{24}-w_{25},
		w_{15}+w_{23}+w_{24}-w_{25})\\
		w_{14}+w_{25}+w_{23}-w_{25}&=
		\min(w_{12}+w_{45}+w_{23}-w_{25},
		w_{15}+w_{24}+w_{23}-w_{25})\\
		w_{24}+w_{35}+w_{12}-w_{25} &=
		\min(w_{23}+w_{45}+w_{12}-w_{25},
		w_{25}+w_{34}+w_{12}-w_{25}).
	\end{align*}
We now simplify these expressions and underline
terms that agree, obtaining:
	\begin{align}
		w_{13}+w_{24}&= \label{eq:1}
		\min(\underline{w_{12}+w_{35}+w_{24}-w_{25}},
		\underline{\underline{w_{15}+w_{23}+w_{24}-w_{25}}})\\
		w_{14}+w_{23}&= \label{eq:2}
		\min(\uwave{w_{12}+w_{45}+w_{23}-w_{25}},
		\underline{\underline{w_{15}+w_{24}+w_{23}-w_{25}}}) \\
		\underline{w_{24}+w_{35}+w_{12}-w_{25}} &= \label{eq:3}
		\min(\uwave{w_{23}+w_{45}+w_{12}-w_{25}},
		w_{34}+w_{12}).
	\end{align}
There are now eight cases to consider, based on whether 
the minimum is achieved by the first or second term in each of 
\eqref{eq:1}, \eqref{eq:2}, \eqref{eq:3}.
All cases are straightforward. If the minimum is achieved by 
the first term in \eqref{eq:1} and the second term in \eqref{eq:3},
then we find that $w_{13}+w_{24}=w_{12}+w_{34} \leq w_{14}+w_{23}$.
In the other six cases, we find that 
 $w_{13}+w_{24}=
 w_{14}+w_{23} \leq 
 w_{12}+w_{34}$.
Therefore the positive tropical Pl\"ucker relation involving $1,2,3,4$
is satisfied.

\emph{Case 2: $w_{35} < \infty.$}
The argument for Case 2 is the same as for Case 1, except we 
	use the tropicalization of the second identity in \Cref{lem:ID}.

\emph{Case 3: $w_{25}=w_{35}=\infty.$}
	In this case, since $5$ is not a loop, either $w_{15}<\infty$ or 
	$w_{45}<\infty.$  Suppose that 
	$w_{15}<\infty$. 
Then	the positive tropical Pl\"ucker relations 
\begin{itemize}
	\item
		$w_{13}+w_{25} = 
		\min(w_{12}+w_{35}, w_{15}+w_{23})$ 
	\item 
		$w_{14}+w_{25} = 
		\min(w_{12}+w_{45}, w_{15}+w_{24})$
	\item 
		$w_{14}+w_{35} = 
		\min(w_{13}+w_{45}, w_{15}+w_{34})$
\end{itemize}
	imply that $w_{23}=w_{24}=w_{34}=\infty$, and hence
	the positive tropical Pl\"ucker relation involving 
	$1,2,3,4$ is satisfied.  
	The case where $w_{45}<\infty$ is similar. 
\end{proof}

For $w\in \TT^{\binom{[n]}{r}}$, define its dual $w^\perp \in \TT^{\binom{[n]}{n-r}}$ by $w^\perp(I) = w([n]\setminus I)$.  It is straightforward to verify that $w$ is an element of $\Dr_{r;n}$ (resp.\ $\Dr_{r;n}^{\geq 0}$) if and only if $w^\perp$ is an element of $\Dr_{n-r;n}$ (resp.\ $\Dr_{n-r;n}^{\geq 0}$).  This matroid duality gives the following dual formulation of \Cref{lem:EB}.

\begin{cor}\label{cor:EB}
Suppose $w\in \TT^{\binom{[5]}{3}}$ satisfies all three-term positive-tropical Grassmann-Pl\"ucker relations that contain a variable indexed by $S\in \binom{[5]}{3}$ with $5\notin S$.  If $\underline w$ is a matroid such that 5 is not a coloop, then $w\in \Dr_{3;5}^{\geq 0}$, i.e.\ $w$ also satisfies the three-term positive-tropical Grassmann-Pl\"ucker relation whose every variable contains 5 in its indexing subset.
\end{cor}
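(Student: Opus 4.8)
The plan is to deduce \Cref{cor:EB} from \Cref{lem:EB} by matroid duality, passing from $w\in\TT^{\binom{[5]}{3}}$ to $w^\perp\in\TT^{\binom{[5]}{2}}$ defined by $w^\perp(I)=w([5]\setminus I)$. The only substance is to check that the complementation map $I\mapsto [5]\setminus I$ carries the hypotheses of \Cref{cor:EB} to those of \Cref{lem:EB} and is compatible with the conclusions.

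First I would set up the dictionary between three-term Grassmann-Pl\"ucker relations of type $(3;5)$ and of type $(2;5)$. A three-term relation of type $(3;5)$ is indexed by a singleton $\{m\}$, with $\{i<j<k<\ell\}=[5]\setminus\{m\}$, and each of its variables $p_{mab}$ (for $\{a,b\}\subseteq[5]\setminus\{m\}$) complements to $p_{\{i,j,k,\ell\}\setminus\{a,b\}}$; hence under $^\perp$ the relation indexed by $\{m\}$ matches exactly the three-term relation of type $(2;5)$ indexed by the $4$-set $[5]\setminus\{m\}$. Consequently the relation indexed by $\{m\}$ ``contains a variable indexed by some $S$ with $5\notin S$'' precisely when $m\neq 5$ (only then does $[5]\setminus\{m\}$ contain a $2$-subset avoiding $5$), which is precisely when the matching type-$(2;5)$ relation ``involves the element $5$''. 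So the hypothesis of \Cref{cor:EB} translates to: $w^\perp$ satisfies every three-term positive-tropical Grassmann-Pl\"ucker relation involving $5$. Dually, the relation indexed by $\{5\}$ --- the one whose every variable contains $5$ --- matches the type-$(2;5)$ relation not involving $5$, so the conclusion of \Cref{cor:EB} translates to $w^\perp\in\Dr_{2;5}^{\geq 0}$, which is exactly the conclusion of \Cref{lem:EB}.

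Next I would translate the side condition. Since $\underline w$ is a matroid of rank $3$ on $[5]$, the support $\underline{w^\perp}$ is its dual matroid $(\underline w)^*$ (its bases are the complements of the bases of $\underline w$), so ``$5$ is not a coloop of $\underline w$'' is equivalent to ``$5$ is not a loop of $\underline{w^\perp}$'', i.e.\ to the existence of $i\in\{1,2,3,4\}$ with $w^\perp(\{i,5\})=w([5]\setminus\{i,5\})<\infty$ --- exactly the hypothesis ``$w_{i5}<\infty$ for some $i$'' of \Cref{lem:EB}. Applying \Cref{lem:EB} to $w^\perp$ yields $w^\perp\in\Dr_{2;5}^{\geq 0}$, and then the duality equivalence recorded just before the corollary ($w\in\Dr_{3;5}^{\geq 0}$ if and only if $w^\perp\in\Dr_{2;5}^{\geq 0}$) gives $w\in\Dr_{3;5}^{\geq 0}$; in particular $w$ satisfies the three-term positive-tropical Grassmann-Pl\"ucker relation whose every variable contains $5$.

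I do not expect a genuine obstacle here: all the content sits in \Cref{lem:EB}, and \Cref{cor:EB} is its verbatim dual, so the only thing requiring care is the index bookkeeping above. (If one preferred to bypass the dictionary, one could instead rerun the three-case argument of \Cref{lem:EB} directly on $\binom{[5]}{3}$ using the complemented form of the identity in \Cref{lem:ID}, but the duality route is shorter.)
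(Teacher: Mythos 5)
Your proof is correct and takes exactly the paper's approach: the paper obtains \Cref{cor:EB} from \Cref{lem:EB} via the duality $w\mapsto w^\perp$, stating the equivalence $w\in\Dr_{r;n}^{(\geq 0)}\iff w^\perp\in\Dr_{n-r;n}^{(\geq 0)}$ and leaving the index bookkeeping (which you spell out carefully and correctly) to the reader.
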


We are now ready to prove \Cref{thm:almost3term}.
We expect that the proof of \Cref{thm:almost3term} here adapts well to give an analogous statement for arbitrary perfect hyperfields.

\begin{proof}[Proof of \Cref{thm:almost3term}]
Given such $\boldsymbol\mu = (\mu_1, \mu_2) \in \PP\left(\TT^{\binom{[n]}{r}}\right) \times \PP\left(\TT^{\binom{[n]}{r+1}}\right)$, define $\widetilde{\boldsymbol\mu}\in \PP\left(\TT^{\binom{[n+1]}{r+1}}\right)$ by
\[
\widetilde{\boldsymbol\mu}(S) =
\begin{cases}
 \mu_1(S\setminus (n+1)) &\text{if $(n+1)\in S$}\\
 \mu_2(S) &\text{otherwise}.
\end{cases}
\]
Because $\underline{\boldsymbol \mu}$ is a flag matroid, we have that $\underline{\widetilde{\boldsymbol\mu}}$ is a matroid, with the element $(n+1)$ that is neither a loop nor a coloop.
We observe that $\widetilde{\boldsymbol\mu} \in \Dr_{r+1;n+1}^{\geq 0}$ if and only if $\boldsymbol\mu \in \FlDr^{\geq 0}_{r,r+1;n}$ because the validity of the three-term positive-tropical Grassmann-Pl\"ucker relations for $\widetilde{\boldsymbol\mu}$ is equivalent to the validity of both the three-term positive-tropical incidence relations and the three-term positive-tropical Grassmann-Pl\"ucker relations for $\boldsymbol\mu$.

We need to check that $\widetilde\bmu$ satisfies every three-term positive-tropical Grassmann-Pl\"ucker relation of type $(r+1;n+1)$.  Consider the three-term relation associated to the subset $S\subseteq [n+1]$ of cardinality $r-1$ and $\{i<j<k<\ell\}\subseteq [n+1]$ disjoint from $S$.  We have three cases:
\begin{itemize}
\item $\ell = n+1$.  In this case, erasing the index $n+1$ in the expression for the corresponding three-term Grassmann-Pl\"ucker relation yields a three-term incidence relation of type $(r,r+1;n)$, which is satisfied by our assumption on $\bmu$.
\item $(n+1) \in S$.  In this case, if $(n+1)$ is not a coloop in the minor $\widetilde\bmu|S\cup ijk\ell / (S\setminus(n+1))$, then applying \Cref{cor:EB} to $\widetilde\bmu|S\cup ijk\ell / (S\setminus(n+1))$ implies that the three-term Grassmann-Pl\"ucker relation is satisfied.
\item $(n+1) \notin S\cup ijk\ell$.   In this case, if $(n+1)$ is not a loop in the minor $\widetilde\bmu|S\cup ijk\ell(n+1) / S$, then applying \Cref{lem:EB} to $\widetilde\bmu|S\cup ijk\ell(n+1) / S$ implies that the three-term Grassmann-Pl\"ucker relation is satisfied.
\end{itemize}

	Under condition (i) of Theorem~\ref{thm:almost3term}, i.e.\ when the support $\underline\bmu$ consists of uniform matroids, the element $(n+1)$ is not a coloop in the minor $\widetilde\bmu|S\cup ijk\ell / (S\setminus(n+1))$, and is not a loop in the minor $\widetilde\bmu|S\cup ijk\ell(n+1) / S$.  Hence, both Corollary~6.4 and Lemma~6.2 apply respectively, and we conclude that in every case the three-term positive-tropical Grassmann-Pl\"ucker relation is satisfied.

	Now suppose condition (ii) of Theorem~\ref{thm:almost3term} holds.
We verify that in the cases where Corollary~6.4 or Lemma~6.2 do not apply, the relevant positive-tropical Grassmann-Pl\"ucker relation is satisfied.  Let us consider the third bullet point,
	and suppose that $(n+1)$ is a loop in the minor $\widetilde\bmu|S\cup ijk\ell(n+1) / S$, i.e.\ where Lemma~6.2 does not apply; the argument for the second bullet point is similar by matroid duality.
%In this case, that the element $(n+1)$ is a loop in the minor implies that the underlying matroids pair $(\mu_1|S\cup ijk\ell/S, \mu_2|S\cup ijk\ell/S)$ forms 
%We may assume that $S$ is independent in $\underline\mu_1$
%We may assume that $S$ is independent and $\widetilde\bmu|S\cup ijk\ell(n+1) / S$ has rank 2, since otherwise the three-term relations involving $S$ and four elements from $ijk\ell(n+1)$ are vacuously satisfied as all terms involved are $\infty$.
	In this case, since $(n+1)$ is not a loop in the matroid $\underline{\widetilde\bmu}$, $(n+1)$ belongs to the closure (also called \emph{span}) 
	in $\underline{\widetilde\bmu}$ of $S$. Since $S$ is also independent, there is an element $s\in S$ such that $(S\setminus s)\cup (n+1)$ is independent and has the same closure as $S$ in $\underline{\widetilde\bmu}$.
Let $S' = S\setminus s$.
For any $a,b\in \{i,j,k,\ell\}$, by our choice of $s\in S$, we have that $Sab$ is a basis of $\underline{\widetilde\bmu}$ if and only if $S'ab(n+1)$ is a basis of $\underline{\widetilde\bmu}$.
Moreover, for any $a,b,c \in \{i,j,k,\ell\}$ such that the values involved below are finite, we claim
\[
\widetilde\bmu(Sab) - \widetilde\bmu(Sac)  = \widetilde\bmu(S'ab(n+1)) - \widetilde\bmu(S'ac(n+1)).
\]
Note that using the definition of $\widetilde\bmu$, the above claim can be 
equivalently written as
$$
	\mu_2(Sab)-\mu_2(Sac) = \mu_1(S'ab)-\mu_1(S'ac).$$
From the claim, we conclude as follows.  Let $\overline\mu_1$ be the projection of $\mu_1$ to the coordinates labelled by $S'xy$ where $x\neq y \in \{i,j,k,\ell\}$, and let $\overline\mu_2$ be the projection of $\mu_2$ to the coordinates labelled by $Sxy$ where $x\neq y \in \{i,j,k,\ell\}$.  Then, as elements of $\PP(\TT^{\binom{\{i,j,k,\ell\}}{2}})$, the two tropical vectors $\overline\mu_1$ and $\overline\mu_2$ are equal.  Hence, the claim implies that if one of $\mu_1$ or $\mu_2$ satisfies the three-term Grassmann-Pl\"ucker relations on these coordinates, then so does the other.

The claim follows from the validity of three-term tropical incidence relations, which is implied by the validity of three-term positive-tropical incidence relations.  Namely, we have that the minimum is achieved at least twice in
\[
\{\mu_1(S'ab)+\mu_2(S'asc), \mu_1(S'as)+\mu_2(S'abc), \mu_1(S'ac)+\mu_2(S'asb)\},
\]
from which the claim follows because $Sa(n+1)$ is not a basis of $\underline{\widetilde\bmu}$, forcing $\mu_1(S'as) = \infty$.
\end{proof}

\section{Projections of positive Richardsons to positroids}\label{sec:projection}

One recurrent theme in our paper has been the utility of 
projecting a complete flag positroid
(equivalently, a positive Richardson)
to a positroid  (or a positroid cell).
This has come up in Rietsch's cell decomposition of a 
nonnegative (partial) flag variety,
in our proofs 
in \Cref{pf:1}, and in 
the expression of 
a Bruhat interval polytope as a Minkowski sum of positroid polytopes
in \Cref{rem:BIPsum}.
Positive Richardsons can be indexed by pairs $(u,v)$ of permutations with 
$u\leq v$.   Meanwhile, by work of Postnikov \cite{Pos}, 
positroid cells of $\Gr_{d,n}^{\geq 0}$ can be indexed by 
\emph{Grassmann necklaces}.
In this section we will give several concrete combinatorial recipes
for constructing the positroids obtained by projecting a (complete) flag positroid.
We will also discuss the problem of determining when a collection of positroids
can be identified with a (complete) flag positroid.

\subsection{Indexing sets for cells of $\Gr_{d,n}^{\geq 0}$}

As discussed in
\Cref{def:backgroundflag}, there are two equivalent ways of thinking about 
the positroid cell decomposition of $\Gr_{d,n}^{\geq 0}$:
$$\Gr_{d,n}^{\geq 0} = \bigsqcup S_{\mathcal{B}}^{>0} = \bigsqcup_{u,v} \pi(\mathcal{R}_{u,v}^{>0}).$$
In the union on the right, $\pi$ is the projection from $\Fl_n$ to $\Gr_{d,n}$,
and $u,v$ range over all 
permutations $u\leq v$  in $S_n$, such that $v$ is a minimal-length coset representative
of $W/W_d$, and $W_d = \langle s_1,\dots,s_{d-1},\hat{s}_d,s_{d+1},\dots,s_{n-1}\rangle.$
We write $W^d$ for the set of minimal-length coset representatives of $W/W_d$.
Recall that a \emph{descent} of a permutation $z$ is a position
$j$ such that $z(j)>z(j+1)$.  We have that $W^d$ is the subset of permutations in $S_n$
which have at most one descent, and if it exists, that descent must be in position $d$.

Even if $v\notin W^d$, the projection of $\mathcal{R}_{u,v}^{>0}$ to $\Gr_{d,n}^{\geq 0}$
is still a positroid, which we will characterize below.  We start by 
defining \emph{Grassmann necklaces} \cite{Pos}.

\begin{defn}
Let $\I=(I_1,\ldots, I_n)$ be a sequence of subsets of ${[n] \choose d}$. 
We say $\I$ is a \textbf{Grassmann necklace} of \emph{type $(d,n)$} if the following holds:
\begin{itemize}
    \item If $i\in I_i$, then $I_{i+1}=(I_i\setminus i)\cup j$ for some $j\in[n]$.
    \item If $i\notin I_i$, then $I_{i+1}=I_i$. 
\end{itemize}
\end{defn}

In order to define the bijection between these Grassmann necklaces and positroids, we need to define the \textit{$i$-Gale order} on $\binom{[n]}{d}$. 

\begin{defn}\label{def:shiftedorder}
We write $<_i$ for the following \emph{shifted linear order} on $[n]$.
\begin{equation*}
    i<_ii+1<_i\ldots <_i n<_i1<_i\ldots <_i i-1.
\end{equation*}
	We also define the \emph{$i$-Gale order} on $d$-element subsets by 
	setting $$\{a_1 <_i \dots <_i a_d\} \leq_i \{b_1 <_i \dots <_i b_d\}$$
	if and only if $a_{\ell} \leq_i b_{\ell}$ for all $1\leq \ell \leq d$.
\end{defn}

Given a positroid $M$, 
we define a sequence $\mathcal{I}_M = (I_1,\dots,I_n)$ of subsets of $[n]$  
by letting ${I}_i$ be the minimal basis of $M$ in the $i$-Gale order. 
	The following result is from \cite[Theorem 17.1]{Pos}.

\begin{prop}
	\label{prop:postonecklace}
For any positroid $M$, $\I_M$ is a Grassmann necklace. 
The map $M \mapsto \I_M$ gives a bijection between positroids of rank $d$ on $[n]$ and 
	Grassmann necklaces of type $(d,n)$.
\end{prop}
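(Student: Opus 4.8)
\emph{Overview.} The plan is to establish the two halves of the statement separately: that $\I_M$ is a Grassmann necklace for every positroid $M$ (in fact for every matroid), and that $M \mapsto \I_M$ is a bijection onto Grassmann necklaces of type $(d,n)$. Throughout I would use the standard greedy description of the $i$-Gale-minimal basis: scanning $[n]$ in the shifted order $i <_i i+1 <_i \cdots <_i i-1$ and keeping each element that preserves independence of the running set yields a basis $I_i$, and a one-step exchange argument shows $I_i \leq_i B$ for every basis $B$, so $I_i$ is well defined.

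\emph{The necklace conditions.} Since $\leq_{i+1}$ is obtained from $\leq_i$ by moving $i$ from the bottom of the order to the top, I would compare the two greedy runs. If $i$ is a loop of $M$, it is never selected, so $i\notin I_i$ and the two runs agree verbatim, giving $I_{i+1}=I_i$. If $i$ is not a loop, it is selected first by the $\leq_i$-greedy, so $i\in I_i$ and $I_i\setminus i$ is the $\leq$-minimal basis of $M/i$ in the order $i+1 < \cdots < i-1$; running the $\leq_{i+1}$-greedy (which scans $i+1 < \cdots < i-1 < i$), one checks along the scan that it reconstructs all of $I_i\setminus i$ and then appends a single further element $j$ (possibly $j=i$, when $i$ is a coloop), so $I_{i+1}=(I_i\setminus i)\cup j$. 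This step is pure matroid combinatorics and uses nothing about positivity.

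\emph{Injectivity.} Here positivity is essential. I would prove the identity
\[
\mathcal B(M) = \Big\{ B \in \binom{[n]}{d} : B \geq_i I_i \ \text{ for all } i\in[n]\Big\},
\]
whose inclusion $\subseteq$ is immediate from the minimality of each $I_i$. For $\supseteq$ I would fix a realization $A$ of $M$ whose maximal minors $\Delta_J(A)$ are all nonnegative and are nonzero exactly when $J\in\mathcal B(M)$; assuming $B$ lies on the right-hand side but $\Delta_B(A)=0$, then $B$ contains a circuit of $M$, and combining the nonnegativity of the minors of $A$ with a short three-term Pl\"ucker relation one locates an index $j$ for which $B\geq_j I_j$ fails, a contradiction. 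This is the only place where \emph{nonnegativity} of the minors, rather than bare realizability, is used: for a general matroid the right-hand side can be strictly larger than $\mathcal B(M)$ --- for instance the rank-two matroid on $[4]$ with bases $\{12,23,34,14\}$ is not a positroid, yet $\{13\}$ and $\{24\}$ satisfy all of its necklace inequalities. Granting the identity, $M$ is recovered from $\I_M$, so the map is injective.

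\emph{Surjectivity and the main obstacle.} Given a Grassmann necklace $\I=(I_1,\ldots,I_n)$, the candidate is $M_\I := \{B : B\geq_i I_i \text{ for all } i\}$, the intersection of the $n$ Schubert matroids cut out by the individual inequalities. I would check (i) that $M_\I$ is the basis set of a positroid, and (ii) that $\I_{M_\I}=\I$. For (ii), a standard order-theoretic lemma --- proved by induction on the cyclic distance and using only the defining relations of a necklace --- gives $I_j\geq_i I_i$ for all $i,j$, so each $I_i$ lies in $M_\I$; being $\leq_i$-minimal among all sets that are $\geq_i I_i$, it is the $\leq_i$-minimal basis of $M_\I$, as required. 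For (i), I would invoke the explicit combinatorial models attached to $\I$ --- its decorated permutation and $\mathrm{Le}$-diagram, and the positively parametrized canonical matrix these produce --- whose column matroid is manifestly a positroid, and identify its bases with $M_\I$ by a direct computation with that matrix; alternatively one verifies the basis-exchange axiom for $M_\I$ directly from the necklace relations. Together with the injectivity above, this yields the bijection. I expect the genuine obstacle to be the positivity input shared by the inclusion $\supseteq$ of the key identity and by part (i) --- namely, ruling out ``phantom'' common upper bounds of the necklace --- whereas the remainder is greedy-algorithm manipulation and bookkeeping with the shifted orders.
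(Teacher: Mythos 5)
The paper does not actually prove this proposition; it cites it directly as \cite[Theorem 17.1]{Pos}, so there is no in-paper argument to compare against. Your proposal is therefore a reconstruction of the Postnikov--Oh argument rather than of anything in the paper, and its overall architecture is the right one: the necklace conditions are a pure matroid-theoretic consequence of the greedy algorithm; injectivity rests on the identity $\mathcal B(M)=\{B: B\geq_i I_i \ \forall i\}$, which is precisely where positivity is indispensable (your rank-two non-positroid on $[4]$ with bases $\{12,23,34,14\}$ and phantom common upper bound $\{13\}$ is a correct illustration); and surjectivity is obtained by building a positroid realization from a Grassmann necklace via its Le-diagram or decorated permutation. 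You also correctly observe the order-theoretic fact $I_j\geq_i I_i$ needed to show each $I_i$ is a basis of the candidate matroid.

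The gap is in the step you flag as ``the only place where nonnegativity is used.'' The identity $\mathcal B(M)=\bigcap_i\{B:B\geq_i I_i\}$ is Oh's theorem, and your sketch of it --- ``assuming $B$ is on the right-hand side but $\Delta_B(A)=0$, $\ldots$ combining nonnegativity with a short three-term Pl\"ucker relation one locates an index $j$ for which $B\geq_j I_j$ fails'' --- does not describe a working argument. A single $B$ violating the conclusion does not pair naturally with a single three-term Pl\"ucker relation: the constraint ``$B\geq_i I_i$ for all $i$'' is a global cyclic condition, and producing a local Pl\"ucker contradiction from its failure is exactly the difficulty. The known proofs (Oh via Le-diagrams and $\Gamma$-moves; Knutson--Lam--Speyer via affine permutations and cyclic rank matrices; Ardila--Rinc\'on--Williams via connected positroids and non-crossing partitions; Marcott via a different route) are all substantially longer precisely because no such short Pl\"ucker identity is available. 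Your surjectivity step also defers the matching of $M_\I$ with the column matroid of the canonical Le-matrix to ``a direct computation,'' which is a nontrivial part of Postnikov's machinery. So the proposal is a faithful roadmap but leaves the two genuinely hard lemmas --- Oh's intersection theorem and the necklace-to-positroid construction --- as black boxes whose internal arguments are understated.
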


\subsection{Projecting positive Richardsons to positroids}\label{sec:projecting}

In this section we will give several descriptions 
of the constituent positroids appearing in 
 a complete flag positroid (that is, 
a flag matroid represented by a positive Richardson). We start by reviewing 
a cryptomorphic definition of flag matroid, based on 
 \cite[Sections 1.7-1.11]{BGW03}. 

A \emph{flag} 
$F=F_1 \subset F_2 \subset \dots \subset F_k$ on $[n]$ is an increasing
sequence of finite subsets of $[n]$. 
A \emph{flag matroid} is a collection $\mathcal{F}$ of flags satisfying the \emph{Maximality Property}.
Recall that $e_S$ denotes the $01$ indicator vector in $\RR^n$ associated to a subset $S\subset [n]$.
For a flag 
$F=F_1 \subset F_2 \subset \dots \subset F_k$ we let 
$e_F = e_{F_1} + \dots + e_{F_k}$.  In this language, the 
flag matroid polytope of $\mathcal{F}$ is 
$P_{\mathcal{F}} = \Conv\{e_F \ \vert \ F\in \mathcal{F}\}$,
whose vertices are precisely the points $e_F$ for $F\in \mathcal{F}$.

In the complete flag case, each point $e_F$ is a 
permutation vector $(z(1),\dots,z(n))$ for some $z\in S_n$.
Note that we can read off $z:=z(F)$ from $F$ by setting 
$z(i)=j$, where $j$ is the unique element of $F_i \setminus F_{i-1}$.

Given $u\leq v$ in Bruhat order,
we define the \emph{Bruhat interval flag matroid} $\mathcal{F}_{u,v}$ 
to be the complete flag matroid
whose flags  are precisely
$$\{z([1]) \subset z([2]) \subset \dots \subset z([n])\} 
\text{ for }u\leq z \leq v,$$
where $[i]$ denotes $\{1,2,\dots,i\}$ and $z([i])$ 
denotes $\{z(1),\dots,z(i)\}$.
Then by the above discussion,
 the (twisted) Bruhat interval polytope 
$$\tilde{P}_{u,v} = \Conv\{(n+1-z^{-1}(1),n+1-z^{-1}(2),\dots,n+1-z^{-1}(n)) \ \vert \ u \leq z \leq v\}$$
is the
flag matroid polytope of the Bruhat interval flag matroid $\mathcal{F}_{u,v}$.

This observation leads naturally to the following definition.
\begin{defn}\label{def:envelope}
Consider a complete flag matroid $\mathcal{F}$ on $[n]$,
which we identify with a collection $\mathcal{S}$ of 
permutations on $[n]$. By the Maximality Property 
\cite[Section 1.7.2]{BGW03} and its relation to the
tableau criterion for Bruhat order \cite[Theorem 5.17.3]{BGW03},
	$\mathcal{S}$ contains a unique permutation $u$ (respectively,
	$v$) which is 
	minimal (respectively, maximal)
	in Bruhat order among all elements of $\mathcal{S}$.
We say that $\mathcal{F}_{u,v}$ is the 
\emph{Bruhat interval envelope} of $\mathcal{F}$.
\end{defn}
It follows from \Cref{def:envelope} that  the
Bruhat interval envelope of a complete flag matroid $\mathcal{F}$
contains $\mathcal{F}$;
however, in general this inclusion
is strict.  It is an equality precisely when 
$\mathcal{F}$ is a Bruhat interval flag matroid.

Recall that if $F=(F_1,\dots, F_n)$ and $G=(G_1,\dots,G_n)$ are flags,
we say that 
$F$ is \emph{less than or equal to $G$ in the $\leq_j$ Gale order}
(and write 
$F \leq_j G$)
if and only if $F_i \leq_j G_i$ for all $1 \leq i \leq n$.
(We also talk about the ``usual'' Gale order with respect to the 
 total order $1 < 2 < \dots < n$.)
The \emph{Maximality Property} for flag matroids implies that for any flag matroid 
$\mathcal{F}$, there is always a unique element which is maximal
(and a unique element which is minimal) with respect to  $\leq_j$.

We now give a Grassmann necklace characterization of the positroid constituents
of a complete flag positroid, which follows from the previous discussion 
	plus \Cref{prop:postonecklace}.
\begin{prop}\label{prop:neck}
Consider a complete flag positroid 
$\M = (M_1, \ldots, M_n)$
on $[n]$, that is, 
the flag positroid associated to any point of $\mathcal{R}_{u,v}^{>0}$,
for some $u\leq v$. For each $1 \leq j \leq n$, let 
	$z^{(j)}$ be the Gale-minimal permutation with respect to $\leq_j$ in the 
	interval $[u,v]$.  Then the Grassmann necklace of the positroid $M_j$
	is 
	$(z^{(1)}([j]),z^{(2)}([j]),\dots, z^{(n)}([j]))$.
\end{prop}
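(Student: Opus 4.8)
The strategy is to unwind the two bijections that have already been set up: (i) the correspondence between positive Richardson cells $\mathcal{R}_{u,v}^{>0}$ and complete flag positroids, via \Cref{rem:BIPsum} and the Bruhat interval flag matroid $\mathcal{F}_{u,v}$; and (ii) the bijection $M \mapsto \I_M$ between positroids of rank $d$ and Grassmann necklaces (\Cref{prop:postonecklace}), where $(\I_M)_i$ is the $i$-Gale minimal basis of $M$. So the entire statement reduces to identifying the $i$-Gale minimal basis of the positroid $M_j$, and the claim is precisely that this basis is $z^{(i)}([j])$, where $z^{(i)}$ is the $\leq_i$-Gale minimal permutation in $[u,v]$.

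\textbf{Key steps.}
First I would recall that by \Cref{rem:BIPsum} and the discussion preceding \Cref{def:envelope}, if $A \in \mathcal{R}_{u,v}^{>0}$ then the flag matroid $\M = (M_1,\dots,M_n)$ realized by the rows of $A$ is exactly the Bruhat interval flag matroid $\mathcal{F}_{u,v}$; in particular, the bases of $M_j$ are exactly the sets $z([j]) = \{z(1),\dots,z(j)\}$ for $z$ ranging over the Bruhat interval $[u,v]$. Second, I would show that the map sending a permutation $z$ to the flag $(z([1]),\dots,z([n]))$ is an order isomorphism from $(S_n, \leq_i\text{-Gale on flags})$ onto its image, and more importantly that Gale-minimization is compatible with truncation: if $z^{(i)}$ is the unique $\leq_i$-Gale minimal element of $[u,v]$ (which exists by the Maximality Property for flag matroids, as stated just before \Cref{prop:neck}), then $z^{(i)}([j])$ is the $\leq_i$-minimal element among $\{z([j]) : z \in [u,v]\}$. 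This last point is the crux: one direction is immediate since $z^{(i)}([j]) \leq_i z([j])$ for all $z \in [u,v]$ follows from $z^{(i)} \leq_i z$ in the flag Gale order (which includes the $j$-th coordinate inequality); for the other direction one needs that the $\leq_i$-minimum of $\{z([j])\}$ is actually attained by the truncation of a single permutation, namely $z^{(i)}$, which again is exactly what the existence of a $\leq_i$-minimal element of the flag matroid $\mathcal{F}_{u,v}$ guarantees. Third, I would invoke \Cref{prop:postonecklace}: since $z^{(i)}([j])$ is the $i$-Gale minimal basis of $M_j$, the sequence $(z^{(1)}([j]), z^{(2)}([j]), \dots, z^{(n)}([j]))$ is by definition the Grassmann necklace $\I_{M_j}$.

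\textbf{Main obstacle.}
The step I expect to require the most care is the compatibility of Gale-minimization with the truncation map $z \mapsto z([j])$ — that is, verifying that the $\leq_i$-Gale minimal basis of the $j$-th constituent positroid $M_j$ of the flag matroid $\mathcal{F}_{u,v}$ is the $j$-th subset in the flag $(z^{(i)}([1]),\dots,z^{(i)}([n]))$, rather than some other basis arising from a permutation in $[u,v]$ that is $\leq_i$-minimal only in its $j$-th coordinate but not overall. The resolution is that the Maximality Property for flag matroids (\cite[Section 1.7.2]{BGW03}, cited just before \Cref{prop:neck}) guarantees a single flag that is $\leq_i$-minimal simultaneously in every coordinate, so there is no conflict; but articulating this cleanly, and being careful that the notion of $\leq_i$-Gale order on flags restricts correctly to the $i$-Gale order on $\binom{[n]}{j}$ used in \Cref{def:shiftedorder}, is where the real content lies. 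Once that is in place, the proposition follows formally from \Cref{prop:postonecklace}.
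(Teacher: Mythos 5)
Your proof is correct and takes essentially the same route the paper intends (the paper itself offers only a one-line "follows from the previous discussion plus \Cref{prop:postonecklace}"). The one small glitch is that your "other direction" paragraph is unnecessary: once you know $z^{(i)} \leq_i z$ coordinate-wise for every $z \in [u,v]$ (because the Maximality Property upgrades the unique $\leq_i$-minimal flag to a genuine minimum) and that $z^{(i)}$ itself lies in $[u,v]$, it follows immediately that $z^{(i)}([j])$ is the $\leq_i$-minimum of $\{z([j]) : z \in [u,v]\}$; there is nothing further to check. The fact you use -- that the constituent positroids of the flag positroid coming from $\mathcal{R}_{u,v}^{>0}$ have bases $\{z([j]): u\le z\le v\}$ -- is Lemma 7.8 in the paper (stated just after the proposition), and the compatibility of Gale-minimization with taking constituents is \cite[Corollary 7.2.1]{BGW03}, which the paper cites a bit later in the proof of \Cref{prop:flagtopos}; both are exactly what you invoked.
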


\begin{eg}
Consider the flag positroid associated to a point of 
	$\mathcal{R}_{u,v}^{>0}$, where $u=(1,2,4,3)$ and $v=(4,2,1,3)$
	(which we abbreviate as 1243 and 4213).
	The interval $[u,v]$ consists of 
	$$[u,v]=\{1243, 1423, 2143, 2413, 4123, 4213\}.$$

	We now use \Cref{prop:neck}, and find that 
	 the Gale-minimal permutations of $[u,v]$ with respect to 
	$\leq_1, \leq_2, \leq_3, \leq_4$ are 
	$1243, 2413, 4123, 4123$.  Therefore the Grassmann necklaces for the constituent
	positroids
	$M_1, M_2, M_3$ and $M_4$ are
	$(1,2,4,4)$, $(12, 24, 14, 14)$, $(124, 124, 124, 124)$,
	and $(1234, 1234, 1234, 1234)$.  

	Alternatively, 
	we can read off the flags in the flag positroid from the permutations in $[u,v]$,
	obtaining the flags
	$$\{1 \subset 12 \subset 124, 1 \subset 14 \subset 124 , 
	2 \subset 12 \subset 124, 2 \subset 24 \subset 124 , 
	4 \subset 14 \subset 124,
	4 \subset 24 \subset 124\}.$$
	(Note that for brevity, we have omitted the subset 1234 from the end of each flag above.)
	We can now read off the bases of $M_1, M_2, M_3, M_4$ from the flags,
	obtaining 
	$\{1,2,4\}$, $\{12, 14, 24\}$, $\{124\}$, and $\{1234\}$.  We can then
	directly calculate the Grassmann necklaces from these sets of bases, 
	getting the same answer as above.

If we compute the Minkowski sum of the positroids $M_1, M_2, M_3, M_4$ above, 
	we obtain the twisted Bruhat interval polytope
	$\tilde{P}_{1243,4213}=P_{2314, 4312}$, whose 
	vertices are $$\{(4,3,1,2),(4,2,1,3),(3,4,1,2), (3,2,1,4), (2,4,1,3), (2,3,1,4)\},$$ as noted in \Cref{rem:BIPsum}.  
\end{eg}

The following result gives an alternative description of the constituent positroids
of a complete flag matroid, this time in terms of bases.
\begin{lem}[{\cite[Lemma 3.11]{KW15} and \cite[Theorem 1.4]{BW}}] 
	\label{lem:flagbases}
Consider a complete flag positroid, that is,
a flag matroid represented by a point of a positive Richardson
$\mathcal{R}_{u,v}^{>0}$, where $u,v\in S_n$ and 
	$u\leq v$ in Bruhat order. Choose $1\leq d \leq n$.
Let $\pi$ denote the projection from $\Fl_n$ to $\Gr_{d,n}$.
Then the bases of the rank $d$ 
positroid represented by $\pi(\mathcal{R}_{u,v}^{>0})$
	are $\{ z([d])  \ \vert \ u \leq z \leq v \}.$
\end{lem}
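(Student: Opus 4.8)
The plan is to deduce this lemma from two facts that are essentially already recorded: that the complete flag matroid carried by a point of $\mathcal{R}_{u,v}^{>0}$ is the Bruhat interval flag matroid $\mathcal{F}_{u,v}$, and that the rank-$d$ constituent matroid of a flag matroid presented as a collection of flags is obtained by recording the rank-$d$ term of each flag.

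First I would fix a point $A\in\mathcal{R}_{u,v}^{>0}$, regarded as an $n\times n$ matrix, and let $\M=(M_1,\dots,M_n)$ be its complete flag matroid, so that $M_i$ is the matroid realized by the first $i$ rows of $A$ and $M_d$ is the matroid of the subspace $V_d=\pi(A)$. Since $\pi(\mathcal{R}_{u,v}^{>0})$ is precisely the set of such subspaces $V_d$, the positroid it represents is $M_d$, provided $M_d$ does not depend on the choice of $A$ --- which will fall out of the identification below. By \cite[Corollary 6.11]{KW15} (recalled in \Cref{rem:BIPsum}), the flag matroid polytope $P(\M)=P(M_1)+\cdots+P(M_n)$ equals the twisted Bruhat interval polytope $\tilde P_{u,v}$, and $\tilde P_{u,v}$ is in turn the flag matroid polytope $P(\mathcal{F}_{u,v})$ of the Bruhat interval flag matroid $\mathcal{F}_{u,v}$, whose flags are exactly the chains $z([1])\subset\cdots\subset z([n])$ for $u\le z\le v$ (as used in the discussion preceding \Cref{def:envelope}). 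Since the vertices of a flag matroid polytope $P(\mathcal{F})$ are precisely the vectors $e_F$ for $F\in\mathcal{F}$ \cite[Corollary 1.13.5]{BGW03}, a flag matroid is determined by its polytope, so $P(\M)=P(\mathcal{F}_{u,v})$ forces $\M=\mathcal{F}_{u,v}$; in particular $M_d$ is the rank-$d$ constituent of $\mathcal{F}_{u,v}$, and is indeed independent of $A$.

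It then remains to identify the bases of this constituent. Under the cryptomorphism between the polytopal and flag-collection descriptions of flag matroids \cite[Sections 1.7--1.11]{BGW03}, the basis set of the rank-$d$ constituent of a flag matroid $\mathcal{F}$ is exactly $\{F_d : F\in\mathcal{F}\}$; in our situation this can also be seen directly, since $M_d$ is realized by the first $d$ rows of $A$ and a suitable face of $P(\M)=P(M_1)+\cdots+P(M_n)$ recovers $P(M_d)$. Applying this with $\mathcal{F}=\mathcal{F}_{u,v}$, whose flags have $d$-th term $z([d])$, yields that the bases of $M_d$ are $\{z([d]) : u\le z\le v\}$, as claimed; $M_d$ is a positroid because it is realized by the first $d$ rows of a matrix representing a point of $\Fl_n^{\ge 0}$.

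The one step with genuine content is the identification $\M=\mathcal{F}_{u,v}$, that is, that the matroid data of a point of a positive Richardson is governed by the underlying Bruhat interval; this is precisely \cite[Corollary 6.11]{KW15} (equivalently \cite[Theorem 1.4]{BW}), so I would not reprove it, but instead assemble the chain of polytope equalities $P(\M)=\tilde P_{u,v}=P(\mathcal{F}_{u,v})$, invoke ``a flag matroid is determined by its polytope,'' and pass to the rank-$d$ constituent. If a self-contained argument were wanted one could instead parametrize $\mathcal{R}_{u,v}^{>0}$ by the Marsh--Rietsch map of \Cref{thm:cell} and determine which flag minors of the first $d$ rows vanish identically, but this route is longer and the polytope argument matches the references already cited above.
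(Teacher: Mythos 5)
The paper does not actually prove \Cref{lem:flagbases}: it is stated with citations to \cite[Lemma 3.11]{KW15} and \cite[Theorem 1.4]{BW} and used as a black box. So there is no internal proof in the paper to compare against; the question is whether your derivation is correct and what it buys.

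Your argument is correct as an assembly of facts already recorded in the paper. The chain $P(\M)=\tilde P_{u,v}$ (\Cref{rem:BIPsum}, citing \cite[Corollary 6.11]{KW15}), $\tilde P_{u,v}=P(\mathcal{F}_{u,v})$ (the elementary computation $e_F=(n+1-z^{-1}(1),\dots,n+1-z^{-1}(n))$ preceding \Cref{def:envelope}), and ``a flag matroid is determined by its polytope'' (from \cite[Corollary 1.13.5]{BGW03}, since $F\mapsto e_F$ is injective and the vertices of $P(\mathcal F)$ are exactly the $e_F$) together give $\M=\mathcal F_{u,v}$, after which reading off the rank-$d$ constituent is immediate from the flag-collection cryptomorphism.

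One caveat you should make explicit: your proof rests on \cite[Corollary 6.11]{KW15}. If, internally to \cite{KW15}, that corollary is itself derived from \cite[Lemma 3.11]{KW15} (plausible given the numbering), then your argument is not an \emph{independent} proof of the lemma but rather a demonstration that the lemma is equivalent to the polytope identity $\tilde P_{u,v}=P(M_1)+\cdots+P(M_n)$ modulo the BGW cryptomorphism. As a check that the paper's references and definitions are consistent this is perfectly serviceable, and it is also a cleaner way to see \emph{why} the statement is true than the original proofs (which, as you note, go through Marsh--Rietsch-type parametrizations or Deodhar decompositions). But if a genuinely self-contained proof were required, the ``longer route'' you sketch at the end --- parametrize $\mathcal R_{u,v}^{>0}$ via \Cref{thm:cell} and track which flag minors of the top $d$ rows vanish identically --- is the one that would actually be independent of \cite{KW15}. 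It would be worth a sentence in your write-up flagging this distinction.
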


Finally, we remark that \cite[Remark 5.24]{BK} gives
yet another description of the constituent positroids of a complete 
flag positroid, this time in terms of pairs of permutations.

\subsection{Characterizing when two adjacent-rank
positroids form an oriented 
matroid quotient}

We have discussed how to compute the projection of a
complete flag positroid to a positroid.   Moreover, it is well-known
that every positroid is the projection of a complete flag positroid.
In this section we will give a criterion for determining 
 when two positroids 
$M_i$ and $M_{i+1}$ on $[n]$
of ranks $i$ and $i+1$
 can be obtained as the projection of a complete
flag positroid
(see \Cref{thm:necklacequotient}).

We recall the definition of oriented matroid quotient
in the setting at hand.
\begin{defn} 
We say that two positroids $M_i$ and $M_{i+1}$ on $[n]$ 
of ranks $i$ and $i+1$ 
\emph{form an oriented matroid quotient} if 
$(M_i,M_{i+1})$ is an oriented flag matroid.
\end{defn}

The following statement is a direct consequence of \Cref{lem:extend3}.
\begin{prop}
Let  $M_i$ and $M_{i+1}$ be positroids on $[n]$ 
of ranks $i$ and $i+1$.  Then there is a complete
flag positroid with $M_i$ and $M_{i+1}$ as constituents
if and only if $(M_i, M_{i+1})$  form an oriented  matroid quotient.
\end{prop}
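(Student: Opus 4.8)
The plan is to prove the two implications by passing between the language of flag positroids and that of positively oriented flag matroids, using the dictionary provided by \Cref{cor:real} and \Cref{lem:extend3}.

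For the forward implication, suppose $(N_1,\dots,N_n)$ is a complete flag positroid with $N_i=M_i$ and $N_{i+1}=M_{i+1}$. Restricting a realizing matrix to its first $i+1$ rows exhibits $(N_i,N_{i+1})$ as a flag positroid of consecutive ranks $(i,i+1)$, so \Cref{cor:real} applies and shows that, considered as a sequence of positively oriented matroids, $(N_i,N_{i+1})=(M_i,M_{i+1})$ is an oriented flag matroid; that is, $(M_i,M_{i+1})$ form an oriented matroid quotient. (Equivalently, one may apply \Cref{cor:real} to all of $(N_1,\dots,N_n)$ and then note that the relations $\mathscr P_{i,i;n}\cup\mathscr P_{i,i+1;n}\cup\mathscr P_{i+1,i+1;n}$ defining a rank-$(i,i+1)$ oriented flag matroid form a subfamily of $\mathscr P_{(1,\dots,n);n}$ involving only the coordinates indexed by $\binom{[n]}{i}$ and $\binom{[n]}{i+1}$.)

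For the converse, assume $(M_i,M_{i+1})$ form an oriented matroid quotient, i.e.\ $(M_i,M_{i+1})$ is an oriented flag matroid. Since $M_i$ and $M_{i+1}$ are positroids, $(M_i,M_{i+1})$ is by definition a positively oriented flag matroid of consecutive ranks $(i,i+1)$. Applying \Cref{lem:extend3} with $a=i$, $b=i+1$, $a'=1$ and $b'=n$ extends it to a positively oriented flag matroid $(M_1,\dots,M_n)$ of consecutive ranks $(1,\dots,n)$ whose rank-$i$ and rank-$(i+1)$ constituents are still $M_i$ and $M_{i+1}$. Because these ranks are consecutive, \Cref{cor:real} then guarantees that $(M_1,\dots,M_n)$ is realizable by a flag positroid, hence is a complete flag positroid, and it has $M_i$ and $M_{i+1}$ among its constituents.

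I do not expect a genuine obstacle here: once \Cref{lem:extend3} and \Cref{cor:real} are granted, both directions are short. The only point needing a little care is the bookkeeping in the forward implication --- that the oriented-flag-matroid property restricts to a pair of adjacent-rank constituents --- which is immediate from the observation about Pl\"ucker relations above.
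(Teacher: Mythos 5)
Your proof is correct and follows essentially the same route as the paper, which simply remarks that the proposition is a direct consequence of \Cref{lem:extend3}: the converse direction extends $(M_i,M_{i+1})$ to a positively oriented complete flag matroid via \Cref{lem:extend3} and then invokes \Cref{cor:real} for realizability, while the forward direction is the easy half of \Cref{cor:real}. You merely spell out both directions more explicitly than the paper's one-line justification.
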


\begin{prop}\label{prop:compatibilitycondition}
Suppose that $(M_1,\dots,M_n)$ is a sequence of positroids
of ranks $1,2,\dots,n$ on $[n]$, such that each 
pair $M_i$ and $M_{i+1}$ forms an oriented matroid quotient. Then 
$(M_1,\dots,M_n)$ is a complete flag positroid.
Moreover, it is realized by a point
of the positive Richardson $\mathcal{R}_{u,v}^{>0}$,
where we can explicitly construct $u$ and $v$ as follows:
	\begin{itemize}
		\item 
Let $B_1^{\min},\dots,B_n^{\min}$ 
(respectively, $B_1^{\max},\dots,B_n^{\max}$) be the 
bases of $M_1,\dots,M_n$ which are minimal (maximal) with respect 
			to the usual Gale ordering.  Then 
			$u,v\in S_n$ are defined by
			\begin{equation*}
				u(i)=
			B_i^{\min}\setminus B_{i-1}^{\min} \ 
				\text{ and }\	v(i)=
			B_i^{\max}\setminus B_{i-1}^{\max}.
			\end{equation*}
	\end{itemize}
\end{prop}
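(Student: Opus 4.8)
The plan is to prove the two assertions in turn: first that $(M_1,\dots,M_n)$ is a complete flag positroid, and then that it is realized inside the specific cell $\mathcal R_{u,v}^{>0}$ described in the statement.

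For the first assertion, I would argue that the pairwise hypothesis already forces $(M_1,\dots,M_n)$ to be a positively oriented flag matroid of ranks $(1,2,\dots,n)$, after which \Cref{cor:real} (applicable because these ranks are consecutive) produces a realization by a flag positroid. To see that $(M_1,\dots,M_n)$ is an oriented flag matroid one may proceed in either of two ways. First, the requirement that each consecutive pair $(M_i,M_{i+1})$ be an oriented matroid quotient is, by definition, the requirement that $(M_1,\dots,M_n)$ be a sequence of oriented matroid quotients (\cite[Definition 7.7.2]{BLVSWZ99}), which is cryptomorphic to being an oriented flag matroid in the sense of \Cref{def:oriented} by \cite{JL} (recalled just after \Cref{def:oriented}). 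Alternatively, one invokes the analogue of \Cref{prop:3terms} over the sign hyperfield $\SS$, which is perfect and whose ranks $(1,\dots,n)$ are consecutive: the support $(M_1,\dots,M_n)$ is an (unoriented) flag matroid by \cite[Theorems 1.7.1 and 1.11.1]{BGW03}, the three-term Grassmann--Pl\"ucker relations over $\SS$ hold because each $M_i$ is a positroid, and the three-term incidence relations over $\SS$ hold because each consecutive pair is an oriented matroid quotient. Either way, all chirotope values are $0$ or $1$, so $(M_1,\dots,M_n)$ is a positively oriented flag matroid and \Cref{cor:real} applies.

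For the second assertion, I would fix a realization $A$ of $(M_1,\dots,M_n)$ as a complete flag positroid. Then $A$ is a Pl\"ucker-nonnegative point of $\Fl_n$, hence, by \Cref{thm:BK} in the consecutive case $\br=(1,2,\dots,n)$, a point of $\Fl_n^{\geq 0}$; by Rietsch's cell decomposition \eqref{eq:Rietsch} it lies in a unique cell $\mathcal R_{u',v'}^{>0}$ with $u'\leq v'$. Applying \Cref{lem:flagbases} to the projection $\pi_d\colon \Fl_n\to\Gr_{d,n}$ identifies $M_d$, the positroid realized by the first $d$ rows of $A$, with the positroid whose bases are $\{z([d]):u'\leq z\leq v'\}$. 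The tableau criterion for Bruhat order \cite[Theorem 5.17.3]{BGW03} then says that $u'\leq z$ for every $z\in[u',v']$ is equivalent to $u'([d])\leq z([d])$ in the usual Gale order for all such $z$ and all $d$, so $u'([d])$ is the Gale-minimal basis $B_d^{\min}$ of $M_d$; dually $v'([d])=B_d^{\max}$. In particular $B_{d-1}^{\min}=u'([d-1])\subsetneq u'([d])=B_d^{\min}$, which simultaneously shows that the recipe in the statement is well defined and that it produces $u(d)=u'([d])\setminus u'([d-1])=u'(d)$, and similarly $v=v'$. Hence $A\in\mathcal R_{u,v}^{>0}$, as claimed.

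I expect the genuinely delicate point to be bookkeeping rather than a hard computation: one must match the Bruhat-order convention in the tableau criterion with the one under which \Cref{lem:flagbases} yields the bases $\{z([d]):u\leq z\leq v\}$, and one must note that the matroid data of a point is constant along a Rietsch cell, so that the cell containing $A$ is determined by $(M_1,\dots,M_n)$ alone. The first assertion is essentially bookkeeping on top of \Cref{cor:real}, the only subtlety being the reduction of the oriented flag matroid condition to consecutive pairs, i.e.\ the $\SS$-analogue of \Cref{prop:3terms}.
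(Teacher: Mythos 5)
Your proof is correct and follows essentially the same approach as the paper: the paper identifies the chirotopes with $\{0,\infty\}$-points of the nonnegative flag Dressian via \Cref{lem:signembed}, verifies the three-term incidence relations, and then invokes $\FlDr_n^{\geq 0}=\TrFl_n^{\geq 0}$, which is exactly the content you package by invoking \Cref{cor:real}; and for the identification of $u$ and $v$, the paper likewise appeals to \Cref{lem:flagbases} together with the tableau criterion for Bruhat order. The only small presentational caveat is that in your ``Way 1'' the phrase ``by definition'' slightly understates the role of the cryptomorphism from \cite{JL} in reducing the all-rank-pair condition of \Cref{def:oriented} to adjacent-rank quotients; your ``Way 2'' makes this reduction explicit and matches the paper's own reasoning.
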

\begin{proof}
As in 
\Cref{lem:signembed}, we identify each positroid $M_i$
	with the image $t(\chi_i)$ of its chirotope $\chi_i$;
we have that 
 $t(\chi_i)$ lies in $\Dr_{i;n}^{\geq 0}$.  The fact that each
pair $M_i, M_{i+1}$ forms an oriented matroid quotient
	means that $(t(\chi_1),\dots,t(\chi_n))$
	satisfies all three-term incidence-Pl\"ucker relations,
	and hence 
	 $(t(\chi_1),\dots,t(\chi_n)) \in 
	 \FlDr_n^{\geq 0}$.
	 Since $\FlDr_n^{\geq 0} = \TrFl_n^{\geq 0}$, 
	 we have proved that $(M_1,\dots,M_n)$ is a complete flag positroid.

To prove the characterization of $u$ and $v$, we use 
	\Cref{lem:flagbases}.  In particular, it follows from 
	\Cref{lem:flagbases} and	the 
	Tableaux Criterion for Bruhat order that 
	the Gale-minimal and Gale-maximal
	bases of the  rank $d$ positroid 
	$\pi(\mathcal{R}_{u,v}^{>0})$ are $u([d])$ and $v([d])$.
The result now follows.
\end{proof}

As we've seen in \Cref{eg:notreal} it is a  subtle question 
to 
determine whether a pair of positroids $M_1$ and $M_2$ of ranks $r$ and $r+1$
form an oriented matroid quotient.
One way is to 
construct an $n$ by $r+1$ matrix such that the minor in rows $1,\ldots,r$ and columns $I$ is non-zero if and only if $I$ is a basis of $M_1$ while the maximal minor in rows $1,\ldots,r+1$ and columns $J$ is non-zero if and only if $J$ is a basis of $M_2$. Another way 
is to check the three-term relations over the signed tropical hyperfield, as in \Cref{prop:3terms}. 
We do not have an efficient way to do either of these things.  Instead, in 
\Cref{thm:necklacequotient},
 we will give an algorithmic, combinatorial way to verify 
 whether $M_1$ and $M_2$ form an oriented matroid quotient.

{\bf Construction \# 1.}
Given two positroids $M_1$ and $M_2$ on the ground set $[n]$ of ranks $r$ and $r+1$, respectively, which form a positively oriented matroid quotient, we construct
a positroid $M:=M(M_1,M_2)$ of rank $r+1$ on the ground set $[n+1]$ where $n+1$ is neither a loop nor a coloop. The bases of $M$ are precisely 
\begin{equation*}
    \mathcal{B}(M)=\mathcal{B}(M_2)\cup\{B\cup \{n+1\}\mid B\in \mathcal{B}(M_1)\}.
\end{equation*}

{\bf Construction \#2.}
Conversely, given a rank $r$ positroid $M$ on  ground set $[n+1]$, where $(n+1)$ is neither a loop nor  coloop, we construct two positroids $M_1:=M_1(M)$ and $M_2:=M_2(M)$ which form a positively
oriented matroid quotient, as follows.
Let $\tilde{A}$ be a matrix realizing $M$; 
therefore its Pl\"ucker coordinates are nonnegative.
We apply row operations to rewrite $\tilde{A}$ in the form 
\begin{equation*}
			A=\begin{bmatrix}
				\hspace{1cm} &\vline& 0\\
				 \hspace{1cm} A'\ \  \ \ \hspace{2cm}&\vline& 0\\
				\hspace{1cm} &\vline& 0\\
				 \hline
			*\;\;\;\;*\;\;	\;\;* \;\;\;\;\;\;\cdots\;\; \;\;\;\; *  &\vline &1
			\end{bmatrix}.
\end{equation*}
Let $M_1$ denote the matroid on $[n]$ realized by $A'$ and let $M_2$ denote the matroid on $[n]$
realized by
$A'$ together with the row of $*$'s below it.
Then $M_1$ and $M_2$ are both positroids (since the Pl\"ucker coordinates of $A'$ and $A$ are all
nonnegative), and they form a positively oriented quotient.
Moreover, it is clear that $M_1=M\setminus (n+1)$ and $M_2=M / (n+1)$.

The idea of our algorithm is to translate Constructions $\# 1$ and $\# 2$ into operations on 
Grassmann necklaces, so that Construction $\# 1$ is well-defined even if $M_1$ and $M_2$
fail to form a positively oriented quotient.
Clearly if we start with positroids $M_1$ and $M_2$ forming a positively oriented matroid quotient,
then Construction $\# 1$ followed by $\# 2$ is the identity map. 
Conversely, if  
 Construction $\# 1$ followed by $\# 2$ \emph{is} the identity map, then since Construction
 $\# 2$ always outputs a positively oriented matroid quotient, we must have started with positroids
 forming a positively oriented matroid quotient.

We let $\min_i\{S_1,\cdots, S_k\}$ denote the minimum of the sets $S_1,\cdots, S_k$ in the $\leq_i$ order.

\begin{prop}\label{prop:flagtopos} Let $M_1$ and $M_2$ be positroids of consecutive ranks which form a positively oriented quotient. Let $\mathcal{I}_{M_j}=(I_1^{(j)},\ldots, I_n^{(j)})$ be the Grassmann necklace of $M_j$ for $j=1,2$. 
	Define
\begin{equation*}
    J_{i}= \begin{cases}
	    I_1^{(2)}, & \text{ for }i=1\\
	    \min_i\{I_i^{(1)}\cup \{n+1\}, I_i^{(2)}\}, & \text{ for }2\leq i\leq n\\
	    I_1^{(1)}\cup \{n+1\}, &\text{ for }i=n+1.
\end{cases} .   
\end{equation*}
Then $\mathcal{J}=(J_1,\dots,J_{n+1})$
	is the Grassmann necklace of the positroid $M=M(M_1,M_2)$ on $[n+1]$ 
whose bases are precisely 
\begin{equation*}
    \mathcal{B}(M)=\mathcal{B}(M_2)\cup\{B\cup \{n+1\}\mid B\in \mathcal{B}(M_1)\}.
\end{equation*}
\end{prop}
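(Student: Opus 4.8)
The plan is to prove the statement in two stages. First I would show that $M = M(M_1,M_2)$ is genuinely a positroid on $[n+1]$ in which $n+1$ is neither a loop nor a coloop; then I would compute its Grassmann necklace $\mathcal{I}_M = (\min_1\mathcal{B}(M),\ldots,\min_{n+1}\mathcal{B}(M))$ directly from the description $\mathcal{B}(M) = \mathcal{B}(M_2)\sqcup\{B\cup\{n+1\}:B\in\mathcal{B}(M_1)\}$ and check that it equals $\mathcal{J}$. Once both stages are complete, \Cref{prop:postonecklace} guarantees that $\mathcal{J}=\mathcal{I}_M$ is a Grassmann necklace and that $M$ is the positroid it indexes, which is exactly the assertion.

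For the first stage I would invoke \Cref{cor:real}: since $(M_1,M_2)$ forms a positively oriented matroid quotient, it is a positively oriented flag matroid of consecutive ranks, say $r$ and $r+1$, hence realizable. So there is a real $(r+1)\times n$ matrix $A'$ whose first $r$ rows realize $M_1$ with all $r\times r$ minors nonnegative and all of whose $(r+1)\times(r+1)$ minors are nonnegative and realize $M_2$. Let $A=[\,A' \mid \be_{r+1}\,]$ be the $(r+1)\times(n+1)$ matrix obtained by adjoining the column $\be_{r+1}=(0,\dots,0,1)^\top$. A one-line cofactor expansion along the last column shows that for $J\in\binom{[n+1]}{r+1}$ we have $p_J(A)=p_J(A')$ when $n+1\notin J$, and $p_J(A)=p_{J\setminus\{n+1\}}(\text{first $r$ rows of }A')$ when $n+1\in J$; in either case $p_J(A)\ge 0$, and $p_J(A)\ne 0$ precisely when $J\in\mathcal{B}(M)$. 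Thus $A$ realizes $M$ with nonnegative Pl\"ucker coordinates, so $M$ is a positroid, and since $\mathcal{B}(M_1)$ and $\mathcal{B}(M_2)$ are both nonempty, $n+1$ lies in some basis and is missing from some basis, hence is neither a loop nor a coloop.

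For the second stage, fix $i$ and let $J'_i=\min_i\mathcal{B}(M)$ be the unique $\leq_i$-minimal basis of the matroid $M$ (uniqueness is the classical fact underlying Grassmann necklaces). The bookkeeping rests on two observations: (i) the order $<_i$ on $[n+1]$ restricts on $[n]$ to $<_i$ when $i\le n$ and to $<_1$ when $i=n+1$, so $\min_i\mathcal{B}(M_2)$ computed inside $[n+1]$ equals $I_i^{(2)}$ for $i\le n$ and $I_1^{(2)}$ for $i=n+1$; and (ii) $\{B\cup\{n+1\}:B\in\mathcal{B}(M_1)\}$ is the set of bases of $M_1$ with a coloop on the new element adjoined, and its $\leq_i$-minimum is $(\min_i\mathcal{B}(M_1))\cup\{n+1\}$, which is $I_i^{(1)}\cup\{n+1\}$ for $i\le n$ and $I_1^{(1)}\cup\{n+1\}$ for $i=n+1$. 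Granting (i) and (ii): because $\mathcal{B}(M)$ is the disjoint union of these two sets and $J'_i$ is $\leq_i$ every basis of $M$, $J'_i$ must equal the $\leq_i$-minimum of whichever of the two sets contains it, hence it is the $\leq_i$-smaller of $\min_i\mathcal{B}(M_2)$ and $(\min_i\mathcal{B}(M_1))\cup\{n+1\}$. For $2\le i\le n$ this is exactly $\min_i\{I_i^{(1)}\cup\{n+1\},\,I_i^{(2)}\}=J_i$. For $i=1$ the last coordinate of $I_1^{(1)}\cup\{n+1\}$ in the $\leq_1$-order is $n+1$, which is $\leq_1$-maximal, forcing the minimum to be $I_1^{(2)}=J_1$; for $i=n+1$ the first coordinate of $I_1^{(1)}\cup\{n+1\}$ in the $\leq_{n+1}$-order is $n+1$, which is $\leq_{n+1}$-minimal, so the minimum is $I_1^{(1)}\cup\{n+1\}=J_{n+1}$. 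This is precisely $\mathcal{J}$.

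The one point requiring genuine (if short) work is observation (ii): I would isolate the elementary lemma that inserting the single fixed element $n+1$ into two $\leq_i$-sorted sequences preserves coordinatewise domination, i.e.\ $B\leq_i B'$ implies $B\cup\{n+1\}\leq_i B'\cup\{n+1\}$, from which the commutation of adjoining a coloop with taking the Gale-minimal basis follows. The subtlety is that $B$ and $B'$ may have different numbers of elements below $n+1$ in the $\leq_i$-order, so $n+1$ can occupy different positions in $B\cup\{n+1\}$ and $B'\cup\{n+1\}$; a short case analysis split according to where $n+1$ lands relative to the two sorted sequences handles this. Everything else is routine manipulation of the shifted orders $\leq_i$.
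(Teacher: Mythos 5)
Your proof is correct and takes essentially the same route as the paper's: both arguments come down to verifying that each $J_i$ is the $i$-Gale-minimal basis of $M$. The main differences are organizational and in what is made explicit. The paper proves that every basis $S \in \mathcal{B}(M)$ satisfies $S \geq_i J_i$ by a case split on whether $S \in \mathcal{B}(M_2)$ or $S = S'\cup\{n+1\}$ with $S'\in\mathcal{B}(M_1)$, and it leans on the flag-matroid fact $I_t^{(1)}\subset I_t^{(2)}$ (from \cite[Cor.~7.2.1]{BGW03}) to handle the boundary cases $i=1,n+1$ and to justify that $\min_i\{I_i^{(1)}\cup\{n+1\},\,I_i^{(2)}\}$ is well-defined. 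You instead compute $\min_i\mathcal{B}(M)$ directly on each of the two pieces and deduce comparability of the two candidate minima a posteriori from the fact that the Gale-minimal basis of a matroid is $\leq_i$ every other basis and must lie in one of the two pieces; this is a clean alternative to invoking $I_t^{(1)}\subset I_t^{(2)}$, though it buys nothing essential. Two ingredients that the paper uses but leaves implicit you usefully isolate: the realization of $M$ as a positroid via the matrix $[A' \mid \be_{r+1}]$ (the paper asserts this in the discussion of Construction~\#1 but does not write out the cofactor expansion), and the ``insertion lemma'' that $B\leq_i B'$ implies $B\cup\{n+1\}\leq_i B'\cup\{n+1\}$ (the paper's steps ``$S\cup\{n+1\}\geq_i I_i^{(1)}\cup\{n+1\}$'' silently rely on exactly this). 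You correctly flag the one genuine subtlety in that lemma, that $n+1$ may land at different positions in the two sorted sequences; the short case analysis you indicate does go through.
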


\begin{proof}

It suffices to show that each basis of $M$ is $i$-Gale greater than $J^{(i)}$ for all $i\in [n+1]$. One also need to check that the $J^{(i)}$ are in fact bases of $M$ but this is clear by definition.

 Note that the $\leq_i$ minimal flag  of a flag matroid consists of the $\leq_i$ minimal bases of each of its constituent matroids \cite[Corollary 7.2.1]{BGW03}. Thus, $I_t^{(1)}\subset I_t^{(2)}$ for each $t\in [n]$.

 First, let $S\subset [n]$ be a basis of $M_2$. For $ i\in [n]$, we have $S\geq_i I_i^{(2)}\geq_i J_{i}$. Since neither $S$ nor $I_i^{(2)}$ contain $n+1$, $S\geq_{n+1}I_1^{(2)}$. By our earlier observation, $I_1^{(2)}=I_1^{(1)}\cup \{a\}$ for some $a\in [n]$. Thus, $I_1^{(2)}\geq_{n+1}I_1^{(1)}\cup \{n+1\}$. We conclude that $S\geq_iJ_i$ for all $i\in[n+1]$.

Next, consider $S\cup \{n+1\}$ for $S$ a basis of $M_1$. For $2\leq i\leq n$, we have $S\cup\{n+1\}\geq_i I_i^{(1)}\cup \{n+1\}\geq_i J_{i}$. Since neither $S$ nor $I_i^{(1)}$ contain $n+1$, we have $S\geq_1 I_1^{(1)}$ and $S\cup \{n+1\}\geq_{n+1}I_1^{(1)}\cup \{n+1\}=J_{n+1}$. Since $I_1^{(2)}=I_1^{(1)}\cup \{a\}$, we have $I_1^{(1)}\cup \{n+1\}\geq_{1}I_1^{(2)}=J_1$. We conclude that $S\cup\{n+1\}\geq_i J_i$ for all $i\in[n+1]$. 
\end{proof}

If $M_1$ and $M_2$ form a positively oriented quotient, we should obtain them 
from the positroid $M=M(M_1,M_2)$, constructed as in \Cref{prop:flagtopos}, by deleting and contracting $n+1$.  The following result explains how these operations affect Grassmann necklaces.

\begin{prop}\label{prop:contractdeletepos}\cite[Proposition 7 and Lemma 9]{Oh}
Let $M$ be a positroid on $[n+1]$ such that $n+1$ is neither a loop nor a coloop,  with Grassmann necklace $(J_{i})_{i=1}^{n+1}$. Then the Grassmann necklaces $\left(K_1^{(1)},\cdots ,K_n^{(1)}\right)$ and $\left(K_1^{(2)},\cdots ,K_n^{(2)}\right)$ of $M_1=M/(n+1)$ and $M_2=M\setminus (n+1)$, are  as follows:

\begin{align*}    
K_i^{(1)}=&
\begin{cases}
J_{i}\setminus \{n+1\}, &n+1\in J_{i}\\
J_{i}\setminus \{\max_i (J_{i}\setminus J_{n+1})\}, & n+1\notin J_{i}
\end{cases}
\\
K_i^{(2)}=&\begin{cases}(J_{i}\setminus \{n+1\})\cup\{\min_i (J_{n+1}\setminus J_{i})\}, & n+1\in J_{i}\\
J_{i}, &n+1\notin J_{i}
\end{cases}.
\end{align*}
\end{prop}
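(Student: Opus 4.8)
This proposition is \cite[Proposition 7 and Lemma 9]{Oh}, so in principle one may simply cite it; here is how a self-contained proof would go. By Proposition \ref{prop:postonecklace}, the $i$-th entry of the Grassmann necklace of a positroid is its $\leq_i$-Gale-minimal basis, so for every $i$ the task is to identify the $\leq_i$-minimal basis of $M_1 = M/(n+1)$, resp.\ of $M_2 = M\setminus(n+1)$, in terms of the $J_\bullet$. Since $n+1$ is not a loop, the bases of $M_1$ are exactly $\{B\setminus\{n+1\} : B\in\mathcal B(M),\ n+1\in B\}$; since $n+1$ is not a coloop, the bases of $M_2$ are exactly $\{B\in\mathcal B(M) : n+1\notin B\}$. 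We also record that $n+1\in J_{n+1}$: as $n+1$ is the $\leq_{n+1}$-smallest element and lies in some basis, the $\leq_{n+1}$-minimal basis must contain it.

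Two of the four cases are then immediate. If $n+1\notin J_i$, then $J_i$ is already a basis of $M_2$ and, being $\leq_i$-minimal over all of $\mathcal B(M)\supseteq\mathcal B(M_2)$, it is in particular $\leq_i$-minimal over $\mathcal B(M_2)$; hence $K_i^{(2)}=J_i$. If $n+1\in J_i$, then $J_i$ is $\leq_i$-minimal over $\mathcal B(M)$ and contains $n+1$, so it is $\leq_i$-minimal over the bases of $M$ containing $n+1$; a short termwise comparison shows that deleting $n+1$ preserves the $\leq_i$-order on such sets, giving $K_i^{(1)}=J_i\setminus\{n+1\}$.

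The content is in the other two cases, where one must start from $J_i$ and make a single optimal exchange. For deletion with $n+1\in J_i$, the plan is to first apply the symmetric basis-exchange property to $J_i$ and $J_{n+1}$, removing $n+1$ from $J_i$, producing a basis $(J_i\setminus\{n+1\})\cup\{x\}$ of $M$ with $x\in J_{n+1}\setminus J_i$, and then to show that the $\leq_i$-smallest such $x$, namely $x=\min_i(J_{n+1}\setminus J_i)$, works and is optimal. The minimality step compares an arbitrary basis $B$ of $M_2$ termwise with $J_i$ in the $\leq_i$-order: since $B\geq_i J_i$ but $B$ omits $n+1$ while $J_i$ uses it, the first discrepancy forces $B$ to contain an element of $J_{n+1}\setminus J_i$ that is $\leq_i$-at-least $x$, whence $B\geq_i(J_i\setminus\{n+1\})\cup\{x\}$. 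The contraction case with $n+1\notin J_i$ is symmetric: one shows $(J_i\setminus\{\max_i(J_i\setminus J_{n+1})\})\cup\{n+1\}$ is a basis of $M$ that is $\leq_i$-minimal among bases of $M$ containing $n+1$, and then deletes $n+1$ to obtain $K_i^{(1)}$.

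I expect the bookkeeping in these two substantive cases — upgrading the raw exchange property to the precise optimal element and ruling out any better basis of the minor — to be the main obstacle; this is exactly where general matroid theory does not suffice and one must use the positroid-specific facts that the bases of a positroid $N$ are \emph{precisely} the sets lying $\geq_j$ above $\mathcal I_N(j)$ for every $j$, together with the extremality of $J_{n+1}$ for the element $n+1$. An alternative route that sidesteps this: verify directly that the two proposed sequences satisfy the defining conditions of a Grassmann necklace and that the positroid each of them cuts out via $\{B : K_i\leq_i B\ \forall i\}$ equals $M/(n+1)$, resp.\ $M\setminus(n+1)$, then invoke the bijection of Proposition \ref{prop:postonecklace}; this replaces the exchange argument by a purely combinatorial check on sequences.
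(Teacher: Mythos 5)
The paper simply cites this result to Oh without reproducing the argument, so there is no in-paper proof to compare against; I'll assess your sketch on its own.

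Your preliminaries and the two easy cases are correct, and you correctly locate the difficulty. But the exchange step for the deletion case has a concrete gap. You propose to ``apply the symmetric basis-exchange property to $J_i$ and $J_{n+1}$, removing $n+1$ from $J_i$'' --- yet you have just established $n+1\in J_{n+1}$, so in the case $n+1\in J_i$ we have $n+1\in J_i\cap J_{n+1}$, hence $n+1\notin J_i\setminus J_{n+1}$. Basis exchange (symmetric or not) only swaps elements across the symmetric difference, so it cannot remove $n+1$ from $J_i$ using $J_{n+1}$ as the partner basis. Concretely, for $M=U_{2,3}$ with $n+1=3$ one has $J_1=\{1,2\}$, $J_2=\{2,3\}$, $J_3=\{1,3\}$; the formula asserts $K_2^{(2)}=\{1,2\}$, but the only exchange between $J_2=\{2,3\}$ and $J_3=\{1,3\}$ swaps $2$ for $1$ and yields $\{1,3\}$, never $\{1,2\}$.

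The natural repair is to use that $n+1$ is not a coloop: pick some basis $B'$ with $n+1\notin B'$ and exchange $n+1\in J_i\setminus B'$ for some $f\in B'\setminus J_i$. But then $f$ need not lie in $J_{n+1}$, and the actual content of the formula --- that the $\leq_i$-best choice is exactly $\min_i(J_{n+1}\setminus J_i)$ --- does not follow from matroid axioms alone. Your ``first discrepancy'' minimality sketch is likewise not automatic: there is no a priori reason the first position where a basis $B$ of $M_2$ exceeds $J_i$ must carry an element of $J_{n+1}\setminus J_i$. Both points really do need the positroid-specific characterization that $B$ is a basis if and only if $B\geq_j J_j$ for all $j$, as you anticipated; filling that in is where the proof actually lives. (In the contraction case the exchange mechanism itself is fine, since $n+1\in J_{n+1}\setminus J_i$, but the specific element $\max_i(J_i\setminus J_{n+1})$ still requires the same positroid-specific justification.) Your alternative route --- verify the Grassmann-necklace axioms for $K^{(1)}$ and $K^{(2)}$ directly and compare the positroids they determine with $M/(n+1)$ and $M\setminus(n+1)$ --- sidesteps the exchange issue entirely and is the more robust path, but as stated it too is only a plan rather than a proof.
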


Taken together, the last two results yield a recipe for verifying whether two positroids, 
given in terms of their Grassmann necklaces, form a positively oriented quotient.
First apply the construction of \Cref{prop:flagtopos}. If that yields a Grassmann necklace,  
apply \Cref{prop:contractdeletepos} and see if that yields the original Grassmann necklaces. 
If so, the two Grassmann necklaces form a positively oriented quotient.

Our next goal is to streamline this recipe.
Let $\mathcal{I}^{(1)}=\left(I^{(1)}_1,\ldots, I^{(1)}_n\right)$ and $\mathcal{I}^{(2)}=\left(I^{(2)}_1,\ldots, I^{(2)}_n\right)$ be Grassmann necklaces of positroids of ranks $r$ and $r+1$, respectively. 
Note that a necessary condition for the positroids corresponding to $\mathcal{I}^{(1)}$ and $\mathcal{I}^{(2)}$ forming
a positively oriented quotient is  that $I^{(1)}_i\subset I^{(2)}_i$ for all $i\in[n]$. 
Now, we define a subset $S$ as follows: For each $i$, if $I^{(1)}_i\cup \{n+1\}<_i I^{(2)}_i$, let $i\in S$. Since $I^{(2)}_i=I^{(1)}_i\cup a$ for some $a\in [n]$, this is as simple as checking whether $a<_i n+1$. If the positroids corresponding to $\mathcal{I}^{(1)}$ and $\mathcal{I}^{(2)}$ form a positively oriented quotient, applying \Cref{prop:flagtopos} and then \Cref{prop:contractdeletepos} should leave them unchanged. It is straightforward to see that $i\in S$ if and only if $n+1\in J_i$ in \Cref{prop:contractdeletepos}. In particular, since $\mathcal{J}$ is a Grassmann necklace, $S$ must either be an interval of the form $[d,n]$, or empty. 

Next we claim that, once we verify that $S$ is an interval of the form $[d,n]$ or is empty, then it follows automatically that $\mathcal{J}$, as constructed in \Cref{prop:flagtopos}, is a Grassmann necklace. 

\begin{lem}\label{lem:GrassmannNecklace}
Let $\mathcal{I}^{(1)}=\left(I^{(1)}_1,\ldots, I^{(1)}_n\right)$ and $\mathcal{I}^{(2)}=\left(I^{(2)}_1,\ldots, I^{(2)}_n\right)$ be Grassmann necklaces of types $(r,n)$ and $(r+1,n)$, respectively. Construct $\mathcal{J}=(J_1,\ldots,J_{n+1})$ as in \Cref{prop:flagtopos}. Let $S=\left\{i\in [n]|I^{(1)}_i\cup (n+1)<_i I^{(2)}_i\right\}$. If $S=[d,n]$ for some $d\leq n$ or $S=\emptyset$, then $\mathcal{J}$ is a Grassmann necklace. 
\end{lem}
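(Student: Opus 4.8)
The plan is to verify directly that $\mathcal{J}=(J_1,\ldots,J_{n+1})$ satisfies the defining conditions of a Grassmann necklace of type $(r+1,n+1)$: reading indices cyclically (so $J_{n+2}:=J_1$), for each $i\in[n+1]$ one needs $J_{i+1}=(J_i\setminus\{i\})\cup\{j\}$ for some $j\in[n+1]$ when $i\in J_i$, and $J_{i+1}=J_i$ when $i\notin J_i$. Throughout I would use that we may assume $I^{(1)}_i\subseteq I^{(2)}_i$ for all $i$ — otherwise neither the minima defining the $J_i$ nor the set $S$ is well-defined, and this containment is exactly the necessary condition tested before the construction of \Cref{prop:flagtopos} is applied. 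Write $I^{(2)}_i=I^{(1)}_i\cup\{a_i\}$ with $a_i\in[n]\setminus I^{(1)}_i$.

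First I would pin down the explicit shape of the $J_i$. The elementary fact that two $(r+1)$-subsets of $[n+1]$ differing in a single element are $\leq_i$-comparable, with the $\leq_i$-smaller one being the set containing the $\leq_i$-smaller of the two differing elements, shows for $2\le i\le n$ that $I^{(1)}_i\cup\{n+1\}$ and $I^{(2)}_i$ are $\leq_i$-comparable and that $J_i=I^{(1)}_i\cup\{n+1\}$ exactly when $n+1<_i a_i$, i.e.\ when $a_i\in\{1,\ldots,i-1\}$, i.e.\ when $i\in S$; otherwise $J_i=I^{(2)}_i$ and $a_i\in\{i,i+1,\ldots,n\}$. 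Combined with $J_1=I^{(2)}_1$ (note $1\notin S$ always) and $J_{n+1}=I^{(1)}_1\cup\{n+1\}$, this yields the clean description: $J_i=I^{(1)}_i\cup\{n+1\}$ for $i\in S$, $J_i=I^{(2)}_i$ for $i\in[n]\setminus S$, and $J_{n+1}=I^{(1)}_1\cup\{n+1\}$.

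Next I would run the case check over $i\in[n+1]$, using $S=[d,n]$ or $S=\emptyset$ to determine which formula governs $J_i$ and $J_{i+1}$. The case $i=n+1$ is immediate: $n+1\in J_{n+1}=I^{(1)}_1\cup\{n+1\}$ and $J_1=I^{(1)}_1\cup\{a_1\}=(J_{n+1}\setminus\{n+1\})\cup\{a_1\}$. The case $i=n$ uses only that $\mathcal{I}^{(1)}$ is a Grassmann necklace: if $n\in S$ then $J_n=I^{(1)}_n\cup\{n+1\}$, while if $n\notin S$ one checks $a_n=n$ so $J_n=I^{(1)}_n\cup\{n\}$, and in either subcase the necklace relation from $I^{(1)}_n$ to $I^{(1)}_1$ gives the claim against $J_{n+1}=I^{(1)}_1\cup\{n+1\}$. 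For $1\le i\le n-1$ the interval shape of $S$ leaves three possibilities: if $\{i,i+1\}\cap S=\emptyset$ then $J_i=I^{(2)}_i$, $J_{i+1}=I^{(2)}_{i+1}$ and the claim is the necklace axiom for $\mathcal{I}^{(2)}$ at $i$; if $\{i,i+1\}\subseteq S$ then $J_i=I^{(1)}_i\cup\{n+1\}$, $J_{i+1}=I^{(1)}_{i+1}\cup\{n+1\}$ and the claim is the necklace axiom for $\mathcal{I}^{(1)}$ at $i$ (the element $n+1$ simply rides along); and there is the boundary case $i\notin S$, $i+1\in S$, i.e.\ $i=d-1$. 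The remaining pattern $i\in S$, $i+1\notin S$ cannot occur since $S$ is an interval ending at $n$.

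The boundary case $i=d-1$ will be the main obstacle, and here both input necklaces and the interval structure of $S$ enter simultaneously. In this case $J_i=I^{(2)}_i\subseteq[n]$ while $J_{i+1}=I^{(1)}_{i+1}\cup\{n+1\}\ni n+1$, so $J_{i+1}\ne J_i$; hence one must show $i\in J_i=I^{(2)}_i$ and $I^{(1)}_{i+1}=I^{(2)}_i\setminus\{i\}$. For the first point: if $i\notin I^{(2)}_i$, the necklace axioms for $\mathcal{I}^{(1)}$ and $\mathcal{I}^{(2)}$ at $i$ force $I^{(1)}_{i+1}=I^{(1)}_i$ and $I^{(2)}_{i+1}=I^{(2)}_i$, hence $a_{i+1}=a_i$; but $i\notin S$ gives $a_i\in\{i,\ldots,n\}$ and $i+1\in S$ gives $a_{i+1}\in\{1,\ldots,i\}$, disjoint ranges — a contradiction. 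For the second point I would split on the two ways $i$ can lie in $I^{(2)}_i$: if $i=a_i$ then $i\notin I^{(1)}_i$, so $\mathcal{I}^{(1)}$ gives $I^{(1)}_{i+1}=I^{(1)}_i=I^{(2)}_i\setminus\{i\}$; if $i\in I^{(1)}_i$, then $\mathcal{I}^{(1)}$ gives $I^{(1)}_{i+1}=(I^{(1)}_i\setminus\{i\})\cup\{b_i\}$ and $\mathcal{I}^{(2)}$ gives $I^{(2)}_{i+1}=(I^{(2)}_i\setminus\{i\})\cup\{c_i\}$, and matching these two descriptions of $I^{(2)}_{i+1}=I^{(1)}_{i+1}\cup\{a_{i+1}\}$ forces $\{b_i,a_{i+1}\}=\{a_i,c_i\}$; the same disjointness of the ranges of $a_i$ and $a_{i+1}$ rules out $a_{i+1}=a_i$, leaving $b_i=a_i$ and hence $I^{(1)}_{i+1}=(I^{(1)}_i\setminus\{i\})\cup\{a_i\}=I^{(2)}_i\setminus\{i\}$. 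This finishes all cases, so $\mathcal{J}$ is a Grassmann necklace. Everything outside the boundary case is bookkeeping that reduces to the necklace axioms for $\mathcal{I}^{(1)}$ or $\mathcal{I}^{(2)}$; the boundary case is the only place where genuine interaction between the two necklaces and the shape of $S$ is needed.
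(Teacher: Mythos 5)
Your proof is correct and takes essentially the same route as the paper's: verify the cyclic necklace relation between consecutive $J_i$, observe that almost all transitions are inherited directly from $\mathcal I^{(1)}$ or $\mathcal I^{(2)}$, and isolate the boundary transition (your $i=d-1$, the paper's $k-1$) as the place where both necklaces and the structure of $S$ interact. The differences are organizational rather than conceptual. Two of them slightly streamline matters. First, at $i=n$ you note that $n\notin S$ forces $a_n=n$, so $J_n=I^{(1)}_n\cup\{n\}$ and the $\mathcal I^{(1)}$ necklace axiom finishes; the paper instead re-runs the full boundary argument in the $S=\emptyset$ case. Second, at the boundary you split on $a_i=i$ versus $i\in I^{(1)}_i$ before comparing the two expressions for $I^{(1)}_{i+1}$; the paper performs the comparison first, lands on the degenerate alternative $c_{k-1}=c_k$, and then handles it via a coloop argument. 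Your split disposes of that degenerate alternative at the start, since in the remaining case $a_i\neq i$ makes the ranges $\{i+1,\dots,n\}$ and $\{1,\dots,i\}$ genuinely disjoint. You also explicitly establish $i\in I^{(2)}_i$ before proceeding, which the paper leaves implicit (it follows a posteriori from their cardinality-consistent conclusion). One small imprecision worth tightening: when you first invoke ``disjoint ranges'' to derive a contradiction from $i\notin I^{(2)}_i$, the ranges $\{i,\dots,n\}$ and $\{1,\dots,i\}$ as written overlap at $i$; the disjointness you need follows only after noting that $i\notin I^{(2)}_i$ forces $a_i\neq i$, hence $a_i\in\{i+1,\dots,n\}$. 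This is easy to fix and doesn't affect the correctness of the argument.
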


\begin{proof}
It is clear from the definition that $\mathcal{J}$ satisfies the Grassmann necklace condition for each pair of consecutive sets $J_i$ and $J_{i+1}$ except for when $i=k-1$, $i=n$ and $i=n+1$ (where we label sets cyclically so that $J_{n+2}=J_1$). 

If $S\neq \emptyset$, then $J_n=I^{(1)}_n\cup \{n+1\}$. This makes it clear that the Grassmann necklace condition holds for $J_n$ and $J_{n+1}$. Also, sing the fact that $I^{(1)}_i\subset I^{(2)}_i$ for all $i$, it is not hard to verify the Grassmann necklace condition for $J_{n+1}$ and $J_1$.

This leaves us to check the condition for $J_{k-1}$ and $J_k$. In this case, $J_{k-1}=I^{(2)}_{k-1}$ and $J_k=I^{(1)}_k\cup\{n+1\}$. Our goal is to show that $J_{k}=(J_{k-1}\setminus \{k-1\})\cup \{a\}$ for some $a\in [n+1]$. It is immediately obvious that we necessarily have $a=n+1$. Thus, we are left to show that $I^{(1)}_k\cup\{n+1\}=(I^{(2)}_{k-1}\setminus \{k-1\})\cup\{n+1\}$, or that $I^{(1)}_k=I^{(2)}_{k-1}\setminus \{k-1\}$. 

Let $a_i$ be defined by $I^{(1)}_i=(I^{(1)}_{i-1}\setminus \{i-1\})\cup \{a_i\}$, let $b_i$ be defined by $I^{(2)}_i=(I^{(2)}_{i-1}\setminus \{i-1\})\cup \{b_i\}$ and let $c_i$ be defined by $I^{(2)}_i=I^{(1)}_i\cup \{c_i\}$. We observe that $I^{(1)}_k=(I^{(1)}_{k-1}\setminus \{k-1\})\cup \{a_{k}\}=(I^{(2)}_{k-1}\setminus \{c_{k-1},k-1\})\cup \{a_{k}\}$. Also, $I^{(1)}_k=I^{(2)}_{k}\setminus \{c_{k}\}=(I^{(2)}_{k-1}\setminus \{c_{k},k-1\})\cup \{b_{k}\}$. Comparing these two equalities, we conclude that either $a_{k}=c_{k-1}$ and $b_{k}=c_k$, or $c_{k-1}=c_k$ and $a_{k}=b_{k}$. The first case is what we want to prove, so let us show by contradiction that the second case cannot occur.

Assume $c_k=c_{k-1}$ and $a_k=b_k$. By assumption, $I^{(1)}_{k-1}\cup \{c_{k-1}\}=I^{(2)}_{k-1}<_{k-1} I^{(1)}_{k-1}\cup \{n+1\}$ and $I^{(1)}_k\cup \{n+1\}<_kI^{(2)}_k=I^{(1)}_k\cup \{c_k\}$.  Thus, $c_{k-1}<_{k-1} n+1$ and $c_k >_k n+1$. Since $c_k=c_{k-1}$, this means they are both equal to $k-1$. However, if $c_k=k-1$, then $M_2$ has $k-1$ as a coloop. it follows that $b_{k}=k-1$, which means $a_k=k-1$ as well. Thus, in this case, $I_k^{(1)}=I_{k-1}^{(1)}=I_{k-1}^{(2)}\setminus \{k-1\}$, as desired. 

Finally, if $A=\emptyset$, we can check that the Grassmann necklace condition holds for $J_{n+1}$ and $J_1$ as before. The we are just left to verify this condition for $J_n$ and $J_{n+1}$. We can apply the same logic but with $J_{k-1}$ replaced by $J_{n}=I_n^{(2)}$ and $J_{k}$ replaced by $J_{n+1}=I^{(1)}\cup \{n+1\}$. Specifically, we find $I_{1}^{(1)}=(I_{n}^{(2)}\setminus \{c_n, n\})\cup \{a_1\}=I_{n}^{(2)}\setminus \{c_1, n\})\cup \{b_1\}$. We then must show that it is impossible for $c_1=c_n$ and $a_1=b_1$. However, $I_{n}^{(1)}\cup \{c_n\}=I_n^{(2)}<_n I_{n}^{(1)}\cup \{n+1\}$. Moreover, it is always true that $I_{1}^{(1)}\cup \{n+1\}<_{n+1}I_{1}^{(1)}\cup \{c_1\}=I_{2}^{(1)}$. Using $c_1=c_n$, we then find $c_n<_n(n+1)$ and $c_n>_{n+1}(n+1)$ which means that $c_1=c_n=n$ and we can conclude as in the previous paragraph. 
\end{proof}

Combining \Cref{prop:flagtopos}, \Cref{prop:contractdeletepos} and \Cref{lem:GrassmannNecklace}, we obtain the following: 

\begin{thm}\label{thm:necklacequotient}
	Fix positroids $M_1$ and $M_2$ on $[n]$ of ranks $r$ and $r+1$, respectively. Let $\mathcal{I}=\mathcal{I}_{M_1}=\left(I_1,\ldots, I_n\right)$ and $\mathcal{J}=\mathcal{I}_{M_2}=\left(J_1,\ldots, J_n\right)$ be their Grassmann necklaces. We now set $S=\left\{i \in [n] \mid I_i\cup \{n+1\}\leq_i J_i\right\}$,
	where $\leq_i$ denotes the $\leq_i$ Gale order on $[n+1]$.  Define $a_i=\max_i\left(J_i\setminus I_1\right)$ and $b_i=\min_i\left(I_1\setminus I_i\right)$. Then $M_1$ and $M_2$ form a positively oriented quotient if and only if the following conditions hold: 

\begin{enumerate}
    \item For $i\in [n]$, $I_i\subset J_i$.
    \item $S$ is an interval of the form $[d,n]$ or $S=\emptyset$.
    \item For $i\notin S$, $I_i=J_i
    \setminus \{a_i\}$.
    \item For $i\in S$, $J_i=I_i\cup \{b_i\}$.
\end{enumerate}
\end{thm}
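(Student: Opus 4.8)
The plan is to assemble \Cref{thm:necklacequotient} from the three preceding results by tracking what \textbf{Construction \#1} followed by \textbf{Construction \#2} does on the level of Grassmann necklaces, and reading off exactly when this composition returns the original pair. Recall the guiding principle stated in the text: if $M_1,M_2$ form a positively oriented quotient, then Construction \#1 (i.e.\ forming $M=M(M_1,M_2)$) followed by Construction \#2 (i.e.\ deletion and contraction of $n+1$) is the identity; and conversely, since Construction \#2 always outputs a positively oriented quotient, if this round trip is the identity then we must have started from a positively oriented quotient. So the theorem reduces to making the round trip effective on necklaces and identifying its fixed points.

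First I would dispose of necessity. Suppose $(M_1,M_2)$ is a positively oriented quotient. Then by \cite[Corollary 7.2.1]{BGW03} (used already in the proof of \Cref{prop:flagtopos}) the $\leq_i$-minimal bases nest, giving $I_i\subset J_i$ for all $i$, which is condition (1). \Cref{prop:flagtopos} then applies and produces the Grassmann necklace $\mathcal{J}=(J_1,\dots,J_{n+1})$ of $M=M(M_1,M_2)$; as observed in the paragraph before \Cref{lem:GrassmannNecklace}, $i\in S$ if and only if $n+1\in J_i$, and since $\mathcal{J}$ is an honest Grassmann necklace the set $\{i: n+1\in J_i\}$ is an interval $[d,n]$ or empty — this is condition (2). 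Finally, applying \Cref{prop:contractdeletepos} to $M$ recovers $M_1=M/(n+1)$ and $M_2=M\setminus(n+1)$; writing out the formulas there with the explicit $J_i$ from \Cref{prop:flagtopos} and simplifying, the case $n+1\notin J_i$ (i.e.\ $i\notin S$) of the $M_2$-formula forces $I_i = J_i\setminus\{a_i\}$ with $a_i=\max_i(J_i\setminus J_{n+1})=\max_i(J_i\setminus I_1)$, giving condition (3), and the case $n+1\in J_i$ (i.e.\ $i\in S$) of the $M_1$-formula forces $J_i = I_i\cup\{b_i\}$ with $b_i=\min_i(J_{n+1}\setminus J_i)=\min_i(I_1\setminus I_i)$, giving condition (4). (One must be slightly careful that the roles of $M_1,M_2$ versus $M/(n+1), M\setminus(n+1)$ are matched correctly — $M_1 = M\setminus(n+1)$ is the smaller-rank one in the excerpt's Construction \#2, so I would double-check the indexing against \Cref{prop:contractdeletepos} when writing this out.)

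For sufficiency, assume (1)--(4). By (1) and (2), \Cref{lem:GrassmannNecklace} applies and tells us that $\mathcal{J}=(J_1,\dots,J_{n+1})$ built by the recipe of \Cref{prop:flagtopos} is a genuine Grassmann necklace, hence corresponds to a positroid $M$ on $[n+1]$; moreover $S\neq[1,n]$ and the structure of $\mathcal{J}$ show $n+1$ is neither a loop nor a coloop of $M$ (it lies in some $J_i$ but not all, using that $S$ is a proper-from-below interval or empty, together with $J_{n+1}\ni n+1$ and $J_1=I_1^{(2)}\not\ni n+1$). Now apply \Cref{prop:contractdeletepos} to $M$: it produces necklaces $K^{(1)}, K^{(2)}$ for $M\setminus(n+1)$ and $M/(n+1)$. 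Conditions (3) and (4) are precisely the statements that, after the simplification noted above, $K^{(1)}=\mathcal{I}$ and $K^{(2)}=\mathcal{J}$ — i.e.\ the round trip returns $(M_1,M_2)$. Since Construction \#2 (equivalently \Cref{prop:contractdeletepos}) always outputs a positively oriented matroid quotient, $(M_1,M_2)$ is one.

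The main obstacle I anticipate is purely bookkeeping: reconciling the several slightly different descriptions of the same sets. Concretely, \Cref{prop:flagtopos} writes $J_{n+1}=I_1^{(1)}\cup\{n+1\}$ and $J_1=I_1^{(2)}$, while the theorem phrases $a_i,b_i$ in terms of $I_1$ (which is $I_1^{(1)}=\mathcal{I}_{M_1}$'s first term) and $J_i$; I need to check $J_i\setminus J_{n+1}=J_i\setminus I_1$ under the relevant hypotheses (this uses $I_1^{(1)}\subseteq I_i^{(1)}\subseteq J_i$ appropriately, and that $n+1$ is handled consistently) and likewise $J_{n+1}\setminus J_i$ versus $I_1\setminus I_i$. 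Getting the $\leq_i$-orders right when $n+1$ is thrown into the ground set $[n+1]$ — in particular that $n<_i n+1 <_i 1$ for $i\geq 2$ but $n+1<_1 1$ — is where sign/order errors are most likely, so I would verify the edge cases $i=1$, $i=d$, and $i=n$ explicitly, exactly as \Cref{lem:GrassmannNecklace}'s proof does. Once the dictionary between $(a_i,b_i)$ and the $\max_i,\min_i$ expressions appearing in \Cref{prop:contractdeletepos} is pinned down, the theorem is just the concatenation of \Cref{prop:flagtopos}, \Cref{lem:GrassmannNecklace}, and \Cref{prop:contractdeletepos} together with the identity-round-trip observation.
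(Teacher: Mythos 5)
Your proposal is correct and mirrors the paper's own proof: both assemble the theorem from \Cref{prop:flagtopos}, \Cref{lem:GrassmannNecklace}, and \Cref{prop:contractdeletepos} via the observation that the round trip of Construction \#1 followed by Construction \#2 is the identity precisely on positively oriented quotients. The $M_1/M_2$ bookkeeping you flagged as needing care is real but harmless---condition (3) in fact falls out of the $K^{(1)}$ formula and (4) out of $K^{(2)}$, opposite to what you wrote---and traces to a labeling inconsistency between the paper's prose for Construction \#2 (which says $M_1 = M\setminus(n+1)$) and \Cref{prop:contractdeletepos} (which says $M_1 = M/(n+1)$, the version consistent with the matrix construction).
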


\begin{proof}
First, suppose that we have a positively oriented quotient. As explained earlier, the first two conditions always hold for positively oriented quotients. We know that 
	applying the constructions of \Cref{prop:flagtopos} and \Cref{prop:contractdeletepos} in sequence should preserve our positively oriented quotient. Observing what conditions this imposes on the constituent Grassmann necklaces yields conditions $3$ and $4$. 

	Conversely, if the conditions in the theorem statement hold, then by \Cref{lem:GrassmannNecklace}, applying the construction of \Cref{prop:flagtopos} to $\mathcal{I}$ and $\mathcal{J}$ yields another Grassmann necklace $\mathcal{K}$ on $[n+1]$ such that $n+1$ is neither a loop nor a coloop of the positroid corresponding to $\mathcal{K}$. Then, conditions $3$ and $4$ guarantee that applying the construction of \Cref{prop:contractdeletepos} to $\mathcal{K}$ will recover $\mathcal{I}$ and $\mathcal{J}$. The result of applying \Cref{prop:contractdeletepos} to the Grassmann necklace of a positroid $M$ with $n+1$ neither a loop nor a coloop is the pair of Grassmann necklaces corresponding to $M/(n+1)$ and $M \setminus (n+1)$, which form a positively oriented quotient.  
\end{proof}

\begin{eg}
Let $\mathcal{I}=(123,235,356,456,561,613)$ and $\mathcal{J}=(1235,2356,3456,4562,5612,6123)$. Then $A=\{4,5,6\}$ is an interval with upper endpoint $n=6$. Note that $a_1=5$, $a_2=6$ and $a_3=6$, while $b_4=1$, $b_5=2$ and $b_6=2$. The positroids with these Grassmann necklaces do not form a positively oriented quotient since it is false that $I_3=J_3\setminus \{a_3\}$. 

However, if we start with Grassmann necklaces 
	$\mathcal{I}=(123,235,345,456,561,613)$ and $\mathcal{J}=(1235,2356,3456,4562,5612,6123)$, then the values of the $a_i$ and $b_i$ are unchanged. It is straightforward to verify that the conditions of \Cref{thm:necklacequotient} hold and so the positroids corresponding to $\mathcal{I}$ and $\mathcal{J}$ do in fact form a positively oriented quotient. 
\end{eg}

We now have a tool that allows us to recognize flag positroids in consecutive ranks 
without finding a realization or certifying the incidence relations over the signed hyperfield.

\begin{cor}\label{cor:recgonizeflags}
Suppose $(M_a,M_{a+1},\ldots, M_b)$ is a sequence of positroids of ranks $a, a+1,\dots,b$.
 Then $(M_a,M_{a+1},\ldots, M_b)$ is a flag positroid if and only if for $a\leq i<b$, the pair of positroids $(M_i, M_{i+1})$ satisfy the conditions of \Cref{thm:necklacequotient}.
\end{cor}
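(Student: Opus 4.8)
The plan is to reduce $\Cref{cor:recgonizeflags}$ to the combination of the ``adjacent ranks'' characterization of flag matroids together with the equivalence of $\ref{eqvs:TrFl}$ and $\ref{eqvs:FlDr}$ in $\Cref{thm:eqvs}$, and finally invoke $\Cref{thm:necklacequotient}$ to translate the oriented-matroid-quotient condition into the concrete Grassmann-necklace conditions. Here is the skeleton.

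\textbf{Step 1: Reduce to adjacent pairs.} First I would recall that a sequence of matroids $(M_a, \ldots, M_b)$ is a flag matroid if and only if each consecutive pair $(M_i, M_{i+1})$ is a flag matroid; in the oriented setting the analogous statement is exactly the content of the discussion after $\Cref{defn:tropFl}$ (the remark about $\mathscr P_{\br;n}^{adj}$), or may be quoted directly as the oriented-matroid analogue of $\cite[\text{Theorem 1.7.1, Theorem 1.11.1}]{BGW03}$. So $(M_a, \ldots, M_b)$, regarded as a sequence of positively oriented matroids, is an oriented flag matroid if and only if $(M_i, M_{i+1})$ is an oriented flag matroid for every $a \leq i < b$, i.e.\ if and only if each pair forms a positively oriented quotient.

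\textbf{Step 2: Identify ``flag positroid'' with ``oriented flag matroid'' in consecutive ranks.} By $\Cref{cor:real}$, a sequence of positroids of consecutive ranks $(M_a, \ldots, M_b)$ is a flag positroid if and only if, considered as a sequence of positively oriented matroids, it is an oriented flag matroid. (This is exactly where the consecutive-ranks hypothesis is used.) Combining with Step 1, $(M_a, \ldots, M_b)$ is a flag positroid if and only if each adjacent pair $(M_i, M_{i+1})$ is an oriented flag matroid, equivalently a positively oriented quotient.

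\textbf{Step 3: Apply $\Cref{thm:necklacequotient}$.} Finally, $\Cref{thm:necklacequotient}$ states that a pair of positroids $(M_i, M_{i+1})$ of ranks $i$ and $i+1$ forms a positively oriented quotient if and only if the four listed conditions on their Grassmann necklaces hold. Substituting this characterization into Step 2 gives exactly the claimed statement. I would write this up as essentially a one-paragraph proof: ``By $\Cref{cor:real}$ and the adjacent-ranks criterion for (oriented) flag matroids, $(M_a, \ldots, M_b)$ is a flag positroid iff each $(M_i, M_{i+1})$ is a positively oriented quotient; by $\Cref{thm:necklacequotient}$ this holds iff the conditions (1)--(4) hold for each $i$.'' The only real point to be careful about is making sure the adjacent-ranks statement is cited in the \emph{oriented} form (not just the unoriented form), so I would either cite $\cite[\text{Example above Theorem D}]{JL}$ together with the structure theorem for oriented matroid quotients in $\cite{BLVSWZ99}$, or simply observe that being an oriented flag matroid is defined via the Pl\"ucker relations $\mathscr P_{\br;n}$, which for consecutive ranks reduce to the adjacent ones by $\Cref{prop:3terms}$ applied to the sign hyperfield (or the analogous statement over $\SS$). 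I do not anticipate a genuine obstacle; the substance has all been established in the earlier sections, and the corollary is a bookkeeping consequence. If anything, the subtlety is purely expository: making explicit that $S = \{i \in [n] \mid I_i \cup \{n+1\} \leq_i J_i\}$ and the $a_i, b_i$ in the theorem statement are computed separately for each consecutive pair, so condition (2) must be checked pair-by-pair.

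\begin{proof}
A sequence of matroids is a flag matroid if and only if each consecutive pair is a flag matroid; the same holds for oriented flag matroids, since for consecutive ranks the defining Pl\"ucker relations $\mathscr P_{\br;n}$ reduce to those involving only adjacent ranks (cf.\ the discussion of $\mathscr P_{\br;n}^{adj}$ after $\Cref{prop:3terms}$ and $\cite[\text{Example above Theorem D}]{JL}$; see also $\cite[\text{Theorem 1.7.1, Theorem 1.11.1}]{BGW03}$). Thus $(M_a, M_{a+1}, \ldots, M_b)$, regarded as a sequence of positively oriented matroids, is an oriented flag matroid if and only if each pair $(M_i, M_{i+1})$ with $a \leq i < b$ forms a positively oriented quotient. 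By $\Cref{cor:real}$, since the ranks are consecutive, $(M_a, \ldots, M_b)$ is a flag positroid if and only if it is an oriented flag matroid. Combining these two facts, $(M_a, \ldots, M_b)$ is a flag positroid if and only if $(M_i, M_{i+1})$ is a positively oriented quotient for all $a \leq i < b$. By $\Cref{thm:necklacequotient}$, the latter holds for a given $i$ if and only if the conditions (1)--(4) of that theorem are satisfied by the Grassmann necklaces $\mathcal I_{M_i}$ and $\mathcal I_{M_{i+1}}$. This proves the claim.
\end{proof}
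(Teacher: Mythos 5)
Your proof is correct, and it takes essentially the same route as the paper's: reduce to checking that each adjacent pair $(M_i,M_{i+1})$ forms a positively oriented quotient, then invoke \Cref{thm:necklacequotient}. The paper cites \Cref{prop:compatibilitycondition} for the reduction step, while you go through the adjacent-pairs characterization of oriented flag matroids (\cite[Example above Theorem D]{JL}) and then \Cref{cor:real}; these are equivalent, and indeed \Cref{prop:compatibilitycondition} is proved using the same $\FlDr^{\geq 0}=\TrFl^{\geq 0}$ machinery that underlies \Cref{cor:real}. If anything your citation of \Cref{cor:real} is a slightly cleaner fit, since \Cref{prop:compatibilitycondition} is stated only for the complete rank sequence $(1,\ldots,n)$ while the corollary concerns arbitrary consecutive ranks $(a,\ldots,b)$ (though the proof of \Cref{prop:compatibilitycondition} extends verbatim). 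One small expository correction: the reduction to adjacent-rank relations (the $\mathscr P_{\br;n}^{adj}$ remark and \cite{JL}) does not depend on the ranks being consecutive, so it is cleanest to invoke it unconditionally rather than phrase it as a consequence of consecutivity; consecutivity is only needed for \Cref{cor:real}.
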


\begin{proof}
By \Cref{prop:compatibilitycondition}, it suffices to check that each such pair forms a positively oriented quotient, which is precisely the content of \Cref{thm:necklacequotient}.
\end{proof}

\section{Fan structures for and coherent subdivisions
from $\TrGr_{d;n}^{>0}$ and $\TrFl_n^{>0}$}
\label{sec:BIP}

In this section we make some brief remarks about the various
fan structures for 
 $\TrFl_{\br;n}^{>0}$ and coherent subdivisions from points of  $\TrFl_{\br;n}^{>0}$.
 Codes written for computations here are available at \url{https://github.com/chrisweur/PosTropFlagVar}.
We take a detailed look at the Grassmannian and complete flag variety,
in particular the case of $\TrFl_4^{>0}$.

\subsection{Fan structures}
There are multiple  possibly different natural fan structures  for
$\TrFl_{\br;n}^{>0}$:
\begin{enumerate}[label = (\roman*)]
\item The Pl\"ucker fan (induced by the three-term tropical Pl\"ucker relations).
\item The secondary fan (induced according to the coherent subdivision as in \Cref{cor:permute}).
\item The Gr\"obner fan (induced according to the initial ideal of the ideal 
	$\langle \mathscr P_{\br;n}\rangle$). 
\item The simultaneous refinement of the fans dual to the Newton polytopes
of the Pl\"ucker coordinates,
when the 
Pl\"ucker coordinates are expressed in terms of a 
``positive parameterization'' of $\Fl_{\br;n}^{>0}$,
		such as an \emph{$\mathcal{X}$-cluster chart.}
\item (If the cluster algebra associated to 
	$\Fl_{\br;n}$ has finitely many cluster variables) 
	the same fan as above but with (the larger set of) cluster variables replacing 
Pl\"ucker coordinates.
\end{enumerate}

Note that by definition, fan (v) is always a refinement of (iv).

In the case of the positive tropical Grassmannian, 
the fan structures in (iv) and (v) were studied in 
\cite[Definition 4.2 \& Section 8]{SW05}, where the authors observed that for 
$\Gr_{2,n}$, fan (iv) (which coincides with (v)) is isomorphic to the 
\emph{cluster complex}\footnote{See \cite{ca2} for background 
on the cluster complex.} of type $A_{n-3}$;
 for $\Gr_{3,6}$ and $\Gr_{3,7}$, fan (iv) is isomorphic to a 
coarsening of the corresponding {cluster complex},
 while fan (v) is isomorphic to the cluster complex (of types $D_4$ and $E_6$, respectively).  
\cite[Conjecture 8.1]{SW05} says that fan (v) (associated to the 
positive tropicalization of a full rank cluster variety of finite type)
should be isomorphic to the corresponding cluster complex.
This conjecture was essentially resolved in \cite{JLS21, AHL21}
by working with $F$-polynomials.

\cite[Theorem 14]{Olarte} states that the Pl\"ucker fan and the secondary fan structures for Dressians coincide, and hence implies that (i) and (ii) coincide because the positive Dressian and the positive tropical Grassmannian are the same \cite{SW21}.
For $\TrGr_{2,n}$, the results of \cite[\S4]{SS04} imply that (i), (ii), and (iii) agree, and combining this with \cite[\S5]{SW05} implies that all five fan structures agree for $\TrGr_{2,n}^{>0}$.
For $\TrGr_{3,6}^{>0}$, we computed that (iii) and (v) strictly refine (i), but the two fan structures are not comparable.

\smallskip
We can consider the same fan structures in the case of the 
positive tropical complete
flag variety. 
When $n=3$, the fan $\TrFl_n^{>0}$ modulo its lineality space is a one-dimensional fan, and all fan structures coincide.
For $\TrFl_n$ (before taking the positive part), one can find computations of the fan (iii) for $n = 4$ and $n=5$ in \cite[\S3]{Bossinger}, the fan (i) and its relation to (iii) for $n=4$ in \cite[Example 5.2.3]{BEZ21}, and the fan (ii) and its relation to (iii) for $n=4$ in \cite[\S5]{JLLO}.  Returning to the positive tropicalization, 
\cite[Section 5.1]{Bos22} computed the fan structure (iii) for $\TrFl_4^{>0}$, and found it was dual to the three-dimensional associahedron; in particular,
there are $14$ maximal cones and the $f$-vector is $(14, 21, 9, 1)$.   
Using the positive parameterization of
\cite{Bor} (a graphical version of 
the parameterizations of \cite{MR04}) for $\TrFl_n^{>0}$, we computed the polyhedral complex underlying (iv) for $n=4$ in Macaulay2 by computing the normal fan of the Minkowski sum of the Newton polytopes of the 
Pl\"ucker coordinates expressed in the chosen parametrization; we obtained
 the $f$-vector $(13, 20, 9, 1)$.  We also computed (v) 
 after incorporating the additional non-Pl\"ucker cluster variable 
$p_2 p_{134}-p_1 p_{234}$.
Combining these, we find that for $n=4$, (i)=(iv) and (ii)=(iii).  We also find that both (ii) and (v) strictly refine (i)=(iv) and are both isomorphic to the normal fan of the three-dimensional associahedron, but are not comparable fan structures.

The fact that the fan structure (v) of $\TrFl_4^{>0}$ is dual to the 
three-dimensional associahedron is consistent with 
\cite[Conjecture 8.1]{SW05} and the fact that 
$\Fl_4$ has a cluster algebra structure of 
finite type $A_3$ \cite[Table 1]{GLS}, 
whose cluster complex is dual to the associahedron.

We now give a graphical way to think about the fan structure
on $\TrFl_4^{>0}$, building on the ideas of 
 \cite{SW05} and 
\cite[Example 5.2.3]{BEZ21}.

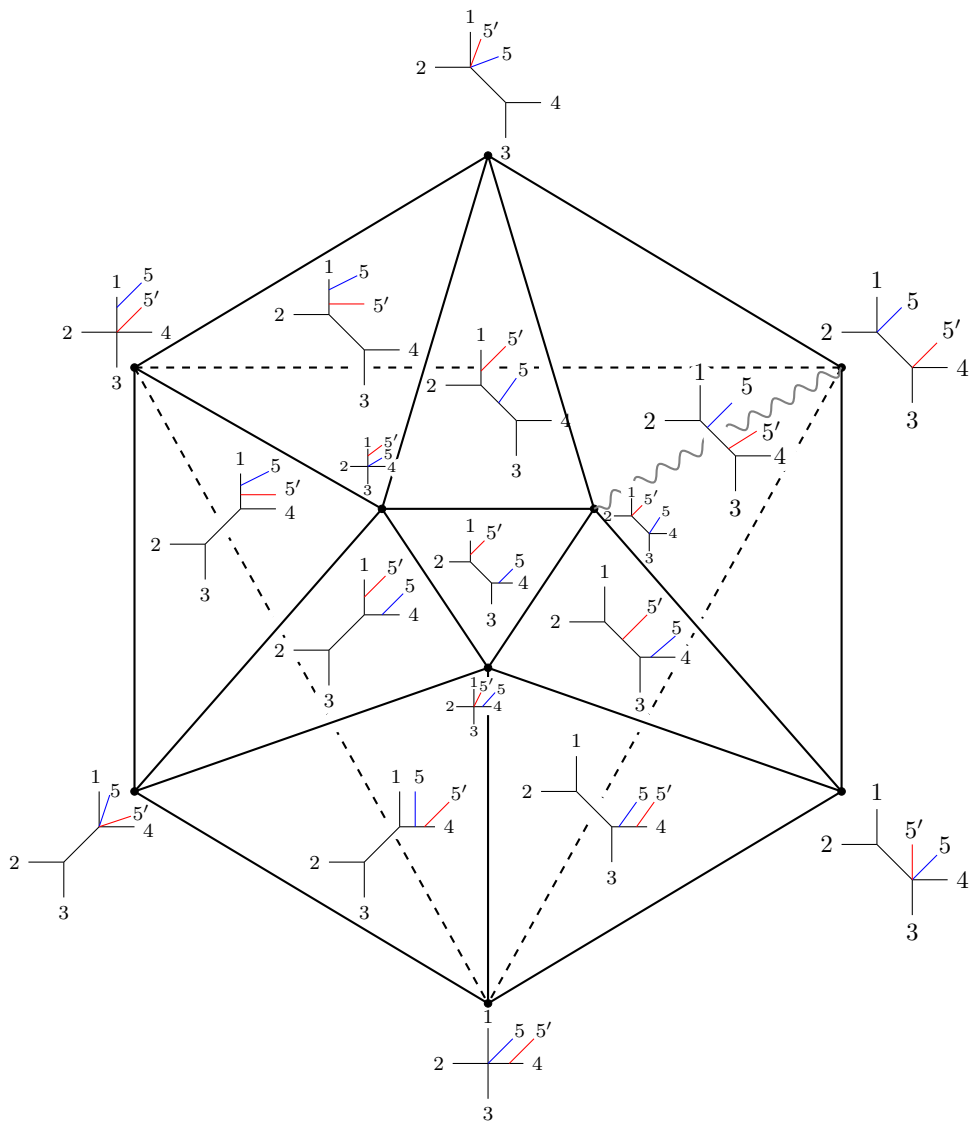
\begin{figure}[h]
\begin{tikzpicture}[scale=0.94, every node/.style={scale=0.94}]
\filldraw[black] (1.5,1) circle (1.5pt) node[anchor=west]{};
\filldraw[black] (-1.5,1) circle (1.5pt) node[anchor=west]{};
\filldraw[black] (0,-1.25) circle (1.5pt) node[anchor=west]{};
\filldraw[black] (5,3) circle (1.5pt) node[anchor=west]{};
\filldraw[black] (-5,3) circle (1.5pt) node[anchor=west]{};
\filldraw[black] (0,6) circle (1.5pt) node[anchor=west]{};
\filldraw[black] (5,-3) circle (1.5pt) node[anchor=west]{};
\filldraw[black] (-5,-3) circle (1.5pt) node[anchor=west]{};
\filldraw[black] (0,-6) circle (1.5pt) node[anchor=west]{};

\draw[black, thick, dashed] (-5,3) -- (0,-6);
\draw[black, thick, dashed] (5,3) -- (0,-6);
\draw[black, thick, dashed] (-5,3) -- (5,3);

\fill[white] (0.9,3) circle(0.15);
\fill[white] (-0.9,3) circle(0.15);
\fill[white] (2.96,-0.65) circle(0.12);
\fill[white] (-2.96,-0.65) circle(0.15);
\fill[white] (2.18,-2.02) circle(0.15);
\fill[white] (-2.18,-2.02) circle(0.15);

\draw[black, thick] (1.5,1) -- (-1.5,1);
\draw[black, thick] (1.5,1) -- (0,-1.25);
\draw[black, thick] (0,-1.25) -- (-1.5,1);
\draw[black, thick] (0,6) -- (5,3);
\draw[black, thick] (5,3) -- (5,-3);
\draw[black, thick] (5,-3) -- (0,-6);
\draw[black, thick] (0,-6) -- (-5,-3);
\draw[black, thick] (-5,-3) -- (-5,3);
\draw[black, thick] (-5,3) -- (0,6);
\draw[black, thick] (0,6) -- (-1.5,1);
\draw[black, thick] (1.5,1) -- (0,6);
\draw[black, thick] (-5,-3) -- (-1.5,1);
\draw[black, thick] (-5,-3) -- (0,-1.25);
\draw[black, thick] (5,-3) -- (0,-1.25);
\draw[black, thick] (1.5,1) -- (5,-3);
\draw[black, thick] (-5,3) -- (-1.5,1);
\draw[black, thick] (0,-6) -- (0,-1.25);

\draw[gray, thick, snake it] (5,3) -- (1.5,1);

%TOPRIGHT VERTEX
\draw[black, thin] (6,3)--(5.5,3.5);
\draw[black, thin] (5.5,3.5)--(5,3.5);
 \draw[black, thin] (5.5,3.5)--(5.5,4);
 \draw[black, thin] (6,3)--(6.5,3);
 \draw[black, thin] (6,3)--(6,2.5);
 \draw[blue, thin] (5.5,3.5)--(5.85, 3.85);
 \draw[red, thin] (6,3)--(6.35,3.35);
 \node[anchor=south] at (5.5,4) {$1$};
 \node[anchor=east] at (5,3.5) {$2$};
 \node[anchor=north] at (6,2.5) {$3$};
 \node[anchor=west] at (6.5,3) {$4$};
 \node[anchor=south west] at (5.8,3.7) {$5$};
 \node[anchor=south west] at (6.3,3.3) {$5'$};
 
%bottom right
 \draw[black, thin] (6,-4.25)--(5.5,-3.75);
\draw[black, thin] (6,-4.25)--(6,-4.75);
\draw[black, thin] (6,-4.25)--(6.5,-4.25);
 \draw[black, thin] (5.5,-3.75)--(5.5,-3.25);
 \draw[black, thin] (5.5,-3.75)--(5,-3.75);
  \draw[blue, thin] (6,-4.25)--(6.35,-3.9);
 \draw[red, thin] (6,-4.25)--(6,-3.75);
 \node[anchor=south] at (5.5,-3.25) {$1$};
 \node[anchor=east] at (5,-3.75) {$2$};
 \node[anchor=north] at (6,-4.75) {$3$};
 \node[anchor=west] at (6.5,-4.25) {$4$};
 \node[anchor=south west] at (6.25,-4) {$5$};
 \node[anchor=south] at (6.03,-3.78) {$5'$};

%center
 \draw[black, thin] (-0.25,0.25)--(0.05,-0.05);
\draw[black, thin] (-0.25,0.25)--(-0.25,0.55);
\draw[black, thin] (-0.25,0.25)--(-0.55,0.25);
 \draw[black, thin] (0.05,-0.05)--(0.05,-0.35);
 \draw[black, thin] (0.05,-0.05)--(0.35,-0.05);
  \draw[blue, thin] (0.15,-0.05)--(0.35,0.15);
 \draw[red, thin] (-0.25,0.35)--(-0.05,0.55);
 \node[anchor=south] at (-0.25,0.55) {\footnotesize$1$};
 \node[anchor=east] at (-0.5,0.25) {\footnotesize$2$};
 \node[anchor=north] at (0.05,-0.35) {\footnotesize$3$};
 \node[anchor=west] at (0.3,-0.05) {\footnotesize$4$};
 \node[anchor=south west] at (0.25,0.05) {\footnotesize$5$};
 \node[anchor=south west] at (-0.15,0.45) {\footnotesize{$5'$}};
 
%top vertex
\draw[black, thin] (0.25,6.75)--(-.25,7.25);
\draw[black, thin] (-0.25,7.25)--(-0.25,7.75);
\draw[black, thin] (-0.25,7.25)--(-0.75,7.25);
 \draw[black, thin] (0.25,6.75)--(0.25,6.25);
 \draw[black, thin] (0.25,6.75)--(0.75,6.75);
  \draw[blue, thin] (-0.25,7.25)--(0.15,7.4);
 \draw[red, thin] (-0.25,7.25)--(-0.1,7.65);
 \node[anchor=south] at (-0.25,7.75) {\footnotesize$1$};
 \node[anchor=east] at (-0.75,7.25) {\footnotesize$2$};
 \node[anchor=north] at (0.25,6.25) {\footnotesize$3$};
 \node[anchor=west] at (0.76,6.75) {\footnotesize$4$};
 \node[anchor=south west] at (0.07,7.23) {\footnotesize$5$};
 \node[anchor=south west] at (-0.2,7.55) {\footnotesize{$5'$}};

  %top triangle
  
  \fill[white] (-0.05,3) circle(0.15);
  \fill[white] (0.5,3) circle(0.15);
 \draw[black, thin] (-0.1,2.75)--(0.4,2.25);
\draw[black, thin] (-0.1,2.75)--(-0.1,3.25);
\draw[black, thin] (-0.1,2.75)--(-0.6,2.75);
 \draw[black, thin] (0.4,2.25)--(0.4,1.75);
 \draw[black, thin] (0.4,2.25)--(0.9,2.25);
  \draw[blue, thin] (0.15,2.5)--(0.4,2.85);
 \draw[red, thin] (-0.1,2.95)--(0.25,3.3);
 \node[anchor=south] at (-0.1,3.25) {\footnotesize$1$};
 \node[anchor=east] at (-0.6,2.75) {\footnotesize$2$};
 \node[anchor=north] at (0.4,1.75) {\footnotesize$3$};
 \node[anchor=west] at (0.9,2.25) {\footnotesize$4$};
 \node[anchor=south west] at (0.3,2.75) {\footnotesize$5$};
 \node[anchor=south west] at (0.15,3.2) {\footnotesize{$5'$}};

 %inner right vertex
 \fill[white] (2.03,1.25) circle(0.15);
\draw[black, thin] (2.03,0.9)--(2.28,0.65);
\draw[black, thin] (2.03,0.9)--(2.03, 1.15);
\draw[black, thin] (2.03,0.9)--(1.78,0.9);
 \draw[black, thin] (2.28,0.65)--(2.28,0.4);
 \draw[black, thin] (2.28,0.65)--(2.53,0.65);
  \draw[blue, thin] (2.28,0.65)--(2.43,0.88);
 \draw[red, thin] (2.03,0.9)--(2.18,1.05);
 \node[anchor=south] at (2.03,1.05) {\tiny$1$};
 \node[anchor=east] at (1.88,0.9) {\tiny$2$};
 \node[anchor=north] at (2.28,0.5) {\tiny$3$};
 \node[anchor=west] at (2.43,0.65) {\tiny$4$};
 \node[anchor=south west] at (2.33,0.78) {\tiny$5$};
 \node[anchor=south west] at (2.08,.95) {\tiny{$5'$}};

%top right face
\fill[white] (3.25,2) circle(0.2);
\fill[white] (4.15,1.75) circle(0.22);
\fill[white] (3,2.85) circle(0.2);
% \fill[black!20,opacity=0.3] (3.25,2) circle (1.15);
 \draw[black, thin] (3,2.25)--(3.5,1.75);
\draw[black, thin] (3,2.25)--(3,2.75);
 \draw[black, thin] (3,2.25)--(2.5,2.25);
 \draw[black, thin] (3.5,1.75)--(3.5,1.25);
 \draw[black, thin] (3.5,1.75)--(4,1.75);
 \draw[blue, thin] (3.1,2.15)--(3.45, 2.5);
 \draw[red, thin] (3.4,1.85)--(3.8,2.1);
 \node[anchor=south] at (3,2.7) {$1$};
 \node[anchor=east] at (2.5,2.25) {$2$};
 \node[anchor=north] at (3.5,1.25) {$3$};
 \node[anchor=west] at (3.92,1.75) {$4$};
 \node[anchor=south west] at (3.45,2.5) {$5$};
 \node[anchor=south west] at (3.75,1.85) {$5'$};

 %bottom right dashed line
 \fill[white] (1.5,-3.25) circle(0.2);
%  \fill[black!20,opacity=0.3] (1.5,-3.25) circle (1.19);
\draw[black, thin] (1.25,-3)--(1.75,-3.5);
\draw[black, thin] (1.25,-3)--(1.25,-2.5);
\draw[black, thin] (1.25,-3)--(0.75,-3);
 \draw[black, thin] (1.75,-3.5)--(1.75,-4);
 \draw[black, thin] (1.75,-3.5)--(2.25,-3.5);
  \draw[blue, thin] (1.85,-3.5)--(2.1,-3.15);
 \draw[red, thin] (2.1,-3.5)--(2.35,-3.15);
 \node[anchor=south] at (1.25,-2.5) {\footnotesize$1$};
 \node[anchor=east] at (0.75,-3) {\footnotesize$2$};
 \node[anchor=north] at (1.75,-4) {\footnotesize$3$};
 \node[anchor=west] at (2.25,-3.5) {\footnotesize$4$};
 \node[anchor=south west] at (2,-3.25) {\footnotesize$5$};
 \node[anchor=south west] at (2.25,-3.25) {\footnotesize{$5'$}};

%middle right face
 \fill[white] (2.74,-1.12) circle(0.2);
%  \fill[black!20,opacity=0.3] (1.9,-0.85) circle (1.05);
\draw[black, thin] (1.65,-0.6)--(2.15,-1.1);
\draw[black, thin] (1.65,-0.6)--(1.65,-0.1);
\draw[black, thin] (1.65,-0.6)--(1.15,-0.6);
 \draw[black, thin] (2.15,-1.1)--(2.15,-1.6);
 \draw[black, thin] (2.15,-1.1)--(2.65,-1.1);
  \draw[blue, thin] (2.3,-1.1)--(2.65,-0.8);
 \draw[red, thin] (1.9,-0.85)--(2.25,-0.5);
 \node[anchor=south] at (1.65,-.1) {\footnotesize$1$};
 \node[anchor=east] at (1.15,-0.6) {\footnotesize$2$};
 \node[anchor=north] at (2.15,-1.55) {\footnotesize$3$};
 \node[anchor=west] at (2.6,-1.1) {\footnotesize$4$};
 \node[anchor=south west] at (2.47,-0.9) {\footnotesize$5$};
 \node[anchor=south west] at (2.12,-0.6) {\footnotesize{$5'$}};

%top left face
 \fill[white] (-1.8,3) circle(0.18);
%  \fill[black!20,opacity=0.3] (1.9,-0.85) circle (1.05);
\draw[black, thin] (-2.25,3.75)--(-1.75,3.25);
\draw[black, thin] (-2.25,3.75)--(-2.25,4.25);
\draw[black, thin] (-2.25,3.75)--(-2.75,3.75);
 \draw[black, thin] (-1.75,3.25)--(-1.75,2.75);
 \draw[black, thin] (-1.75,3.25)--(-1.25,3.25);
  \draw[blue, thin] (-2.25,4.1)--(-1.85,4.3);
 \draw[red, thin] (-2.25,3.9)--(-1.75,3.9);
 \node[anchor=south] at (-2.25,4.22) {\footnotesize$1$};
 \node[anchor=east] at (-2.75,3.75) {\footnotesize$2$};
 \node[anchor=north] at (-1.75,2.75) {\footnotesize$3$};
 \node[anchor=west] at (-1.25, 3.25) {\footnotesize$4$};
 \node[anchor=south west] at (-1.95,4.14) {\footnotesize$5$};
 \node[anchor=west] at (-1.75,3.9) {\footnotesize{$5'$}};
 
 %topmiddle left face
 \fill[white] (-3.75,0.75) circle(0.18);
%  \fill[black!20,opacity=0.3] (1.9,-0.85) circle (1.05);
\draw[black, thin] (-3.5,1)--(-4,0.5);
\draw[black, thin] (-3.5,1)--(-3.5,1.5);
\draw[black, thin] (-4,0.5)--(-4.5,0.5);
 \draw[black, thin] (-4,.5)--(-4,0);
 \draw[black, thin] (-3.5,1)--(-3,1);
  \draw[blue, thin] (-3.5,1.33)--(-3.1,1.53);
 \draw[red, thin] (-3.5,1.2)--(-3,1.2);
 \node[anchor=south] at (-3.5,1.5) {\footnotesize$1$};
 \node[anchor=east] at (-4.5,0.5) {\footnotesize$2$};
 \node[anchor=north] at (-4,0) {\footnotesize$3$};
 \node[anchor=west] at (-3,0.95) {\footnotesize$4$};
 \node[anchor=south west] at (-3.2,1.4) {\footnotesize$5$};
 \node[anchor=west] at (-3,1.3) {\footnotesize{$5'$}};

  %lowermiddle left face
 \fill[white] (-2.75,-1) circle(0.18);
  \fill[white] (-2.3,-1.7) circle(0.18);
%  \fill[black!20,opacity=0.3] (1.9,-0.85) circle (1.05);
\draw[black, thin] (-1.75,-0.5)--(-2.25,-1);
\draw[black, thin] (-1.75,-0.5)--(-1.75,0);
\draw[black, thin] (-2.25,-1)--(-2.75,-1);
 \draw[black, thin] (-2.25,-1)--(-2.25,-1.5);
 \draw[black, thin] (-1.75,-0.5)--(-1.25,-0.5);
  \draw[blue, thin] (-1.5,-0.5)--(-1.2,-0.2);
 \draw[red, thin] (-1.75,-0.25)--(-1.45,0.05);
 \node[anchor=south] at (-1.75,0) {\footnotesize$1$};
 \node[anchor=east] at (-2.75,-1) {\footnotesize$2$};
 \node[anchor=north] at (-2.25,-1.5) {\footnotesize$3$};
 \node[anchor=west] at (-1.25,-0.5) {\footnotesize$4$};
 \node[anchor=south west] at (-1.3,-0.3) {\footnotesize$5$};
 \node[anchor= south west] at (-1.55,-0.05) {\footnotesize{$5'$}};
 
   %bottom left face
 \fill[white] (-1.25,-3.5) circle(0.18);
 
%  \fill[black!20,opacity=0.3] (1.9,-0.85) circle (1.05);
\draw[black, thin] (-1.25,-3.5)--(-1.75,-4);
\draw[black, thin] (-1.25,-3.5)--(-1.25,-3);
\draw[black, thin] (-1.75,-4)--(-2.25,-4);
 \draw[black, thin] (-1.75,-4)--(-1.75,-4.5);
 \draw[black, thin] (-1.25,-3.5)--(-0.75,-3.5);
  \draw[blue, thin] (-1.03,-3.5)--(-1.03,-3);
 \draw[red, thin] (-0.9,-3.5)--(-0.55,-3.15);
 \node[anchor=south] at (-1.3,-3) {\footnotesize$1$};
 \node[anchor=east] at (-2.25,-4) {\footnotesize$2$};
 \node[anchor=north] at (-1.75,-4.5) {\footnotesize$3$};
 \node[anchor=west] at (-0.75,-3.5) {\footnotesize$4$};
 \node[anchor=south] at (-0.97,-3) {\footnotesize$5$};
 \node[anchor= south west] at (-0.65,-3.25) {\footnotesize{$5'$}};
 
    %bottom left corner

%  \fill[black!20,opacity=0.3] (1.9,-0.85) circle (1.05);
\draw[black, thin] (-5.5,-3.5)--(-6,-4);
\draw[black, thin] (-5.5,-3.5)--(-5.5,-3);
\draw[black, thin] (-6,-4)--(-6.5,-4);
 \draw[black, thin] (-6,-4)--(-6,-4.5);
 \draw[black, thin] (-5.5,-3.5)--(-5,-3.5);
  \draw[blue, thin] (-5.5,-3.5)--(-5.35,-3.05);
 \draw[red, thin] (-5.5,-3.5)--(-5.05,-3.35);
 \node[anchor=south] at (-5.55,-3) {\footnotesize$1$};
 \node[anchor=east] at (-6.5,-4) {\footnotesize$2$};
 \node[anchor=north] at (-6,-4.5) {\footnotesize$3$};
 \node[anchor=west] at (-5,-3.55) {\footnotesize$4$};
 \node[anchor=south west] at (-5.47,-3.2) {\footnotesize$5$};
 \node[anchor= south west] at (-5.15,-3.53) {\footnotesize{$5'$}};
 
 %Bottom vertex

\draw[black, thin] (0,-6.85)--(0,-6.35);
\draw[black, thin] (0,-6.85)--(-0.5,-6.85);
 \draw[black, thin] (0,-6.85)--(0,-7.35);
 \draw[black, thin] (0,-6.85)--(0.5,-6.85);
  \draw[blue, thin] (0,-6.85)--(0.35,-6.5);
 \draw[red, thin] (0.3,-6.85)--(0.65,-6.5);
 \node[anchor=south] at (0,-6.4) {\footnotesize$1$};
 \node[anchor=east] at (-0.5,-6.85) {\footnotesize$2$};
 \node[anchor=north] at (0,-7.35) {\footnotesize$3$};
 \node[anchor=west] at (0.5,-6.85) {\footnotesize$4$};
 \node[anchor=south west] at (0.25,-6.6) {\footnotesize$5$};
 \node[anchor= south west] at (0.55,-6.6) {\footnotesize{$5'$}};

\draw[black, thin] (-5.25,3.5)--(-5.25,4);
\draw[black, thin] (-5.25,3.5)--(-5.75,3.5);
 \draw[black, thin] (-5.25,3.5)--(-5.25,3);
 \draw[black, thin] (-5.25,3.5)--(-4.75,3.5);
  \draw[blue, thin] (-5.25,3.85)--(-4.9,4.2);
 \draw[red, thin] (-5.25,3.5)--(-4.9,3.85);
 \node[anchor=south] at (-5.25,4) {\footnotesize$1$};
 \node[anchor=east] at (-5.75,3.5) {\footnotesize$2$};
 \node[anchor=north] at (-5.25,3) {\footnotesize$3$};
 \node[anchor=west] at (-4.75,3.5) {\footnotesize$4$};
 \node[anchor=south west] at (-5,4.1) {\footnotesize$5$};
 \node[anchor= south west] at (-5,3.75) {\footnotesize{$5'$}};

 %inner top left vertex
 
\fill[white] (-1.25,1.56) circle(0.15); 
\draw[black, thin] (-1.7,1.6)--(-1.7,1.85);
\draw[black, thin] (-1.7,1.6)--(-1.95,1.6);
 \draw[black, thin] (-1.7,1.6)--(-1.7,1.35);
 \draw[black, thin] (-1.7,1.6)--(-1.45,1.6);
  \draw[blue, thin] (-1.7,1.6)--(-1.5,1.72);
 \draw[red, thin] (-1.7,1.75)--(-1.5,1.9);
 \node[anchor=south] at (-1.7,1.75) {\tiny$1$};
 \node[anchor=east] at (-1.85,1.6) {\tiny$2$};
 \node[anchor=north] at (-1.7,1.45) {\tiny$3$};
 \node[anchor=west] at (-1.55,1.59) {\tiny$4$};
 \node[anchor=south west] at (-1.6,1.58) {\tiny$5$};
 \node[anchor= south west] at (-1.6,1.75) {\tiny{$5'$}};
 
 %inner bottom vertex
 
  \fill[white] (0,-1.65) circle(0.27);
\draw[black, thin] (-0.2,-1.8)--(-0.2,-1.55);
\draw[black, thin] (-0.2,-1.8)--(-0.45,-1.8);
 \draw[black, thin] (-0.2,-1.8)--(-0.2,-2.05);
 \draw[black, thin] (-0.2,-1.8)--(0.05,-1.8);
  \draw[blue, thin] (-0.08,-1.8)--(0.1,-1.6);
 \draw[red, thin] (-0.2,-1.8)--(-0.1,-1.6);
 \node[anchor=south] at (-0.2,-1.65) {\tiny$1$};
 \node[anchor=east] at (-0.35,-1.8) {\tiny$2$};
 \node[anchor=north] at (-0.2,-1.95) {\tiny$3$};
 \node[anchor=west] at (-0.05,-1.8) {\tiny$4$};
 \node[anchor=south west] at (0, -1.75) {\tiny$5$};
 \node[anchor= south west] at (-0.25,-1.7) {\tiny{$5'$}};
 \end{tikzpicture}
	\caption{The fan structure (ii)=(iii) of $\TrFl_4^{>0}$.}
        \label{fig:TrFl4}
\end{figure}

\begin{eg}
A \emph{planar tree} on $[n]$ is an unrooted tree drawn
in the plane with 
$n$ leaves labeled by $1,2,\dots,n$ (in counterclockwise 
order).  By \cite{SW05}, 
$\TrGr_{2;n}^{>0}$ parameterizes metric planar trees, and its cones correspond
to the various combinatorial types of planar trees.
In particular, 
 if we assign real-valued lengths to the edges of a planar tree, 
then the negative of the distance 
  between leaf $i$ and $j$ encodes
the positive tropical Pl\"ucker coordinate $w_{ij}$ of a point
	in the corresponding  cone.
	In particular, 
	it is easy to see that the negative distances $w_{ij}$ 
	associated to such a planar tree satisfy the positive tropical Pl\"ucker 
	relations.

Now as in 
\cite[Example 5.2.3]{BEZ21}, 
we note that for a valuated matroid $\mu$ whose 
underlying matroid is the uniform matroid $U_{2,4}$,
the tropical linear spaces $\trop(\mu)$
and $\trop(\mu^*)$
associated to $\mu$ and its dual 
$\mu^*$ are translates of each other. 
This allows us to identify points 
 $\bmu = (\mu_1,\mu_2,\mu_3)$ of 
$\TrFl_4^{>0}$ with planar trees on 
	the vertices $\{1,2,3,4,5,5'\}$
	such that the vertices
	$\{1,2,3,4,5\}$ and separately the vertices
	$\{1,2,3,4,5'\}$ appear in counterclockwise
	order.
	To see this, note that (using the same idea as 
	Construction \#1 from \Cref{sec:projection})
	we can identify $(\mu_1,\mu_2)$, with Pl\"ucker coordinates
	$(w_1,\dots,w_4; w_{12},\dots,w_{34})$,
with an element $(w_{ab})$ of $\TrGr_{2,5}^{>0}$:
	we simply set $w_{a5}:=w_a$ for $1\leq a \leq 4$.
   Similarly,
we identify $(\mu_2,\mu_3)$, where $\mu_3$ has Pl\"ucker coordinates
$(w_{123},\dots,w_{234})$,
with an 
element of $\TrGr_{2,5}^{>0}$:
	we simply set $w_{d5'}:=w_{abc}$, where $\{a,b,c\}:=[4]\setminus \{d\}$.

This gives us the Pl\"ucker fan structure (i)=(iv) 
with thirteen maximal cones, as shown in 
	\Cref{fig:TrFl4}.  
	To get the Gr\"obner fan structure (iii) we subdivide one of the 
cones into two, along the squiggly line shown in 
	\Cref{fig:TrFl4}.  
This squiggly line occurs when
$\distance(x_1,blue) = \distance(x_2,red)$,
where $x_1$ and $x_2$ are the two black trivalent nodes in the tree on $[4]$.
To obtain the fan structure (v), instead of the squiggly line, the square face is subdivided along the other diagonal.
\end{eg}

Using the computation of $\TrFl_5$ in \cite{Bossinger}, available at \url{https://github.com/Saralamboglia/Toric-Degenerations/blob/master/Flag5.rtf} and \Cref{cor:3terms}, we further computed that $\TrFl_5^+$ with (iii) has 938 maximal cones (906 of which are simplicial) and that (iv) has 406 maximal cones.
According to \cite[Conjecture 8.1]{SW05}, the (v) fan structure for $\TrFl_5^+$ has 672 maximal cones.

\subsection{Coherent subdivisions}

We next discuss coherent subdivisions coming from the positive
tropical Grassmannian and positive tropical complete flag variety.
When $\Fl_{\br;n}$ is the Grassmannian $\Gr_{d,n}$ 
and the support $\underline{\bmu}$
is the uniform matroid, 
 \Cref{thm:eqvs} gives rise to the following corollary
 (which  was first proved
  in \cite{LPW} and \cite{AHLS}).
 \begin{cor}\label{cor:hyper}
Let $\bmu  = (\mu_d) \in  \PP\left( \TT^{\binom{[n]}{d}}\right)$, 
 and suppose it has no $\infty$ coordinates.  Then the following statements are equivalent.
 \begin{itemize}
\item $\bmu\in \TrGr_{d,n}^{> 0}$, that is, $\bmu$ lies in the strictly positive
	tropical Grassmannian.
\item
		Every face in the coherent subdivision $\mathcal 
		 D_{\boldsymbol\mu}$ of the hypersimplex $\Delta_{d,n}$
 induced by $\bmu$ is a positroid polytope.
 \end{itemize}
 \end{cor}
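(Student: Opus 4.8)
The plan is to obtain \Cref{cor:hyper} as a direct specialization of \Cref{thm:eqvs} to the case where $\br = (d)$ is a single integer and the support $\underline{\bmu}$ is the uniform matroid. First I would record the two elementary identifications forced by the hypothesis that $\bmu = (\mu_d)$ has no coordinate equal to $\infty$: its support $\underline{\mu_d}$ is then all of $\binom{[n]}{d}$, which is the set of bases of the uniform matroid $U_{d,n}$, so $P(\underline{\bmu}) = P(U_{d,n}) = \Delta_{d,n}$ and the coherent subdivision $\mathcal D_{\bmu}$ of \Cref{defn:subdiv} is genuinely a subdivision of the hypersimplex.

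Next I would match up the two bulleted conditions of \Cref{cor:hyper} with conditions \ref{eqvs:TrFl} and \ref{eqvs:subdiv} of \Cref{thm:eqvs} in the case $\br=(d)$. On the tropical side, $\TrFl_{(d);n}^{\geq 0}$ is by definition the nonnegative tropical Grassmannian $\TrGr_{d,n}^{\geq 0}$, and among points with no $\infty$ coordinate — which includes our $\bmu$ — this is exactly the strictly positive tropical Grassmannian $\TrGr_{d,n}^{>0}$; so \ref{eqvs:TrFl} becomes the first bullet. On the polytopal side, when $k=1$ a flag positroid is just a positroid and a flag positroid polytope of rank $(d)$ is just a positroid polytope (as noted after \Cref{def:flagpositroid}), so \ref{eqvs:subdiv} becomes precisely the statement that every face of $\mathcal D_{\bmu}$ is a positroid polytope, i.e.\ the second bullet.

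The corollary then follows immediately from the equivalence \ref{eqvs:TrFl} $\iff$ \ref{eqvs:subdiv} in \Cref{thm:eqvs}, which is already assembled in the paper from the chain \ref{eqvs:TrFl} $\iff$ \ref{eqvs:FlDr} (\Cref{pf:1}) together with \ref{eqvs:FlDr} $\implies$ \ref{eqvs:subdiv} and the remaining implications closing the loop (\Cref{pf:2} and \Cref{pf:3}). Since this machinery is already in hand, there is essentially no obstacle; the only point requiring care is the bookkeeping that "no $\infty$ coordinates" simultaneously upgrades $\TrGr^{\geq 0}$ to $\TrGr^{>0}$ and guarantees that the ambient polytope is the full hypersimplex $\Delta_{d,n}$ rather than the matroid polytope of a proper positroid. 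For context I would also remark that this recovers the results of \cite{LPW} and \cite{AHLS}, though no further argument is needed for that.
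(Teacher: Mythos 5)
Your proposal is correct and takes essentially the same approach as the paper: the paper does not write out a separate proof but simply observes that \Cref{cor:hyper} is the specialization of \Cref{thm:eqvs} to $\br=(d)$ with uniform-matroid support, which is exactly the specialization you carry out, including the two bookkeeping points (no $\infty$ coordinates forces $P(\underline{\bmu})=\Delta_{d,n}$ and upgrades $\TrGr_{d,n}^{\geq 0}$ to $\TrGr_{d,n}^{>0}$).
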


The coherent subdivisions above (called \emph{positroidal subdivisions})
were further studied in \cite{SW21}, 
where the finest positroidal subdivisions were characterized in terms of 
 series-parallel matroids.  Furthermore, all finest
 positroidal subdivisions of $\Delta_{d,n}$ achieve equality in Speyer's 
 \emph{$f$-vector theorem}; in particular, they all 
 consist of ${n-2 \choose d-1}$
 facets
	\cite[Corollary 6.7]{SW21}.

When  $\Fl_{\br;n}$ is the complete flag variety $\Fl_n$,
and the support $\underline{\bmu}$
is the uniform flag matroid, 
 \Cref{thm:eqvs} gives rise to the following corollary,
 which appeared 
 in \cite[Theorem 20]{JLLO}.

\begin{cor}\label{cor:permute}
Let $\bmu  = (\mu_1,\ldots, \mu_n) \in \prod_{i = a}^{b} \PP\left( \TT^{\binom{[n]}{i}}\right)$, 
 and suppose it has no $\infty$ coordinates.  Then the following statements are equivalent.
 \begin{itemize}
\item $\bmu\in \TrFl_{n}^{> 0}$, that is, $\bmu$ lies in the strictly positive
	tropical flag variety.
\item
		Every face in the coherent subdivision $\mathcal 
		D_{\boldsymbol\mu}$ of the permutohedron $\Perm_n$
 induced by $\bmu$ is a Bruhat interval polytope.
 \end{itemize}
 \end{cor}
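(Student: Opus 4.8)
The plan is to derive \Cref{cor:permute} directly from \Cref{thm:eqvs} applied to the complete flag case $\br = (1,2,\ldots,n)$, which is a sequence of consecutive integers, so that the equivalence $\ref{eqvs:TrFl}\iff\ref{eqvs:subdiv}$ is available for every $\bmu = (\mu_1,\ldots,\mu_n)\in\prod_{i=1}^n\PP\big(\TT^{\binom{[n]}{i}}\big)$. The only work is to translate conditions \ref{eqvs:TrFl} and \ref{eqvs:subdiv} into the statements in \Cref{cor:permute} under the hypothesis that $\bmu$ has no $\infty$ coordinates.

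First I would unpack \ref{eqvs:TrFl}: by the definition of the strictly positive tropicalization, a point with no $\infty$ coordinates lies in $\TrFl_n^{>0}$ exactly when it lies in $\TrFl_n^{\geq0}$, so for our $\bmu$ condition \ref{eqvs:TrFl} is precisely ``$\bmu\in\TrFl_n^{>0}$.'' Next I would identify the polytope being subdivided: since $\bmu$ has no $\infty$ coordinates, $\underline{\mu_i}=\binom{[n]}{i}$ for each $i$, so $\underline{\bmu}$ is the uniform flag matroid $(U_{1,n},\ldots,U_{n,n})$ and $P(\underline{\bmu}) = P(U_{1,n})+\cdots+P(U_{n,n})$. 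The vertices of this Minkowski sum are the vectors $\be_{B_1}+\cdots+\be_{B_n}$ as $B_1\subset\cdots\subset B_n$ ranges over complete flags of subsets of $[n]$, i.e.\ the permutation vectors; thus $P(\underline{\bmu})$ is the permutohedron $\Perm_n$ (the $u=e$, $v=w_0$ instance of the Minkowski-sum description in \Cref{rem:BIPsum}, up to the translation and relabeling conventions of \Cref{sec:background}). So for our $\bmu$, condition \ref{eqvs:subdiv} reads: every face of $\mathcal D_{\bmu}$ is a flag positroid polytope of rank $(1,\ldots,n)$.

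Finally I would invoke the identification of flag positroid polytopes of rank $(1,\ldots,n)$ with Bruhat interval polytopes: by \Cref{thm:BK} the Lusztig and Pl\"ucker notions of nonnegativity agree for $\Fl_n$, and by \Cref{def:genBIP} together with \Cref{rem:BIPsum} the flag positroid polytopes of rank $(1,\ldots,n)$ are exactly the Minkowski sums $P(M_1)+\cdots+P(M_n)$ of the positroid constituents of points of $\Fl_n^{\geq0}$, which is precisely the class of (twisted) Bruhat interval polytopes $\tilde P_{u,v}$ with $u\leq v$. Chaining $\ref{eqvs:TrFl}\iff\ref{eqvs:subdiv}$ with these two dictionary entries yields \Cref{cor:permute}.

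The main obstacle is mild and purely bookkeeping: one must check that \emph{every} face of $\mathcal D_{\bmu}$, not just the top-dimensional ones, is a flag positroid polytope of rank $(1,\ldots,n)$ and hence a Bruhat interval polytope (possibly of smaller dimension), and one must make sure the conventions relating $P(\underline{\bmu})$, $\Perm_n$, and $\tilde P_{u,v}$ are applied consistently so that the class of subdivisions obtained really is the class of subdivisions of $\Perm_n$ into Bruhat interval polytopes. Everything substantive is already contained in \Cref{thm:eqvs} and \Cref{thm:BK}.
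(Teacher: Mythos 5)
Your proof is correct and takes the same approach the paper intends: the paper states \Cref{cor:permute} as an immediate specialization of \Cref{thm:eqvs} to the complete flag case $\br=(1,\ldots,n)$ (citing \cite[Theorem 20]{JLLO} for priority), and you have accurately supplied the two bookkeeping translations—that $P(\underline{\bmu})=\Perm_n$ when $\bmu$ has no $\infty$ coordinates, and that flag positroid polytopes of rank $(1,\ldots,n)$ are precisely (twisted) Bruhat interval polytopes via \Cref{thm:BK}, \Cref{def:genBIP}, and \Cref{rem:BIPsum}.
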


In light of the results of \cite{SW21}, 
it is natural to ask if one can characterize the 
finest coherent subdivisions of the permutohedron $\Perm_n$ into Bruhat interval polytopes.  
Furthermore, do they all have the same $f$-vector?

Explicit computations for $\TrFl_4$ show that the answer to the 
second question is no.  
We find that $\TrFl_4$ with the fan structure (iii) 
(which agrees with (ii) by \cite[\S5]{JLLO}) has $78$ maximal cones.
We choose a 
point in the relative interior of each of the $78$ cones to 
use as a height function
(thinking of points in $\TrFl_4$ as weights on the vertices of $\Perm_4$ as in \Cref{eqvs:subdiv} of \Cref{thm:eqvs}),
 then 
use Sage to compute the corresponding coherent subdivision of 
$\Perm_4$.
As expected, precisely $14$ of the $78$ cones induce subdivisions of $\Perm_4$ 
into Bruhat interval polytopes,  see
  \Cref{tab:my_label}.

\begin{table}\label{tab:subdivisions}
    \centering
    \begin{tabular}{|c|c|c|}
         \hline
         
         \centered{Height function
	    $(P_1,P_2,P_3, P_4$; $P_{12},P_{13},$\\  $P_{14},P_{23}, P_{24},P_{34}$; $P_{123}, P_{124},P_{134}, P_{234})$}& \centered{Bruhat interval polytopes\\ in subdivision}   & \centered{$f$-vector}\\
        \hline
        \centered{$(15,-1,-7,-7;4,-2,-2,$\\$-2,-2,4;-7,-7,-1,15)$} &\centered{$P_{3214,4321}, P_{3124,4231},P_{2314,3421},$\\$P_{2134,3241},P_{1324,2431},P_{1234,2341}$} &\multirow{23}{*}{$(24,46,29,6)$}\\
        \cline{1-2}
        \centered{$(15,3,-9,-9;4,-8,-8,$\\$-4,-4,20;-1,-1,-1,3)$}& \centered{$P_{2413,4321}, P_{3124,4231},P_{2314,4231},$\\$P_{2134,3241},P_{1324,2431},P_{1234,2341}$}&\\
        \cline{1-2} 
        \centered{$(15,-7,-1,-7;-2,4,-2,$\\$-2,4,-2;-7,-1,-7,15)$} &\centered{$P_{3142,4321},P_{3124,4312},P_{2143,3421},$\\$P_{2134,3412},P_{1243,2431},P_{1234,2413}$} & \\
        \cline{1-2}
        \centered{$(-1,-1,-1,3;4,-8,-4,$\\$-8,-4,20;15, 3,-9,-9)$} &\centered{$P_{2413,4321}, P_{1423,4231},P_{1342,4231},$\\ $P_{1324,4213},P_{1243,4132},P_{1234,4123}$} &\\
        \cline{1-2}
        \centered{$(-7,-7,-1,15;4,-2,-2,$\\$-2,-2,4;15,-1,,-7,-7)$} &\centered{$P_{1432,4321},P_{1423,4312}, P_{1342,4231},$\\ $P_{1324,4213},P_{1243,4132},P_{1234,4123}$} &\\
        \cline{1-2}
        \centered{$(-1,-7,-7,15;-2,-2,4,$\\$4,-2,-2;15,-7,-7,-1)$} & \centered{$P_{3142,4321},P_{2143,4312},P_{2134,4213},$\\$P_{1342,3421},P_{1243,3412},P_{1234,2413}$}&\\
        \cline{1-2}
        \centered{$(-9,-9,3,15;20,-4,-8,$\\$-4,-8,4;3,-1,-1,-1)$ }& \centered{$P_{1432,4321}, P_{1423,4312},P_{1342,4231},$\\$P_{1324,4213},P_{1324,4132},P_{1234,3142}$}&\\
        \cline{1-2}
        \centered{$(11,-7,-7,3;-6,-6,4,$\\$4,2,2;11,-7,-7,3)$} &\centered{$P_{3142,4321},P_{2143,4312},P_{2134,4213},$\\ $P_{2143,3421},P_{1243,2431},P_{1234,2413}$} &\\
        \cline{1-2}
        \centered{$(3,3,-3,-3;20,-10,-10,$\\$-10,-10,20;-3,-3,3,3) $}& \centered{$P_{2413,4321},P_{3124,4231},P_{2314,4231},$\\ $P_{1324,2431},P_{1324,3241},P_{1234,3142}$}&\\
        \cline{1-2}
        \centered{$(3,-1,-1,-1;20,-4,-4,$\\$-8,-8,4;-9,-9,3,15)$} &\centered{$P_{3214,4321},P_{3124,4231},P_{2314,3421},$\\ $P_{1324,3241},P_{1324,2431},P_{1234,3142}$} &\\
        \cline{1-2}
        \centered{$(-3,-3,3,3;20,-10,-10,$\\$-10,-10,20;3,3,-3,-3)$} &\centered{$P_{2413,4321}, P_{1423,4231}, P_{1342,4231},$\\ $P_{1324,4132},P_{1324,4213}, P_{1234,3142}$} &\\
        \cline{1-2}
        \centered{$(3,-7,-7,11;2,2,4,$\\$4,-6,-6;3,-7,-7,11)$} &\centered{$P_{3142,4321}, P_{3124,4312}, P_{1342,3421},$\\ $P_{2134,3412}, P_{1243,3412},P_{1234,2413}$} & \\
        \hline
        \centered{$(11,-1,-7,-3;-2,-8,-4,$\\$-4,0,18;11,-1,-7,-3)$} &\centered{$P_{2413,4321}, P_{2143,4231}, P_{2134,4213},$\\ $P_{1243,2431}, P_{1234,2413}$} &\multirow{3}{*}{$(24,45,27,5)$}\\
        \cline{1-2}
        \centered{$(-3,-7,-1,11;18,0,-4,$\\$-4,-8,-2;-3,-7,-1,11)$} &\centered{$P_{3142,4321}, P_{3124,4312}, P_{1342,3421}$\\ $P_{1324,3412}, P_{1234,3142}$} &\\
        \hline
        \end{tabular}
    \caption{Table documenting the $14$ finest coherent subdivisions of $\Perm_4$ into Bruhat interval polytopes. There are two possible $f$-vectors, each of which can be realized in multiple ways.}
    \label{tab:my_label}
\end{table}

Of the $14$ coherent subdivisions coming from maximal 
cones of $\TrFl_4^{>0}$, $12$ of them 
contain $6$ facets, while the other $2$ contain $5$ facets. 
  \Cref{tab:my_label}
  lists the facets 
 and $f$-vectors of each of these $14$ subdivisions.
 Note that each Bruhat interval polytope $P_{v,w}$ which appears
 as a facet satisfies $\ell(w)-\ell(v)=3$. 
 Thus, any Bruhat interval polytope $P_{v',w}$ properly contained
 inside $P_{v,w}$ would have the property that 
 $\ell(w')-\ell(v')\leq 2$, and hence $\dim(P_{v',w'})\leq 2$. 
 Since $\Perm_4$ is $3$-dimensional, 
 all $14$ of these subdivisions are finest subdivisions.

We note that the $12$ finest subdivisions whose $f$-vector
is $(24, 46, 29, 6)$ are subdivisions of the permutohedron 
into cubes.  Subdivisions of the permutohedron into Bruhat 
interval polytopes which are cubes
have been previously studied in \cite[Sections 5 and 6]{Harada}
 \cite{Lee}, and in \cite[Section 6]{NT}.  
In particular, there is a subdivision of $\Perm_n$ into $(n-1)!$
Bruhat interval polytopes
$$\{P_{u,v} \ \vert \ u=(u_1\dots,u_n) \text { with }u_n=n, \text{ and }
                      v=(v_1,\dots,v_n) \text { with }v_i = u_i+1 \text{ modulo }n\}.$$
The first subdivision in 
    \Cref{tab:my_label} has this form.

We can further study the $f$-vectors of subdivisions of $\TrFl_4^{>0}$ which are coarsest (without being trivial), rather than finest. In this case, we observe three different $f$-vectors, each of which occurs in multiple subdivisions. The detailed results of our explicit computations on coarsest subdivisions can be found in \Cref{tab:coarsest}.

\begin{table}
    \centering
    \begin{tabular}{|c|c|c|}
         \hline
         
         \centered{Height function $(P_1,P_2,P_3, P_4; P_{12},P_{13},$ \\ $P_{14},P_{23}, P_{24},P_{34}; P_{123}, P_{124},P_{134}, P_{234})$}& \centered{Bruhat interval polytopes\\ in subdivision}   & \centered{$f$-vector}\\
        \hline
        \centered{$(-1,-1,-1,0;-1,-1,0,-1,0,0;0,0,0,0)$} &\centered{$P_{1243,4321}, P_{1234,4213}$} &\multirow{4}{*}{$(24,39,18,2)$}\\
        \cline{1-2}
        \centered{$(-1,-1,-1,0;0,0,0,0,0,0;0,0,0,0)$}& \centered{$P_{1342,4321}, P_{1234,4312}$}&\\
        \cline{1-2} 
        \centered{$(1,0,0,0;0,0,0,0,0,0;0,0,0,0)$} &\centered{$P_{2134,4321},P_{1234,2431}$} & \\
        \cline{1-2}
        \centered{$(1,0,0,0;0,0,0,1,1,1;0,0,0,0)$} &\centered{$P_{3124,4321}, P_{1234,3421}$} &\\ \hline

        \centered{$(0,0,0,0;-1,-1,-1,-1,-1,0;0,0,0,0)$} &\centered{$P_{2413,4321},P_{1234,4231}$} &\multirow{2}{*}{$(24,40,19,2)$}\\
        \cline{1-2}
        \centered{$(0,0,0,0;1,0,0,0,0,0;0,0,0,0)$} & \centered{$P_{1324,4321},P_{1234,3142}$}&\\
        \hline
        
        \centered{$(-1,-1,0,0;-1,-1,-1,-1,-1,0;0,0,0,0)$ }& \centered{$P_{1423,4321}, P_{1342,4231},$\\$P_{1324,4213},P_{1234,4132}$}&\multirow{5}{*}{$(24,42,23,4)$}\\
        \cline{1-2}
        \centered{$(0,-1,-1,0;0,0,1,0,0,0;0,0,0,0)$} &\centered{$P_{3142,4321},P_{1243,3421},$\\ $P_{2134,4312},P_{1234,2413}$} &\\
        \cline{1-2}
        \centered{$(1,1,0,0;1,0,0,0,0,0;0,0,0,0) $}& \centered{$P_{2314,4321},P_{1324,2431},$\\ $P_{3124,4231},P_{1234,3241}$}&\\
        \hline
        \end{tabular}
    \caption{Table documenting the $9$ coarsest coherent subdivisions of $\Perm_4$ into Bruhat interval polytopes. There are three possible $f$-vectors, each of which can be realized in multiple ways.}
    \label{tab:coarsest}
\end{table}

\small
\bibliography{BEW_bib_pruned}

\newcommand{\etalchar}[1]{$^{#1}$}
\begin{thebibliography}{BLVS{\etalchar{+}}99}

\bibitem[AFR10]{AFF}
Federico Ardila, Alex Fink, and Felipe Rinc\'{o}n.
\newblock Valuations for matroid polytope subdivisions.
\newblock {\em Canad. J. Math.}, 62(6):1228--1245, 2010.

\bibitem[AHHL21]{AHL21}
Nima Arkani-Hamed, Song He, and Thomas Lam.
\newblock Cluster configuration spaces of finite type.
\newblock {\em SIGMA Symmetry Integrability Geom. Methods Appl.}, 17:Paper No.
  092, 41, 2021.

\bibitem[AHLS20]{AHLS}
Nima Arkani-Hamed, Thomas Lam, and Marcus Spradlin.
\newblock Positive configuration space.
\newblock 2020.
\newblock Preprint, \texttt{arXiv:2003.03904v3}.

\bibitem[AK06]{AK06}
Federico Ardila and Caroline~J. Klivans.
\newblock The {B}ergman complex of a matroid and phylogenetic trees.
\newblock {\em J. Combin. Theory Ser. B}, 96(1):38--49, 2006.

\bibitem[ARW16]{ARW}
Federico Ardila, Felipe Rinc{\'o}n, and Lauren Williams.
\newblock Positroids and non-crossing partitions.
\newblock {\em Trans. Amer. Math. Soc.}, 368(1):337--363, 2016.

\bibitem[ARW17]{ARW17}
Federico Ardila, Felipe Rinc\'{o}n, and Lauren Williams.
\newblock Positively oriented matroids are realizable.
\newblock {\em J. Eur. Math. Soc. (JEMS)}, 19(3):815--833, 2017.

\bibitem[BB19]{BB19}
Matthew Baker and Nathan Bowler.
\newblock Matroids over partial hyperstructures.
\newblock {\em Adv. Math.}, 343:821--863, 2019.

\bibitem[BCTJ22]{BCT}
Carolina Benedetti, Anastasia Chavez, and Daniel Tamayo~Jim\'{e}nez.
\newblock Quotients of uniform positroids.
\newblock {\em Electron. J. Combin.}, 29(1):Paper No. 1.13, 20, 2022.

\bibitem[BEZ21]{BEZ21}
Madeline Brandt, Christopher Eur, and Leon Zhang.
\newblock Tropical flag varieties.
\newblock {\em Adv. Math.}, 384:Paper No. 107695, 41, 2021.

\bibitem[BGW03]{BGW03}
Alexandre~V. Borovik, I.~M. Gelfand, and Neil White.
\newblock {\em Coxeter matroids}, volume 216 of {\em Progress in Mathematics}.
\newblock Birkh\"{a}user Boston, Inc., Boston, MA, 2003.

\bibitem[BK22]{BK}
Carolina Benedetti and Kolja Knauer.
\newblock Lattice path matroids and quotients.
\newblock 2022.
\newblock Preprint, \texttt{arXiv:2202.11634}.

\bibitem[BLMM17]{Bossinger}
Lara Bossinger, Sara Lamboglia, Kalina Mincheva, and Fatemeh Mohammadi.
\newblock Computing toric degenerations of flag varieties.
\newblock In {\em Combinatorial algebraic geometry}, volume~80 of {\em Fields
  Inst. Commun.}, pages 247--281. Fields Inst. Res. Math. Sci., Toronto, ON,
  2017.

\bibitem[BLVS{\etalchar{+}}99]{BLVSWZ99}
Anders Bj\"{o}rner, Michel Las~Vergnas, Bernd Sturmfels, Neil White, and
  G\"{u}nter~M. Ziegler.
\newblock {\em Oriented matroids}, volume~46 of {\em Encyclopedia of
  Mathematics and its Applications}.
\newblock Cambridge University Press, Cambridge, second edition, 1999.

\bibitem[Bor22]{Bor}
Jonathan Boretsky.
\newblock Totally nonnegative tropical flags and the totally nonnegative flag
  {D}ressian.
\newblock 2022.
\newblock Preprint, \texttt{arXiv:2208.09128v2}.

\bibitem[Bos21]{Bosstrop}
Lara Bossinger.
\newblock Birational sequences and the tropical {G}rassmannian.
\newblock {\em J. Algebra}, 585:784--803, 2021.

\bibitem[Bos22]{Bos22}
Lara Bossinger.
\newblock Tropical totally positive cluster varieties, 2022.
\newblock Preprint, \texttt{arXiv:2208.01723}.

\bibitem[BW22]{BW}
Sara Billey and Jordan Weaver.
\newblock Criteria for smoothness of positroid varieties via pattern avoidance,
  johnson graphs, and spirographs, 2022.
\newblock Preprint, \texttt{arXiv:2207.06508}.

\bibitem[CEGM19]{Cachazo}
Freddy Cachazo, Nick Early, Alfredo Guevara, and Sebastian Mizera.
\newblock Scattering equations: from projective spaces to tropical
  {G}rassmannians.
\newblock {\em J. High Energy Phys.}, (6):039, 32, 2019.

\bibitem[dS87]{daS}
Ilda~P.F. da~Silva.
\newblock Quelques propri\'et\'es des matroides orient\'es.
\newblock Ph.D. Dissertation, Universit\'e Paris VI, 1987.

\bibitem[Ear22]{Early}
Nick Early.
\newblock From weakly separated collections to matroid subdivisions.
\newblock {\em Comb. Theory}, 2(2):Paper No. 2, 35, 2022.

\bibitem[Ful97]{Ful97}
William Fulton.
\newblock {\em Young tableaux}, volume~35 of {\em London Mathematical Society
  Student Texts}.
\newblock Cambridge University Press, Cambridge, 1997.
\newblock With applications to representation theory and geometry.

\bibitem[FZ03]{ca2}
Sergey Fomin and Andrei Zelevinsky.
\newblock Cluster algebras. {II}. {F}inite type classification.
\newblock {\em Invent. Math.}, 154(1):63--121, 2003.

\bibitem[GGMS87]{GGMS87}
I.~M. Gelfand, R.~M. Goresky, R.~D. MacPherson, and V.~V. Serganova.
\newblock {Combinatorial geometries, convex polyhedra, and Schubert cells}.
\newblock {\em Adv. in Math.}, 63(3):301--316, 1987.

\bibitem[GLS08]{GLS}
Christof Geiss, Bernard Leclerc, and Jan Schr\"{o}er.
\newblock Preprojective algebras and cluster algebras.
\newblock In {\em Trends in representation theory of algebras and related
  topics}, EMS Ser. Congr. Rep., pages 253--283. Eur. Math. Soc., Z\"{u}rich,
  2008.

\bibitem[GS87]{GS87}
I.~M. Gelfand and V.~V. Serganova.
\newblock Combinatorial geometries and the strata of a torus on homogeneous
  compact manifolds.
\newblock {\em Uspekhi Mat. Nauk}, 42(2(254)):107--134, 287, 1987.

\bibitem[Gun19]{Gun}
Trevor Gunn.
\newblock A {N}ewton polygon rule for formally-real valued fields and
  multiplicities over the signed tropical hyperfield, 2019.
\newblock Preprint, \texttt{arXiv:1911.12274}.

\bibitem[HHMP19]{Harada}
M.~Harada, T.~Horiguchi, M.~Masuda, and Sondzhon Park.
\newblock The volume polynomial of regular semisimple {H}essenberg varieties
  and the {G}elfand-{T}setlin polytope.
\newblock {\em Tr. Mat. Inst. Steklova}, 305(Algebraicheskaya Topologiya
  Kombinatorika i Matematicheskaya Fizika):344--373, 2019.

\bibitem[HJJS08]{HJJS08}
Sven Herrmann, Anders~Nedergaard Jensen, Michael Joswig, and Bernd Sturmfels.
\newblock How to draw tropical planes.
\newblock {\em Electr. J. Comb.}, 16, 2008.

\bibitem[HJS14]{HJS14}
Sven Herrmann, Michael Joswig, and David~E. Speyer.
\newblock Dressians, tropical {G}rassmannians, and their rays.
\newblock {\em Forum Math.}, 26(6):1853--1881, 2014.

\bibitem[JL22]{JL}
Manoel Jarra and Oliver Lorscheid.
\newblock Flag matroids with coefficients, 2022.
\newblock Preprint, \texttt{arXiv:2204.04658}.

\bibitem[JLLO]{JLLO}
Michael Joswig, Georg Loho, Dante Luber, and Jorge~Alberto Olarte.
\newblock Generalized permutahedra and positive flag {D}ressians.
\newblock {\em Int.\ Math.\ Res.\ Not. (to appear)}.
\newblock arXiv:2111.13676.

\bibitem[JLS21]{JLS21}
Dennis Jahn, Robert L\"{o}we, and Christian Stump.
\newblock Minkowski decompositions for generalized associahedra of acyclic
  type.
\newblock {\em Algebr. Comb.}, 4(5):757--775, 2021.

\bibitem[Kun86]{Kun86}
Joseph P.~S. Kung.
\newblock {Strong maps}.
\newblock pages 224--253, 1986.

\bibitem[KW15]{KW15}
Yuji Kodama and Lauren Williams.
\newblock The full {K}ostant-{T}oda hierarchy on the positive flag variety.
\newblock {\em Comm. Math. Phys.}, 335(1):247--283, 2015.

\bibitem[LF19]{LeFraser}
Ian Le and Chris Fraser.
\newblock Tropicalization of positive {G}rassmannians.
\newblock {\em Selecta Math. (N.S.)}, 25(5):Paper No. 75, 55, 2019.

\bibitem[LMP21]{Lee}
Eunjeong Lee, Mikiya Masuda, and Seonjeong Park.
\newblock Toric {B}ruhat interval polytopes.
\newblock {\em J. Combin. Theory Ser. A}, 179:Paper No. 105387, 41, 2021.

\bibitem[LPW20]{LPW}
Tomasz Lukowski, Matteo Parisi, and Lauren~K. Williams.
\newblock The positive tropical grassmannian, the hypersimplex, and the m=2
  amplituhedron, 2020.
\newblock Preprint, \texttt{arXiv:2002.06164}.

\bibitem[Lus94]{lusztig}
G.~Lusztig.
\newblock Total positivity in reductive groups.
\newblock In {\em Lie theory and geometry}, volume 123 of {\em Progr. Math.},
  pages 531--568. Birkh\"auser Boston, Boston, MA, 1994.

\bibitem[Mar10]{Markwig:Puiseux}
T.~Markwig.
\newblock A field of generalised {P}uiseux series for tropical geometry.
\newblock {\em Rend. Semin. Mat. Univ. Politec. Torino}, 68(1):79--92, 2010.

\bibitem[MR04]{MR04}
B.~R. Marsh and K.~Rietsch.
\newblock Parametrizations of flag varieties.
\newblock {\em Represent. Theory}, 8:212--242 (electronic), 2004.

\bibitem[MS15]{MS15}
Diane Maclagan and Bernd Sturmfels.
\newblock {\em {Introduction to tropical geometry}}, volume 161 of {\em
  Graduate Studies in Mathematics}.
\newblock American Mathematical Society, Providence, RI, 2015.

\bibitem[MS18]{MS18}
Kazuo Murota and Akiyoshi Shioura.
\newblock On equivalence of {$M^\natural$}-concavity of a set function and
  submodularity of its conjugate.
\newblock {\em J. Oper. Res. Soc. Japan}, 61(2):163--171, 2018.

\bibitem[NT22]{NT}
Philippe Nadeau and Vasu Tewari.
\newblock Remixed {E}ulerian numbers, 2022.
\newblock Preprint, \texttt{arXiv:2208.04128}.

\bibitem[Oh08]{Oh}
Suho Oh.
\newblock Contraction and restriction of positroids in terms of decorated
  permutations, 2008.

\bibitem[OPS19]{Olarte}
Jorge~Alberto Olarte, Marta Panizzut, and Benjamin Schr\"{o}ter.
\newblock On local {D}ressians of matroids.
\newblock In {\em Algebraic and geometric combinatorics on lattice polytopes},
  pages 309--329. World Sci. Publ., Hackensack, NJ, 2019.

\bibitem[Oxl11]{Oxl11}
James Oxley.
\newblock {\em {Matroid theory}}, volume~21 of {\em Oxford Graduate Texts in
  Mathematics}.
\newblock Oxford University Press, Oxford, 2 edition, 2011.

\bibitem[Poo93]{Poo93}
Bjorn Poonen.
\newblock Maximally complete fields.
\newblock {\em Enseign. Math. (2)}, 39(1-2):87--106, 1993.

\bibitem[Pos]{Pos}
Alexander Postnikov.
\newblock Total positivity, {G}rassmannians, and networks.
\newblock Preprint,
  \href{http://math.mit.edu/~apost/papers/tpgrass.pdf}{\texttt{http://math.mit.edu/\textasciitilde
  apost/papers/tpgrass.pdf}}.

\bibitem[PS04]{PS04}
Lior Pachter and Bernd Sturmfels.
\newblock Tropical geometry of statistical models.
\newblock {\em Proc. Natl. Acad. Sci. USA}, 101(46):16132--16137, 2004.

\bibitem[Rie98]{rietsch}
Konstanze~Christina Rietsch.
\newblock {\em Total Positivity and Real Flag Varieties}.
\newblock Ph.D.\ thesis, Massachusetts Institute of Technology, 1998.

\bibitem[Rie06]{Rie06}
K.~Rietsch.
\newblock Closure relations for totally nonnegative cells in {$G/P$}.
\newblock {\em Math. Res. Lett.}, 13(5-6):775--786, 2006.

\bibitem[RW08]{RW08}
Konstanze Rietsch and Lauren Williams.
\newblock The totally nonnegative part of {$G/P$} is a {CW} complex.
\newblock {\em Transform. Groups}, 13(3-4):839--853, 2008.

\bibitem[RW19]{RW:Duke}
K.~Rietsch and L.~Williams.
\newblock Newton-{O}kounkov bodies, cluster duality, and mirror symmetry for
  {G}rassmannians.
\newblock {\em Duke Math. J.}, 168(18):3437--3527, 2019.

\bibitem[Spe08]{Spe08}
David~E. Speyer.
\newblock {Tropical linear spaces}.
\newblock {\em SIAM J. Discrete Math.}, 22(4):1527--1558, 2008.

\bibitem[SS04]{SS04}
David Speyer and Bernd Sturmfels.
\newblock The tropical {G}rassmannian.
\newblock {\em Adv. Geom.}, 4(3):389--411, 2004.

\bibitem[SW05]{SW05}
David Speyer and Lauren Williams.
\newblock The tropical totally positive {G}rassmannian.
\newblock {\em J. Algebraic Combin.}, 22(2):189--210, 2005.

\bibitem[SW21]{SW21}
David Speyer and Lauren~K. Williams.
\newblock The positive {D}ressian equals the positive tropical {G}rassmannian.
\newblock {\em Trans. Amer. Math. Soc. Ser. B}, 8:330--353, 2021.

\bibitem[TW13]{TW13}
Kelli Talaska and Lauren Williams.
\newblock Network parametrizations for the {G}rassmannian.
\newblock {\em Algebra Number Theory}, 7(9):2275--2311, 2013.

\bibitem[TW15]{TW15}
E.~Tsukerman and L.~Williams.
\newblock Bruhat interval polytopes.
\newblock {\em Adv. Math.}, 285:766--810, 2015.

\end{thebibliography}
\bibliographystyle{alpha}

\end{document}